\def\a{\alpha}
\newtheorem{theorem}{Theorem}[section]
\newtheorem{lemma}[theorem]{Lemma}
\newtheorem{proposition}[theorem]{Proposition}
\newtheorem{corollary}[theorem]{Corollary}
\newtheorem{definition}[theorem]{Definition}
\newtheorem{remark}[theorem]{Remark}
\def\i{\mathrm{i}}
\def\C{\mathbb{C}}
\def\cP{\mathcal{P}}
\def\geq{\geqslant}
\def\leq{\leqslant}
\def\R{\mathbb{R}}
\def\T{\mathbb{T}}
\def\Z{\mathbb{Z}}
\def\N{\mathbb{N}}
\def\Q{\mathbb{Q}}
\def\epsilon{\varepsilon}
\newcommand{\vep}{\varepsilon}
\newcommand{\bea}{\begin{eqnarray}}
  \newcommand{\eea}{\end{eqnarray}}
  \newcommand{\beab}{\begin{eqnarray*}}
  \newcommand{\eeab}{\end{eqnarray*}}
\renewcommand{\a}{\alpha}
  \newcommand{\be}{\begin{equation}}
  \newcommand{\ee}{\end{equation}}
\begin{document}
\title{Prime number theorem for analytic skew products}
\author{Adam Kanigowski}
\address{Department of Mathematics, University of Maryland at College Park, College Park,
MD 20740, USA}
\email{adkanigowski@gmail.com}
\author{Mariusz Lema{\'n}czyk}
\address{Faculty of Mathematics and Computer Science, Nicolaus Copernic University Ul. Chopina 12/18 87-100 Torun, Poland}
\email{mlem@mat.umk.pl}
\author{Maksym Radziwi{\l\l}}
\address{Department of Mathematics
Caltech, Linde Hall,
1200 E California Blvd,
Pasadena, CA, 91125, USA}
\email{maksym.radziwill@gmail.com}
%\author{Adam Kanigowski \and Mariusz Lema{\'n}czyk \and Maksym Radziwi{\l\l}}

\maketitle
\begin{abstract} We establish a prime number theorem for all uniquely ergodic, analytic skew products on the $2$-torus $\mathbb{T}^2$. More precisely, for every irrational $\alpha$ and every $1$-periodic real analytic $g:\mathbb{R}\to\mathbb{R}$ of zero mean, let $T_{\alpha,g} : \mathbb{T}^2 \rightarrow \mathbb{T}^2$ be defined by $(x,y) \mapsto (x+\alpha,y+g(x))$. We prove that if $T_{\alpha, g}$ is uniquely ergodic then, for every $(x,y) \in \mathbb{T}^2$, the sequence $\{T_{\alpha, g}^p(x,y)\}$ is equidistributed on $\T^2$ as $p$ traverses prime numbers. This is the first example of a class of natural, non-algebraic and smooth dynamical systems for which a prime number theorem holds. We also show that such a prime number theorem does not necessarily hold if $g$ is only continuous on $\mathbb{T}^2$.

\end{abstract}

\section*{Introduction}

Let $X$ be a compact metric space and $T : X \rightarrow X$ a continuous map so that $(X,T)$ is a topological dynamical system. Given a $T$-invariant Borel measure $\nu$, following the work of Bourgain \cite{Bo1} and Wierdl \cite{Wi}, we know that for $\nu$-almost all $x \in X$, the sequence
\begin{equation} \label{eq:limit}
\frac{1}{N} \sum_{p \leq N} f(T^p x) \log p
\end{equation}
converges, with $p$ traversing prime numbers. However, we are in general lacking a description of the limit. %, unless strong conditions, such as total ergodicity are imposed on the system $(T, X)$.
 More importantly, the problem of understanding when convergence in \eqref{eq:limit} holds for all $x \in X$ remains open.

Whenever \eqref{eq:limit} converges to a limit for all $x \in X$ for any given continuous $f : X \rightarrow \mathbb{R}$ we will say that \textit{a prime number theorem holds for $(X, T)$}.
%We will say that a prime number theorem holds for $(X, T)$ if for every continuous $f : X \rightarrow \mathbb{R}$ and every $x \in X$, the averages \eqref{eq:limit} converge.
%For strictly ergodic\footnote{i.e\ uniquely ergodic and minimal} it is natural to conjecture that \eqref{eq:limit} should converge to $\int f d \nu$ for all $x \in X$. However strict ergodicity alone is not sufficient (see \cite{Pa}, and our Theorem \ref{thm:main4} below).
There is at present no clear understanding for which dynamical systems a prime number theorem should hold. On the other hand, we have a very precise conjecture, due to Sarnak, for the seemingly related notion of \textit{M\"obius disjointness}. Sarnak's conjecture asserts that for any dynamical system of topological entropy zero,
$$
\frac{1}{N} \sum_{n \leq N} f(T^n x) \mu(n) \rightarrow 0
$$
for all $x \in X$ as $N \rightarrow \infty$. Sarnak's conjecture is verified for a vast array of dynamical systems (see \cite{SarnakSurvey}). Meanwhile prime number theorems are established only for a few special dynamical systems:
%Nonetheless prime number theorems should hold for a vast class of natural dynamical systems of entropy zero. Unfortunately, we have only a few examples at our disposal :
\begin{itemize}
\item cyclic rotations on $\Z / d \Z $ (i.e.\ the Prime Number Theorem in arithmetic progressions),
\item rotations on $\mathbb{T}$ (i.e.\ Vinogradov's \cite{Vinogradov} theorem),
\item nilsystems (i.e.\ the Green-Tao \cite{Gr-Ta} theorem),
\item Rudin-Shapiro sequences (Mauduit-Rivat \cite{Ma-Ri}),
\item enumeration systems (Bourgain \cite{Bo, Bo2}, Green \cite{Gr}),
\item certain finite rank symbolic systems (Bourgain \cite{Bour4}, Ferenczi-Maduit \cite{FerMa}),
\item automata (M{\"u}llner \cite{Mullner}).
\end{itemize}
%
%At present there is no clear conjecture describing for which dynamical systems a prime number theorem should hold.

%One might expect that establishing a prime number theorem is related to Sarnak's conjecture \cite{Sa}, according to which
%$$
%\frac{1}{N} \sum_{n \leq N} f(T^n x) \mu(n) \rightarrow 0
%$$
%as $N \rightarrow \infty$ for all $x \in X$ and all continuous $f: X \rightarrow \mathbb{R}$.
One of the reasons for this discrepancy is that we have more tools to address Sarnak's conjecture.
The number-theoretic tools (e.g.\ \cite{BSZ}, \cite{Daboussi}, \cite{Katai},  \cite{MatoRad}, \cite{Tao}) that are used in the context of Sarnak's conjecture rely on the fact that most integers are composite and thus factor. In particular, these tools completely ignore the behavior on the subsequence of prime numbers and are therefore inapplicable in the context of establishing a prime number theorem.

All dynamical systems for which a prime number theorem is currently known are either algebraic or symbolic.
This is, as we will explain later, an important technical advantage.
In this paper we are interested in establishing a prime number theorem for a natural class of zero entropy \textit{smooth dynamical systems}
that are neither algebraic nor symbolic.
Specifically, we consider \textit{analytic skew products} (also known as \textit{Anzai skew products}\footnote{in honour of Anzai \cite{Anzai} who introduced them in the 1950's.}), that is, maps $T_{\alpha, g} : \mathbb{T}^2 \rightarrow \mathbb{T}^2$ defined by
$$T_{\alpha, g} (x, y) = (x + \alpha, y + g(x))$$
with $\alpha$ irrational and  $g : \mathbb{R} \rightarrow \mathbb{R}$ a $1$-periodic real-analytic function.
The behavior of these systems can be quite complex: Furstenberg \cite{Fu1} famously showed that $T_{\alpha, g}$ (with $g$ analytic)
can be minimal without being uniquely ergodic (i.e.\ the orbits $\{T_{\alpha, g}^{n}(x,y)\}$ can be dense without being equidistributed). Yet, analytic skew products are some of the simplest (non-algebraic) generalizations of irrational rotations and they can be viewed as {\em random rotations}: at the $n$th step $T_{\alpha,g}$ rotates the second coordinate of $(x,y)$ by $g(\{x + n \alpha\})$ and the sequence $\{x + n \alpha\}$ can be viewed as a source of ``deterministic randomness''.
We refer the reader to \cite{Le-spectralsurvey} for further information on Anzai skew products and information on their importance in ergodic theory. M\"obius disjointness of skew products $T_{\alpha, g}$ received particular attention: For analytic $g$, under a modest additional condition, M\"obius disjointness for $T_{\alpha, g}$ was established by Liu-Sarnak \cite{SarnakLiu}, subsequent results lowered this assumption to $g$ analytic \cite{Wang}, then $C^{\infty}$ \cite{WZX}, then $C^{2 + \varepsilon}$ \cite{KLR} and the current best result requires $g$ to be only $C^{1+ \varepsilon}$ \cite{deFav}.

We are now ready to state our main result.

\begin{theorem}\label{thm:main} Let $\alpha \in \mathbb{R} \backslash \mathbb{Q}$ and let $g : \mathbb{R} \rightarrow \mathbb{R}$ be a $1$-periodic real-analytic function of zero mean. If $T_{\alpha,g}$ is uniquely\footnote{In the class of Anzai skew products strict ergodicity is equivalent to unique ergodicity. Moreover, unique ergodicity implies total unique ergodicity, that is, all-non zero powers remain uniquely ergodic.} ergodic then for every continuous $f : \mathbb{T}^2 \rightarrow \C$ and every $(x,y) \in \mathbb{T}^2$, as $N \rightarrow \infty$,
\begin{equation} \label{eq:pntmainresult}
\frac{1}{N}\sum_{p\leq N}f(T_{\alpha,g}^p(x,y))\log p \rightarrow \int_{\T^2}f (\beta,\gamma) d \beta d \gamma,
\end{equation}
where as usual the letter $p$ stands for prime numbers.
In fact, the convergence is uniform in $(x,y) \in \mathbb{T}^2$.
\end{theorem}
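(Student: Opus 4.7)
The plan is to reduce to exponential sums and then attack them by a Vaughan-type decomposition. First, by Stone--Weierstrass it is enough to prove \eqref{eq:pntmainresult} when $f$ is a character $e(mx+ny)$ with $(m,n)\in\Z^2$. Iterating the skew-product structure gives $T_{\alpha,g}^{n}(x,y)=(x+n\alpha,\,y+S_n g(x))$ with $S_n g(x):=\sum_{k=0}^{n-1}g(x+k\alpha)$, so the statement becomes
$$
\frac{1}{N}\sum_{p\le N}e\big(mp\alpha+n\,S_p g(x)\big)\log p\;\longrightarrow\;0\qquad\text{uniformly in }x\in\T
$$
for every $(m,n)\ne(0,0)$. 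When $n=0$ this is Vinogradov's theorem for $\sum_{p}e(mp\alpha)\log p$, so from now on $n\ne 0$.

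I would then apply Vaughan's identity to the von Mangoldt weights, producing the familiar Type~I and Type~II contributions
$$
\Sigma_{\mathrm I}=\sum_{b\le B}\beta_b\sum_{a\le N/b}F\big(T_{\alpha,g}^{ab}(x,y)\big),\qquad
\Sigma_{\mathrm{II}}=\sum_{u\sim U}\sum_{v\sim V}a_u b_v\,F\big(T_{\alpha,g}^{uv}(x,y)\big),
$$
with $F(x,y)=e(mx+ny)$, $B$ a small power of $N$, and $UV\asymp N$. Bounding $\Sigma_{\mathrm I}$ is relatively soft: for each fixed $b$ the inner sum is a Birkhoff average of $F$ along the orbit of $(x,y)$ under $T_{\alpha,g}^{b}$. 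Total unique ergodicity (cf.\ the footnote of the theorem) makes every power uniquely ergodic, so Oxtoby's theorem yields $\frac1M\sum_{a\le M}F(T_{\alpha,g}^{ab}(x,y))\to \int F\,d\lambda=0$ uniformly in $(x,y)$; with a modest quantitative rate, available from analyticity of $g$ and Denjoy--Koksma-type bounds, summation over $b\le B$ gives $\Sigma_{\mathrm I}=o(N)$.

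The heart of the matter is $\Sigma_{\mathrm{II}}$. After extracting the prefactor $e(mx+ny)$, a Cauchy--Schwarz in $v$ combined with the cocycle identity $S_{u_1 v}g(x)-S_{u_2 v}g(x)=S_{(u_1-u_2)v}g\!\left(x+u_2 v\alpha\right)$ reduces $|\Sigma_{\mathrm{II}}|^{2}$ to bounding, on average over pairs $u_1\ne u_2\in[U,2U]$, exponential sums of the form
$$
S(x;d,r):=\sum_{v\sim V}e\Big(mdv\alpha+n\,S_{dv}g(x+rv\alpha)\Big),\qquad d:=u_1-u_2,\ r:=u_2.
$$
What one needs is $|S(x;d,r)|=o(V)$ for the overwhelming majority of admissible pairs $(d,r)$, uniformly in $x\in\T$. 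Here both the real-analyticity of $g$ and the unique-ergodicity hypothesis are essential: analyticity gives exponentially decaying Fourier coefficients $\hat g(k)$ and a clean spectral control of the cohomological equations in which the $S_n g$ live, while unique ergodicity guarantees that $g$ is not an $L^{2}$-coboundary of the rotation by $\alpha$ (otherwise $T_{\alpha,g}$ would be measurably conjugate to a rotation-times-identity and fail unique ergodicity), so that $S_n g$ genuinely disperses as $n$ grows. The plan is to view $S(x;d,r)$ as a Weyl-type sum for the auxiliary skew product $v\mapsto(x+rv\alpha,\,S_{dv}g(x+rv\alpha))$ on $\T^{2}$ and to extract cancellation by combining van der Corput differencing in $v$ with quantitative equidistribution of iterates of $T_{\alpha,g}$.

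The expected bottleneck is exactly this final bilinear estimate. Any approach must control the coupling between the linear phase $mdv\alpha$ and the non-linear phase $n\,S_{dv}g(x+rv\alpha)$ without losing the uniformity in $x$, and must rule out the resonant configurations in which the two phases conspire to cancel. Excluding such resonances amounts precisely to the qualitative statement that, for analytic $g$ producing a uniquely ergodic skew product, $g$ cannot be even approximately cohomologous to an integer multiple of the identity cocycle. Converting this structural statement into a genuine quantitative bilinear saving, uniformly over all $x\in\T$ and over a positive-density set of pairs $(u_1,u_2)$, is the technical heart of the argument and is where I would expect the bulk of the work to concentrate.
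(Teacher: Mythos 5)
Your overall reduction (Stone--Weierstrass to characters, $n\ne 0$, write the problem as $\sum_{p\le N}e(mp\alpha+n\,S_p g(x))\log p$) is correct, but applying Vaughan's identity directly to this exponential sum is precisely the approach the paper rules out, and for a concrete reason that is not merely a technical difficulty.

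The gap is the "modest quantitative rate" you claim for the Type I sums. Vaughan's identity needs that bound uniformly in $b$ up to $B\asymp N^{1/2-\varepsilon}$ (and the Type II sums need $U,V$ in a range whose endpoints are fixed powers of $N$). But unique ergodicity of $T_{\alpha,g}$ forces $\alpha$ to be non-Diophantine, i.e.\ Liouville. Take a denominator $b=q_n$ of $\alpha$ with $q_{n+1}$ far larger than any fixed power of $q_n$, and $N$ in the range $[q_n^{C},q_{n+1}]$ with $b\le B$. Then for $a\le N/b$ one has $\|a b\alpha\|\le (N/q_n)\|q_n\alpha\|\le N/(q_nq_{n+1})\le 1/q_n$, and Denjoy--Koksma together with the cocycle identity gives $\sup_x|S_{ab}g(x)|=o(1)$ for these $a$. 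Thus $T^{ab}(x,y)$ never leaves a shrinking neighborhood of $(x,y)$, so
\[
\frac{b}{N}\sum_{a\le N/b}F\big(T^{ab}(x,y)\big)=F(x,y)+o(1),
\]
which is not $o(1)$. The Type I contribution from that single $b$ is already $\gg N/b$, so $\Sigma_{\mathrm I}=o(N)$ fails. No amount of analyticity of $g$ can restore a uniform rate here: this is rigidity, and it is forced by the very hypothesis of the theorem. The same rigidity enters the Type II sums through $d=u_1-u_2$ near a denominator, so the bilinear estimate you anticipate as "the bottleneck" is not hard in the ordinary sense --- it is simply false as stated once $d$ is allowed to be $\asymp q_n$, and that range is unavoidable. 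This is exactly what the introduction means by "we can only obtain information for type I and type II sums with $d,d_1,d_2$ that do not exceed $N^{\varepsilon}$ for every fixed $\varepsilon>0$, which is always insufficient."

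The paper's actual mechanism turns rigidity from an enemy into the main tool. Instead of using unique ergodicity to kill long Birkhoff averages along $T^{b}$, it exploits the rigidity to replace $e_{b,c}(T^m(x,y))$ on intervals $m\in[N',N'+H]$ (with $H$ a power of $N$) by $e_{b,c}(T^{m\bmod z_k}(x',y'))$ times a low-degree polynomial phase in $m$ (Lemma 2.4, Corollary 2.5, Proposition 2.4 in the paper). This decouples the dynamics from the primes: the dynamical side becomes a sum of $e_{b,c}(T^a(\cdot))$ over reduced residues $a$ modulo $z_k$ (handled by Proposition 5.1, which is where unique ergodicity is actually used), while the arithmetic side becomes a hybrid Huxley-type statement about primes $p$ with $p\bmod q$ in a short interval and a fixed residue class, twisted by a polynomial phase (Theorems 6.1, 6.2, 7.1). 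Vaughan/Heath-Brown identities do appear, but only inside the proofs of these number-theoretic lemmas, where the modulus is a denominator of $\alpha$ rather than an arbitrary $b\le N^{1/2-\varepsilon}$, and where the saving needed is barely past the $N^{1/6}$ (Huxley) or $N^{2/3}$ (Matomäki--Shao) threshold. That threshold-crossing is the real bottleneck, and it is attacked with a type-III sieve bound in the style of Heath-Brown, not with the dynamical equidistribution you propose.
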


Since Theorem \ref{thm:main} holds for all uniquely ergodic analytic skew-products, we believe that the rate of convergence in \eqref{eq:pntmainresult} can be arbitrarily slow. We expect that the condition ``$T_{\alpha, g}$ uniquely ergodic'' is also necessary in Theorem \ref{thm:main}. Such a converse is implicit in our proof for certain special $\alpha$'s (for example those $\alpha$ that can be expanded into a continued fraction $[0; q_1, q_2, \ldots]$ with all the $q_i$'s having a bounded number of prime factors).

We recall that the system $T_{\alpha, g}$ is uniquely ergodic if and only if there is no measurable solution $\xi : \mathbb{T} \rightarrow \mathbb{C}$ with $|\xi| = 1$ to the equation
\begin{equation} \label{eq:abc}
e^{2\pi i k g(x)} = \frac{\xi(x)}{\xi(x + \alpha)}
\end{equation}
for every $k \in \mathbb{N}$. This implies that if $T_{\alpha, g}$ is uniquely ergodic then $\alpha$ is non-diophantine\footnote{that is, for any given $A > 0$ there are only finitely many $q$ such that $\| q \alpha \| > q^{-A}$.}. Finally, whenever a measurable solution $\xi$ to the equation \eqref{eq:abc} with $k=1$ exists, we say that \textit{$g$ is a multiplicative coboundary for the rotation by $\alpha$}. We refer the reader to Katok \cite{Katok} for a sufficient condition\footnote{If $g(x)=\sum_{m\in\Z}a_me^{2\pi imx}$ is analytic and there is a subsequence $\{q_{n_k}\}$ of denominators for $\alpha$ such that $\|q_{n_k}\alpha\|/a_{q_{n_k}}\to0$ then $g$ is not multiplicatively cohomologous to any constant. As Katok's condition is stable for multiples of $g$, it implies the unique ergodicity of $T_{\alpha, g}$.} in terms of the Fourier coefficients of $g$ that ensures that $g$ is not a multiplicative coboundary for $\alpha$.

%We recall that if $T_{\alpha, g}$ is uniquely ergodic then $\alpha$ is non-diophantine\footnote{That is, for all $A > 0$ there are only finitely many $q$'s with $\| q \alpha \| \geq q^{-A}$}. More than that, Vinogradov's result for all irrational $\alpha$ is a simple consequence of one of main number theoretic results (see Theorem \ref{thm:nr4}). Theorem \ref{thm:main} provides a new source of sequences which are uniformly distributed along primes. Indeed, recall first that a $1$-periodic function $g:\mathbb{R}\to \mathbb{R}$ is a {\em multiplicative coboundary for the rotation by $\alpha$} if  there exists a measurable solution $\xi:\mathbb{T}\to\mathbb{S}^1$ to the equation
%$$
%e^{2\pi i g(x)}=\xi(x)/\xi(x+\alpha)\text{ a.e.}
%$$
We have the following immediate corollary of Theorem \ref{thm:main}.
\begin{corollary}\label{cor:main}Let $\alpha \in \mathbb{R} \backslash \mathbb{Q}$ and $g : \mathbb{R} \rightarrow \mathbb{R}$ be a $1$-periodic real-analytic function of zero mean. If for no $k\in\N$, $kg$ is a multiplicative coboundary for the rotation by $\alpha$ then for any continuous $f : \mathbb{T} \rightarrow \C$ and $x \in \mathbb{T}$, as $N \rightarrow \infty$,
  $$
  \frac{1}{N} \sum_{p \leq N} f \Big ( \sum_{m < p} g(x + m \alpha) \Big )\log p \rightarrow \int_{\mathbb{T}} f(u) du.
  $$
  That is, the sequence $\{g(x)+g(x+\alpha)+\ldots+ g(x+(p_n-1)\alpha)\}$, $n \geq1$, is equidistributed (and where $p_n$ denotes the $n$-th prime number). \end{corollary}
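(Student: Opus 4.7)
The plan is straightforward: this corollary is a direct specialization of Theorem \ref{thm:main}, and the work lies almost entirely in matching up the hypotheses and unpacking the notation.

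First I would verify that the hypothesis of the corollary coincides with the unique ergodicity hypothesis of Theorem \ref{thm:main}. The excerpt explicitly records that $T_{\alpha,g}$ is uniquely ergodic if and only if there is no measurable $\xi:\T\to\C$ with $|\xi|=1$ solving $e^{2\pi i k g(x)} = \xi(x)/\xi(x+\alpha)$ for any $k\in\N$; by the definition of ``multiplicative coboundary'' recalled just after, this is precisely the assumption that $kg$ is not a multiplicative coboundary for the rotation by $\alpha$ for any $k\in\N$. Hence Theorem \ref{thm:main} applies.

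Next I would unfold the iterates of the skew product. A trivial induction gives
\[
T_{\alpha,g}^n(x,y) = \Big(x+n\alpha,\ y + \sum_{m<n} g(x+m\alpha)\Big),
\]
so that the second coordinate of $T_{\alpha,g}^p(x,y)$ is exactly $y + \sum_{m<p} g(x+m\alpha)$. Given a continuous $f:\T\to\C$, define $F:\T^2\to\C$ by $F(x,y)=f(y)$, which is continuous. Applied to the base point $(x,0)$, Theorem \ref{thm:main} yields
\[
\frac{1}{N}\sum_{p\leq N} f\Big(\sum_{m<p} g(x+m\alpha)\Big)\log p \;=\; \frac{1}{N}\sum_{p\leq N} F(T_{\alpha,g}^p(x,0))\log p \;\longrightarrow\; \int_{\T^2}F(\beta,\gamma)\,d\beta\,d\gamma,
\]
and the right-hand side equals $\int_\T f(u)\,du$ by Fubini. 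This is the first assertion of the corollary.

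Finally, to pass to equidistribution of the sequence $\{g(x)+g(x+\alpha)+\ldots+g(x+(p_n-1)\alpha)\}_{n\geq 1}$, I would invoke Weyl's criterion together with partial summation: the prime number theorem gives $\sum_{p\leq N}\log p \sim N$, so the weighted averages above over primes $p\leq p_n$ weighted by $\log p$ are equivalent, after a standard partial-summation/dyadic argument, to the unweighted Ces\`aro averages $\frac{1}{n}\sum_{k\leq n} f(g^{(p_k)}(x))$. There is no real obstacle here, only the bookkeeping of removing the $\log p$ weight, which is routine and identical to how one deduces equidistribution of $\{n\alpha\}$ along primes from Vinogradov's theorem.
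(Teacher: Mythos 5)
Your proof is correct and takes exactly the route the paper intends: the paper simply labels this an ``immediate corollary'' of Theorem~\ref{thm:main}, and you have supplied the three missing steps -- the translation of the no-multiplicative-coboundary hypothesis into the stated unique ergodicity criterion (with the caveat that the paper takes this criterion as recalled background), the unfolding $T_{\alpha,g}^p(x,0)=(x+p\alpha,\sum_{m<p}g(x+m\alpha))$ applied to $F(x,y)=f(y)$, and the standard partial-summation removal of the $\log p$ weight. Nothing to change.
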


Let us now turn to a discussion of some of the more technical aspects of Theorem \ref{thm:main}. Prime number theorems have been so far established only for dynamical systems $(X,T)$ that are either algebraic (e.g.\ translations on nilmanifolds) or symbolic. A fundamental reason for this is that most of the earlier approaches immediately use Vinogradov's method to reduce the problem to that of understanding sums of the form
\begin{align*}
& \text{(Type I) } \sum_{n \leq N} f(T^{d n} x) \text{ and } \\ & \text{(Type II) } \sum_{n \leq N} f(T^{d_1 n} x) \overline{f(T^{d_2 n} x)}
\end{align*}
for all $x \in X$ and continuous $f$.  For fixed $d_1$ and $d_2$ results on type II sums can be obtained by studying joinings of $T^{d_1}$ and $T^{d_2}$. Unfortunately, in order to obtain information on primes (unlike for the M{\"o}bius disjointness) the variables $d, d_1, d_2$ need to be allowed to grow at least like a small power of $N$. For algebraic dynamical systems (e.g.\ nilsystems) one has a chance of obtaining such information using joinings.
However, for non-algebraic dynamical systems these methods break down.

For a successful application of Vinogradov's method one needs to reach a certain numerical threshold in the evaluation of type I and type II sums, for instance $d \leq N^{1/2 - \varepsilon}$ and $d_1, d_2 \leq N^{1/3 - \varepsilon}$ for any given $\varepsilon > 0$ (see e.g.\ \cite{DFI}). For $T_{\alpha, g}$, in the most optimistic scenario, we can only obtain information for type I and type II sums with $d, d_1, d_2$ that do not exceed $N^{\varepsilon}$ for every fixed $\varepsilon > 0$. This is always insufficient for a successful application of Vinogradov's method.

Instead, we develop a new approach tailored for systems of slow and controlled orbit growth.
The main new idea on the dynamical side is to use the slow orbit growth of the system to approximate it by a periodic system (with a period being a small power of $N$) plus a polynomial phase and then approximate averages along primes by usual ergodic averages. While the coefficients of the polynomial phase do depend on the point $(x,y) \in \mathbb{T}^2$, they are well controlled uniformly over all points $(x,y) \in \mathbb{T}^2$.

On the number theoretic side, controlling the average behavior of these orbits boils down roughly to being able to control expressions of the form
\begin{equation} \label{eq:huxleytype0}
\sum_{z < q} \sum_{y < N} \sup_{\beta} \Big | \sum_{\substack{p \in [y, y + H] \\ p_{q} \in [z, z + H']}} e(p_{q} \beta) \log p  -  \frac{H}{\varphi(q)} \sum_{\substack{(a,q) = 1 \\ a \in [z, z + H']}} e(a \beta) \Big |
\end{equation}
with $e(x) := e^{2\pi i x}$ and $[0,q-1]\ni p_{q} := p \mod{q}$,
or expressions of the form
\begin{equation} \label{eq:huxleytype}
  \sum_{y < N} \sum_{v = 1}^{r} \Big | \sum_{\substack{p \in [y, y + H] \\ p_{q} \equiv v \mod{r}}} \log p - \frac{H}{r} \Big |
\end{equation}
with $1 \leq r \leq q^{1 - \varepsilon}$.
To get a sense of the problem let us focus on \eqref{eq:huxleytype}. A non-trivial estimate for \eqref{eq:huxleytype} with $q = 1$ and $r = 1$ is equivalent to establishing a prime number theorem in $[y, y + H]$ for almost all $y \leq N$. Following Huxley \cite{Huxley}, this is known for $H > N^{1/6 + \varepsilon}$ and $N$ sufficiently large with respect to $\varepsilon > 0$, for any fixed $\varepsilon > 0$. For a general $q > 1$, if we take $r = \lfloor q^{1 - \varepsilon} \rfloor $ and if $p_{q}$ was replaced by $p$ then a non-trivial bound for \eqref{eq:huxleytype} would correspond to a hybrid version of Huxley's theorem in short arithmetic progressions and short intervals. The latter is completely out of reach, and we are helped to a large extent by the fact that we have to understand the distribution of $p_{q}$, rather than $p$, in arithmetic progressions. Our argument will share some commonalities with Huxley's result and in particular, we will be limited by the condition $H / q > N^{1/6 + \varepsilon}$ which is the correct analogue of Huxley's result in short arithmetic progression and short intervals.

The second important input is an extension of a recent result of Matom\"aki-Shao \cite{MS}, on polynomial phases in short intervals, namely
\begin{equation} \label{eq:matotype}
\sum_{N \leq p \leq N + H} e \Big ( \sum_{j \leq k} \alpha_j (p - N)^{j} \Big )
\end{equation}
with $\alpha_j \in \mathbb{R}$.
Their result allows one to take $H > N^{2/3 + \varepsilon}$. For our argument to succeed, it will be crucial to either pass the threshold $H / q > N^{1/6 + \varepsilon}$ in \eqref{eq:huxleytype} or the threshold $H> N^{2/3 + \varepsilon}$ in \eqref{eq:matotype}. Passing either threshold requires one to address the contribution of so-called type III sums. We believe that it is an interesting feature of this problem that such a natural number theoretic obstruction appears in it. We end up passing this threshold by slightly improving the result of Matom{\"a}ki-Shao using ideas of Heath-Brown which allows one to barely handle the contribution of these type III sums.

Clearly, in all of our results, it would be interesting to further relax the assumption on the smoothness of $g$. However, it turns out that Theorem \ref{thm:main} cannot hold for merely continuous $g$.

\begin{theorem}\label{thm:main4}  For every
  $\alpha\in \R\setminus\Q$ there exists a continuous $g : \mathbb{T} \rightarrow \mathbb{T}$ such that the map $T=T_{\alpha,g}$ satisfies the following:
\begin{enumerate}
\item[i.] $T$ is uniquely ergodic,
\item[ii.] $T$ satisfies Sarnak's conjecture,
\item[iii.] $T$ does not satisfy polynomial Sarnak's conjecture; more precisely, for a continuous $f(x,y)=\tilde{f}(y)$,
the sequence $\{\frac1N\sum_{n\leq N}f(T^{n^2}(0,0))\mu(n)\}$ has a non-zero accumulation point,
\item[iv.] there exists a continuous $f(x,y)=\tilde{f}(y)$ such that the sequence
$$
\frac{1}{N}\sum_{p\leq N}f(T^p(0,0))\log p
$$
does not converge.
\end{enumerate}
\end{theorem}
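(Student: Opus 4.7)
It suffices to verify (i)--(iv) for test functions of the form $f(x,y)=\tilde f(y)$. Writing $\Psi(n):=\sum_{m<n}g(m\alpha)\in\T$ for the second-coordinate cocycle based at $0$ and expanding $\tilde f$ in Fourier series, and using that $x\mapsto x+\alpha$ is uniquely ergodic, the four items translate into statements on the single sequence $\Psi$: (i)$\Leftrightarrow$ $N^{-1}\sum_{n\le N}e(k\Psi(n))\to 0$ for every $k\ne 0$; (ii)$\Leftrightarrow$ the same with $\mu(n)$ inserted; (iii)$\Leftrightarrow$ $N^{-1}\sum_{n\le N}\mu(n)e(k\Psi(n^2))$ has a non-zero accumulation point for some $k\ne 0$; (iv)$\Leftrightarrow$ $N^{-1}\sum_{p\le N}\log p\cdot e(k\Psi(p))$ fails to converge for some $k\ne 0$.

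Given $\alpha=[0;a_1,a_2,\ldots]$ with denominators $(q_n)$, pick a lacunary subsequence $(q_{n_k})$ and define
\[
g(x)=\sum_{k\ge 1}c_k\cos(2\pi q_{n_k}x+\theta_k),\qquad \sum_{k}|c_k|<\infty,
\]
which is automatically continuous. Because $\|q_{n_k}\alpha\|<1/q_{n_k+1}$, the $k$-th summand contributes to the Birkhoff sum an expression essentially equal to $c_k \operatorname{Re}\bigl[e^{i\theta_k}\cdot(e(n q_{n_k}\alpha)-1)/(e(q_{n_k}\alpha)-1)\bigr]$, which is a slowly varying quasi-periodic function of $n$ on $[0,q_{n_k+1}/q_{n_k}]$. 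Summing across $k$ provides enough flexibility, via $(c_k,\theta_k,n_k)$, to prescribe $\Psi(n)\bmod 1$, up to $o(1)$, on a nested sequence of windows $[M_{k-1},M_k)$ that exhaust $\N$.

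Choose the target $\Psi^*=\Psi_0+\Psi_1+\Psi_2$, where (a) $\Psi_0$ is a uniformly distributed, M\"obius-orthogonal ``background'' (e.g.\ a generic rotation-like cocycle sum); (b) $\Psi_1$ is supported on the perfect squares and bounded by $O(1)$, engineered so that $\Psi_1(n^2)$ carries a $\mu(n)$-correlated bias; and (c) $\Psi_2$ is supported on the primes, $O(1)$-bounded, oscillating between two distinct Ces\`aro limits along two explicit subsequences of primes. Since squares and primes are density-$0$ subsets of $\N$, $\Psi_1$ and $\Psi_2$ do not disturb items (i) and (ii), while they directly force (iii) and (iv).

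The principal obstacle is the arithmetic selectivity of $\Psi_1$ and $\Psi_2$: the squares and the primes are not Fourier-selectable as subsets of $\N$, so a single continuous cocycle $g$ cannot see them directly. Instead, these perturbations must be realized implicitly by tuning the distribution of $\Psi$ over the residue classes $\bmod q_{n_k}$ so that, upon pullback by the square map and by the inclusion of primes, it matches the prescribed biases. This relies on Weyl-type equidistribution of $\{n^2\alpha\}$ and Vinogradov-type equidistribution of primes in arithmetic progressions at the scales $1/q_{n_k}$, and is carried out by a Borel--Cantelli argument across $k$ which simultaneously respects the continuity constraint $\sum_k|c_k|<\infty$ and precludes the existence of a measurable solution to \eqref{eq:abc} for any $k\in\N$, thereby securing unique ergodicity.
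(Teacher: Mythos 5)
Your proposal correctly identifies the main obstacle — a single continuous cocycle cannot be ``supported'' on the squares or the primes, since these sets are not recognizable from a trigonometric series at any scale — but the proposed repair does not resolve it, and the overall architecture lacks the degrees of freedom that the real proof needs.

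The fundamental problem is your choice of ansatz $g(x)=\sum_k c_k\cos(2\pi q_{n_k}x+\theta_k)$. Each term here is $1/q_{n_k}$-periodic, so its contribution to the Birkhoff sum $S_w(g)(0)$ depends only on $w\alpha \pmod{1/q_{n_k}}$. Consequently, at scale $k$ you control the target values $S_w(g)(0)$ for $w$ near $q_{n_k}$ through just two real parameters $(c_k,\theta_k)$. You would need to independently prescribe $S_w(g)(0)$ at roughly $q_{n_k}^{1/2}$ distinct points $w=m^2$ (or a comparable number of primes), so a two-parameter family is far too rigid. Your suggested fix via ``tuning the distribution of $\Psi$ over residue classes $\bmod\, q_{n_k}$'' cannot work with this ansatz because the trigonometric term genuinely cannot distinguish two residues $w\equiv w'\pmod{q_{n_k}}$ when $\{w\alpha\}$ and $\{w'\alpha\}$ are close mod $1/q_{n_k}$ — and that is precisely what happens for $w,w'$ in the same residue class. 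Invoking Borel--Cantelli does not create new parameters.

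The paper's construction works because it uses a much richer ansatz: $g=\sum_n (f_n\circ R_\alpha - f_n)$ with $f_n$ piecewise linear, having an independently chosen slope $L_{n,w}$ on the small arc of length $1/q_{k_n+1}$ near each $wp_{k_n}/q_{k_n}$. Since $S_k(g)(0)=\sum_n f_n(k\alpha)$ telescopes, the slope $L_{n,w}$ directly sets $f_n(w\alpha)$ and hence sets $S_w(g)(0)$ to a prescribed value mod $1$ at each point $w$ of the target sparse set. The cost is that $g$ must remain continuous, which forces $|L_{n,w}-L_{n,w+1}|$ to be small relative to $q_{k_n+1}$. This is exactly where the ``almost sparse'' structure (the gaps between elements of $A\cap[0,N]\setminus B_N$ tending to infinity, which for primes comes from a Brun-type bound) is used: between consecutive target points the slope is allowed to drift slowly, so the variation per step stays controlled. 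You never invoke this gap property, and without it the slope interpolation would break continuity. Finally, the decomposition ``$\Psi^*=\Psi_0+\Psi_1+\Psi_2$'' with pieces ``supported on squares/primes'' is not a construction — it is the statement of what one wishes to achieve — and the claim that these pieces ``do not disturb (i) and (ii) because squares and primes have density zero'' is not justified: what actually secures unique ergodicity and M\"obius disjointness in the paper is an auxiliary family $h_n$ of $1/q_{\ell_n}$-periodic tent functions inserted at a reserved sub-sequence of scales, together with the AOP criterion of K\'u{\l}aga--Lema\'nczyk applied to the cocycles $aS_r(g)(r\cdot)+bS_s(g)(s\cdot+c)$. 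None of that appears in your sketch.
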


We recall that the only known (totally) strictly ergodic systems for which a prime number theorem fails were constructed by Pavlov \cite{Pa}. His examples are given by some symbolic constructions (subshifts) whose entropy has not been determined. Note also that iii.\ gives a negative answer to polynomial Sarnak's conjecture (that is, a part of  Problem~7.1 in \cite{SarWorkshop} and Conjecture 2.3 in \cite{Ei}). Simultenaously and independently of us, a negative answer to polynomial Sarnak's conjecture has been obtained in \cite{Chin1} in the class of Toeplitz sub-shifts.

It is an open question to determine whether Theorem \ref{thm:main} holds for $g$ which are $C^{\infty}(\mathbb{T})$. Our current proof exploits the fact that if a trigonometric polynomial is large at a point then it is large at a set of large measure, provided that the degree remains under control. When $g$ is analytic, we can approximate $g$ sufficiently well by trigonometric polynomials (of bounded degree), so as to conclude that $g$ inherits the same property. However, such an approximation is no longer possible if $g$ is only required to be $C^{\infty}$.

Before we turn to a description of our proof, we would like to make a few comments on possible extensions of this work:
\begin{enumerate}
\item If $T_{\alpha, g}$ (with $\alpha \not \in \mathbb{Q}$ and $g$ analytic) is minimal but not uniquely ergodic, we are able to show that the set $\{T_{\alpha,g}^{p}(x,y):\: p\text{ is prime}\}$ is dense in $\T^2$ for each $(x,y)\in \T^2$ (the proof will be published elsewhere). This result resembles a result on the distribution of prime orbits of the horocycle flow from \cite{SU}. It would be interesting to determine whether the averages
  $$
  \frac{1}{N} \sum_{p \leq N} f(T^p_{\alpha, g} (x,y)) \log p
  $$
  converge for every $(x,y)$ without any assumption on $T_{\alpha, g}$.

\item A variant of our proof establishes the results of Liu-Sarnak \cite{SarnakLiu} and Wang \cite{Wang} without using either the DDKBSZ (Daboussi-Delange-K{\'a}tai-Bourgain-Sarnak-Ziegler \cite{BSZ, Daboussi, DaboussiDelange, Katai}) criterion or the Matom\"aki-Radziwi{\l\l} theorem \cite{MatoRad}.

\item For certain special $\alpha$'s and under the assumption of the Generalized Riemann Hypothesis it is possible to relax the requirement on the smoothness of $g$ to $g \in C^k(\mathbb{T}^2)$ for some $k > 2$. It is unclear to us if the smoothness can be relaxed conditionally on the Generalized Riemann Hypothesis for all $\alpha$, and it remains an interesting open question to determine even conjecturally the optimal smoothness exponent.

\item It should be possible to extend our work to handle a larger class of rigid systems for which a direct application of Vinogradov's method (of type I and II sums) is ineffectual.
   \end{enumerate}

   \subsection*{Acknowledgments} The three authors would like to thank the American Institute of Mathematics for hosting a workshop on ``Sarnak's Conjecture'' at which this work was begun. Research of AK was partially supported by NSF grant DMS-1956310. Research of ML was
partially supported by Narodowe Centrum Nauki grant UMO-2019/33/B/ST1/00364. MR acknowledges the partial support of NSF (through the grant DMS-1902063) and of a Sloan fellowship.
   We are grateful to Kaisa Matom\"aki for an alternative treatment of a part of Section \ref{sec:eqprimes2}; her argument is described in Subsection \ref{se:alternative}.

\section{Outline of the proof}\label{outline}
Let $q_1 < q_2 < \ldots$ be the sequence of \textit{denominators} of $\alpha$, i.e.\ ``best rational approximations'' of $\alpha$ so that for all $k \geq 2$,
$$
\frac{1}{2 q_{k + 1} q_k} \leq \Big | \alpha - \frac{\ell_k}{q_{k}} \Big | \leq \frac{1}{q_{k + 1} q_{k}},
$$
for some integer valued sequence $\{\ell_k\}_{k\in \N}$. We begin by noticing that since $g$ is real-analytic, it admits a Fourier expansion
$$
g(x) = \sum_{m \in \mathbb{Z}} a(m) e(m x)
$$
with $|a(m)| \leq e^{-\tau m}$ for some $\tau > 0$. For simplicity, we assume that $\tau = 1$.
Moreover, instead of working with $g$, we can work with
$$
\widetilde{g}(x) = \sum_{n \in \mathbb{Z}} g_n(x) \ , \ g_n(x) := \sum_{\substack{q_n \leq |m| \leq \log q_{n + 1} \\ q_n | m}} a(m) e(m x).
$$
Indeed, we show that the maps $T_{\alpha, g}(x,y)$ and $T_{\alpha, \widetilde{g}}(x,y)$ are topologically conjugate, therefore, there exists a continuous invertible map $H: \mathbb{T}^2 \rightarrow \mathbb{T}^2$ such that $$T_{\alpha, g}^{p}(x,y) = H^{-1}(T^{p}_{\alpha, \widetilde{g}}(H(x,y)))$$
for all $p \geq 1$. So Theorem \ref{thm:main} for   $T_{\alpha, g}$ follows from Theorem \ref{thm:main} for ${T_{\alpha,\widetilde{g}}}$. We assume therefore without loss of
generality that $g = \widetilde{g}$. This assumption will be in place throughout the whole paper. %Furthermore, given $n \geq 1$, this reduction leads us to define the important parameter $n^{\ast}$ as
%the largest integer $n^{\ast} \leq n$ such that $q_{n^{\ast}} > e^{q_{n^{\ast} - 1} / 16}$.
Since the functions $e_{b,c}(x,y) := e^{2\pi i (b x + c y)}$ are dense in the set of continuous functions on $\mathbb{T}^2$, it suffices to obtain Theorem \ref{thm:main} for $f(x,y) = e_{b,c}(x,y)$. We can assume that $c \neq 0$ since otherwise the result follows from Vinogradov's theorem.
We will also write $e(x) := e^{2\pi i x}$ and $T=T_{\alpha,g}$.

Given  a sufficiently large $N \geq 1$, let $n \in \mathbb{N}$ be the unique integer such that $N \in [q_{n}, q_{n + 1})$.

Roughly, we will relate the behavior of
\begin{equation}\label{eq:beha}
\frac{1}{N} \sum_{p \leq N} e_{b,c}(T^{p}(x,y)) \log p  \ \text{ with } \ \frac{1}{\varphi(q_k)} \sum_{\substack{(m, q_k) = 1 \\ m < q_k}} e_{b,c}(T^{m}(x,y))
\end{equation}
for some $q_k$ with $k \leq n$, depending on $N$ and $g$, and such that $q_{k} \rightarrow \infty$ as $N \rightarrow \infty$.

When $q_k$ is prime, the condition $(m,q_k) = 1$ is redundant and the sum on the right-hand side converges to $0$
since $T$ is uniquely ergodic.
However, for $q_k$ highly composite, the sum over $m$ could be quite lacunary, and it is not obvious that
the unique ergodicity of $T$ is sufficient to ensure that the sum is $o(1)$. Instead, we show that if $q_k$ is replaced by $z_k q_k$ for some small $z_k$ (i.e. $z_k\leq \log^2 q_k$),
then the sum over $m$
can be indeed made to converge to zero. Therefore, in the actual proof we will be relating the sum over primes to a similar sum but with modulus
$z_k q_k$ instead of $q_k$.
For simplicity, we will at first ignore this issue in the outline below and
assume that $\alpha$ is chosen so that all the $q_n$ are prime.
At the end of the outline, we indicate the changes that are necessary to treat all $\alpha$. It is enough to show that for every $\eta>0$ and $N$ sufficiently large (in terms of $1/\eta$),
\begin{equation}\label{eq:aeta}
\sum_{p \leq N} e_{b,c}(T(x,y)) \log p \ll \eta^{1/2} N
\end{equation}
for every $(x,y) \in \mathbb{T}^2$ and every $b,c \in \mathbb{Z}$ with $c \neq 0$.

We establish two important types of approximation which we will repeatedly use:
\begin{itemize}
\item Given $n \geq 1$, let $n^{\ast} \leq n$ be the largest integer such that $q_{n^{\ast}} > e^{q_{n^{\ast} - 1} / 16}$. Then, for $m \leq q_n \min(q_{n + 1} / q_{n^{\ast}} , e^{q_n / 8})$,
  \begin{equation} \label{eq:firstapx}
  e_{b,c}(T^m(x,y)) \approx e_{b,c}(T^{m \mod {q_n}}(x,y)).
  \end{equation}

  In particular, if $e^{q_n / 8} > q_{n + 1}$ then the above holds for all $m \leq q_{n + 1}$.
\item Given any $\delta > 0$, for $m \leq q_{n + 1}^{1 - \delta}$ and $n$ sufficiently large with respect to $\delta$, we have
  \begin{align} \label{eq:secondapx}
  e_{b,c}(T^m(x,y)) & \approx e_{b,c}(T^{m \mod {q_n}}(x , y + P_{n}(x,m)) \\ \nonumber & = e_{b,c}(T^{m \mod{q_n}} (x,y)) e(c P_n(x,m)),
  \end{align}
  where $P_{n}(x,m)$ is a polynomial of degree $\leq \lfloor 1/\delta \rfloor$ and where the second equality follows
  simply from the definition of $T$.
  The polynomial $P_{n}(x,m)$ is given by
  $$
  P_n(x, m) = \sum_{1 \leq j \leq 1 / \delta} a_j(x) m^j
  $$
  with
  $|a_1(x)| \leq e^{-q_n / 16}$ and $|a_j(x)| \leq q_n^{-1} q_{n + 1}^{- j + 1}$
  for all $x$.
\end{itemize}

Note that it is sensible to use these in an iterative fashion. For instance, in some scenarios, we will apply the first approximation twice, and in others, we will first apply the first approximation, followed by the second.

An important parameter for understanding when to use \eqref{eq:firstapx} or \eqref{eq:secondapx} is given by $n^{\ast}$ : the largest integer $n^{\ast} \leq n$ such that $q_{n^{\ast}} \geq e^{q_{n^{\ast} - 1} / 16}$. We will typically localize $e_{b,c}(T^m(x,y))$ into a short interval $m \in [N, N + H]$ for various scales of $H$. In particular, using that
$$
e_{b,c}(T^m(x,y)) = e_{b,c}(T^{m - N}(T^N(x,y))),
$$
it is enough to understand the behavior of $T^{m - N}$, at the price of loosing control on $T^N(x,y)$. If $H \approx q_k$ with $k > n^{\ast}$ then we can appeal to \eqref{eq:firstapx} to show that instead of studying $T^{m - N}$ it's enough to understand $T^{m - N \mod{q_{k - 1}}}$ thus reducing the complexity of the problem. On the other hand, if $H \approx q_{k}^{1 - \eta}$ with $k \leq n^{\ast}$ then we have no choice but to use \eqref{eq:secondapx}. The upshot then is that $e_{b,c}(T^{m - N}(x,y))$ with $m$ varying in each such interval is approximately a polynomial phase of degree $\approx 1 / \eta$ together with a low complexity term (with small period).

We are now ready to discuss the proof of \eqref{eq:aeta}
The proof splits into three main cases, depending on whether $N \in [\exp(q_n^{1/2}), q_{n + 1})$, $N \in [q_n^{6/5 + \eta^2}, \exp(q_n^{1/2})]$
  or $N \in [q_n, q_n^{6/5 + \eta^2}]$.
  The cut-off $\exp(q_n^{1/2})$ is rather arbitrary, while $q_n^{6/5 + \eta^2}$ is significant.
  The case $N \in [q_n, q_n^{6/5 + \eta^2}]$ is further separated into the cases where $q_{n^{\ast}} > N^{2/3 - \eta / 5}$
  and $q_{n^{\ast}} < N^{2/3 - \eta / 5}$. %where $n^{\ast} \leq n$ is the largest integer such that $q_{n^{\ast}} > e^{q_{n^{\ast} - 1} / 16}$.
 % The convergent $q_{n^{\ast}}$ is roughly speaking the largest convergent on which the approximation \eqref{eq:firstapx} can be used
 % upon splitting the relevant sums into sufficiently short intervals of length $\leq q_{n^{\ast}}$.

%For this reason in the actual proof we sometimes multiply our moduli
%by such a small number. In the outline we will ignore this additional difficulty and assume that $\alpha$ is
%chosen so that all the $q_n$ are prime.

  \subsection{The case $N \in [\exp(q_{n}^{1/2}), q_{n + 1})$}\label{ca:one}

    This is the ``easy case'' and we deal with it by decomposing the interval $[0, N]$ into sub-intervals of length $H = \min(N, q_{n + 1}^{3/4})$ (there is no specific importance to the exponent $3/4$ and anything larger than $2/3$ and smaller than $1$ would have worked). As a result, it suffices to show that
    $$
    \sum_{N' \leq p \leq N' + H} e_{b,c}(T^p(x,y)) \log p \ll \eta^{1/2} H
    $$
    for any $N' \leq N$. We write $T^p(x,y) = T^{p - N'}(x', y')$ with $(x', y') = T^{N'}(x,y)$. Splitting $p - N'$ into arithmetic progressions $\pmod{q_n}$ and using \eqref{eq:secondapx}, we can approximate
    $T^{p - N'}(x', y')$ by $T^a(x', y')$ and $P(x', p - N')$ of degree $\leq 5$ in $p - N'$.  In particular, this reduces the problem to showing that
    \begin{equation} \label{eq:redx}
    \sum_{0 \leq a < q_n} e_{b,c}(T^{a}(x', y')) \sum_{\substack{N' \leq p \leq N' + H \\ p \equiv a + N' \pmod{q_n}}} e(c P(p - N')) \ll \eta^{1/2} H.
    \end{equation}
    To understand the short sums over $p \in [m, m + H]$, we can now either appeal to a recent result of Matom\"aki-Shao \cite{MS} or a slight strenghtening of there-off that we will need later (Theorem \ref{thm:nr4}).
    Using that the coefficients of $P(x',p - N')$ are small, we can show that the left-hand side of \eqref{eq:redx} is equal to
    $$
    \Big ( \frac{1}{q_n} \sum_{0 \leq a < q_n} e_{b,c}(T^{a}(x', y')) \Big ) \cdot \sum_{N' \leq k \leq N' + H} e(c P(k)) + O(\eta H)
    $$
    and the result now follows from trivially bounding the sum over $k$ and using the unique ergodicity of $T$ to conclude that the sum over $a$ is $o(1)$
    as $N \rightarrow \infty$.

\subsection{The case $N \in [q_{n}^{6/5 + \eta^2}, \exp(q_{n}^{1/2})]$} \label{ca:sec}

In this case, since $m \leq N \leq e^{q_n / 16}$, we can use \eqref{eq:firstapx} to reduce the problem to showing that
\begin{equation} \label{eq:mainx}
\sum_{p \leq N} e_{b,c}(T^{p_{q}}(x,y)) \log p \ll \eta^{1/2} N,
\end{equation}
where $q := q_n$ and $p_q := p \mod{q} \in [0, q - 1]$. Notice that $p_{q}$ is a simpler object than $p$ but not by a huge amount since $q$ can be as large as $N^{5/6 - c\eta^2}$ for some $c>0$, and trivially $p_{q} = p$ for $q > N$.

We will now apply either \eqref{eq:firstapx} or \eqref{eq:secondapx} to further approximate $e_{b,c}(T^{p_q}(x,y))$ by simpler expressions. We split into two sub-cases depending on the relative sizes of $q_n$ and $q_{n^{\ast}}$, where $n^{\ast}$ is defined as the largest integer $n^{\ast} \leq n$ such that $q_{n^{\ast}} > e^{q_{n^{\ast} - 1} / 16}$. Instead of working with $n^{\ast}$, we could alternatively iterate the approximation \eqref{eq:firstapx} several times until reaching a desirable denominator $q_n$. The use of $q_{n^{\ast}}$ allows to expedite this iteration.

\subsubsection{The case $q_{n^{\ast}} > q_n^{1 - \eta^2}$}
We decompose $p_{q}$ into short intervals of length $H' = q_n^{1/3}$ and split $p_{q}$ into residue classes to modulus $r := q_{n^{\ast} - 1}$. By the definition of $n^{\ast}$, the modulus $r$ is tiny compared to $q_{n^{\ast}} \leq q_n$ and thus compared to $N$. Therefore, splitting into residue classes $\pmod{r}$ does not increase the complexity of the problem. Thus, to establish \eqref{eq:mainx}, we will study the expression  
$$
\frac{1}{q H'} \sum_{z < q} \sum_{a \leq r} \sum_{\substack{p \leq N \\ p_{q} \in [z, z + H'] \\ p_{q} \equiv a \pmod{r}}} e_{b,c}(T^{p_{q} - z}(T^z(x,y))) \log p + O(H').
$$
In the above formula, $0 \leq p_{q} - y \leq q_{n}^{1/3} \leq q_{n^{\ast}}^{3/8}$ for $\eta$ sufficiently small. Therefore, by \eqref{eq:secondapx}, we can approximate the above by
$$
\frac{1}{q H'} \sum_{z < q} \sum_{a \leq r} e_{b,c}(T^{a}(T^z(x,y))) \sum_{\substack{p \leq N \\ p_{q} \in [z, z + H'] \\ p_{q} \equiv a \pmod{r}}} e(c p_q \beta_z) \log p
$$
with $|\beta_z| \leq e^{-q_{n^{\ast} - 1}}$ and $e^{-q_{n^{\ast} - 1}} \leq \eta^4$ provided that $N$ is taken sufficiently large with $\eta$ since $n^{\ast} \rightarrow \infty$ with $n \rightarrow \infty$. Notice that we can exclude $a = 0$ from the summation at the price of an error $\ll N / r$ which is acceptable (recall that we assume for simplicity that all $q_i$ are prime).

We now bound the above as
\begin{align*}
\frac{1}{q H'} \sum_{\substack{z < q \\ 0 < a < r}} \sup_{|\beta| \leq \eta^4} \Big | & \sum_{\substack{p \leq N \\ p_{q} \in [z, z + H'] \\ p_{q} \equiv a \pmod{r}}} e(c p_{q} \beta) \log p - \frac{N}{\varphi(q) \varphi(r)} \sum_{\substack{(v,q) = 1 \\ v \in [z, z + H'] \\ v \equiv a \pmod{r}}} e(c v \beta) \Big | \\ & + \frac{N}{q H'} \sum_{z < q} \frac{1}{\varphi(q) \varphi(r)} \sum_{0 < a < r} e_{b,c}(T^{a + z}(x,y))\sum_{\substack{(v,q) = 1 \\ v \in [z, z + H']}} e(c v \beta_z).
\end{align*}
The second sum is $\ll \eta^{1/2} N$ for all $N$ sufficiently large with respect to $\eta$, by unique ergodicity applied to the sum over $a$. The first sum is also $\ll \eta^{1/2} N$ but this requires a non-trivial arithmetic input. To avoid repetition with a later more involved sub-case we skip the discussion of this number theoretic input. Note that it is important for the argument to work to have the upper bound $|\beta| \leq \eta^4$, since the number theoretic bound cannot hold if for instance $\beta = 1/2$.

\subsubsection{The case $q_{n^{\ast}} \leq q_{n}^{1 - \eta^2}$}  \label{se:similarcase}

In this case, clearly $n^{\ast} \neq n$, therefore, we have $q_{n} \leq e^{q_{n - 1} / 16}$ which means that the approximation \eqref{eq:firstapx} is applicable and we can approximate $e_{b,c}(T^{p_q}(x,y))$ by $e_{b,c}(T^{p_{q} \pmod{r}}(x,y))$, where $r := q_{n - 1}$. In particular, splitting the $p_{q}$ in the sum \eqref{eq:mainx} into progressions $\mod{r}$, it suffices to show that
$$
\sum_{a\leq r}e_{b,c}(T^a(x,y))\Big(\sum_{\substack{p\leq N\\ p_q\equiv a \mod r}}\log p\Big) \ll \eta^{1/2} N.
$$
We now bound this as
$$
\sum_{a \leq r} \Big | \sum_{\substack{p \leq N \\ p_{q} \equiv a \pmod{r}}} \log p - \frac{N}{r} \Big | + \frac{N}{r} \sum_{a \leq r} e_{b,c}(T^{a}(x,y)) .
$$
Unique ergodicity shows that the second sum is $\ll \eta^{1/2} N$ for all sufficiently large $N$, and therefore, it remains to show that the first sum is negligible. Let us now describe the number theoretic tools that go into this. In other words, it will suffice to show that, for any given $\varepsilon > 0$,
\begin{equation} \label{eq:toproveoutline}
\sum_{a \leq r} \Big | \sum_{\substack{p \leq N \\ p_{q} \equiv a \pmod{r}}} \log p - \frac{N}{r} \Big | = o(N)
\end{equation}
uniformly in $r < q^{1 - \epsilon}$ and $q<N^{5/6-\epsilon}$.

To illustrate the core difficulties let us assume that $r \approx N^{5/6 - 2\epsilon}$ which is the hardest case. If $p_{q}$ were replaced by $p$ then this would be a $q$-analogue of Huxley's theorem on prime numbers in almost all short intervals. The latter is completely out of reach since it would require a zero free region for $L(s, \chi)$ better than what is currently known. However, we are helped by the fact that we have to prove this result for $p_{q}$ instead of $p$. Indeed, opening \eqref{eq:toproveoutline} into Dirichlet characters, the problem reduces to bounding
$$
\sum_{a < r} \Big | \Big ( \frac{1}{\varphi(q)} \sum_{\chi \neq \chi_{0} \pmod{q}} \sum_{p \leq N} \chi(p) \log p \Big ) \cdot \Big ( \sum_{\substack{v < q \\ v \equiv a \pmod{r}}} \chi(v) \Big ) \Big |.
$$
It is important that we do not use the triangle inequality on the sum over $\chi$ at this stage.
We separate this expression into two types of characters: the few bad characters $\chi$ for which there is no cancellation in the sum over $p$ and the good characters $\chi$ in which we have a non-trivial amount of cancellations in the sum over $p$. We bound the contribution of the bad characters by
$$
\frac{1}{\varphi(q)} \# \{ \chi \text{ bad}\} \cdot N \cdot \sup_{\chi \neq \chi_{0}} \sum_{a < r} \Big | \sum_{\substack{v < q \\ v \equiv a \pmod{r}}} \chi(v) \Big |.
$$
Applying the Cauchy-Schwarz inequality, orthogonality of additive characters and the completion method gives
$$
\sum_{a < r} \Big | \sum_{\substack{v < q \\ v \equiv a \pmod{r}}} \chi(a) \Big | \ll \sqrt{r q} \cdot d(q) \log q
$$
and so we end up with a final bound
$$
\sqrt{\frac{r}{q}} \cdot \log q \cdot \# \{ \chi \text{ bad}\} \cdot N
$$
which is acceptable as long as $r \leq q^{1 - \varepsilon}$ because $q \approx N^{5/6 - \varepsilon}$ and there are few bad characters (fewer than $\ll (\log N)^{A}$ for some large $A$).

It remains to deal with the contribution of the good characters, that is,
$$
\sum_{a < r} \Big | \frac{1}{\varphi(q)} \sum_{\substack{\chi \neq \chi_{0} \\ \text{good}}} \Big ( \sum_{\substack{p \leq N}} \chi(p) \log p \Big ) \cdot \Big ( \sum_{\substack{v < q \\ v \equiv a \mod{r}}} \chi(v) \Big ) \Big |.
$$
We find phases $\theta_{a} \in \mathbb{R}$ for which the above expression can be re-written as
$$
\sum_{a < r} e^{i \theta_{a}} \cdot \frac{1}{\varphi(q)} \sum_{\substack{\chi \neq \chi_{0} \\ \text{good}}} \Big ( \sum_{\substack{p \leq N}} \chi(p) \log p \Big ) \cdot \Big ( \sum_{\substack{v < q \\ v \equiv a \mod{r}}} \chi(v) \Big ).
$$
In particular, we can re-write this as
$$
\frac{1}{\varphi(q)} \sum_{\substack{\chi \neq \chi_{0} \\ \text{good}}} \Big ( \sum_{p \leq N} \chi(p) \log p \Big ) \cdot \Big ( \sum_{v < q} \chi(a) c(a) \Big )
$$
with $c(a) = e^{i \theta_{a \mod {r}}}$, where $a \mod {r} \in [0, r - 1]$. Whenever we will use such a trick, we will say that we ``used duality''.
We then apply the Cauchy-Schwarz inequality (and the large sieve on the sum over $v$) and the problem reduces to showing that
$$
\frac{1}{\varphi(q)} \sum_{\substack{\substack{\chi \neq \chi_{0} \\ \chi \text{ good}}}} \Big | \sum_{p \leq N} \chi(p) \log p \Big |^2 \ll_{A} \frac{N^2}{(\log N)^{A}}
$$
for some sufficiently large $A > 0$. This is however now an analogue of Huxley's result with the assumption that $L(s, \chi)$ has a good zero free region (because we restrict only to ``good'' characters $\chi$ which is equivalent to assuming that the corresponding $L$-function $L(s, \chi)$ has an enhanced zero-free region). At this stage, we use the same ideas that go into the proof of Huxley's estimate, in particular, his bounds for the frequency of large values of Dirichlet polynomials. We note that there are no known techniques to us that would allow us to handle asymptotically the case $r > N^{5/6 + \varepsilon}$ with $\varepsilon > 0$ fixed and therefore, this is really the best range that we can obtain given the current techniques (short of assuming some unproven hypothesis such as, for example, the generalized Lindel\"of hypothesis).

\subsection{The case $N \in [q_n, q_n^{6/5 + \eta^2}]$}

This is the most delicate case which is further split according to whether $q_{n^{\ast}} > N^{2/3 - \eta / 5}$ or
$q_{n^{\ast}} \leq N^{2/3 - \eta / 5}$. There is an interesting numerological interaction between these two sub-cases: we find that in order to be able to handle both, one either needs to lower the exponent $\tfrac {2}{3} + \varepsilon$ in the result of Matom\"aki-Shao or lower the exponent $\tfrac 16 + \varepsilon$ in our variants of Huxley's theorem. In both cases, the bottleneck are type-III sums which emerge as one crosses this threshold in either problem. We manage to circumvent the problem of fully dealing with these type-III sums since it is sufficient for us to cross the threshold $\tfrac 23$ (or $\tfrac 16$) by an $\eta > 0$ which tends to zero as $N$ tends to infinity (at the price of error term that only save $O(\eta)$). In particular, we appeal to ideas of Heath-Brown \cite{HeathBrown} and bound the contribution of type-III sums using a sieve estimate which is sufficient since $\eta$ eventually tends to zero with $N$ (albeit very slowly). We chose to cross this threshold in the Matom\"aki-Shao theorem since this is more likely to be useful in the number theoretic literature.

\subsection{The case $N \in [q_n, q_n^{6/5 + \eta^2}]$ and $q_{n^{\ast}} > N^{2/3 - \eta / 5}$}

We cover $[0, N]$ with disjoint intervals of length $H=q_{n^\ast}^{1-\eta}$. Thus it's enough to show that
$$
\sum_{p \in [N', N' + H]} e_{b,c}(T^p(x,y)) \log p \ll \eta^{1/2} H
$$
for all $N' \leq N$. The hardest case occurs when $N' \asymp N$, and we assume this for simplicity.
Proceeding as in the first case, we decompose  $N \in [\exp(q_n^{1/2}), q_{n + 1})$ into residue classes $\pmod r$ with $r := q_{n^{\ast} - 1}$ and
  use the approximation \eqref{eq:secondapx} applied to $T^{p - N'}$. As a result, it suffices to bound
  \begin{equation} \label{eq:thirdeqx}
  \frac{1}{r} \sum_{a < r} e_{b,c}(T^{a}(T^{N'}(x,y)))  \sum_{\substack{p \in [N', N' + H] \\ p \equiv a + N' \pmod{r}}} e(c P_{n^{\ast}}(p))
  \end{equation}
  with $P_{n^{\ast}}(p)$ a polynomial of degree $\leq 1/\eta + 1$ and with small coefficients (as described in \eqref{eq:secondapx}).
  At this point, the only difference with
  the previous case \ref{ca:one} is that the length of the interval, $H$, is only guaranteed to be $> N^{2/3 - \eta}$ since $H = q_{n^{\ast}}^{1 - \eta} \geq N^{(1 - \eta) \cdot (2 / 3 - \eta / 5)} > N^{2/3 - \eta}$. If $H$ were $> N^{2/3 + \eta}$, we could appeal to results of Matom\"aki-Shao to conclude immediately. In fact, from the number-theoretic point of view there is a significant difference between intervals of length $N^{2/3 + \eta}$ and $N^{2/3 - \eta}$. The latter requires one to handle the contribution of so-called ``type-III'' sums, a special case of which is
$$
\sum_{\substack{N \leq a b c \leq N + H \\ N^{1/3} \leq a,b,c \leq 2 N^{1/3}}} e ( P_{n^{\star}}(a b c)), \ H = N^{2/3 - \eta}.
$$
Ideally, one would hope to show that these sums are $\ll_{A} H (\log N)^{-A}$ for any $A > 0$. This is possible for example for polynomials of degree $1$ (see \cite{Tang}), but for general polynomials of degree $\asymp \eta^{-1}$ we do not know how to obtain such a saving.
Instead, we appeal to an idea of Heath-Brown \cite{HeathBrown} and use Linnik's identity to bound the contribution of the type-III sums using an upper bound sieve. While this gives rise to a much weaker error term of size $O(\eta \log \frac{1}{\eta}\cdot H / \varphi(r))$, it allows the degree of the polynomial to be of size $1/\eta$ as long as $N$ is sufficiently large with respect to $1/\eta$. This strengthening of the result of M\"atomaki-Shao then allows us to handle \eqref{eq:thirdeqx} just as in the case \ref{ca:one} and we conclude.

  \subsection{The case $N \in [q_n, q_n^{6/5 + \eta^2}]$ and $q_{n^{\ast}} \leq N^{2/3 - \eta / 5}$}

Let $n' < n$ be the largest integer such that $q_{n'} \leq N^{5/6 - 2 \eta^2}$.
Set $q := q_{n'}$ and $H = q N^{1/6 + \eta^2}$. Then, since $q_{n^{\ast}} < N^{2/3 - \eta/ 5}$
and $q_{n' + 1} > N^{5/6 - 2\eta^2}$, we have
$$
q N^{1/6 + \eta^2} \leq q_{n'} \frac{N^{5/6 - 2 \eta^2}}{N^{2/3 - \eta / 5}} \leq q_{n'} \frac{q_{n' + 1}}{q_{n^{\ast}}}
$$
  and, moreover, we have $q_{n^{\ast}} < N^{2/3} \leq N^{5/6 - 2 \eta^2} \leq q_{n' + 1}$
  and therefore $n^{\ast} \leq n'$ so that $N^{5/6 - 2 \eta^2} \leq q_{n' + 1} \leq e^{q_{n'} / 16}$
  and in particular $q N^{1/6 + \eta^2} \leq q e^{q / 16}$.
  Therefore, on intervals of length $H$ we can use the approximation \eqref{eq:firstapx} and write
  \begin{equation} \label{eq:mainxx}
  \sum_{p \in [N', N' + H]} e_{b,c}(T^{p}(x,y)) \log p \approx \sum_{p \in [N', N' + H]} e_{b,c}(T^{p_{q} - N'_{q}}(x', y')) \log p
  \end{equation}
  with $(x', y') = T^{N'}(x,y)$.

\subsubsection{The case $q_{n^{\ast}} < q_{n'}^{1 - \eta^2}$.}

In this case, $n^{\ast} < n'$ so that and we set $r := q_{n' - 1}$. Applying the approximation \eqref{eq:firstapx}
to the right-hand side of \eqref{eq:mainxx}, we can further reduce $p_{q} - N'_{q}$ modulo $r$.
As a result, it's enough to understand on average the behavior of
$$
\sum_{a < r} e_{b,c}(T^{a}(x', y')) \sum_{\substack{p \in [N' , N' + H] \\ p_{q} \equiv a + N'_{q} \pmod{r}}} \log p.
$$
In particular, on each interval $[N', N' + H]$, we have
\begin{equation} \label{eq:maxin}
\sum_{\substack{p \in [N', N' + H] \\ p_{q} \equiv a \pmod{r}}} \log p \sim \frac{H}{r}
\end{equation}
and we conclude by the unique ergodicity of $T$ that for all sufficiently large $N$,
$$
\sum_{p \in [N', N' + H]} e_{b,c}(T^{p}(x,y)) \log p \ll \eta^{1/2} H.
$$
Therefore, it is enough to show that the majority of intervals of length $H$ have the property \eqref{eq:maxin}.
This in turn follows (by Chebyschev's inequality) once we can show that for any given $\varepsilon > 0$ and $H \leq N$,
$$
\sum_{x < N} \sum_{a < r} \Big | \sum_{\substack{p \in [x, x + H] \\ p_{q} \equiv a \pmod{r}}} \log p - \frac{H}{r} \Big | = o(H N)
$$
uniformly in $1 \leq r < q^{1 - \varepsilon}$ and $H / q > N^{1/6 + \varepsilon}$ as $N \rightarrow \infty$.
The proof of this estimate is a slightly more general variant of the estimate according to which
$$
\sum_{a < r} \Big | \sum_{\substack{p  \leq N \\ p_{q} \equiv a \pmod{r}}} \log p - \frac{N}{r} \Big | = o(N)
$$
uniformly $r \leq q^{1 - \varepsilon}$ and $q \leq N^{5/6 - \varepsilon}$ as $N \rightarrow \infty$. Since we discussed the proof
of this earlier, we omit the discussion of the proof of the variant as it is similar.

\subsubsection{The case $q_{n^{\ast}} \geq q_{n'}^{1 - \eta^2}$.}

In this case, we rewrite right hand side of \eqref{eq:mainxx} by splitting $p_{q}$ into
short intervals of length $H' := q^{1/3}$. Note that $q^{1/3} \leq q_{n^{\ast}}^{1/2 - \eta^2}$
since $q \leq N$ and $q_{n^{\ast}} > N^{2/3}$ for $\eta$ sufficiently small. As a result, on each such sum we can
apply the approximation \eqref{eq:secondapx} getting that with $r := q_{n^{\ast} - 1} \leq (\log N)^3$,
$$
\sum_{\substack{p \in [N', N' + H] \\ p_{q} \in [z, z + H']}} e_{b,c}(T^{p_{q} - N'_{q}}(x', y'))
\approx \sum_{a < r} e_{b,c}(T^{a}(x', y')) \sum_{\substack{p \in [N', N' + H] \\ p_{q} \in [z, z + H'] \\ p_{q} \equiv a + N'_{q} \pmod{r}}} e(c (p_{q} - N'_{q}) \beta_{N', z, a})
$$
for some $\beta := \beta_{N', z, a}$ depending on $N', z$ and $a$ and such that $|\beta| \leq \eta^4$ for all sufficiently
large $N$. Once we can show that the sum over $p$ is for most $z,a$ and $N'$ independent of $a$, we can conclude using unique ergodicity on the sum over $a$. Thus, it suffices to show that for the majority of $N', z$ and $v$, we have
$$
\sum_{\substack{p \in [N', N' + H] \\ p_{q} \in [z, z + H'] \\ p_{q} \equiv v + N'_{q} \pmod{r}}} e(p_{q} \beta_{N', z, v}) = \frac{H}{\varphi(q)} \sum_{\substack{(a,q) = 1 \\ a \equiv v \pmod{r} \\ a \in [z, z + H']}} e(a \beta_{N', z, v}) + O(\eta^{1/2} H).
$$
In order to establish this it suffices to show that
$$
\sum_{y < x} \sum_{z < q} \sup_{\substack{\beta \in \mathbb{R} \\ 0 \leq v < r}} \Big | \sum_{\substack{p \in [y, y + H] \\ p_{q} \in [z, z + H'] \\ p_{q} \equiv v \pmod{r}}} e(p_{q} \beta) - \frac{H}{\varphi(q)} \sum_{\substack{(a,q) = 1 \\ a \equiv v \pmod{r} \\ a \in [z, z + H']}} e(a \beta) \Big | = o \Big ( \frac{x H H'}{r}  \Big )
$$
as $N \rightarrow \infty$.
Let us now describe some of the ideas that go into this.
We express the condition $p \in [y, y + H]$ using a contour integral and capture the behavior of the $p_{q}$ using Dirichlet characters. In this way, the problem reduces to obtaining bounds for
$$
\sum_{y < x} \sum_{z < q} \sup_{\substack{\beta \in \mathbb{R}\\ 0 \leq v < r}} \Big | \frac{1}{\varphi(q)} \sum_{\chi \neq \chi_{0}\pmod{q}} \Big ( \frac{H}{\sqrt{N}} \int_{|t| \leq N / H} P(\tfrac 12 + it, \chi) y^{it} dt \Big ) \cdot \Big ( \sum_{\substack{(a,q) = 1 \\ a \equiv v \pmod{r} \\ a \in [z, z + H']}} \chi(a) e(a \beta) \Big ) \Big |.
$$
We notice that expressing the condition $ a\equiv v \pmod{r}$ in terms of additive characters and using the triangle inequality, we can remove the condition $a \equiv v \pmod{r}$ and simply take the supremum over $\beta$ instead of a supremum over $\beta$ and $0 \leq v < r$. Furthermore, the $\beta$ now depends only on $y$ and $z$ and thus we can re-write the above as
$$
\sum_{y < x} \sum_{z < q} \Big | \frac{1}{\varphi(q)} \sum_{\chi \neq \chi_{0}\pmod{q}} \Big ( \frac{H}{\sqrt{N}} \int_{|t| \leq N / H} P(\tfrac 12 + it, \chi) y^{it} dt \Big ) \cdot \Big ( \sum_{\substack{(a,q) = 1 \\ a \in [z, z + H']}} \chi(a) e(a \beta_{y,z}) \Big ) \Big |
$$
for some $\beta_{y,z}$ depending on $y$ and $z$. Finally, using duality, we can express the above as
$$
\sum_{y < x} \sum_{z < q} e^{i \theta_{y,z}} \cdot \frac{1}{\varphi(q)} \sum_{\chi \neq \chi_{0} \pmod{q}} \Big ( \frac{H}{\sqrt{N}} \int_{|t| \leq N / H} P(\tfrac 12 + it, \chi) y^{it} dt \Big ) \cdot \Big ( \sum_{\substack{(a,q) = 1 \\ a \in [z, z + H']}} \chi(a) e(a \beta_{y,z}) \Big )
$$
for some $\theta_{y,z} \in \mathbb{R}$ and
where
$$
P(\tfrac 12 + it, \chi) = \sum_{p \leq N} \frac{\chi(p) \log p}{p^{1/2 + it}}.
$$
We can now proceed in the same way as before separating the tuples $(t, \chi)$ into those which are bad, that is, $P(\tfrac 12 + it, \chi)$ exhibits no cancellations and those which are good, that is, $P(\tfrac 12 + it, \chi)$ is non-trivially small. There are few bad tuples $(t,\chi)$ and in order to control their contribution one needs a non-trivial bound for
$$
\sum_{z < q} \Big | \sum_{\substack{(a,q) = 1 \\ a \in [z, z + H']}} \chi(a) e(a \beta_{y,z}) \Big |.
$$
In order to achieve this, one can use Weyl differencing to eliminate $e(a \beta_{y,z})$ at the cost of now having to estimate a character sum of $\chi(a) \overline{\chi}(a + h)$ on average over $a \in [z, z + H']$. However, this can be accomplished by using the Weyl bound for character sums involving $\chi(a) \overline{\chi}(a + h) \chi(a') \overline{\chi}(a' + h)$.
It remains to show that the contribution of the ``good'' $(t,\chi)$ is acceptable. Here, we use duality to re-write the sum as
$$
\frac{1}{\varphi(q)} \sum_{\substack{\chi \neq \chi_{0} \pmod{q} \\ y < x}} \Big ( \frac{H}{\sqrt{N}} \int_{\substack{|t| \leq N / H \\ (t,\chi) \text{ good}}} P(\tfrac 12 + it, \chi) y^{it} dt \Big ) \cdot \Big ( \sum_{a < q} \chi(a) c(a, y) \Big )
$$
with
$$
c(a,y) := \sum_{\substack{1 \leq z \leq q \\ z \in [a - H', a]}} e^{i \theta_{y,z}} e(a \beta_{y,z}).
$$
We now apply the Cauchy-Schwarz inequality in $(y, \chi)$. Then,
$$
\sum_{y < x} \frac{1}{\varphi(q)} \sum_{\chi} \Big | \sum_{a < q} \chi(a) c(a,y) \Big |^2
$$
is evaluated using the large sieve and the trivial bound $|c(a, y)| \leq H'$. On the other hand, we evaluate
$$
\sum_{y < x} \frac{1}{\varphi(q)} \sum_{\chi \neq \chi_{0}} \Big | \int_{\substack{|t| \leq N / H \\ (t, \chi) \text{ good}}} P(\tfrac 12 + it, \chi) y^{it} dt \Big |^2
$$
by using using orthogonality in $y^{it}$. This reduces the problem to showing that
$$
\sum_{\chi \neq \chi_{0}} \int_{\substack{|t| \leq N / H \\ (t, \chi) \text{ good}}} |P(\tfrac 12 + it, \chi)|^2 dt \ll_{A} \frac{N}{(\log N)^{A}}
$$
for some large $A > 0$, and this can be seen as equivalent to obtaining a hybrid version of Huxley's theorem (in short arithmetic progression and large moduli). Once again the fact that we restrict to $(t, \chi)$ which are good is crucial since it allows us to act as if we had an enlarged zero-free region for $L(s, \chi)$.

\subsection{Extending to the case of general $\alpha$}
Our strategy is to relate sums over primes to sums over reduced residues modulo $q_k$, as in  \eqref{eq:beha}. When the modulus $q_k$ of the reduced residues has few prime factors the sum over reduced residues is easy to estimate using the unique ergodicity of $T$. However, this fails if $q_k$ is ``very'' composite. In fact, we don't know how to deal with the sums on the right-hand side of \eqref{eq:beha}. Instead, for a given $q_k$, we show that there exists a prime number $p_k\in [\log^2q_k, 2\log^2q_k]$\footnote{To be more precise, any prime number $p_k \in [\log^2 q_k, 2 \log^2 q_{k}]$ with $(p_k, q_{k - 1}) = 1$ will work. There always exists at least one such prime number.} such that
\begin{equation} \label{eq:123asd}
\lim_{k\to+\infty} \min_{z_k\in \{q_k,p_kq_k\}} \sup_{(x,y)\in \T^2}\frac{1}{\varphi(z_k)}\Big|\sum_{\substack{i\leq z_k\\(i,z_k)=1}}e_{b,c}(T^i(x,y))\Big|=0.
\end{equation}
Given $k$, let $z_k$ be the integer in $\{q_k, p_k q_k\}$ that minimizes the sum in \eqref{eq:123asd}. We modify our earlier argument so as to relate at every turn the sum over primes in \eqref{eq:beha} to the sum over reduced residues $\pmod{z_k}$ instead of reduced residues $\pmod{q_k}$. This is possible because the approximations \eqref{eq:firstapx} and \eqref{eq:secondapx} remain valid if we replace the modulus $q_k$ by $w q_k$ (for all $w \leq \log^3 q_{k}$ simultaneously).

In fact, we establish a stronger version of \eqref{eq:123asd} showing that the convergence to zero holds uniformly over all divisors  of $z_k$:
\begin{equation}\label{eq:3tf}
\lim_{k\to+\infty}  \min_{z_k\in \{q_k,p_kq_k\}}\max_{d|z_k} \sup_{(x,y)\in \T^2} \ \frac{d}{\varphi(d)} \cdot \frac{1}{z_k} \Big|\sum_{\substack{i\leq z_k\\(i,d)=1}}e_{b,c}(T^i(x,y))\Big|=0.
\end{equation}
The proof and the choice of $z_k$ splits into several cases based on  the relations between $q_{k-1}, q_k$ and $q_{k^\ast}$.

\subsection{The case $k^\ast=k$.} In this case, we take $z_k=q_k$.  We split the interval $[0,q_k]$ into intervals $I$ of length $q_k^{1/2-\epsilon}$.
Then, by \eqref{eq:secondapx}, for every $n \in I$ with $n \equiv a \pmod{q_{k - 1}}$, we have $e_{b,c}(T^{n}(x,y)) \approx e_{b,c}(T^{a}(x_{I},y_{I})) e(c \beta_I (n - z_I))$ with $x_{I}, y_{I}$, $|\beta_I| \leq e^{-q_{k - 1} / 16}$ and $z_I$ depending on the interval $I$. As a result, for every $d | q_{k}$, we have
\begin{equation} \label{eq:asdasdasd}
\sum_{\substack{m \in I \\ (m,d) = 1}} e_{b,c}(T^{m}(x,y)) \approx e(-c z_{I}) \sum_{a \leq q_{k - 1}} e_{b,c}(T^{a}(x_I, y_I)) \sum_{\substack{m \in I \\ (m,d) = 1 \\ m \equiv a \pmod{q_{k - 1}}}} e(c \beta_I m).
\end{equation}
Since $|I| > q_k^{1/5}$, $|\beta_I| \leq e^{-q_{k - 1} / 16}$, $(q_{k - 1}, q_k) = 1$ (because consecutive convergents are co-prime) and $q_{k -1} \leq \log^{100} q_k$, we can show using some simple sieves that
$$
\sup_{|\beta| \leq e^{-q_{k - 1} / 16}} \Big | \sum_{\substack{m \in I \\ (m, d) = 1 \\ m \equiv a \pmod{q_{k - 1}}}} e(m \beta) - \frac{1}{\varphi(q_{k - 1})} \sum_{\substack{m \in I \\ (m, d q_{k - 1}) = 1}} e(m \beta) \Big | \ll \frac{\varphi(d)}{d} \cdot \frac{|I|}{e^{q_{k - 1} / 1000}} .
$$
It is crucial for the validity of this estimate that the supremum over $\beta$ is restricted to small $\beta$. As a result of this estimate, we can re-write the sum on the left-hand side of \eqref{eq:asdasdasd} as
$$
\frac{1}{\varphi(q_{k - 1})} \sum_{\substack{m \in I \\ (m, d q_{k - 1}) = 1}} e(c m \beta_I) \sum_{\substack{a \leq q_{k - 1}}} e_{b,c}(T^a(x_{I}, y_{I}) + O \Big ( \frac{1}{q_{k - 1}} \cdot \frac{\varphi(d)}{d} |I| \Big ).
  $$
  The claim now follows from the unique ergodicity of $T$, because this shows that the sum over $a$ exhibits cancellations.

\subsection{The case $k^\ast<k$.}  Let $a(m)$ denote the $m$th Fourier coefficients of $g$.
Further, let $w(k)=\log \log k$. Let $p_k$ be a prime number in $[\log^2 q_k,2\log^2q_k]$ co-prime to $q_{k - 1}$. We now define $z_k$ as follows:
\begin{enumerate}
\item[Z1.] $z_k:=q_k$ if $q_{k-1}\leq \frac{q_k}{w(k)\log\log q_k}$;
\item[Z2.] $z_k:=p_kq_k$ if $q_{k-1}\geq \frac{q_k}{w(k)\log\log q_k}$ and $q_{k^\ast}\leq \frac{q_{k-1}}{16\log^2 q_k}$;
\item[Z3.] $z_k:=p_kq_k$ if $q_{k-1}\!\geq\! \frac{q_k}{w(k)\log\log q_k}$, $q_{k^\ast}\!\geq\! \frac{q_{k-1}}{16\log^2 q_k}$ and
$
\max_{\substack{|m|\in [q_{k^\ast-1},\log q_{k^\ast}]\\q_{k^\ast-1}|m}}|a(m)|\leq \frac{1}{\log^4 q_k};
$
\item[Z4.] $z_k:=q_k$  if $q_{k-1}\geq \frac{q_k}{w(k)\log\log q_k}$, $q_{k^\ast}\geq \frac{q_{k-1}}{16\log^2 q_k}$ and
\begin{equation}\label{eq:sdk}
\max_{\substack{|m|\in [q_{k^\ast-1},\log q_{k^\ast}]\\q_{k^\ast-1}|m}}|a(m)|\geq \frac{1}{\log^4 q_k}.
\end{equation}
\end{enumerate}
The treatment of cases Z1, Z2 and Z3 is analogous, whereas Z4 uses different methods.
\subsection{Cases Z1, Z2 and Z3} As usual, let $d | z_{k}$. We split the sum
$$
\sum_{\substack{i\leq z_k\\(i,d)=1}}e_{b,c}(T^i(x,y))
$$
into residue classes $\mod q_{k-1}$. In all the cases Z1, Z2 and Z3 it follows that if $\ell\leq z_k$, $\ell\equiv a \mod q_{k-1}$, then $e_{b,c}(T^\ell(x,y)) \approx e_{b,c}(T^a(x,y))$. This implication is not immediate, particularly in the case Z3, but for simplicity we skip the details. We get that the above sum is approximated by
\begin{equation} \label{eq:eqw}
\sum_{a\leq q_{k-1}}e_{b,c}(T^a(x,y)) \Big ( \sum_{\substack{i\leq z_k\\(i,d)=1\\i\equiv a\mod q_{k-1}}}1 \Big )
\end{equation}
Moreover, by the definition of the $z_k$, for every $d|z_k$,
 it follows that in all the cases
\begin{equation} \label{eq:toinf}
\frac{\varphi(d)z_k}{q_{k-1}d}\to+\infty.
\end{equation}
Indeed, in the case Z1 this follows from $q_{k-1}\leq q_k / (w(k) \log\log q_k)$ and in the cases Z2 and Z3, we use that $p_k\geq \log^2q_k$ and $q_k\geq q_{k-1}$ to ensure that \eqref{eq:toinf} holds. We then show, using sieve-methods and by establishing a $q$-analogue of a result of Friedlander \cite[Section 6.10]{Opera}, that if \eqref{eq:toinf} holds, then
$$
\sum_{a\leq q_{k-1}}\Big|\sum_{\substack{i\leq z_k\\(i,d)=1\\i\equiv a\mod q_{k-1}}}1-\frac{\varphi(d)z_k}{q_{k-1}d}\Big|=o \Big ( \frac{\varphi(d)z_k}{d} \Big ).
$$
Therefore, \eqref{eq:eqw} is equal to
$$
\frac{\varphi(d)z_k}{q_{k-1}d}\sum_{a\leq q_{k-1}}e_{b,c}(T^a(x,y)) + o \Big ( \frac{\varphi(d)z_k}{d} \Big )
$$
and the claim follows from unique ergodicity applied to the sum over $a$.
%the last equality by unique ergodicity.

\subsection{The Case Z4} It follows from \eqref{eq:sdk} that we have $e^{-q_{k^\ast-1}}\geq e^{-m} \geq |a(m)|\geq \log^{-4}q_k$ for $m$ divisible by $q_{k^{\ast} - 1}$ and belonging to $[q_{k^{\ast} - 1}, \log q_{k^{\ast}}]$. In particular,
$$
q_{k^{\ast}-1}\leq [\log\log q_k]^2.
$$
We will show that for $H=q_{k^{\ast}}^{1/2-\epsilon}\geq q_k^{1/2-2\epsilon}$ (the inequality follows from the assumptions of this case), we have
$$
\sum_{u<q_k}\Big|\sum_{\substack{\ell\in[u,u+H]\\ (\ell,d)=1}}e_{b,c}(T^{\ell}(x,y))\Big|=o \Big ( \frac{q_k H \varphi(d)}{d} \Big ).
$$
This will then imply that \eqref{eq:3tf} holds (by splitting into  disjoint intervals of length $H$ and summing over them). If $\ell \in[u,u+H]$, $\ell \equiv a+u \mod q_{k^\ast-1}$, then for $(x_u,y_u)=T^u(x,y)$,
$$
e_{b,c}(T^\ell(x,y)) \approx e_{b,c}(T^a(x_u,y_u)) e(c (\ell-u)\beta_u)),
$$
where $\beta_u=g_{k^\ast-1}(x+u\alpha)$ and $g_{k}(x)=\sum_{\substack{m\in [q_k,\log q_{k+1}]\\q_k|m}}a(m) e(m x)$.
Thus,
$$
\sum_{\substack{\ell\in[u,u+H]\\ (\ell,d)=1}}e_{b,c}(T^\ell(x,y))
$$
is approximately
$$
\sum_{a\leq q_{k^\ast-1}}e_{b,c}(T^a(x_u,y_u))\sum_{\substack{\ell\in[u,u+H]\\ (\ell,d)=1\\ \ell\equiv a\mod q_{k^\ast-1}}}e_{c}((\ell-u)\beta_u).
$$
If $\beta_u\geq q_k^{-\epsilon}$, then since $H>q_k^{100\epsilon}$, using some simple sieve estimates, we can show that
$$
\sum_{\substack{\ell\in[u,u+H]\\ (\ell,d)=1\\ \ell\equiv a\mod q_{k^\ast-1}}}e_{c}((\ell-u)\beta_u)=O \Big ( \frac{\varphi(d)H}{dq_{k^\ast-1}^2} \Big ) .
$$
Summing over $a\leq q_{k^\ast-1}$ gives then
$$
\sum_{\substack{\ell\in[u,u+H]\\ (\ell,d)=1}}e_{b,c}(T^\ell(x,y))=O \Big ( \frac{\varphi(d)H}{q_{k^{\ast} - 1} d} \Big),
$$
which is enough since $q_{k^{\ast} - 1} \rightarrow \infty$ as $k \rightarrow \infty$. Therefore, the problem reduces to showing that
$$
\{u\leq q_k\;:\; |\beta_u|\leq q_k^{-\epsilon}\}=o(q_k),
$$
which, by the definition of $\beta_u$, is equivalent to
\be\label{eq:za12}
\{u\leq q_k\;:\; |g_{k^{\ast}-1}(x+u\alpha)|\leq q_k^{-\epsilon}\}=o(q_k),
\ee
uniformly over $x\in \T$. In order to show \eqref{eq:za12}, we will use our assumption that
$$
\sup_{x\in \T}|g_{k^\ast-1}(x)|\geq \frac{1}{\log^4q_k},
$$
which follows from \eqref{eq:sdk}. Since $g_{k^{\ast}-1}$ is a trigonometric polynomial of degree $o(\log q_k)$, it follows from a theorem of Nazarov \cite[Theorem 1.1]{Naz} that as $k \rightarrow \infty$,
$$
Leb(E)=o(1), \text{ where } E:=\{x\in \T\;:\; |g_{k^\ast-1}(x)| \leq q_k^{-\epsilon/2}\}.
$$
Because of the rapid decay of the Fourier coefficients of $g$, we have
$$\sup_{x \in \mathbb{T}} |g'_{k^{\ast} -1}(x)| \rightarrow 0$$ uniformly in $x \in \mathbb{T}$.
It follows that if $I \subset \mathbb{T}$ is an interval of length $\approx q_k^{-\epsilon/2}$ such that $I\cap E^c\neq \emptyset$, then, for all $x \in I$, we have $|g_{k^\ast-1}(x)|\geq q_k^{-\epsilon}$. Let $\{I_i\}_{i \leq w}$ be a covering of $\mathbb{T}$ with intervals of length $\approx q_{k}^{-\varepsilon / 2}$. Since $\text{Leb}(E) = o(1)$ as $k \rightarrow \infty$ for all but at most $o(w)$ indices $i \leq w$, we have $I_{i} \cap E^{c} \neq \emptyset$ and therefore, for such $i$'s for all $x \in I_i$, we have $|g_{k^{\ast} - 1}(x)| \geq q_k^{-\varepsilon}$. Since for each $i$ the number of $m \leq q_k$ such that $\{x + m \alpha \} \in I_i$ is by Denjoy-Koksma inequality equal to $q_k |I_i| + O(1)$, we conclude that the cardinality of the set \eqref{eq:za12} is bounded from above by
$$
\sum_{\substack{i \leq w \\ I_i \cap E^c = \emptyset}} \sum_{\substack{m \leq q_k \\ \{ x + m \alpha \} \in I_i }} 1 \ll q_k \sum_{\substack{i \leq w \\ I_i \cap E^c = \emptyset}} |I_i| = o(q_k).
$$

\subsection*{Plan of the paper}

The paper splits into two parts. In the first part of the paper we establish an ergodic theorem along reduced residue classes, which is required for the proof of our main result Theorem \ref{thm:main}. Specifically, in Section \ref{sec:propan} we establish important properties of analytic cocycles. In Section \ref{sec:principal1} and \ref{sec:principal2} we collect a few number theoretic results on the distribution of reduced residues to large moduli and twisted by additive phases. In Section \ref{sec:ergchar} we establish the main result of this part of the paper, namely that for uniquely ergodic skew products, ergodic sums weighted by principal characters converge.

In the second part of the paper we focus on the proof of our main result Theorem \ref{thm:main}. We start by stating several crucial results on the equidistribution of primes to high moduli and in short arithmetic progressions in Section \ref{sec:eqprimes1}, and several results on exponential sums over primes with polynomial phases in Section \ref{sec:eqprimes2}. In Section \ref{sec:thmmain} we use results from Sections \ref{sec:eqprimes1}, \ref{sec:eqprimes2}, \ref{sec:propan} and \ref{sec:ergchar} to prove Theorem \ref{thm:main}.
Finally, in Section \ref{sec:count} we prove Theorem \ref{thm:main4}. We include below a detailed table of contents.

\subsection*{}

\par
\tableofcontents

\subsection*{Notation} We will denote by $d_{r}(n)$ the $r$-fold divisor function, so that
$$
d_{r}(n) := \sum_{n = n_1 \ldots n_r} 1
$$
and, in particular, $d(n) := d_{2}(n)$. The von Mangoldt function $\Lambda(n)$ is defined as $\log p$ when $n = p^{\alpha}$ with $p$ prime and $\alpha > 0$ and is defined as zero on all the remaining integers. The symbol $\overline{e}$ will denote the modular inverse of $e$ to an appropriate modulus which will typically be clear from the context.

The symbol $f(x) \ll g(x)$ will mean that there exists an absolute constant $C > 0$ such that $|f(x)| \leq C |g(x)|$ for all $x$ in the domain of definition of $f$ and $g$. For instance, if $f,g$ are sequences then this bound will be valid for all positive integers $x$.
When used in a subscript of a sum or integral, the notation $n \sim A$ means that $A \leq n < 2 A$.

The Fourier transform of $f : \mathbb{R} \rightarrow \mathbb{R}$ is defined as
$$
\widehat{f}(x) := \int_{\mathbb{R}} f(u) e(- x u) du.
$$
The Mellin transform of $f : [0, \infty) \rightarrow \mathbb{R}$ is defined as
$$
\widetilde{f}(s) := \int_{0}^{\infty} f(x) x^{s - 1} dx.
$$

Given a real number $x$, we let $\| x \| := \min_{n \in \mathbb{Z}} |x - n|$.

\part{Ergodic theorem along reduced residue classes}

\section{Properties of analytic cocycles}\label{sec:propan}
Fix $\alpha\in \T$ with the sequence of denominators $\{q_n\}$. For $h\in C(\T)$ and $\alpha\in \T$ we use the following notation:
$$
S_{n}(h)(x)=\sum_{j=0}^{n-1}h(x+j\alpha)
$$
for all $n\in\N$. Observe that the \textit{cocycle identity} $S_{n+m}(h)(x)=S_n(h)(x)+S_m(h)(x+n\alpha)$ holds.

Let $g$ be a $1$-periodic real-analytic function of zero mean. Expanding in a Fourier series, we can write
%Let $g\in C^{\omega}(\T)$ be given by
$g(x)=\sum_{m\in \Z}a_me_m(x)$,
where $e_m(x) := e(m x)$ (with $a_0=0$).
Since $g$ is real analytic and $1$-periodic, its Fourier coefficients are decreasing to zero exponentially fast, so without loss of generality, we can assume that for all $m\in\Z$,
 \begin{equation}\label{ujwzrost}|a_m|\leq e^{-\tau' |m|}\text{ with }\tau'<\frac1{10}.\end{equation}

We start with the following lemma:
\begin{lemma}\label{lem:gothed} If $g$ is not a continuous coboundary (i.e. if there is no continuous solution $\xi:\T\to\mathbb{S}^1$ to $e(g)(x)=\xi(x)/\xi(x+\alpha)$ for all $x\in\T$ and where $e(g)(x) := e^{2\pi i g(x)}$), then there exists a subsequence $\{q_{n_k}\}$ such that
$q_{n_k+1}\geq e^{\frac{\tau'q_{n_k}}{2}}$ for all $k\geq1$.
\end{lemma}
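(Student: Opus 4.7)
My strategy is to prove the contrapositive: if $q_{n+1}<e^{\tau' q_n/2}$ holds for all sufficiently large $n$, then $g$ is a continuous multiplicative coboundary. The plan has two steps: first establish that the Birkhoff sums $S_n(g)$ are uniformly bounded in $x$ and $n$, and then pass from an additive coboundary to a multiplicative one via Gottschalk--Hedlund together with a short lifting argument.

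For the uniform bound, I will use Fourier analysis. Expanding $g(x)=\sum_{m\neq 0}a_m e(mx)$ and summing the geometric series term by term yields
\[
S_n(g)(x)=\sum_{m\neq 0} a_m\, e(mx)\cdot\frac{e(mn\alpha)-1}{e(m\alpha)-1},
\]
so that $\|S_n(g)\|_\infty\leq \sum_{m\neq 0}|a_m|/(2\|m\alpha\|)$ uniformly in $n$. I will estimate the right-hand side by grouping Fourier modes into dyadic blocks $q_{k-1}\leq |m|<q_k$. On each such block, the best-approximation property of continuants gives $\|m\alpha\|\geq \|q_{k-1}\alpha\|\geq 1/(2q_k)$, while $|a_m|\leq e^{-\tau' q_{k-1}}$ for $|m|\geq q_{k-1}$, so that, using \eqref{ujwzrost} to sum the tail,
\[
\sum_{q_{k-1}\leq |m|<q_k}\frac{|a_m|}{2\|m\alpha\|}\;\ll\; q_k\, e^{-\tau' q_{k-1}}.
\]
Under the hypothesis $q_k<e^{\tau' q_{k-1}/2}$ (valid for all but finitely many $k$), this block bound becomes $\ll e^{-\tau' q_{k-1}/2}$, which is rapidly summable; the finitely many blocks where the hypothesis may fail contribute an absolutely bounded amount. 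Hence $\|S_n(g)\|_\infty=O(1)$ uniformly in $n$.

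Given the uniform boundedness of $S_n(g)$, the Gottschalk--Hedlund theorem applied to the minimal rotation by $\alpha$ produces a continuous function $h:\T\to\R$ with $g(x)=h(x)-h(x+\alpha)$. Setting $\xi(x):=e(h(x))$ then yields a continuous $\xi:\T\to\mathbb{S}^1$ satisfying $e(g)(x)=\xi(x)/\xi(x+\alpha)$, so $g$ is a continuous multiplicative coboundary, contradicting the hypothesis. The main technical content is the Fourier tail estimate in the second paragraph; the lifting from the additive to the multiplicative formulation is automatic, since $\xi=e(h)$ is constructed with trivial winding number on $\T$. (In the converse direction, any continuous $\xi:\T\to\mathbb{S}^1$ lifts to a map of the form $\Phi(x)=kx+\phi(x)$ with $\phi$ continuous and $1$-periodic, and the zero-mean condition on $g$ forces $k\alpha\in\Z$, hence $k=0$; this observation is not needed here but confirms that the two notions coincide.)
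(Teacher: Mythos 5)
Your proposal is correct and follows essentially the same route as the paper's proof: both prove the contrapositive, bound $S_n(g)$ uniformly via the Fourier expansion and the continued-fraction estimate $\|m\alpha\|\geq 1/(2q_{s+1})$ for $|m|\in[q_s,q_{s+1})$, and invoke Gottschalk--Hedlund. The paper simply folds the hypothesis into a per-frequency bound $\|m\alpha\|\gg e^{-\tau'|m|/2}$ and sums directly, whereas you sum block-by-block over $[q_{k-1},q_k)$ before applying the hypothesis; this is the same estimate, rearranged.
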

\begin{proof}We will show that if such a subsequence does not exist, then $g$ is a continuous coboundary, i.e. assume that for some $C'>0$ and every $s\in \N$,
$$
q_{s+1}\leq C' e^{\frac{\tau'q_{s}}{2}}.
$$
Note first that for $m\in \Z$, if $s$ is unique such that $|m|\in [q_s,q_{s+1})$, then
\be\label{eq:mal}
\|m\alpha\|\geq \|q_s\alpha\|\geq \frac{1}{2q_{s+1}}\geq \frac{1}{2C'}e^{-\frac{\tau'q_{s}}{2}}\geq \frac{1}{2C'}e^{-\frac{\tau'|m|}{2}}.
\ee

 By the Gottschalk-Hedlund theorem, it is enough to show that there exists $C>0$ such that for every $k\in \N$,
$$
|S_k(g)(0)|<C.
$$
Notice that for every $x\in \T$,
$$
S_k(e_m)(x)=e_m(x)\frac{e_m(k\alpha)-1}{e_m(\alpha)-1},
$$
and therefore, $|S_k(e_m(x)|\leq \frac{4}{\|m\alpha\|}$. By the cocycle identity, the bound on $a_m$ and \eqref{eq:mal}, it follows that
\begin{multline*}
|S_k(g)(0)|=|\sum_{m\in \Z}a_m S_k(e_m)(0)|\leq 4\sum_{m\in\Z}e^{-\tau' |m|}\|m\alpha\|^{-1}\leq \\
8C' \sum_{m\in \Z}e^{-\tau' |m|/2}:=C<+\infty.
\end{multline*}
This finishes the proof.
\end{proof}

Next, we will show that the only important frequencies of $g$ come from multiples of denominators. Indeed, for $n\in \N$, let
\be\label{eq:gn}
g_n(x):=\sum_{\substack{|m|\in [q_n,\frac{\log q_{n+1}}{\tau'^2}]\\ q_n|m}}a_me_m(x)\;\;\;\;\;\text{  and  let  }\;\;\tilde{g}:=\sum_{n\in \N} g_n.
\ee

\begin{lemma}\label{lem:tilg}
The function $g-\tilde{g}$ is a continuous coboundary for $\alpha$.
\end{lemma}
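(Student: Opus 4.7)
The plan is to apply the Gottschalk--Hedlund theorem to the uniquely ergodic irrational rotation by $\alpha$: since $g-\tilde g$ is continuous and of mean zero (every Fourier mode appearing in $g$ or $\tilde g$ is non-constant), it is a continuous coboundary iff the Birkhoff sums $S_k(g-\tilde g)(0)$ are uniformly bounded in $k$. So the entire task reduces to proving
$$
\sup_{k\in\N}\bigl|S_k(g-\tilde g)(0)\bigr|<+\infty,
$$
exactly in the spirit of the proof of Lemma~\ref{lem:gothed}.

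Writing $g-\tilde g=\sum_{m\in E}a_m e_m$, where $E$ is the set of nonzero $m\in\Z$ that do not lie in the support of any $g_n$, the elementary estimate $|S_k(e_m)(0)|\leq 1/\|m\alpha\|$ reduces the matter to showing that $\sum_{m\in E}|a_m|/\|m\alpha\|<\infty$. I would group the terms according to the unique $n$ with $|m|\in[q_n,q_{n+1})$; from the definition of $g_n$, such an $m$ belongs to $E$ iff either (a) $|m|>\log q_{n+1}/\tau'^2$, or (b) $|m|\in[q_n,\log q_{n+1}/\tau'^2]$ with $q_n\nmid m$. (For all large $n$ the ranges $[q_{n'},\log q_{n'+1}/\tau'^2]$ are pairwise disjoint, so a frequency being ``multiply captured'' by $\tilde g$ is not an issue beyond a finite set of $n$, which contributes a harmless constant.)

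For region (a) I would use the universal bound $\|m\alpha\|\geq\|q_n\alpha\|\geq 1/(2q_{n+1})$ valid for $|m|<q_{n+1}$, together with $|a_m|\leq e^{-\tau'|m|}\leq q_{n+1}^{-1/\tau'}$; a geometric sum then yields a total contribution $O(q_{n+1}^{1-1/\tau'})=O(q_{n+1}^{-9})$ since $\tau'<1/10$. The hard part will be region (b), where the universal bound on $\|m\alpha\|$ is too crude. Here my idea is to invoke an Ostrowski-type decomposition $m=jq_n+r$ with $0<r<q_n$ and $|j|\leq\log q_{n+1}/(\tau'^2 q_n)$: the classical estimates $\|r\alpha\|\geq\|q_{n-1}\alpha\|\geq 1/(2q_n)$ and $|j|\,\|q_n\alpha\|\leq\log q_{n+1}/(\tau'^2 q_n q_{n+1})$ should combine (once $q_{n+1}\gg\log q_{n+1}$, which holds for all large $n$) to give
$$
\|m\alpha\|\geq\|r\alpha\|-|j|\,\|q_n\alpha\|\geq \frac{1}{4q_n}.
$$
This is precisely where the cutoff $\log q_{n+1}/\tau'^2$ in the definition of $g_n$ is used decisively: without it the perturbation $j\|q_n\alpha\|$ would potentially swamp the main term $\|r\alpha\|$. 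With this $1/(4q_n)$ lower bound in hand, the contribution from (b) is at most $4q_n\sum_{|m|\geq q_n}e^{-\tau'|m|}=O(q_n e^{-\tau' q_n})$, which is summable in $n$ since $q_n$ grows at least Fibonacci-fast. Adding the contributions from (a) and (b) over $n$ gives the required uniform bound on $|S_k(g-\tilde g)(0)|$, and Gottschalk--Hedlund concludes.
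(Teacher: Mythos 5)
Your proof is correct and uses essentially the same approach as the paper: reduce via Gottschalk--Hedlund to uniform boundedness of $S_k(g-\tilde g)(0)$, expand in Fourier series, apply the pointwise bound $|S_k(e_m)(0)|\lesssim 1/\|m\alpha\|$, and split the frequencies $|m|\in[q_n,q_{n+1})$ using continued-fraction estimates. The only (cosmetic) difference is in the case analysis: the paper covers $E$ by the two overlapping conditions $q_n\nmid m$ and $|m|\geq\log q_{n+1}/\tau'^2$ and shows each gives $\ll 1/m^2$ (handling $q_n\nmid m$ at all scales $|m|<q_{n+1}$ via a further sub-case split into $s\leq q_{n+1}/(3q_n)$ versus $|m|>q_{n+1}/3$), whereas you partition and use the cutoff $\log q_{n+1}/\tau'^2$ to keep the Ostrowski quotient $j$ small, obtaining $\|m\alpha\|\geq 1/(4q_n)$ directly; both routes close the estimate.
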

\begin{proof} By the Gottschalk-Hedlund theorem the assertion is equivalent to showing that there exists $C>0$ such that for every $k\in \N$,
$$
|S_k(g-\sum_{n\in \N}g_n)(0)|<C.
$$
We have $|S_k(e_m)(0)|\leq \frac{4}{\|m\alpha\|}$ and therefore,
\be\label{eq:abu}
|a_m S_k(e_m)(0)|\leq e^{-\tau' |m|}\frac{4}{\|m\alpha\|}.
\ee
Let $n$ be unique such that $|m|\in [q_n,q_{n+1})$. Then either $q_n|m$ or
\begin{equation}\label{uzu1}
\|m\alpha\|\geq \frac1{6|m|}.\end{equation}
Indeed, if
$|m|=sq_n+r$ with $1\leq r<q_n$ and $s\leq q_{n+1}/q_n$ then either
\begin{itemize}
\item $s\leq \frac{q_{n+1}}{3q_n}$ and then $\|r\alpha\|\geq\frac1{2q_n}$ while $\|sq_n\alpha\|<\frac1{3q_n}$, so $\|m\alpha\|\geq\frac1{6q_n}\geq \frac1{6|m|}$ or
\item $|m|>q_{n+1}/3$ and $\|m\alpha\|\geq\|q_n\alpha\|\geq\frac1{2q_{n+1}}\geq \frac1{6|m|}$
\end{itemize}
and \eqref{uzu1} follows.  Using~\eqref{uzu1} and \eqref{eq:abu}, we obtain
$$
|a_m S_k(e_m)(0)|\ll \frac{1}{m^2}.
$$
Let now $|m|\in[\frac{\log q_{n+1}}{\tau'^2},q_{n+1})$. Then (using $\|m\alpha\|\geq \|q_n\alpha\|\geq 1/(2q_{n+1})$), we have
$$
|a_m S_k(e_m)(0)|\leq e^{-\tau' |m|} \frac{4}{\|m\alpha\|}\leq e^{-\tau' |m|}8q_{n+1}\leq$$$$
8e^{-\tau'\cdot \frac{\log q_{n+1}}{\tau'^2}}q_{n+1}=8\frac 1{q_{n+1}^{\frac1{\tau'}-1}}\leq 8\frac 1{|m|^{\frac1{\tau'}-1}}\ll \frac{1}{m^2}
$$
in view of \eqref{ujwzrost}.
Hence, using the definition of $g_n$, we obtain
$$
|S_k(g-\sum_{n\in \N}g_n)(0)|\ll \sum_{|m|>1}\frac{1}{m^2}<+\infty.
$$
This finishes the proof.
\end{proof}

We call $\widetilde{g}$ the \textit{reduced form of} $g$. By Lemma \ref{lem:tilg}, it follows that it is enough to consider the case $g=\tilde{g}$, i.e.\ when $g$ itself is \textit{reduced}.\footnote{When $g(x)=\widetilde{g}(x)+v(x)-v(x+\alpha)$ with $v:\T\to\R$ continuous, the map $(x,y)\mapsto (x,y+v(x)\text{ mod }1)$ establishes a topological isomorphism between $T_{\alpha, g}$ and $T_{\alpha,\widetilde{g}}$. More generally, if $g$ and $h$ are multiplicatively cohomologous with a continuous transfer function, then the skew products $T_{\alpha, g}$ and $T_{\alpha,h}$ are topologically isomorphic.}   {\bf We make this assumption for the rest of the paper}. Note that the functions $g_n$ are the same for $g$ and its reduced form.

Let $\tau=1/2\min(\tau'^2,\tau'/8)$. For $n\in \N$, let $n^{\ast}\leq n$ be the largest integer such that
\be\label{eq:n,ast}
q_{n^{\ast}}\geq e^{\tau q_{n^\ast-1}}
\ee
and where we set $q_{0} := 0$ so as to guarantee that $n^{\ast}$ always exists.
Notice that by Lemma \ref{lem:gothed},
\be\label{eq:n,ast2}
n^{\ast} \to +\infty\text{ as }n\to +\infty.
\ee
We have the following lemma:

\begin{lemma}\label{lem:an} Let $K_n:=\frac{q_{n+1}}{q_{n^{\ast}}}$. Then, for every $K\in \{1,\ldots, K_n\}$,
\be\label{eq:subz}
\sup_{x\in \T}|S_{Kq_n}(g-g_n)(x)|\to 0 \text{ as } n\to +\infty.
\ee
If additionally $K_n\leq e^{2\tau q_n}$ then, for every $K\in \{1,\ldots, K_n\}$,
\be\label{eq:subz2}
\sup_{x\in \T}|S_{Kq_n}(g)(x)|\to 0 \text{ as } n\to +\infty.
\ee
\end{lemma}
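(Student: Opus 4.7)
The plan is to bound $\sup_x |S_{Kq_n}(g-g_n)(x)|$ by Fourier-expanding $g - g_n = \sum_{j\neq n} g_j$ and estimating each contribution termwise via $S_{Kq_n}(e_m)(x) = e_m(x)(e(Kq_nm\alpha)-1)/(e(m\alpha)-1)$. My first move is to observe that $g_j$ vanishes identically unless the index set $\{m:\ q_j\leq|m|\leq(\log q_{j+1})/\tau'^2,\ q_j\mid m\}$ is nonempty, i.e., unless $q_{j+1}\geq e^{\tau'^2 q_j}$. Since $\tau=\tfrac12\min(\tau'^2,\tau'/8)= \tau'^2/2<\tau'^2$ (using $\tau'<1/10$), the maximality in the definition of $n^{\ast}$ forces $q_{j+1}<e^{\tau q_j}<e^{\tau'^2 q_j}$ for every $j\in[n^{\ast},n-1]$, so $g_j=0$ there. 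Hence only $j\in\{1,\dots,n^{\ast}-1\}$ and $j>n$ contribute.

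The main quantitative estimate will be $|S_{Kq_n}(e_{sq_j})(x)|\leq \pi Kq_jq_{j+1}/q_{n+1}$, valid whenever $Ksq_j/q_{n+1}\leq 1/2$. To establish it, write $q_n\alpha=\ell_n+\eta_n/q_{n+1}$ with $|\eta_n|\in[1/2,1]$; this gives $q_nq_j\alpha\equiv q_j\eta_n/q_{n+1}\pmod 1$ and hence $\|Kq_nsq_j\alpha\|=Ksq_j|\eta_n|/q_{n+1}$ in the stated range. Combined with $\|sq_j\alpha\|=s|\eta_j|/q_{j+1}$ and the elementary inequalities $|e(y)-1|\leq 2\pi\|y\|$ and $|e(z)-1|\geq 4\|z\|$, this yields the claim. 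For $K\leq K_n=q_{n+1}/q_{n^{\ast}}$, $j\leq n^{\ast}-1$, and $s\leq(\log q_{j+1})/(\tau'^2 q_j)$, the smallness condition reduces to $sq_j\leq q_{n^{\ast}}/2$, which holds for all large $n$ since $q_{j+1}\leq q_{n^{\ast}}$.

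Next I will sum these bounds against $|a_{sq_j}|\leq e^{-\tau' sq_j}$ and use $K\leq q_{n+1}/q_{n^{\ast}}$ to cancel the $q_{n+1}$; the contribution of $j\leq n^{\ast}-1$ becomes $\leq (2\pi/q_{n^{\ast}})\sum_{j\leq n^{\ast}-1}q_jq_{j+1}e^{-\tau' q_j}$. The boundary term $j=n^{\ast}-1$ contributes $\leq 2\pi q_{n^{\ast}-1}e^{-\tau' q_{n^{\ast}-1}}$, which tends to $0$ because \eqref{eq:n,ast2} yields $q_{n^{\ast}-1}\to\infty$. For $j<n^{\ast}-1$ we have $q_{j+1}\leq q_{n^{\ast}-1}$, so the remainder is bounded by $(q_{n^{\ast}-1}/q_{n^{\ast}})\sum_{j\geq 1}q_je^{-\tau' q_j}$, where the prefactor is $\leq q_{n^{\ast}-1}e^{-\tau q_{n^{\ast}-1}}\to 0$ (using $q_{n^{\ast}}\geq e^{\tau q_{n^{\ast}-1}}$) and the series converges because $q_j$ grows at least Fibonacci-fast. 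Finally, the tail $j>n$ will be handled by the trivial bound $|S_{Kq_n}(e_m)|\leq Kq_n$ together with the strong decay $|a_m|\leq e^{-\tau'|m|}$, producing a contribution of size $\ll q_nq_{n+1}e^{-\tau' q_{n+1}}/q_{n^{\ast}}\to 0$. This establishes \eqref{eq:subz}.

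To upgrade to \eqref{eq:subz2} I need only to estimate $|S_{Kq_n}(g_n)(x)|$, since $S_{Kq_n}(g)=S_{Kq_n}(g-g_n)+S_{Kq_n}(g_n)$. The phase-cancellation argument breaks down here because $\|Kq_nsq_n\alpha\|/\|sq_n\alpha\|$ is essentially $Kq_n$, so I fall back on $|S_{Kq_n}(e_{sq_n})|\leq 1/(2\|sq_n\alpha\|)\leq q_{n+1}/s$ and sum geometrically to get $|S_{Kq_n}(g_n)(x)|\leq 2q_{n+1}e^{-\tau' q_n}$. Under the hypothesis $K_n\leq e^{2\tau q_n}$ we have $q_{n+1}=K_nq_{n^{\ast}}\leq q_ne^{\tau'^2 q_n}$, so the bound is $\leq 2q_ne^{(\tau'^2-\tau')q_n}\to 0$ (since $\tau'<1/10$ makes $\tau'^2-\tau'<0$). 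The main obstacle throughout is the boundary term $j=n^{\ast}-1$, where $q_{j+1}=q_{n^{\ast}}$ can be much larger than $q_j$; it is precisely here that both the refined phase estimate and the cancellation $K/q_{n+1}\leq 1/q_{n^{\ast}}$ furnished by $K\leq K_n$ are indispensable.
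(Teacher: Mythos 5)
Your proof is correct and follows essentially the same route as the paper: Fourier-expand $g - g_n$, use the Diophantine estimate $|S_{M}(e_m)(x)| = |e(Mm\alpha)-1|/|e(m\alpha)-1|$, and trade the exponential decay $|a_m|\le e^{-\tau'|m|}$ against the growth of the geometric-sum bound. The one genuine streamlining you introduce is the observation that $g_j\equiv 0$ for $j\in[n^\ast,n-1]$ (because $q_{j+1}<e^{\tau q_j}<e^{\tau'^2 q_j}$ makes the frequency window empty), which lets you skip the intermediate range $n^\ast-1<k<n$ that the paper handles with a separate estimate; this observation does appear in the paper, but inside the proof of Lemma~\ref{lem:an2} rather than here. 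Working directly with $S_{Kq_n}$ instead of reducing to $K_n\cdot S_{q_n}$ via the cocycle identity, and using $|S_{Kq_n}(e_{sq_n})|\le 1/(2\|sq_n\alpha\|)$ in place of the trivial $Kq_n\|g_n\|_\infty$ bound in \eqref{eq:subz2}, are cosmetic rearrangements of the same computation.
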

\begin{proof} By the cocycle identity, it follows that
$$
S_{Kq_n}(g-g_n)(x)=\sum_{i=0}^{K-1}S_{q_n}(g-g_n)(x+iq_n\alpha).
$$
The statement follows by showing that, uniformly in $x\in \T$,
$$
K_n|S_{q_n}(g-g_n)(x)|\to 0.
$$
Notice that $S_{q_n}(g-g_n)(x)=\sum_{\substack{m\in \Z\\ |m|\notin [q_n,q_{n+1}]}} a_mS_{q_n}(e_m)(x)$ and
$$
S_{q_n}(e_m)(x)=e_m(x)\frac{e_m(q_n\alpha)-1}{e_m(\alpha)-1}.
$$
Moreover,
$$
\Big|\frac{e_m(q_n\alpha)-1}{e_m(\alpha)-1}\Big|\ll \frac{\|mq_n\alpha\|}{\|m\alpha\|}\leq \min\Big(q_n,\frac{|m|\|q_n\alpha\|}{\|m\alpha\|}\Big).
$$
 Then
$$
K_n|a_mS_{q_n}(e_m)(x)|\leq K_n e^{-\tau' |m|}\min\Big(q_n,\frac{|m|\|q_n\alpha\|}{\|m\alpha\|}\Big).
$$
Let $k\in \N$ be unique such that $|m|\in [q_k,q_{k+1})$. We will separately consider the cases $k<n$ and $k>n$ (notice that $g-g_n$ has no frequencies which are multiplies of $q_n$). Assume first that $k<n$. Then, by \eqref{eq:gn}, $|m|=rq_k$ for some $r\leq \frac{q_{k+1}}{q_k}$ (in fact by \eqref{eq:gn}, $r\leq \frac{\log q_{k+1}}{\tau'^2q_k}$).
It follows that $\|m\alpha\|=r\|q_k\alpha\|\geq\frac{r}{2q_{k+1}}$, and so we get
$$
K_n|a_mS_{q_n}(e_m)(x)|\ll K_n e^{ -\tau' |m|}\frac{|m|\|q_n\alpha\|}{\|m\alpha\|}\leq\\
2K_n e^{ -\tau' |m|}\frac{|m| q_{k+1}}{q_{n+1}r}.
$$
Therefore,
\be\label{eq:sua1}
\sum_{\substack{|m|\in [q_k,q_{k+1}]\\q_k|m}}K_n|a_mS_{q_n}(e_m)(x)|\leq 2\frac{K_n q_{k+1}}{q_{n+1}}\sum_{q_k|m}|m|e^{-\tau'|m|}\leq  2\frac{K_n q_{k+1}}{q_{n+1}} e^{-\frac{\tau' q_k}{2}}.
\ee
So, we have
$$
K_n\sum_{\substack{|m|\in[q_k,q_{k+1}]\\ k<n}}a_mS_{q_n}(e_m)(x)\leq   \frac{2K_n}{q_{n+1}}\sum_{k\leq n^{\ast}-1}q_{k+1}e^{-\frac{\tau'q_k}{2}}+\frac{2K_n}{q_{n+1}}\sum_{n>k> n^{\ast}-1}q_{k+1}e^{-\frac{\tau'q_k}{2}}.
$$
By the definitions of $n^{\ast}$ (see \eqref{eq:n,ast}), $K_n$ and $\tau$, it follows that
$$
\frac{2K_n}{q_{n+1}}\sum_{n>k> n^{\ast}-1}q_{k+1}e^{-\frac{\tau'q_k}{2}}\leq \frac{2}{q_{n^{\ast}}}\sum_{n>k>n^{\ast}-1}e^{-\frac{\tau' q_k}{4}}=o(1)
$$
by \eqref{eq:n,ast2}. Moreover, using the definition of $K_n$ again, and noticing that \eqref{eq:n,ast} holds, so  $q_{n^\ast}$ is exponentially big with respect to $q_{n^\ast-1}$, whence exponentially big with respect to $n^\ast q_{n^\ast-1}$, we get
$$
\frac{2K_n}{q_{n+1}}\sum_{k\leq n^{\ast}-1}q_{k+1}
e^{-\frac{\tau'q_k}{2}}\leq \frac2{q_{n^\ast}}\Big(q_{n^{\ast}}e^{-\frac{\tau'q_{n^\ast-1}}{2}}+
\sum_{k<n^\ast-1}q_{k+1}\Big)\leq \frac{2}{q_{n^{\ast}}}o(q_{n^{\ast}})=o(1)
$$
(recall that the denominators themselves grow exponentially fast). Therefore,
\be\label{eq:smalk}
K_n\Big|\sum_{\substack{|m|\in[q_k,q_{k+1}]\\ k<n}}a_mS_{q_n}(e_m)(x)\Big|=o(1).
\ee
If $k>n$, then by the definition of $K_n$ and using $m\geq q_{n+1}$, we obtain
$$
K_n\Big|a_mS_{q_n}(e_m)(x)\Big|\leq K_nq_ne^{-\tau' |m|}\leq q_{n+1}q_ne^{-\tau' |m|}\leq e^{\frac{-\tau' |m|}{4}}.
$$
Therefore,
$$
K_n\Big|\sum_{\substack{|m|\in[q_k,q_{k+1}]\\ k>n}}a_mS_{q_n}(e_m)(x)\Big|=o(1).
$$
This and \eqref{eq:smalk} finish the proof of \eqref{eq:subz}.

To show \eqref{eq:subz2}, it remains to notice that by the bound on the Fourier coefficients, for every $K\leq e^{2\tau q_n}$, we have
$$
|S_{Kq_n}(g_n)(x)|\leq Kq_n\sup_{x\in\T}|g_n(x)|\leq Kq_ne^{-\frac{\tau'q_n}{2}}=o(1).
$$
Hence, \eqref{eq:subz2} follows by \eqref{eq:subz}. The proof is finished.
\end{proof}
Let $d((x,y),(x',y'))=\|x-x'\|+\|y-y'\|$ where $\| x \| := \min_{n \in \mathbb{Z}} |x - n|$.
From Lemma \ref{lem:an}, we deduce the following: \begin{lemma}\label{cor:jad2} For all $n\in \N$,   $m\leq  q_n\min(q_{n+1}/q_{n^\ast}, e^{2\tau q_n})$ and  $z\in \N$, we have
$$
d(T^m(x,y),T^{m \mod zq_n}(x,y))=o(1)
$$
uniformly in $(x,y)\in\T^2$.
\end{lemma}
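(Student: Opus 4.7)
My plan is to reduce the assertion to Lemma \ref{lem:an} via the cocycle identity. Set $M := zq_n \lfloor m/(zq_n)\rfloor = m - (m \bmod zq_n)$; since $M$ is a nonnegative multiple of $q_n$, I would write $M = Kq_n$ with $K := z\lfloor m/(zq_n)\rfloor \leq m/q_n \leq \min(K_n, e^{2\tau q_n})$. Using $T^j(x,y) = (x + j\alpha,\, y + S_j(g)(x))$ together with the cocycle identity
$$
S_m(g)(x) - S_{m \bmod zq_n}(g)(x) \;=\; S_{Kq_n}(g)\bigl(x + (m \bmod zq_n)\alpha\bigr),
$$
the two points $T^m(x,y)$ and $T^{m \bmod zq_n}(x,y)$ differ by $\|Kq_n\alpha\|$ in the first coordinate and by $S_{Kq_n}(g)$ at a translated basepoint in the second.

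The first coordinate is immediate: $\|Kq_n\alpha\| \leq K \cdot \|q_n\alpha\| \leq K/q_{n+1} \leq K_n/q_{n+1} = 1/q_{n^\ast}$, which tends to $0$ uniformly in $(x,y)$ and in $z$ by \eqref{eq:n,ast2}.

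For the second coordinate I would split $S_{Kq_n}(g) = S_{Kq_n}(g - g_n) + S_{Kq_n}(g_n)$. Since $K \leq K_n$, the first piece goes to $0$ uniformly in $x$ by \eqref{eq:subz} of Lemma \ref{lem:an}. For the second piece, the bound $|a_m| \leq e^{-\tau' |m|}$ from \eqref{ujwzrost} together with the lacunary support in \eqref{eq:gn} (smallest nonzero frequency $\geq q_n$, summed geometrically) give $\sup_x |g_n(x)| \ll e^{-\tau' q_n}$, so
$$
\sup_x |S_{Kq_n}(g_n)(x)| \;\leq\; Kq_n \sup_x |g_n(x)| \;\ll\; q_n\, e^{(2\tau - \tau')q_n},
$$
which tends to $0$ since $K \leq e^{2\tau q_n}$ and $2\tau < \tau'$ by the choice $\tau = \tfrac12 \min(\tau'^2, \tau'/8)$. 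Combining the two coordinate estimates gives the claim, uniformly in $(x,y) \in \mathbb{T}^2$ and in $z \in \mathbb{N}$.

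The only mildly subtle step---and the one that prevents treating this as a one-line corollary---is that \eqref{eq:subz2} of Lemma \ref{lem:an} requires the global hypothesis $K_n \leq e^{2\tau q_n}$, whereas Lemma \ref{cor:jad2} permits $K_n > e^{2\tau q_n}$ (constraining only $K \leq \min(K_n, e^{2\tau q_n})$). Thus one cannot invoke \eqref{eq:subz2} as a black box; the decomposition $g = (g - g_n) + g_n$ is essential, with \eqref{eq:subz} handling the ``long tail'' uniformly in $K \leq K_n$ and the direct Fourier estimate on $g_n$ handling the contribution of $g_n$ uniformly in $K \leq e^{2\tau q_n}$.
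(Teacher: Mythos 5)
Your proof is correct and follows the same route as the paper's: reduce via the cocycle identity to $d(T^{Kq_n}(x',y'),(x',y'))=o(1)$ for $K\leq\min(K_n,e^{2\tau q_n})$, then bound the rotation part by $K/q_{n+1}\leq 1/q_{n^\ast}$ and the cocycle part by Lemma~\ref{lem:an}. Your observation that \eqref{eq:subz2} cannot be cited verbatim when $K_n>e^{2\tau q_n}$ is a genuine (if minor) point of care: the paper's proof simply writes ``By Lemma~\ref{lem:an}, $|S_{kq_n}(g)(x)|=o(1)$'' without flagging that one must split $g=(g-g_n)+g_n$ and use \eqref{eq:subz} for the first piece (valid for $K\leq K_n$) plus the direct $L^\infty$ bound on $g_n$ for the second (valid for $K\leq e^{2\tau q_n}$) --- which is exactly what the proof of \eqref{eq:subz2} inside Lemma~\ref{lem:an} does. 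You have unpacked this correctly, so the argument is sound and essentially identical in content to the paper's, just stated with slightly more precision about what part of Lemma~\ref{lem:an} is actually being used.
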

\begin{proof}Notice that the statement follows by showing that for every $k\leq \min(K_n, e^{2\tau q_n})$,
$$
d\Big(T^{kq_n}(x,y),(x,y)\Big)=o(1)
$$
uniformly in $(x,y)$.
We have, $T^{kq_n}(x,y)=(x+kq_n\alpha,y+S_{kq_n}(g)(x))$. By Lemma \ref{lem:an}, $|S_{kq_n}(g)(x)|=o(1)$ (uniformly in $x$) and, by the bound on $k$ and the definition of $K_n$,
$$
\|kq_n\alpha\|\leq \frac{2k}{q_{n+1}}\leq \frac{2}{q_{n^\ast}}=o(1)
$$
by \eqref{eq:n,ast2}. This finishes the proof.
\end{proof}

The following proposition is crucial for further analysis.

\begin{proposition}\label{lem:toadd1} For every $\delta>0$ there exists $n_\delta\in \N$ such that for every $n\geq n_\delta$ we can find a function $P_n:\T\times \N\to \R$ such that
$$
P_n(\cdot,k)=\sum_{j=1}^{d(\delta)}a_j(\cdot)k^j,
$$
where $d(\delta)\leq \Big[\frac{1}{\delta}\Big]$, the functions $a_j:\T\to\R$ satisfy (see \eqref{eq:gn})
\be\label{eq:bocoe}
\sup_{x\in \T}|a_j(x)|\leq q_n^{-1}q_{n+1}^{-j+1}, \;\;\;\; a_1(x)=g_n(x),\;\;\;\text{ and }\;\;\sup_{x\in \T}|a_1(x)|\leq e^{-\tau q_n},
\ee
 and, uniformly for every $x\in \T$, $m\leq q_{n+1}^{1-\delta}$ and $w\leq \log^3 q_n$,
 \be\label{eq:polap}
 \Big|S_m(g)(x)-S_{m\mod wq_n}(g)(x)-P_n(x,m)\Big|\to 0 \;\text{ as }\;n\to+\infty.
 \ee
\end{proposition}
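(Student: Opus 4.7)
My plan is to reduce the analysis to $S_M(g_n)(y)$ via the cocycle identity, control the $(g-g_n)$ part with Lemma~\ref{lem:an}, and then Fourier-expand $g_n$ to obtain a Faulhaber-polynomial representation of $S_m(g_n)$ as a polynomial in $m$.

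Set $r := m \mod wq_n$ and $M := m - r$, so that $M = kwq_n$ for some integer $k \ge 0$. The cocycle identity gives
\[
S_m(g)(x) - S_r(g)(x) = S_M(g)(y), \quad y := x + r\alpha.
\]
Decompose $g = g_n + (g-g_n)$. Since $kw = M/q_n \le m/q_n \le q_{n+1}^{1-\delta}/q_n \le q_{n+1}/q_{n^{\ast}} = K_n$ (using $q_{n^{\ast}} \le q_n$), equation \eqref{eq:subz} of Lemma~\ref{lem:an} yields $\sup_{y \in \T} |S_M(g-g_n)(y)| \to 0$. Applying the cocycle identity once more, $S_M(g_n)(y) = S_m(g_n)(x) - S_r(g_n)(x)$, and from $\|g_n\|_\infty \le 2 e^{-\tau' q_n}$ and $r < wq_n \le q_n \log^3 q_n$ we get $|S_r(g_n)(x)| \le r\|g_n\|_\infty \to 0$ uniformly in $x$. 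Thus it suffices to approximate $S_m(g_n)(x)$ by a polynomial in $m$ whose coefficients depend only on $x$.

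For the main step, set $\eta := q_n\alpha - \ell_n$, so $|\eta| \le 1/q_{n+1}$ and $jq_n\alpha \equiv j\eta \pmod 1$. Fourier-expanding,
\[
S_m(g_n)(x) = \sum_{\substack{j \neq 0 \\ |j| \le \log q_{n+1}/(\tau'^2 q_n)}} a_{jq_n}\, e^{2\pi i jq_n x} \cdot \frac{e^{2\pi i jm\eta}-1}{e^{2\pi i j\eta}-1}.
\]
Using $(e^{iMu}-1)/(e^{iu}-1) = \sum_{\ell=0}^{M-1} e^{i\ell u} = \sum_{k \ge 0} (iu)^k p_k(M)/k!$, where $p_k(M) := \sum_{\ell=0}^{M-1} \ell^k$ is a polynomial of degree $k+1$, and interchanging the sums, I obtain
\[
S_m(g_n)(x) = \sum_{k \ge 0} \frac{(2\pi i\eta)^k}{k!}\, p_k(m)\, H_k(x), \quad H_k(x) := \sum_j j^k a_{jq_n}\, e^{2\pi i jq_n x}.
\]
Truncating at $k = d(\delta)-1$ with $d(\delta) := \lfloor 1/\delta \rfloor$ defines the desired $P_n(x,m)$, a polynomial in $m$ of degree $d(\delta)$. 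The tail $k \ge d(\delta)$ is controlled by noting $|\eta m| \le q_{n+1}^{-\delta}$ and the rapid decay $|H_k(x)| \ll k!/(\tau' q_n)^{k+1}$, which turns the tail into a geometric series in $2\pi|\eta|m/(\tau' q_n)$ that vanishes uniformly in $m \le q_{n+1}^{1-\delta}$ and $w \le \log^3 q_n$.

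Expanding $p_k(m)$ and collecting powers of $m$ gives $P_n(x,m) = \sum_{j=1}^{d(\delta)} a_j(x) m^j$. The coefficient of $m^1$ is dominated by the $k=0$ term $H_0(x) = g_n(x)$; the higher-$k$ contributions to this coefficient are $O(|\eta| e^{-\tau' q_n})$, so multiplying by $m \le q_{n+1}^{1-\delta}$ gives an error $O(q_{n+1}^{-\delta} e^{-\tau' q_n}) \to 0$, allowing me to set $a_1(x) := g_n(x)$ exactly and absorb the discrepancy into the final error. For $j \ge 2$ the dominant contribution comes from $k = j-1$, giving $|a_j(x)| \ll |\eta|^{j-1} |H_{j-1}(x)|/(j-1)! \ll q_{n+1}^{-(j-1)} e^{-\tau' q_n}$, comfortably inside the required $q_n^{-1} q_{n+1}^{-(j-1)}$ for $q_n$ large. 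The main obstacle I expect is the uniform bookkeeping of the various error contributions---the Lemma~\ref{lem:an} tail on $g-g_n$, the $S_r(g_n)(x)$ piece, the truncation tail in $k$, and the redefinition of $a_1$---which must all tend to $0$ uniformly in $x \in \T$, $m \le q_{n+1}^{1-\delta}$ and $w \le \log^3 q_n$ simultaneously.
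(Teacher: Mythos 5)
Your proof is correct but takes a genuinely different route from the paper's. Both begin with essentially the same reduction (cocycle identity plus Lemma~\ref{lem:an} to discard $g-g_n$, and the cheap bound on $S_r(g_n)$), but the heart of the argument differs. The paper writes $S_{kq_n}(g_n)(x)=\sum_{i=0}^{k-1}S_{q_n}(g_n)(x+iq_n\alpha)$, Taylor-expands each summand around $x$ in powers of $i\|q_n\alpha\|$, sums the resulting Faulhaber powers of $i$, controls the coefficients $S_{q_n}(g_n^{(s)})(x)$ via the Denjoy--Koksma inequality, and then separately replaces $S_{q_n}(g_n)$ by $q_ng_n$ using the $1/q_n$-periodicity of $g_n$. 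You instead work entirely in Fourier space: since every frequency of $g_n$ is a multiple of $q_n$, $jq_n\alpha\equiv j\eta\ (\mathrm{mod}\ 1)$ with $\eta:=q_n\alpha-\ell_n$ of size $\asymp q_{n+1}^{-1}$, so $S_m(g_n)(x)$ becomes a finite linear combination of the explicit geometric sums $\sum_{\ell<m}e^{2\pi ij\ell\eta}$, and the polynomial in $m$ emerges at once from Taylor-expanding these in powers of $\eta$ — the Faulhaber polynomials $p_k(m)$ enter at that step rather than from summation over a dynamical index. This sidesteps Denjoy--Koksma and the $S_{q_n}(g_n)\approx q_ng_n$ comparison, and the coefficient bounds follow directly from the exponential decay of $a_m$; the paper's dynamical route, by contrast, is less tied to the precise exponential form of the decay. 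Two details worth making explicit: your bound $|H_k(x)|\ll k!/(\tau'q_n)^{k+1}$ is accurate only up to a factor of size $C^k$ (from comparing the discrete sum $\sum_{j\ge1}j^ke^{-\tau'q_nj}$ with the $\Gamma$-integral), which your geometric tail absorbs harmlessly since the ratio $|\eta|m/q_n\to 0$; and you should observe that $P_n$ is indeed real-valued, as the proposition requires, because the $\pm j$ Fourier modes pair off via $a_{-jq_n}=\overline{a_{jq_n}}$.
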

\begin{proof} Fix $n\in \N$ and let $m=\tilde{k}wq_n+a$, where $k:=\tilde{k}w\leq \frac{q_{n+1}^{1-\delta}}{q_n}$ and $0\leq a< wq_n$. Notice first that the mean of $g'$ is zero, hence, by the Denjoy-Koksma inequality, $\sup_{x\in\T}|S_{q_n}(g')(x)|=o(1)$. Using $a\leq wq_n$, $w\leq \log^3q_n$ and the cocycle identity (splitting into sums of length $q_n$), we get
 $$
 \sup_{a\leq wq_n}\sup_{x\in\T}|S_{a}(g')(x)|\leq \log^3 q_n+o(q_n)=o(q_n),
 $$
where the $o(q_n)$ terms comes from the last interval of length $\leq q_n$ and we use unique ergodicity to note that $\sup_{s\leq q_n}|S_s(g')(\cdot)|=o(q_n)$.
Therefore,
$$
|S_a(g)(x)-S_a(g)(x+kq_n\alpha)|\leq2\sup_{x\in \T} |S_a(g')(x)|\frac{q_{n+1}^{1-\delta}}{q_nq_{n+1}}=o(1).
$$
Hence, by the cocycle identity: $S_m(g)(x)=S_{kq_n}(g)(x)+S_a(x+kq_n\alpha)$, it is enough to show that for $k \leq \frac{q_{n+1}^{1-\delta}}{q_n}$, we have
$$
|S_{kq_n}(g)(x)-P_n(x,m)|=o(1),
$$
where $P_n(\cdot,\cdot)$ is as in the statement of the proposition. Notice that if we construct $P_n(\cdot,\cdot)$ satisfying the assertions of the proposition, then
$$
\sup_{x\in \T}|P_n(x,m)-P_n(x,kq_n)|=o(1).
$$
Indeed, by the mean value theorem for $j\leq d(\delta)$, $(kq_n+a)^j\leq (kq_n)^j+d(\delta)a(2kq_n)^{j-1}$, which implies
$$
\sum_{j=1}^{d(\delta)}a_j(\cdot)m^j= \sum_{j=1}^{d(\delta)}a_j(\cdot)(kq_n)^j+\sum_{j=1}^{d(\delta)}a_j(\cdot)O_\delta(a (kq_n)^{j-1}).
$$
Remebering that $a\leq q_n\log^3q_n$, for $j=1$, by \eqref{eq:bocoe}, $|a_1(\cdot) O_\delta(a)|=o(1)$, and by the same equation, for every $2\leq j\leq d(\delta)$,
\begin{multline*}
|a_j(\cdot)|O_\delta(a (kq_n)^{j-1})\leq O_\delta\Big(q_{n}^{-1}q_{n+1}^{-j+1}q_n(\log^3 q_n) m^{j-1}\Big)=
O_\delta\Big((\log^3 q_n)q_{n+1}^{-j+1+(1-\delta)(j-1)}\Big)=\\O_\delta((\log^3 q_n)q_{n+1}^{-\delta(j-1)})=o(1),
\end{multline*}
if $n$ is sufficiently large. Writing
$$
|S_m(g)(x)-S_a(g)(x)-P_n(x,m)|$$$$=|
S_{kq_n}(g)(x)-P_n(x,kq_n)+S_a(g)(x+kq_n\alpha)-
S_a(g)(x)+P_n(x,kq_n)-P_n(x,m)|,$$
we see that it is enough to construct $P_n(\cdot,\cdot)$ satisfying the assertions of the theorem and such that  (uniformly) for every $k\leq \frac{q_{n+1}^{1-\delta}}{q_n}$ and every $x\in \T$, we have
\be\label{eq:tysh}
|S_{kq_n}(g)(x)-P_n(x,kq_n)|=o(1).
\ee
Notice that by \eqref{eq:subz} in Lemma \ref{lem:an} for every $\ell \leq \frac{q_{n+1}^{1-\delta}}{q_n}$ (the latter number is $\leq \frac{q_{n+1}}{q_{n^\ast}}=K_n$),
$$
|S_{\ell q_n}(g-g_n)(x)|=o(1),
$$
and therefore, it is enough to show \eqref{eq:tysh} for $g=g_n$.
By the cocycle identity, we have
$$
S_{kq_n}(g_n)(x)=\sum_{i=0}^{k-1}S_{q_n}(g_n)(x+iq_n\alpha)=k S_{q_n}(g_n)(x)+\sum_{i=0}^{k-1}\Big(S_{q_n}(g_n)(x+iq_n\alpha)-S_{q_n}(g_n)(x)\Big).
$$
We now use Taylor expansion of $S_{q_n}(g)(\cdot)$ up to order $d=d(\delta)=[\frac{1}{\delta}]$:
$$
S_{q_n}(g_n)(x+iq_n\alpha)-S_{q_n}(g_n)(x)=\sum_{s=1}^{d-1}\frac{S_{q_n}(g_n^{(s)})(x)}{s!}
[i\|q_n\alpha\|]^s + \frac{S_{q_n}(g_n^{(d)})(\theta_i)}{d!}
[i\|q_n\alpha\|]^d.
$$
Summing over $i\in \{0,\ldots, k-1\}$, denoting $M(s):=\sum_{i=0}^{k-1} i^s$, we get
\begin{multline}\label{eq:krw}
S_{kq_n}(g_n)(x)=\\kS_{q_n}(g_n)(x)+\sum_{s=1}^{d-1}\frac{\|q_n\alpha\|^s S_{q_n}(g_n^{(s)})(x)}{s!}M(s)+\sum_{i=0}^{k-1}\frac{S_{q_n}(g_n^{(d)})(\theta_i)}{d!} [i\|q_n\alpha\|]^d.
\end{multline}
Notice that
\be\label{eq:lsd}
\Big|\sum_{i=0}^{k-1}\frac{S_{q_n}(g_n^{(d)})(\theta_i)}{d!} [i\|q_n\alpha\|]^d\Big|\leq M(d) \|q_n\alpha\|^d\cdot \sup_{x\in \T }|S_{q_n}(g_n^{(d)})(x)|.
\ee
We also have
\be\label{eq:msb}
M(s)=\sum_{i=0}^{k-1}i^s= \frac{1}{s+1}k^{s+1}+O(k^s),
\ee
so using $k\leq \frac{q_{n+1}^{1-\delta}}{q_n}$, we obtain
$$
M(d)\|q_n\alpha\|^d= O\Big(k^{d+1}q_{n+1}^{-d}\Big)=O\Big(q_n^{-d-1} q_{n+1}^{1-\delta d-\delta}\Big)=o(1),
$$
since $\frac1\delta\leq d+1$ (by the definition of $d$), whence $1-\delta d-\delta\leq 0$. Moreover, since $g_n^{(d)}$ is smooth (and has zero mean), it follows by the Denjoy-Koksma inequality that $|S_{q_n}(g_n^{(d)})(x)|=o(1)$ (uniformly in $x\in \T$) if $n$ is large enough. So the RHS of \eqref{eq:lsd} is $o(1)$. Plugging this into \eqref{eq:krw}, we get
\be\label{eq:msb2}
S_{kq_n}(g_n)(x)=kS_{q_n}(g_n)(x)+\sum_{s=1}^{d-1}\frac{\|q_n\alpha\|^s S_{q_n}(g_n^{(s)})(x)}{s!}M(s)+o(1).
\ee
Notice that for every $s\leq d-1$, using $k\leq \frac{q_{n+1}^{1-\delta}}{q_n}$, we obtain
$$
k^s\|q_n\alpha\|^s\leq q_{n+1}^{(1-\delta)s -s}q_n^{-s}=o(1),
$$
and again by the Denjoy-Koksma inequality, it follows that $|S_{q_n}(g_n^{(s)})(x)|=o(1)$ (uniformly in $x\in \T$) if $n$ is large enough. Therefore,
$$
O(k^s)\frac{\|q_n\alpha\|^s S_{q_n}(g_n^{(s)})(x)}{s!}=o(1).
$$
Therefore, using \eqref{eq:msb}, \eqref{eq:msb2} implies that
\begin{multline}\label{fin:eq}
S_{kq_n}(g_n)(x)=kS_{q_n}(g_n)(x)+\sum_{s=1}^{d-1}\frac{\|q_n\alpha\|^s S_{q_n}(g_n^{(s)})(x)}{(s+1)!}k^{s+1}+o(1)=\\kS_{q_n}(g_n)(x)+\sum_{s=1}^{d-1}\frac{\|q_n\alpha\|^s S_{q_n}(g_n^{(s)})(x)}{q_n^{s+1}(s+1)!}[kq_n]^{s+1}+o(1).
\end{multline}
Finally, notice that by the $1/q_n$-periodicity of $g_n$,
$$
|S_{q_n}(g_n)(x)-q_n g_n(x)|\leq \sum_{j=0}^{q_n-1}\Big|g_n(x+j\alpha)-g_n(x+j
\frac{p_n}{q_n})\Big|
=O(q_n\|q_n\alpha\|),
$$
whence
$$
|kS_{q_n}(g_n)(x)-kq_n g_n(x)|\leq \frac{q_{n+1}^{1-\delta}}{q_n}O( q_n\|q_n\alpha\|)=O(q_{n+1}^{-\delta})=o(1).$$
We define $a_1(x):=g_n(x)$, $a_s(x):=\frac{\|q_n\alpha\|^{s-1} S_{q_n}(g_n^{(s-1)})(x)}{q_n^{s}s!}$ (for $s=2,\ldots,d$) and $P_n(x,m):=\sum_{s=1}^da_s(x)m^s$. Then, by \eqref{fin:eq},
$$
S_{kq_n}(g_n)(x)=P_n(x,kq_{n})+o(1).
$$
It remains to bound the coefficients of $P_n(\cdot,\cdot)$. Notice that
$|a_1(x)|=|g_n(x)|\leq e^{-\tau q_n}$, by the bound on the Fourier coefficients of $g$. Moreover, for $s\geq 2$,
$$
|a_s(x)|\leq \frac{1}{q_n^sq_{n+1}^{s-1}}|S_{q_n}(g_n^{(s-1)})(x)|\leq \frac{1}{q_nq_{n+1}^{s-1}},
$$
since by the Denjoy-Koksma inequality, $|S_{q_n}(g_n^{(s)})(x)|=o(1)$.
This finishes the proof.
\end{proof}

Proposition \ref{lem:toadd1}  implies the following corollary:

\begin{corollary}\label{cor:jad} For every $\delta>0$ there exists $n_\delta\in \N$ such that for every $n\geq n_\delta$, every $m\leq q_{n+1}^{1-\delta}$ and every $w\leq \log^3 q_n$,  we have
$$
d\Big(T^m(x,y), T^{m\mod wq_n}(x,y+P_n(x,m))\Big)=o(1),
$$
uniformly over all $(x,y)\in \T^2$, where $P_n$ is the polynomial from Proposition~\ref{lem:toadd1} and $\deg P_n\leq [\frac{1}{\delta}]$.
\end{corollary}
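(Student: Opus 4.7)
The plan is to reduce the claim directly to Proposition \ref{lem:toadd1} by unpacking the definition of $T$ and the metric $d$. Since $T^j(x,y) = (x+j\alpha, y + S_j(g)(x))$ for every $j \in \N$, writing $r := m \mod wq_n$ I obtain
$$T^m(x,y) = (x + m\alpha, y + S_m(g)(x)), \qquad T^r(x, y + P_n(x,m)) = (x + r\alpha, y + P_n(x,m) + S_r(g)(x)),$$
so the distance decomposes cleanly as
$$d\bigl(T^m(x,y),\, T^r(x, y + P_n(x,m))\bigr) = \|(m-r)\alpha\| + \|S_m(g)(x) - S_r(g)(x) - P_n(x,m)\|.$$
It therefore suffices to show that each of these two summands tends to zero uniformly in $(x,y)\in \T^2$, in $m \leq q_{n+1}^{1-\delta}$, and in $w \leq \log^3 q_n$.

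The second summand is exactly the object controlled by Proposition \ref{lem:toadd1}: inequality \eqref{eq:polap} asserts that it is $o(1)$ as $n \to \infty$, uniformly over precisely the parameter ranges claimed in the corollary. This disposes of the $y$-coordinate discrepancy and is the only nontrivial ingredient. In particular the degree bound $\deg P_n \leq \lfloor 1/\delta\rfloor$ and the coefficient bounds \eqref{eq:bocoe} are inherited verbatim from the proposition.

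For the first summand, I observe that $m - r$ is a nonnegative integer multiple of $wq_n$, say $m - r = kwq_n$ with $kwq_n \leq m \leq q_{n+1}^{1-\delta}$. Invoking the standard Diophantine bound $\|q_n\alpha\| \leq 1/q_{n+1}$ and the triangle inequality,
$$\|(m-r)\alpha\| \leq kw\,\|q_n\alpha\| \leq \frac{m}{q_n}\cdot \frac{1}{q_{n+1}} \leq \frac{q_{n+1}^{1-\delta}}{q_n q_{n+1}} = \frac{1}{q_n\, q_{n+1}^{\delta}},$$
and the right-hand side clearly tends to $0$ as $n \to \infty$, uniformly in $w$ and $m$ within the allowed ranges. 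Combining the two bounds completes the proof.

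The only real content is Proposition \ref{lem:toadd1}; there is essentially no obstacle in this corollary beyond noticing that the skew-product structure of $T$ cleanly separates the $x$-coordinate (handled by the trivial Diophantine estimate above) from the $y$-coordinate (handled by the polynomial approximation of the Birkhoff sums of $g$).
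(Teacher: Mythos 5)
Your proof is correct and follows the same route as the paper: write out the skew-product coordinates, handle the second coordinate via \eqref{eq:polap} in Proposition~\ref{lem:toadd1}, and bound $\|(m-(m\bmod wq_n))\alpha\|$ by the Diophantine estimate $\|q_n\alpha\|\leq 1/q_{n+1}$ together with $m\leq q_{n+1}^{1-\delta}$. The only (harmless) point glossed over is that \eqref{eq:polap} is stated with absolute value rather than the circle metric, but $\|z\|\leq|z|$ makes your step valid; otherwise this matches the paper's argument, just with the elementary Diophantine estimate spelled out explicitly.
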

\begin{proof} Recall that $T^m(x,y)=(x+m\alpha,y+S_m(g)(x))$. Then notice that since $w\leq \log^3 q_n$, $\|[m-(m\mod wq_n)]\alpha\|=o(1)$, by the bound on $m$.  It remains to use \eqref{eq:polap}.
\end{proof}

Finally, we state the following general property of (complex) polynomials (see \cite{Naz}).
\begin{theorem}\label{thm:naz}[\cite[Theorem 1.1]{Naz}] There exists a global constant $C>0$ such that if $p(x)=\sum_{k=1}^n a_ke(\lambda_k x)$, then for every Borel subset $E\subset I\subset \T$ ($I$ is an interval), we have
$$
Leb(E)\leq C Leb(I) \Big[\frac{\sup_{E}|p(t)|}{\sup_{I}|p(t)|}\Big]^{\frac{1}{n-1}}.
$$
\end{theorem}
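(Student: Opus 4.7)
The plan is to adapt Nazarov's approach in \cite{Naz} via Lagrange interpolation in the $n$-dimensional space $V:=\mathrm{span}\{e(\lambda_k\cdot)\}_{k=1}^n$. After translating, dilating, and using the homogeneity of the statement in $p$, it suffices to show that if $I=[0,1]$ and $\sup_I|p|=1$, then $|E|\le C\,\eta^{1/(n-1)}$ whenever $\sup_E|p|\le\eta$. The starting point is the classical fact that $V$ is unisolvent on any $n$ distinct nodes $x_1<\dots<x_n$: the generalised Vandermonde determinant $\det\bigl(e(\lambda_k x_j)\bigr)_{j,k=1}^n$ is non-zero, so there exists a Lagrange basis $L_1,\dots,L_n\in V$ with $L_j(x_i)=\delta_{ij}$, and every $p\in V$ has the representation $p(x)=\sum_{j=1}^n p(x_j)L_j(x)$.

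My plan has two main steps. First, given $E\subset I$ of positive measure, I would select, by a greedy pigeonhole, nodes $x_1,\dots,x_n\in E$ with pairwise separation $|x_i-x_j|\ge c\,|E|/n$: choose $x_1$ as a density point of $E$, and at each subsequent stage choose $x_{j+1}\in E$ outside a $c|E|/n$-neighbourhood of the previously chosen nodes; for $c$ small enough the total excluded measure stays below $\tfrac12|E|$, so the procedure terminates successfully. Second, establish the Lebesgue-function bound
\[
\sup_{x\in I}\ \sum_{j=1}^n\bigl|L_j(x)\bigr|\ \le\ C_n\Bigl(\frac{|I|}{|E|}\Bigr)^{n-1}
\]
with a constant $C_n$ depending only on $n$. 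Combined with the Lagrange representation this gives
\[
1\ =\ \sup_I|p|\ \le\ \sup_j|p(x_j)|\cdot\sup_{x\in I}\sum_j|L_j(x)|\ \le\ \eta\,C_n\,|E|^{-(n-1)},
\]
which rearranges to $|E|\le C_n^{1/(n-1)}\,\eta^{1/(n-1)}$, as claimed.

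The main obstacle---and the technical heart of Nazarov's argument---is the frequency-independence of the Lebesgue-function bound in the second step. A naive divided-difference expression for $L_j$ carries denominators $\prod_{i<k}|\lambda_i-\lambda_k|^{-1}$, which is useless in our intended application, where the frequencies are multiples of successive denominators $q_k$ of $\alpha$ and may be arbitrarily clustered or spread out over an enormous range. The resolution is an induction on $n$: after dividing out the exponential $e(\lambda_1\cdot)$ and differentiating, $p$ turns into an exponential sum with $n-1$ frequencies $\{\lambda_k-\lambda_1\}_{k\ge 2}$, and tracking how the nodes $x_j$ and the basis $L_j$ transform through this reduction yields a recurrence for $C_n$ that depends only on $n$. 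Carrying out this inductive step while keeping the dependence on $\{\lambda_k\}$ out of the constant---in particular, preserving the separation of the nodes through the differentiation---is the delicate part of the proof and is where careful bookkeeping of auxiliary parameters is required.
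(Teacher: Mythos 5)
The paper does not prove this statement---it cites it verbatim as Theorem~1.1 of Nazarov \cite{Naz}---so there is no internal proof to compare against; your sketch is a stand-alone attempt to reprove Nazarov's theorem.

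The attempt has a genuine gap at precisely the step you flag as the ``technical heart,'' namely the frequency-independent Lebesgue-function bound $\sup_{x\in I}\sum_j|L_j(x)|\le C_n(|I|/|E|)^{n-1}$. That bound is asserted, not proved, and the inductive Rolle-plus-divided-differences route you sketch cannot deliver it: the coefficients $a_k$ are complex, so $p$ is $\mathbb{C}$-valued, and Rolle's theorem does not furnish a separated node set for the differentiated system. The failure is not cosmetic---already for $n=2$ the sum $a_1e(\lambda_1x)+a_2e(\lambda_2x)$ has on the order of $|\lambda_2-\lambda_1|$ real zeros in a unit interval, so no real-variable argument based on Rolle or zero counting can produce a constant depending only on $n$. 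Moreover, for the exponential system $\mathrm{span}\{e(\lambda_k\cdot)\}$ the Lagrange basis has no product form in the $(x-x_i)$; by Cramer's rule $L_j(x)$ is a ratio of generalized Vandermonde determinants $\det\bigl(e(\lambda_k x_i)\bigr)$ whose size genuinely depends on the spacing of the $\lambda_k$'s as well as of the $x_i$'s, so a frequency-free bound is the whole theorem in disguise rather than a routine preliminary. Nazarov's actual argument is not an interpolation argument: it extends $p$ to an entire function of exponential type, works in a complex strip around $I$, and controls the sub-level set via a Cartan-type small-values lemma together with Jensen-type estimates on $\log|p|$, with $n$ entering through the zero-distribution bookkeeping rather than through a Lagrange basis. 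Your node-selection step and final algebra are sound, but without an actual proof of the key estimate---or a complex-analytic substitute for it---the proposal is not a proof.
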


\section{Simple sieve theoretic lemma}
In what follows, $d_k(n)$ denotes the $k$th divisor function $\sum_{n = n_1 \ldots n_k} 1$. In particular, $d(n) := d_2(n)$ denotes the number of $1 \leq d \leq n$ such that $d | n$.
\begin{lemma} \label{le:sieve}
  For $ d | q$ and any $A > 0$, we have
  $$
\mathbf{1}_{(n, d) = 1} = \sum_{\substack{e | n\\ e \leq z}} \lambda_e
  + O \Big ( d(n) \sum_{\substack{p | d \\ p | n \\ p > (\log q)^{A}}} 1 \Big ) + O \Big ( d(n) \mathbf{1}_{\omega(n) > (\log\log q)^2} \Big ),
  $$
  where $z = \exp(A (\log\log q)^3)$ and $\lambda_e$ is defined by setting $\lambda_e = \mu(e)$ when $p | e \implies p \leq (\log q)^{A}$ and $p | e \implies p | d$ and $\omega(e) \leq (\log\log q)^2$ and $\lambda_e = 0$ otherwise.
\end{lemma}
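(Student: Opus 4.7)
The plan is to start from the M\"obius identity $\mathbf{1}_{(n,d)=1} = \sum_{e \mid (n,d)} \mu(e)$ and compare it to the truncated sum $\sum_{e \mid n,\, e \leq z} \lambda_e$. The key opening observation is that $\lambda_e \neq 0$ forces $e$ squarefree with $p \mid e \Rightarrow p \mid d$; together these imply that $e$ divides the radical of $d$, which in turn divides $d$, so $e \mid d$ automatically. Combined with the summation constraint $e \mid n$, every nonzero term in $\sum_{e \mid n,\, e \leq z} \lambda_e$ has $e \mid (n,d)$. Thus both sides of the proposed identity are of the form $\sum \mu(e)$ taken over subsets of the divisors of $(n,d)$, and their difference equals $\sum \mu(e)$ over those $e \mid (n,d)$ that violate at least one of the three conditions: (i) $e \leq z$, (ii) $p \mid e \Rightarrow p \leq (\log q)^A$, (iii) $\omega(e) \leq (\log\log q)^2$.

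The decisive input is that (ii) and (iii) together imply (i). Indeed, if the prime factors of $e$ are all $\leq (\log q)^A$ and there are at most $(\log\log q)^2$ of them, then by squarefreeness
$$e \leq \bigl((\log q)^A\bigr)^{(\log\log q)^2} = \exp\bigl(A (\log\log q)^3\bigr) = z.$$
Consequently the $e$'s missing from the truncated sum are exactly those which violate (ii) or (iii); the explicit cutoff $e \leq z$ contributes no further loss, which motivates the definition of $z$.

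To bound the contribution of $e$'s violating (ii), I would factor each such $e$ as $e = p \cdot e'$, where $p \mid e$ is a prime with $p > (\log q)^A$, $p \mid d$ (inherited from $\lambda_e \neq 0$ or from $e \mid (n,d)$), and $e' \mid n$ with $(p, e') = 1$. Summing trivially over $e'$ yields the bound $\sum_{p \mid n,\, p \mid d,\, p > (\log q)^A} d(n)$, which is precisely the first error term. For $e$'s violating (iii), the inclusion $e \mid n$ forces $\omega(e) \leq \omega(n)$, so the set of such $e$ is empty unless $\omega(n) > (\log\log q)^2$; in that case, the crude bound $\sum_{e \mid n} 1 = d(n)$ yields the second error term.

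The argument is essentially mechanical and I expect no serious obstacle; the only step requiring any numerical care is the compatibility check (ii) $\wedge$ (iii) $\Rightarrow$ (i), which is exactly what fixes the truncation parameter $z = \exp(A (\log\log q)^3)$, and the bookkeeping to verify that $\lambda_e \neq 0$ together with $e \mid n$ recovers the condition $e \mid (n,d)$ without spurious terms.
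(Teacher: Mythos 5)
Your proof is correct and follows essentially the same route as the paper's: start from the M\"obius identity $\mathbf{1}_{(n,d)=1}=\sum_{e\mid(n,d)}\mu(e)$, note that (ii) together with (iii) already force $e\le z$ (this is what fixes $z=\exp(A(\log\log q)^3)$), and observe that the two error terms exactly absorb the divisors of $(n,d)$ violating (ii) or (iii). The only cosmetic difference is that the paper phrases the argument as a case split (the ``bad'' cases $\omega(n)>(\log\log q)^2$ or $p\mid(n,d)$ with $p>(\log q)^A$ being trivial since the error terms then dominate), whereas you bound the difference directly; the content is the same.
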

\begin{proof}

  If $n$ has more than $(\log\log q)^2$ prime factors or if $(n,d)$ has a prime divisor greater than $> (\log q)^{A}$ then the result trivially follows.
  Suppose therefore that $n$ has less than $(\log\log q)^2$ prime factors and that $p | (n,d) \implies p \leq (\log q)^{A}$.
  In that case, by the usual inclusion-exclusion,
  $$
  \mathbf{1}_{(n,d) = 1} = \sum_{\substack{e | n \\ p | e \implies p | d \\ p | e \implies p \leq \log^{A} q}} \mu(e).
  $$
  Since $n$ has at most $(\log\log q)^2$ prime factors, we can write the above as
  $$
  \sum_{\substack{e | n \\ p | e \implies p | d \\ p | e \implies p \leq \log^{A} q \\ \omega(e) \leq (\log\log q)^2 \\ e \leq z}} \mu(e),
  $$
  where the condition $e \leq z$ is implied by $p | e \implies p \leq \log^{A} q$ and $\omega(e) \leq (\log\log q)^2$.

\end{proof}

To handle the contribution of the divisor function in Lemma \ref{le:sieve}, we will need the following special case of a result of Shiu.
\begin{lemma}[Shiu's theorem] \label{le:shiu}
  Let $k, \ell > 0$ and $\varepsilon > 0$ be given. Then, for any $y > x^{\varepsilon}$, $q \leq y^{1 - \varepsilon}$ and $(a,q) = 1$,
  $$
  \sum_{\substack{x \leq n \leq x + y \\ n \equiv a \pmod{q}}} d_k(n)^{\ell} \ll_{\varepsilon} \frac{y}{\varphi(q)} \cdot (\log x)^{k^{\ell} - 1}.
  $$
\end{lemma}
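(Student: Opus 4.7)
The plan is to deduce this directly from Shiu's 1980 theorem on multiplicative functions in arithmetic progressions and short intervals. Recall Shiu proved that for any nonnegative multiplicative function $f$ satisfying (i) $f(p^j)\leq A_1^{j}$ for all primes $p$ and integers $j\geq 1$ and (ii) $f(n)\ll_{\varepsilon'} n^{\varepsilon'}$ for every $\varepsilon' > 0$, one has, uniformly for $y>x^{\varepsilon}$, $q\leq y^{1-\varepsilon}$ and $(a,q)=1$,
$$
\sum_{\substack{x\leq n\leq x+y\\ n\equiv a\pmod q}} f(n) \ \ll_{\varepsilon}\ \frac{y}{\varphi(q)\log x}\exp\Bigl(\sum_{\substack{p\leq x\\ p\nmid q}}\frac{f(p)}{p}\Bigr).
$$
So my first step is simply to apply this with $f(n)=d_{k}(n)^{\ell}$.

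The second step is to verify the hypotheses. Multiplicativity is immediate since $d_{k}$ is multiplicative and the $\ell$-th power of a multiplicative function is multiplicative. The prime-power bound follows from $d_{k}(p^{j})=\binom{j+k-1}{k-1}\leq k^{j}$, so $d_{k}(p^{j})^{\ell}\leq k^{j\ell}=(k^{\ell})^{j}$, giving (i) with $A_{1}=k^{\ell}$. The growth condition (ii) is the classical estimate $d_{k}(n)\ll_{k,\varepsilon'} n^{\varepsilon'}$ raised to the $\ell$-th power.

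The third step is to evaluate the exponential. Since $d_{k}(p)=k$ on primes, we have $f(p)=k^{\ell}$, and Mertens' theorem gives
$$
\sum_{\substack{p\leq x\\ p\nmid q}}\frac{f(p)}{p}=k^{\ell}\sum_{p\leq x}\frac{1}{p}+O_{k,\ell}\Bigl(\sum_{p|q}\frac{1}{p}\Bigr)=k^{\ell}\log\log x+O_{k,\ell}(\log\log\log q),
$$
where the error arises from removing the primes dividing $q$ and is harmless under our hypothesis $q\leq y^{1-\varepsilon}\leq x$. Hence the exponential is $\ll_{k,\ell}(\log x)^{k^{\ell}}$, and multiplying by the prefactor $y/(\varphi(q)\log x)$ yields the stated bound $\ll_{\varepsilon} y(\log x)^{k^{\ell}-1}/\varphi(q)$.

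I expect no real obstacle here beyond checking the hypotheses of Shiu's theorem, since both the short-interval length $y>x^{\varepsilon}$ and the modulus range $q\leq y^{1-\varepsilon}$ match its assumptions exactly. The only mild subtlety is the $\ell$-th power in the prime-power bound, but as noted this is trivial because $\binom{j+k-1}{k-1}\leq k^{j}$.
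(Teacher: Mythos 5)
Your proposal is correct and takes the same route as the paper, which simply cites Shiu's theorem; you have supplied the verification of Shiu's hypotheses and the Mertens computation that the paper leaves implicit. One remark: the $O(\log\log\log q)$ bound in your third step is more than is needed — since the primes $p\mid q$ are being \emph{removed} from the sum $\sum_{p\leq x} f(p)/p$ and $f\geq 0$, the deletion only decreases the exponent, so the bound $\exp(\cdot)\ll (\log x)^{k^{\ell}}$ follows immediately without any estimate on $\sum_{p\mid q}1/p$.
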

\begin{proof}
  This follows from the main theorem of \cite{Shiu}.
\end{proof}
We will also need the following special case of Shiu's theorem when dealing with reduced residues in short intervals.

\begin{lemma} \label{le:shiureduced}
 Let $x \leq q$ and $y > q^{\varepsilon}$. Then,
  $$
  \sum_{\substack{x \leq n \leq x + y \\ (n,q) = 1}} 1 \ll_{\varepsilon} \frac{\varphi(q)}{q} \cdot y.
  $$

\end{lemma}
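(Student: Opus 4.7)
The strategy is direct Möbius inversion, and the hypothesis $y > q^\varepsilon$ is precisely what allows one to avoid invoking genuine sieve machinery. First, I would expand the coprimality indicator as $\mathbf{1}_{(n,q)=1} = \sum_{d \mid (n,q)} \mu(d)$ and swap the order of summation to obtain
$$
\sum_{\substack{x \leq n \leq x + y \\ (n,q)=1}} 1 \;=\; \sum_{d \mid q} \mu(d)\, \#\{x \leq n \leq x + y : d \mid n\} \;=\; \sum_{d \mid q} \mu(d) \left( \frac{y}{d} + O(1) \right) \;=\; y \cdot \frac{\varphi(q)}{q} + O(d(q)).
$$
The main term already matches the required upper bound exactly, so the task reduces to showing that the error $O(d(q))$ is absorbed into $\frac{\varphi(q)}{q} y$.

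For this I would use two standard facts. The divisor bound gives $d(q) \ll_\varepsilon q^{\varepsilon/2}$; combined with $y > q^\varepsilon$ this yields $d(q) \ll_\varepsilon q^{-\varepsilon/2} y$. On the other hand, Mertens' theorem gives $\varphi(q)/q \gg 1/\log\log(q+2) \gg_\varepsilon q^{-\varepsilon/2}$ for all $q$ large enough in terms of $\varepsilon$. Combining these two bounds, the error $O(d(q))$ is dominated by $\varphi(q) y / q$, completing the proof. For $q$ bounded in terms of $\varepsilon$ the statement is trivial, since both sides are then $\asymp y$.

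There is no substantive obstacle: the hypothesis $y > q^\varepsilon$ was chosen precisely so that the pointwise error $O(d(q)) = O_\varepsilon(q^{\varepsilon/2})$ produced by trivial Möbius inversion is negligible compared to the expected main term, and no Selberg-type or Brun-type sieve is required (in contrast to Lemma \ref{le:sieve}, which is designed for the regime where $y$ may be small relative to $q$).
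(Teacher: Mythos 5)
Your proof is correct, and it takes a genuinely different and more elementary route than the paper. The paper applies Shiu's theorem directly to the multiplicative function $\mathbf{1}_{(n,q)=1}$, obtaining the bound $y\exp\bigl(\sum_{p\leq y}(\mathbf{1}_{(p,q)=1}-1)/p\bigr) \ll y\prod_{p\leq y,\,p\mid q}(1-1/p)$, and then notes that $\prod_{p>y,\,p\mid q}(1-1/p)^{-1}\ll_{\varepsilon}1$ because $y>q^{\varepsilon}$ forces $q$ to have at most $1/\varepsilon$ prime factors exceeding $y$. You instead perform trivial M\"obius inversion to get the exact main term $y\,\varphi(q)/q$ plus an error $O(d(q))$, and absorb the error using $d(q)\ll_{\varepsilon}q^{\varepsilon/2}$ together with $\varphi(q)/q\gg 1/\log\log q\gg_{\varepsilon}q^{-\varepsilon/2}$. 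Both are valid; the paper's route is a one-line invocation of machinery it already has on hand (and which it also uses in Lemma~\ref{le:trivialbounding} for genuinely non-trivial weights), while yours is self-contained and shows the statement is really a triviality in disguise, costing nothing beyond the divisor bound. One small remark: you do not actually need the separate case analysis for bounded $q$ at the end, since the bounds $d(q)\ll_{\varepsilon}q^{\varepsilon/2}$ and $\varphi(q)/q\gg_{\varepsilon}q^{-\varepsilon/2}$ hold for all $q\geq 1$ with $\varepsilon$-dependent constants.
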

\begin{proof}
 By Shiu's theorem, the sum is bounded by$$
    y \cdot \exp \Big ( \sum_{\substack{p \leq y}} \frac{\mathbf{1}_{(p,q) = 1} - 1}{p} \Big ) \ll y \prod_{\substack{p \leq y \\ p | q}} \Big( 1 - \frac{1}{p} \Big ).
    $$
    It remains to notice that $\prod_{y < p < q} (1 - 1 / p)^{-1} \ll_{\varepsilon} 1$ to conclude.
\end{proof}

\begin{lemma} \label{le:trivialbounding}
  Let $A > 1000$ and $\varepsilon > 0$ be given. Then, for any $x \leq q$, $q^{\varepsilon} \leq y \leq q$, $r \leq y^{1 - \varepsilon}$ and $a \leq r$ and any $d | q$,
  $$
  \sum_{\substack{x \leq n \leq x + y \\ n \equiv a \pmod{r}}} d(n) \Big ( \sum_{\substack{p | d \\ p | n \\ p > (\log q)^{A}}} 1 + \mathbf{1}_{\omega(n) > (\log\log q)^2} \Big ) \ll \frac{y}{(\log q)^{A / 2}}.
  $$
\end{lemma}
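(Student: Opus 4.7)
The plan is to split the left-hand side as $S_1+S_2$, where $S_2$ is the part involving the indicator $\mathbf{1}_{\omega(n)>W}$ with $W:=(\log\log q)^2$, and $S_1$ is the part involving the inner prime sum restricted to $p\mid d$, $p\mid n$, $p>P:=(\log q)^A$.

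For $S_2$ I will use the elementary pointwise bound $2^{\omega(n)}\le d(n)$, which yields
\[
d(n)\,\mathbf{1}_{\omega(n)>W}\le 2^{-W}\,d(n)\,2^{\omega(n)}\le 2^{-W}d(n)^2,
\]
and then apply Shiu's theorem (Lemma~\ref{le:shiu}, $k=\ell=2$) to bound
\[
\sum_{\substack{x\le n\le x+y\\ n\equiv a\pmod{r}}}d(n)^2\ll \frac{y(\log q)^3}{\varphi(r)}.
\]
Since $2^W=2^{(\log\log q)^2}$ grows faster than any fixed power of $\log q$, the resulting bound comfortably beats $y/(\log q)^{A/2}$.

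For $S_1$ the plan is to swap the order of summation,
\[
S_1=\sum_{\substack{p\mid d\\ p>P}}T_p,\qquad T_p:=\sum_{\substack{x\le n\le x+y\\ n\equiv a\pmod{r}\\ p\mid n}}d(n),
\]
and treat each $T_p$ according to whether $P<p\le y^{\varepsilon/3}$ or $p>y^{\varepsilon/3}$. For \emph{small} primes $P<p\le y^{\varepsilon/3}$ coprime to $r$, the substitution $n=pm$ turns $T_p$ into a sum of $d(pm)\le 2d(m)$ over a progression $m\equiv a\bar p\pmod{r}$ of length $\sim y/p\ge y^{1-\varepsilon/3}$. Since $r\le y^{1-\varepsilon}\le(y/p)^{1-\varepsilon/3}$, Shiu's theorem applies and yields $T_p\ll y(\log q)/(p\varphi(r))$. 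Summing over such primes via
\[
\sum_{\substack{p\mid d\\ p>P}}\frac1p\le\frac{\omega_P(d)}{P}\ll\frac{\log q}{P\log P}
\]
produces a total contribution $\ll y(\log q)^{2-A}/(\varphi(r)\log\log q)$, which is comfortably inside $y/(\log q)^{A/2}$ since $A>1000$. For \emph{large} primes $p>y^{\varepsilon/3}$, the number of integers $n\in[x,x+y]$ with $n\equiv a\pmod{r}$ and $p\mid n$ is at most $y/p+1\ll y^{1-\varepsilon/3}$ (regardless of whether $p\mid r$, since $p\mid r$ forces $r\ge p>y^{\varepsilon/3}$ and hence $y/r<y^{1-\varepsilon/3}$ as well). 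Using the pointwise bound $d(n)\ll_\delta q^\delta$ for any small $\delta>0$ and summing over the $\le\omega(d)\ll\log q$ admissible primes yields $\ll q^\delta y^{1-\varepsilon/3}\log q$. Choosing $\delta<\varepsilon^2/6$ and recalling $y\ge q^\varepsilon$, this is a power-of-$q$ saving, far stronger than the target $(\log q)^{-A/2}$.

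The only real bookkeeping difficulty is the edge case $p\mid r$ in the small-primes regime: either $p\nmid a$ and then $T_p=0$, or $p\mid a$, in which case one divides through by $p$ to reduce the AP modulus from $r$ to $r/p$ (tracking that $\gcd(a/p,r/p)=\gcd(a,r)/p$) before invoking Shiu. This causes no essential obstruction, only a careful case distinction; everything else is routine sieve manipulation.
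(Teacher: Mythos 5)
Your overall decomposition and your bookkeeping around prime sizes are reasonable, and they differ from the paper's proof only in minor organizational choices (you split $S_1$ by prime size where the paper keeps all primes together and uses the bound $y/(pf)+1$; you control the $\omega$-indicator by the pointwise inequality $d(n)\mathbf{1}_{\omega(n)>W}\le 2^{-W}d(n)^2$ where the paper uses Cauchy--Schwarz to the same effect). However, there is a genuine gap running through every Shiu application: Lemma~\ref{le:shiu} requires the residue to be coprime to the modulus, and nothing in your argument enforces $(a,r)=1$. Your claimed bound $\sum_{n\equiv a\ (r),\, x\le n\le x+y}d(n)^2\ll y(\log q)^3/\varphi(r)$ is in fact false when $f:=(a,r)$ is large: taking $a\equiv 0\ (r)$ with $r$ squarefree and highly composite, the left side is $\gg d(r)^2(\varphi(r)/r^2)\,y(\log q)^3$, which exceeds the claimed right side by a factor $\asymp d(r)^2(\varphi(r)/r)^2\to\infty$. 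The same omission undermines the small-prime Shiu applications in $S_1$, since the shifted residue $a\bar p$ need not be reduced $\pmod r$. Your remark on the edge case $p\mid r$ tracks only the common factor $p$; writing $\gcd(a/p,r/p)=\gcd(a,r)/p$ tacitly concedes that $\gcd(a,r)$ can exceed $p$, and you never close that loop.

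The missing ingredient is precisely the first line of the paper's proof: set $f=(a,r)$, write $n=fm$ with $m\equiv a/f\pmod{r/f}$ and $(a/f,r/f)=1$, and use $d(n)\le d(f)d(m)$. After this reduction your $S_2$ argument is salvaged, since $\sum d(n)^2\le d(f)^2\sum_m d(m)^2\ll(d(f)^2/f)\,y(\log q)^3\ll y(\log q)^3$ because $\sup_f d(f)^2/f<\infty$, and the factor $2^{-W}$ still wins comfortably. For $S_1$, however, the reduction changes the case analysis in a way your prime-size split does not anticipate: when $p\mid f$ (which forces $f>(\log q)^A$), the substitution $n\mapsto m$ no longer removes $p$ at all, and the needed saving must come from $d(f)/f\ll f^{-3/4}\ll(\log q)^{-3A/4}$ rather than from $\sum 1/p$ --- this is the paper's ``$f$ large'' branch. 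When $p\nmid f$, your analysis works but one must also check $p\nmid r/f$ for the progression in $m$ to be nonempty. So the gcd reduction is not a mere bookkeeping fix; the paper's split on whether $f>(\log q)^A$ is the structural piece your proposal is missing.
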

\begin{proof}
  Let $f = (a, r)$. Then $n\equiv a \pmod r$ implies that $f|n$. Using the inequality $d(a b) \leq d(a) d(b)$, we can bound the above expression by
  \begin{equation} \label{eq:lemmaeasy}
  \leq d(f) \sum_{\substack{x / f \leq n \leq x / f + y / f \\ n \equiv a / f \pmod{r / f}}} d(n) \Big ( \sum_{\substack{p | d \\ p | n \\ p > (\log q)^{A}}} 1 + \sum_{\substack{p | d \\ p | f \\ p > (\log q)^{A}}} 1 + \mathbf{1}_{\omega(f n) > (\log\log q)^2} \Big ).
  \end{equation}
  If the middle term is non-zero then $f > (\log q)^{A}$. In that event, using that there are at most $\log q$ primes $p | d$ (since $d \leq q$), we  bound trivially
 $$
  \sum_{\substack{p | d \\ p | n \\ p > (\log q)^{A}}} 1 + \sum_{\substack{p | d \\ p | f \\ p > (\log q)^{A}}} 1 + \mathbf{1}_{\omega(f n) > (\log\log q)^2}\ll \log q.
 $$
 This and Lemma \ref{le:shiu} allow us to bound \eqref{eq:lemmaeasy} by (using $y\leq q$ and $d(f)\ll_{\varepsilon} f^{\varepsilon} \ll (\log q)^{A \varepsilon}$)
  $$
  \ll\frac{\log^2 q}{\varphi(r / f)} d(f) \cdot \frac{y}{f} \ll \frac{\log^2 q}{f^{3/4}} \cdot y \ll \frac{y}{(\log q)^{A / 2}}
  $$
  since $f\geq (\log q)^A$.
In the remaining cases, where the middle term in \eqref{eq:lemmaeasy} is zero, we can bound the contribution of the first term by
$$
\ll d(f)\sum_{\substack{p | d \\ p > (\log q)^{A}}}\sum_{\substack{x / fp \leq n \leq x / fp + y / fp \\ pn \equiv a / f \pmod{r / f}}}d(n)
,$$
which, by Lemma \ref{le:shiu}, leads to
  $$
  \ll d(f) \log q \sum_{\substack{p | d \\ p > (\log q)^{A}}} \Big ( \frac{y}{p f} + 1 \Big )\ll \frac{d(f)}{f} \log q \cdot \frac{y}{(\log q)^{A - 2}} + d(f) \log^2 q \ll \frac{y}{(\log q)^{A - 3}}
  $$
  because $d$ has at most $\log q$ prime divisors and $d(f)\ll_{\varepsilon} f^{\varepsilon}$. Finally, the contribution of the last term in \eqref{eq:lemmaeasy} can be handled by an application of the Cauchy-Schwarz inequality, Lemma \ref{le:shiu} and the fact that for any $A > 0$ and all $q$ sufficiently large,
  \begin{align*}
  \sum_{\substack{x/f \leq n \leq x/f + y/f \\ \omega(f n) > (\log\log q)^{2}}} 1 & \leq \sum_{\substack{x/f \leq n \leq x/f + y/f \\ \omega(f n) > A \log\log q}} 1 \\ & \leq 2^{-A \log\log q} \sum_{x/f \leq n \leq x/f + y/f} d(f) d(n) \ll \frac{d(f)}{f} \cdot y (\log q)^{1 - A \log 2},
  \end{align*}
  using the inequality $2^{\omega(n)} < d(n)$.
\end{proof}

\begin{lemma} \label{le:sieveprimes}
 Let $z \geq 1$ be given.
  There exist real coefficients $\lambda_d$ with $|\lambda_d| \leq 1$ such that
  $$
  \mathbf{1}_{\substack{n \text{ prime}}} \leq \sum_{\substack{d | n \\ d \leq z}} \lambda_d
  $$
  and for any $y > z^2$, $x \geq 1$,
  $$
  \sum_{n \in [x, x + y]} \Big ( \sum_{\substack{d | n \\ d \leq z}} \lambda_d \Big ) \ll \frac{y}{\log z}.
  $$
\end{lemma}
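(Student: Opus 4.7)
The plan is to invoke a standard upper-bound sieve for primes, for which weights $\lambda_d$ with the required properties exist. I would use either Selberg's $\Lambda^2$ sieve or, since $|\lambda_d| \leq 1$ is required, a Brun/Rosser–Iwaniec $\beta$-sieve, both of which produce weights of the form $\lambda_d = \mu(d) \mathbf{1}_E$ supported on squarefree $d \leq z$ for an explicit set $E$, so that the pointwise bound $|\lambda_d| \leq 1$ is automatic.

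Concretely, I would choose the sifting range to consist of primes $p \leq z^{1/2}$, so that the sieve support is squarefree $d \leq z$ with prime factors $\leq z^{1/2}$. The sieve inequality then yields $\mathbf{1}_{(n, P(z^{1/2})) = 1} \leq \sum_{d | n, d \leq z} \lambda_d$, and the right-hand side majorizes $\mathbf{1}_{n \text{ prime}}$ for every $n > z^{1/2}$, while for composite $n$ the Selberg quadratic form is automatically $\geq 0$. Primes $n \leq z^{1/2}$ can be covered pointwise by adding small indicator corrections to $\lambda_d$; since the Brun weights at primes $p$ take values in $\{-1, 0\}$, such corrections preserve $|\lambda_d| \leq 1$.

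Next I would swap the order of summation to obtain
\begin{equation*}
\sum_{n \in [x, x+y]} \sum_{d | n, d \leq z} \lambda_d = \sum_{d \leq z} \lambda_d \Big( \frac{y}{d} + O(1) \Big) = y \sum_{d \leq z} \frac{\lambda_d}{d} + O\Big( \sum_{d \leq z} |\lambda_d| \Big).
\end{equation*}
The main term satisfies $\sum_d \lambda_d/d \ll 1/\log z$ by the sieve analysis (Selberg's optimization formula, or the fundamental lemma of the Brun sieve with truncation parameter of order $\log\log z$), together with Mertens' product $\prod_{p \leq z^{1/2}}(1 - 1/p) \asymp 1/\log z$. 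The error satisfies $\sum_d |\lambda_d| \leq z$, which is $\ll y/\log z$ thanks to the hypothesis $y > z^2$. Combining the two contributions yields the desired estimate.

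The main technical point is reconciling $|\lambda_d| \leq 1$ with the pointwise majorization for primes $n \leq z^{1/2}$ while preserving the $\ll y/\log z$ average. The cleanest way to do this is via the Brun construction, where the weights are $\mu$-valued on an explicit set, so that the indicator corrections for small primes only flip a few coefficients from $-1$ to $0$ or from $0$ to $1$, maintaining the $\pm 1$ bound. The calibration of the Brun truncation level (so that the fundamental lemma applies with loss $O(1/\log z)$) is the most delicate point, but standard in sieve theory, and the gap $y > z^2$ is precisely what one needs to absorb the level-of-distribution error.
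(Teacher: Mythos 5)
Your outline is sound for primes $n > z^{1/2}$, but the single sentence disposing of primes $n \le z^{1/2}$ hides a genuine gap. For $n=p$ prime one has $\sum_{d\mid p,\, d\le z}\lambda_d = \lambda_1 + \lambda_p$; the pointwise bound at large primes forces $\lambda_1 = 1$ (since for $p>z$ the sum collapses to $\lambda_1$), and then the bound at small primes forces $\lambda_p\ge 0$ for every prime $p\le z$. Your ``indicator correction'' effectively raises the Brun value $\lambda_p=-1$ up to $0$, which indeed keeps $|\lambda_d|\le 1$, but it ruins the density estimate: the bound $\sum_{d\le z}\lambda_d/d \asymp \prod_{p<z^{1/2}}(1-1/p)\asymp 1/\log z$ relies essentially on cancellation between the $\lambda_p=-1$ terms and the positive higher-order terms. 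Replacing $\lambda_p=-1$ by $\lambda_p\ge 0$ for all sieving primes $p<z^{1/2}$ adds $\sum_{p<z^{1/2}}1/p\asymp\log\log z$ to the density, so for the corrected weights one has $\sum_{d\le z}\lambda_d/d\asymp\log\log z$, and the main term in your final display is of order $y\log\log z$, not $y/\log z$. Your penultimate step asserts the $1/\log z$ bound ``by the sieve analysis,'' but that bound holds for the \emph{uncorrected} weights, which do not majorize $\mathbf{1}_{n\text{ prime}}$ at small primes; with the correction in place it fails.

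This is not a bookkeeping slip that can be patched afterward: once $\lambda_1=1$, $\lambda_p\ge 0$ for $p\le z$, and $|\lambda_d|\le 1$ are forced, the cancellation mechanism underlying every $\mu$-valued combinatorial sieve (Brun, Bonferroni, Rosser--Iwaniec) is unavailable, and it is not at all clear how to retain the $\asymp 1/\log z$ density while keeping the sieved sum nonnegative on all composites. The paper does not actually give a proof -- it cites a sieve reference -- so there is no argument to compare against; but the way this lemma is used in Lemma~\ref{lem:us1} does not need the pointwise majorization for small primes. Since there are at most $\pi(z^{1/2})\ll z^{1/2}\ll y/\log z$ primes below $z^{1/2}$ when $y>z^2$, the clean route is to split $\mathbf{1}_{n\text{ prime}}\le \mathbf{1}_{n\text{ prime},\,n>z^{1/2}}+\mathbf{1}_{n\text{ prime},\,n\le z^{1/2}}$, apply the uncorrected Brun upper-bound sieve to the first part, and count the second part trivially. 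That argument is correct; the one you wrote, which tries to absorb the small primes into the sieve weights while retaining both $|\lambda_d|\le 1$ and the $1/\log z$ density, does not close.
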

\begin{proof}
  This is a standard combinatorial sieve estimate, see e.g \cite{BrunHooleyFordHalberstam}.
\end{proof}

  We will also need the following simple result.

\begin{lemma} \label{le:sieve2}
  Let $A > 10$ and $d | q$ be given and let $\lambda_{e}$ be the same coefficients as in Lemma \ref{le:sieve}. Then, for any $v \leq z$ such that $p | v \implies p | d$ and $p | v \implies p \leq (\log q)^{A}$,
  $$
  \sum_{\substack{e \leq z \\ v | e}} \frac{\lambda_e}{e} \ll_{A} \frac{1}{\varphi(v)} \cdot \frac{\varphi(d)}{d} + \frac{1}{(\log q)^{A - 1}}.
  $$
  If the condition $p | v \implies p | d$ or $p | v \implies p \leq (\log q)^A$ does not hold then the sum is empty.

  Moreover, for $v = 1$, we have
  $$
  \sum_{\substack{e \leq z}} \frac{\lambda_e}{e} = \frac{\varphi(d)}{d} + O_{A} \Big ( \frac{1}{(\log q)^{A - 1}} \Big ).
  $$
  \end{lemma}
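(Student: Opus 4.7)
The plan is to parametrize $e = vf$ with $(f,v) = 1$ and evaluate the resulting sum as a truncated Euler product. First, I observe that $\lambda_e = 0$ unless $e$ is squarefree with all prime factors dividing $d$ and bounded by $(\log q)^{A}$, and $\omega(e) \leq (\log\log q)^2$. Any such $e$ satisfies $e \leq ((\log q)^{A})^{(\log\log q)^2} = \exp(A(\log\log q)^{3}) = z$, so the cutoff at $z$ is redundant in practice. If $v$ violates any of $p \mid v \Rightarrow p \mid d$, $p \mid v \Rightarrow p \leq (\log q)^{A}$, or $\omega(v) \leq (\log\log q)^{2}$, then $v \mid e$ forces $\lambda_e = 0$, so the sum is empty, which establishes the emptiness claim. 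Otherwise $v$ is squarefree, and writing $e = vf$ with $(f,v) = 1$ yields
\[
\sum_{\substack{e \leq z \\ v \mid e}} \frac{\lambda_e}{e} \;=\; \frac{\mu(v)}{v}\sum_{\substack{f \text{ squarefree},\,(f,v)=1 \\ p \mid f \Rightarrow p \mid d,\ p \leq (\log q)^{A} \\ \omega(f) \leq W}} \frac{\mu(f)}{f}, \qquad W := (\log\log q)^{2} - \omega(v).
\]

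Without the cap $\omega(f) \leq W$, the inner sum equals the partial Euler product $E(v) := \prod_{p \mid d,\,p \leq (\log q)^{A},\,p \nmid v}(1 - 1/p)$. I would control the contribution of $\omega(f) > W$ by Rankin's trick: it is bounded by $2^{-W}\sum_f 2^{\omega(f)}/f \leq 2^{-W}\prod_{p \mid d,\,p \leq (\log q)^{A}}(1 + 2/p) \ll_{A} 2^{-W}(\log\log q)^{2}$, where the last estimate uses Mertens' theorem. Since $v$ is squarefree we have $v \geq 2^{\omega(v)}$, hence $v^{-1} \cdot 2^{-W} \leq 2^{-(\log\log q)^{2}}$, which is $\ll_{A} (\log q)^{-A}$ for $q$ large. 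Thus the total error from the $\omega$-truncation is $O_{A}((\log q)^{-A})$.

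For the main term, a direct manipulation gives $\tfrac{1}{v}E(v) = \tfrac{1}{\varphi(v)} \prod_{p \mid d,\,p \leq (\log q)^{A}}(1 - 1/p)$. The primes $p \mid d$ with $p > (\log q)^{A}$ contribute $\prod (1 - 1/p)^{-1} = 1 + O(\omega(d)/(\log q)^{A}) = 1 + O((\log q)^{1-A})$, since $\omega(d) \ll \log q$, so the partial product equals $\varphi(d)/d + O((\log q)^{1-A})$. Combining the main term and the truncation error yields the upper bound $\ll_{A} \tfrac{1}{\varphi(v)} \cdot \tfrac{\varphi(d)}{d} + (\log q)^{1-A}$. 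The case $v = 1$ repeats the same computation, retaining the sign $\mu(1) = 1$ of the main term, to produce the asymptotic $\varphi(d)/d + O_{A}((\log q)^{1-A})$.

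The only delicate step is ensuring that the Rankin tail bound multiplied by $1/v$ is uniformly small; this rests on the identity $\omega(v) + W = (\log\log q)^{2}$. When $\omega(v)$ is small, $2^{-W}$ is super-polynomially small in $\log q$, and when $\omega(v)$ is large, $1/v$ is super-polynomially small. This interplay is precisely the reason for carrying the constraint $\omega(e) \leq (\log\log q)^{2}$ in the definition of $\lambda_e$ in Lemma~\ref{le:sieve}.
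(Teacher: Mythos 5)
Your proof is correct and follows essentially the same route as the paper: parametrize $e=vf$, recognize the unrestricted sum as a truncated Euler product, bound the $\omega$-tail by Rankin, and restore $\varphi(d)/d$ by accounting for primes of $d$ above $(\log q)^A$. The one organizational difference is that the paper first disposes of $v > (\log q)^A$ trivially and then applies Rankin with the full threshold $(\log\log q)^2$, whereas you handle all $v$ uniformly via the identity $\omega(v)+W=(\log\log q)^2$ together with $v\geq 2^{\omega(v)}$ — a slightly tidier way to make the tail bound uniform in $v$.
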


  \begin{proof}
    Notice that the claim is trivial if $v > (\log q)^{A}$ as it just suffices to notice that
    $$
    \sum_{\substack{e \leq z \\ v | e}} \frac{\lambda_e}{e} \ll \frac{1}{v} \sum_{e \leq \exp(A (\log\log q)^3)} \frac{1}{e} \ll_{A} \frac{1}{(\log q)^{A  - 1}}.
    $$
    Therefore, assume now that $v \leq \log^{A} q$. We can further add to the subscript the condition that $p | v \implies p | d$ and $p | v \implies p \leq (\log q)^{A}$ since otherwise the sum is empty. We notice that since the condition $e \leq z$ in the definition of $\lambda_e$ is extraneous and implied by the other two conditions, our sum is equal to
    $$
    \sum_{\substack{p | e v \implies p \leq (\log q)^A \\ p | e v \implies p | d \\ \omega(e v) \leq (\log\log q)^2}} \frac{\mu(v e)}{v e}.
    $$
    By Rankin's bound,
    $$
    \sum_{\substack{p | e \implies p \leq (\log q)^{A} \\ \omega(e) > (\log\log q)^2}} \frac{1}{e} \ll_{A} \frac{1}{(\log q)^{A}}.
    $$
    Therefore, it remains to estimate
    $$
    \sum_{\substack{p | e v \implies p \leq (\log q)^{A} \\ p | e v \implies p | d}} \frac{\mu(e v)}{e v} = \frac{\mu(v)}{v} \cdot \sum_{\substack{p | e \implies p \leq \log^{A} q \\ p | e \implies p | d \\ (e, v) = 1}} \frac{\mu(e)}{e}  = \frac{\mu(v)}{v} \cdot \prod_{\substack{p | d \\ p \nmid v \\ p \leq (\log q)^{A}}} \Big ( 1 - \frac{1}{p} \Big ).
    $$
    Notice that since $p | v \implies p | d$ and $p |v \implies p \leq (\log q)^{A}$, we have
    $$
    \prod_{\substack{p | d \\ p \nmid v \\ p \leq (\log q)^{A}}} \Big (1 - \frac{1}{p} \Big ) = \prod_{\substack{p | d \\ p \leq (\log q)^{A}}} \Big (1 - \frac{1}{p} \Big ) \prod_{p | v} \Big (1 - \frac{1}{p} \Big )^{-1} = \prod_{\substack{p | d \\ p \leq (\log q)^{A}}} \Big ( 1 - \frac{1}{p} \Big ) \frac{v}{\varphi(v)}.
    $$

    It remains to notice that
    $$
    \prod_{\substack{p | d \\ p \leq (\log q)^{A}}} \Big ( 1 - \frac{1}{p} \Big ) = \frac{\varphi(d)}{d} \cdot \Big ( 1 + O \Big ( \frac{1}{\log^{A - 1} q} \Big ) \Big ) = \frac{\varphi(d)}{d} + O \Big ( \frac{1}{(\log q)^{A - 1}} \Big )
    $$
    and the claim follows.
  \end{proof}

  \section{Reduced residues in arithmetic progressions to large moduli} \label{sec:principal1}

  \begin{lemma}\label{lm:n5}
  Let $(r, q) = 1$ and $d | q$. Suppose that $\varphi(d) q / (d r) \rightarrow \infty$. Then,
  \begin{equation} \label{eq:delicate}
\frac{1}{r}  \sum_{a \leq r} \Big | \sum_{\substack{n \leq q \\ (n,d ) = 1 \\ n \equiv a \pmod{r}}} 1 - \frac{\varphi(d)}{d} \cdot \frac{q}{r} \Big | = o \Big ( \frac{\varphi(d)}{d} \cdot \frac{q}{r} \Big ).
  \end{equation}
  \end{lemma}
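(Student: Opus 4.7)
The plan is to prove the estimate by a Cauchy--Schwarz / variance argument. Set $S(a) := \#\{n \leq q : (n,d)=1,\ n \equiv a \pmod r\}$ and $M := \varphi(d)q/(dr)$. By Cauchy--Schwarz,
$$
\Big(\tfrac{1}{r}\sum_{a \leq r} |S(a) - M|\Big)^2 \leq \tfrac{1}{r}\sum_{a \leq r}(S(a) - M)^2,
$$
so it suffices to establish the variance bound $\sum_a (S(a)-M)^2 = o(rM^2) = o(\varphi(d)^2 q^2/(d^2 r))$. Since $(n,d)=1$ depends only on $\operatorname{rad}(d)$ and $\varphi(d)/d$ is invariant under replacing $d$ by $\operatorname{rad}(d)$, I may assume $d$ is squarefree. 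Crucially, $(r,d)=1$, as a consequence of $(r,q)=1$ together with $d \mid q$.

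The second moment is evaluated by pair counting: writing $n_2 = n_1 + h$ with $r \mid h$, the diagonal $h=0$ contributes $q\varphi(d)/d + O(2^{\omega(d)})$, while for $h \neq 0$, M\"obius inversion on the two coprimality conditions $(n,d)=(n+h,d)=1$ gives
$$
\#\{n : n,n+h \in [1,q],\ (n(n+h),d)=1\} = (q-|h|)\prod_{p \mid d} f_p(h) + O(4^{\omega(d)}),
$$
where $f_p(h) := (p-1)/p$ if $p \mid h$ and $(p-2)/p$ otherwise. Because $(r,d)=1$, substituting $h = rk$ gives $\prod_p f_p(rk) = \prod_p f_p(k)$, so the singular series depends only on $k$; this is the essential simplification.

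To evaluate the off-diagonal sum I use the factored expansion
$$
\prod_{p \mid d} f_p(k) = F_0 \sum_{e \mid d} G(e)\,\mathbf{1}_{e \mid k},\quad F_0 := \prod_{p \mid d}\tfrac{p-2}{p},\quad G(e) := \prod_{p \mid e}\tfrac{1}{p-2},
$$
together with the Euler-product identities $F_0 \sum_{e \mid d} G(e) = \varphi(d)/d$ and $F_0 \sum_{e \mid d} G(e)/e = (\varphi(d)/d)^2$, both obtained by a direct local computation. Combined with $\#\{0<|k|<K : e \mid k\} = 2K/e + O(1)$, these yield $\sum_{0<|k|<K}\prod_p f_p(k) = 2K(\varphi(d)/d)^2 + O(\varphi(d)/d)$, and partial summation with $K = q/r$ delivers
$$
\sum_{0<|k|<q/r} (q - r|k|)\prod_{p \mid d} f_p(k) = \frac{q^2 \varphi(d)^2}{d^2 r} + O\!\Big(\frac{q\varphi(d)}{d}\Big).
$$

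Collecting everything, $\sum_a S(a)^2 = q\varphi(d)/d + q^2\varphi(d)^2/(d^2 r) + O(q\varphi(d)/d + q\cdot 4^{\omega(d)}/r + 2^{\omega(d)})$. Using $\sum_a S(a) = q\varphi(d)/d + O(2^{\omega(d)})$, the main off-diagonal term cancels against $-2M\sum_a S(a) + rM^2 = -q^2\varphi(d)^2/(d^2 r) + O(q\varphi(d)2^{\omega(d)}/(dr))$, leaving the variance of size $O(q\varphi(d)/d + q\cdot 4^{\omega(d)}/r + \text{lower order})$. Divided by $r$, the first error becomes $O(q\varphi(d)/(rd)) = o((q\varphi(d)/(rd))^2) = o(M^2)$ precisely because $\varphi(d)q/(rd) \to \infty$; the $4^{\omega(d)}$ and $2^{\omega(d)}$ contributions are absorbed because $4^{\omega(d)}, 2^{\omega(d)}\ll d^{o(1)}$ and $d\le q$, so $4^{\omega(d)} d^2/\varphi(d)^2 = d^{o(1)} = o(q)$. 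The main obstacle is securing the refined $O(\varphi(d)/d)$ fluctuation bound for $\sum_{|k|<K}\prod_p f_p(k)$: the naive estimates $O(d)$ or $O(d/\varphi(d))$ would produce an error of order $O(q)$ after partial summation, which is too large to be absorbed under the hypothesis alone. The factored $F_0$--$G$ decomposition together with the two Euler-product identities is what forces the exact cancellation against the main term and gives the required bound.
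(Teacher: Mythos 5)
Your argument takes a genuinely different and more elementary route than the paper's. Both proofs begin by applying Cauchy--Schwarz to pass to a second moment; but from there the paper proceeds with a smooth weight $W$, the truncated sieve decomposition of $\mathbf{1}_{(n,d)=1}$ from Lemma~\ref{le:sieve} (with $z=\exp(A(\log\log q)^3)$), Poisson summation, and a careful analysis of the resulting dual sum, whereas you compute the variance directly by pair counting $n_2=n_1+rk$ and an exact M\"obius expansion. Your approach is more self-contained, since the paper's argument rests on a stack of auxiliary lemmas (\ref{le:sieve}, \ref{le:trivialbounding}, \ref{le:sieve2}) that are reusable elsewhere in the paper but superfluous for this single lemma. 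You correctly identify the crux of the matter: the fluctuation of the singular-series sum $\sum_{0<|k|<K}\prod_{p\mid d}f_p(k)$ around $2K(\varphi(d)/d)^2$ must be $O(\varphi(d)/d)$, not merely $O(4^{\omega(d)})$ or $O((d/\varphi(d))^3)$ --- either of the latter would give an error of order $q\cdot d^{o(1)}$ that the hypothesis $q\varphi(d)/(rd)\to\infty$ cannot absorb when, say, $d$ is a primorial of order $q$ and $r\asymp q/\log q$. So the $O(\varphi(d)/d)$ refinement is not cosmetic; it is what makes the argument close.

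The one flaw is that the factorization $F_0 = \prod_{p\mid d}(p-2)/p$, $G(e) = \prod_{p\mid e}1/(p-2)$ is not literally well-defined when $2\mid d$: the $p=2$ factor makes $F_0=0$ while $G(e)$ for $2\mid e$ involves $1/(2-2)$, so the product $F_0\sum_e G(e)$ is $0\cdot\infty$ and the identity $F_0\sum_e G(e)=\varphi(d)/d$ fails as written (e.g.\ $d=2$). The fix is cheap and keeps every estimate intact: rather than splitting off $F_0$, define the combined weight $c_e := \prod_{p\mid e}(1/p)\prod_{p\mid d,\,p\nmid e}((p-2)/p) \geq 0$, which is well-defined for every squarefree $d$. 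One checks locally that $\prod_{p\mid d}f_p(k) = \sum_{e\mid d}c_e\mathbf{1}_{e\mid k}$, that $\sum_{e\mid d}c_e = \prod_{p\mid d}\big(\tfrac{p-2}{p}+\tfrac1p\big) = \varphi(d)/d$, and that $\sum_{e\mid d}c_e/e = \prod_{p\mid d}\big(\tfrac{p-2}{p}+\tfrac1{p^2}\big) = (\varphi(d)/d)^2$, reproducing your two Euler-product identities. (Alternatively: peel off the prime $2$ by writing $\prod_{p\mid d}f_p(k) = \tfrac12\mathbf{1}_{2\mid k}\prod_{p\mid d,\,p>2}f_p(k)$ and apply your $F_0$--$G$ split only to the odd part of $d$; the outcome is the same error $O(\varphi(d)/d)$.) With this repair the proof is correct and complete.
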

  \begin{proof}
 Let $\delta > 0$ be given. Fix a large $A$, say $A = 10^{10}$. We will show that for all $d, q, r$ such that $\varphi(d)q / (r d)$ is sufficiently large in terms of $1/ \delta$, the left-hand side of the above equation is $\ll \delta q \varphi(d) / ( d r)$.
   Let $W$ be a smooth function such that $W(x) = 1$ for $0 \leq x \leq 1$ and $W$ is compactly supported in $[-\delta, 1 + \delta]$ and such that $W^{(k)} \ll_{k} \delta^{-k}$ for all $k \geq 1$ (in particular $\widehat{W}(0) = 1 + O(\delta)$). Since
  $$
  \sum_{a \leq r} \Big | \sum_{\substack{q \leq n \leq (1 + \delta) q \\ (n, d) = 1  \\ n \equiv a \pmod{r}}} 1 \Big | \leq \sum_{\substack{q \leq n \leq (1 + \delta) q \\ (n, d) = 1}} 1 \ll \delta \cdot \frac{\varphi(d)}{d} \cdot q,
  $$
it is enough to show that
$$
\frac{1}{r} \sum_{a \leq r} \Big | \sum_{\substack{n \equiv a \pmod{r}}} W \Big ( \frac{n}{q} \Big ) - \widehat{W}(0) \cdot \frac{\varphi(d)}{d} \cdot \frac{q}{r} \Big | = o \Big (\frac{\varphi(d)q}{dr}\Big)
$$
as $\varphi(d) q / (r d) \rightarrow \infty$.

Note that
  \begin{align*}
  \sum_{\substack{a \leq r}} & \Big | \sum_{\substack{n \leq 2 q \\ n \equiv a \pmod{r}}} d(n) \Big ( \sum_{\substack{p | d \\ p | n \\ p > (\log q)^{A}}} 1 + \mathbf{1}_{\omega(n) > (\log\log q)^2} \Big ) \Big | \\ & \leq \sum_{n \leq 2 q} d(n) \sum_{\substack{p | d \\ p | n \\ p > (\log q)^{A}}} 1 + \sum_{\substack{n \leq q \\ \omega(n) > (\log\log q)^2}} d(n)  \ll_{A} \frac{q}{(\log q)^{A/2}}
  \end{align*}
  by Lemma \ref{le:trivialbounding}. Therefore, by Lemma \ref{le:sieve}, it is enough to show that
  $$
\frac{1}{r}  \sum_{a \leq r} \Big | \sum_{\substack{n \equiv a \pmod{r}}} \Big ( \sum_{\substack{e|n \\ e\leq z}}\lambda_e \Big )W \Big ( \frac{n}{q} \Big ) - \widehat{W}(0) \cdot \frac{\varphi(d)}{d} \cdot \frac{q}{r} \Big | = o \Big ( \frac{\varphi(d)}{d} \cdot \frac{q}{r} \Big ).
$$
  After an application of the Cauchy-Schwarz inequality, we see that it suffices to show that
    $$
    \sum_{a \leq r} \Big | \sum_{\substack{n \equiv a \pmod{r}}} \Big ( \sum_{\substack{e | n \\ e \leq z}} \lambda_e \Big ) W \Big ( \frac{n}{q} \Big ) - \widehat{W}(0) \cdot \frac{\varphi(d)}{d} \cdot \frac{q}{r} \Big |^2 = o \Big ( r \cdot \Big ( \frac{\varphi(d)}{d} \cdot \frac{q}{r} \Big )^2 \Big ).
    $$
    By the definition of $\lambda_e$, we can write the main sum as
    \begin{equation} \label{eq:start}
      \sum_{\substack{e \leq z}} \lambda_e \sum_{\substack{e | n \\ n \equiv a \pmod{r}}} W \Big ( \frac{n}{q} \Big ).
    \end{equation}
We notice that this is
$$
\widehat{W}(0) \frac{q}{r} \cdot \sum_{e \leq z} \frac{\lambda_e}{e} + \sum_{\substack{e \leq z}} \lambda_{e} \Big ( \sum_{\substack{e | n \\ n \equiv a \pmod{r}}} W \Big ( \frac{n}{q} \Big ) - \widehat{W}(0) \frac{q}{re} \Big ).
$$
By Lemma \ref{le:sieve2} and the choice of $W$,
$$
\Big | \widehat{W}(0) \frac{q}{r} \cdot \sum_{e \leq z} \frac{\lambda_e}{e} - \frac{q}{r} \cdot \frac{\varphi(d)}{d} \Big | \ll \delta \cdot \frac{q}{r} \frac{\varphi(d)}{d}.
$$
By the Poisson summation,
$$
\sum_{\substack{e | n \\ n \equiv a \pmod{r}}} \widehat{W} \Big ( \frac{n}{q} \Big ) - \widehat{W}(0) \cdot \frac{q}{r e} = \frac{q}{r e} \sum_{\ell \neq 0} e \Big ( - \frac{\ell a \overline{e}}{r} \Big ) \widehat{W} \Big ( \frac{q \ell}{r e} \Big ).
$$
(Note that  $(d,r) = 1$ since $d|q$ and $\lambda_e\neq 0$ implies that $e|d$. Therefore, $\overline{e}$ is well defined.)
We find that it remains to show
\begin{equation} \label{eq:maineq0}
\frac{1}{r} \sum_{a = 1}^{r} \Big | \frac{q}{r} \sum_{e  \leq z} \frac{\lambda_e}{e} \sum_{\ell \neq 0} e \Big ( - \frac{\ell a \overline{e}}{r} \Big ) \widehat{W} \Big ( \frac{q \ell}{r e} \Big ) \Big |^2 = o \Big ( \Big ( \frac{\varphi(d)}{d} \cdot \frac{q}{r} \Big )^2 \Big ) .
\end{equation}
Upon expanding the square and executing the summation over $a$, this is equal to
$$
\frac{q^2}{r^2} \sum_{e_1, e_2 \leq z} \frac{\lambda_{e_1} \lambda_{e_2}}{e_1 e_2} \sum_{\substack{\ell_1, \ell_2 \neq 0 \\ \ell_1 e_2 \equiv \ell_2 e_1 \pmod{r}}} \widehat{W} \Big ( \frac{q \ell_1}{ r e_1} \Big ) \overline{\widehat{W} \Big ( \frac{q \ell_2}{r e_2} \Big )}.
$$
Recall that $z = \exp(A(\log\log q)^3) \ll_{\varepsilon} q^{\varepsilon}$ for any $\varepsilon > 0$. Due to the rapid decay of $\widehat{W}$ we can truncate the sum over $\ell_1$ at $r e_1 q^{-1 + \varepsilon}$ at the price of a completely negligible error term of size $\ll_{A} q^{-A}$. Likewise, we can truncate the sum over $\ell_2$ at $r e_2 q^{-1 + \varepsilon}$. It follows that $|\ell_1 e_2| \leq r e_1 e_2 q^{-1 + \varepsilon} \leq r q^{-1 + 4 \varepsilon} \leq \sqrt{r}$, and similarly, $|\ell_2 e_1| \leq \sqrt{r}$. It follows that the condition $\ell_1 e_2 \equiv \ell_2 e_1 \pmod{r}$ implies $\ell_1 e_2 = \ell_2 e_1$.
We write this as $\ell_1 / e_1 = \ell_2 / e_2 = h / f$ with $(h,f) = 1$ and $f \leq q^{\varepsilon}$. This way we get
$$
\frac{q^2}{r^2} \sum_{\substack{(h,f) = 1 \\ f \leq q^{\varepsilon} \\ f | d}} \Big | \widehat{W} \Big ( \frac{q h}{r f} \Big ) \Big |^2 \cdot \Big ( \sum_{\substack{f | e}} \frac{\lambda_e}{e} \Big )^2.
$$
We notice that owing to the decay of $\widehat{W}$,
$$
\sum_{(h, f) = 1} \Big | \widehat{W} \Big ( \frac{q h}{r f} \Big ) \Big |^2 \ll \frac{r f}{q}.
$$
Therefore, using Lemma \ref{le:sieve2}, the part of the sum with $f \leq (\log q)^{A/2}$ is bounded by
$$
\ll \frac{q^2}{r^2} \sum_{\substack{f \leq (\log q)^{A / 2} \\ f | d}} \frac{r f}{q} \cdot \Big ( \frac{1}{\varphi(f)} \frac{\varphi(d)}{d} \Big )^2 \ll \frac{q}{r} \sum_{f | d} \frac{1}{\varphi(f)} \cdot \frac{f}{\varphi(f)} \cdot \Big ( \frac{\varphi(d)}{d} \Big )^2 \ll \frac{q}{r} \cdot \frac{\varphi(d)}{d}.
$$
On the other hand, the part of the sum with $f > (\log q)^{A / 2}$ is bounded by
\begin{equation} \label{eq:tobound}
\ll \frac{q}{r} \sum_{\substack{f > (\log q)^{A / 2} \\ p | f \implies p | d \\ p | f \implies p \leq (\log q)^{A} \\ f | d}} \frac{\mu^2(f)}{f} \Big ( \sum_{e} \frac{\lambda_{f e}}{e} \Big )^2, %= \frac{q}{r} \sum_{e_1, e_2} \frac{\lambda_{e_1} \lambda_{e_2}}{e_1 e_2} \sum_{\substack{f > (\log q)^{2 A} \\ p | f \implies p | d \\ p | f \implies p \leq (\log q)^{A} \\ f | (e_1, e_2)}} 1
\end{equation}
the condition $\mu^2(f)$ being implied from the definition of $\lambda_e $ as equal to either $\mu(e)$ or $0$.

We now notice that $\sum_{e} \lambda_{f e} / e \ll  \prod_{p | d} (1 + 1/p) \ll \prod_{p \leq \log q} (1 + 1/p) \ll \log\log q$ and that moreover,
$$
\sum_{\substack{f | d \\ p | f \implies p \leq \log^A q}} \mu^2(f) \ll 2^{\omega(d; \log^{A} q)},
$$
where $\omega(d; y)$ denotes the number of distinct prime factors of $d$ that are $\leq y$. Since $\omega(d, y) \leq \frac{\log y}{\log\log y}$, we conclude that the above expression is $\ll_{\varepsilon} \log^{\varepsilon} q$ for any fixed $\varepsilon > 0$.
Therefore, \eqref{eq:tobound} is
$$
\ll_{A} \frac{q}{r} \cdot \frac{1}{(\log q)^{A / 2}}.
$$
This shows that  \eqref{eq:maineq0} is
$$
\ll_{A} \frac{q}{r} \cdot \frac{\varphi(d)}{d} = o \Big (\Big ( \frac{q}{r} \cdot \frac{\varphi(d)}{d} \Big )^2 \Big ),
$$
finishing the proof.

\end{proof}

\section{Additive exponentials along reduced residues} \label{sec:principal2}

In order to prove the remaining lemma, we will mostly appeal to the following result.
\begin{lemma}\label{lm:n7}
  Let $A > 1000$. Let $d \leq q$ and $r \leq \log^{A} q$ with $0 \leq a < r$. Let $\delta > 100 \varepsilon > 0$. Then, for $H > q^{\delta}$, $q^{-\varepsilon} \leq |\beta| \leq e^{-\tau r}$ and all $y \leq q$, we have
  \begin{equation} \label{eq:mainstat}
 \Big| \sum_{\substack{(n,d) = 1 \\ n \in [y, y + H] \\ n \equiv a \pmod{r}}} e(n \beta)\Big| \ll_{A,\delta} \frac{H}{e^{\tau r / 2}} \cdot \frac{\varphi(d)}{d} + \frac{H}{(\log q)^{A}}.
  \end{equation}
\end{lemma}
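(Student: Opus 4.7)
The plan is to combine the sieve decomposition of Lemma~\ref{le:sieve} for the coprimality condition $(n,d)=1$ with an explicit evaluation of each resulting exponential sum over an arithmetic progression via the geometric series formula. First I would apply Lemma~\ref{le:sieve} to write $\mathbf{1}_{(n,d)=1}=\sum_{e\leq z,\,e\mid n}\lambda_{e}+E(n)$ with $z=\exp(A(\log\log q)^{3})$. Multiplying by $e(n\beta)$, which has unit modulus, and summing over $n\equiv a\pmod{r}$ with $n\in[y,y+H]$, the $E(n)$-contribution is bounded by Lemma~\ref{le:trivialbounding} as $O(H/(\log q)^{A/2})$, which is absorbed into the target $H/(\log q)^{A}$ term.

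Next I would analyze the main term $\sum_{e\leq z}\lambda_{e}T_{e}$, where $T_{e}:=\sum_{e\mid n,\,n\equiv a\pmod{r},\,n\in[y,y+H]}e(n\beta)$. For each $e$ with $\lambda_{e}\neq 0$ we have $e\mid d$, and under the implicit assumption $(r,d)=1$ (which holds in the intended applications, where $r$ and $d$ arise from coprime continued-fraction denominators) also $(e,r)=1$; the Chinese Remainder Theorem then identifies the joint congruence with a single arithmetic progression $n\equiv n_{e}\pmod{er}$, so that $T_{e}$ is a geometric sum with ratio $e(er\beta)$. Provided $|er\beta|\leq 1/2$, carefully evaluating this geometric series (tracking the boundary terms) together with the Taylor expansion $1/(e(er\beta)-1)=1/(2\pi i\, er\beta)+O(1)$ yields
\[
T_{e}=\frac{C}{er}+O(1),\qquad C:=\frac{e((y+H)\beta)-e(y\beta)}{2\pi i\beta},
\]
in which $C$ is independent of $e$. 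The crucial point is that the main part of $T_{e}$ factors through $1/e$, which allows me to exploit the cancellation built into $\sum_{e}\lambda_{e}/e$.

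Summing over $e$ and invoking Lemma~\ref{le:sieve2} then gives
\[
\sum_{e\leq z}\lambda_{e}T_{e}=\frac{C}{r}\cdot\frac{\varphi(d)}{d}+O\!\Big(\frac{|C|}{r(\log q)^{A-1}}\Big)+O(z).
\]
The conjunction $q^{-\varepsilon}\leq|\beta|\leq e^{-\tau r}$ forces $r\leq\varepsilon(\log q)/\tau$ and hence $e^{\tau r/2}\leq q^{\varepsilon/2}$; combined with $H\geq q^{\delta}$ and $\delta>100\varepsilon$, this yields $|C|\leq q^{\varepsilon}$ and
\[
\frac{|C|}{r}\cdot\frac{\varphi(d)}{d}\leq\frac{q^{\varepsilon}}{r}\cdot\frac{\varphi(d)}{d}\leq\frac{\varphi(d)}{d}\cdot\frac{H}{e^{\tau r/2}}
\]
using $q^{3\varepsilon/2}\leq q^{\delta}$. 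The secondary errors $O(|C|/(r(\log q)^{A-1}))$ and $O(z)$, the latter because $z=q^{o(1)}$, are both absorbed into $H/(\log q)^{A}$.

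The main technical obstacle is to guarantee $|er\beta|\leq 1/2$ for all $e\leq z$ with $\lambda_{e}\neq 0$. This holds automatically whenever $\tau r\geq 2A(\log\log q)^{3}$, but may fail when $r$ is very small. In that regime I would handle the exceptional $e>1/(2r|\beta|)$ via the trivial bound $|T_{e}|\leq H/(er)+1$, controlling $\sum_{e>1/(2r|\beta|)}|\lambda_{e}|/e$ by Rankin's trick with parameter $\sigma=1/(A\log\log q)$; the restriction $p\mid e\Rightarrow p\leq(\log q)^{A}$ in the definition of $\lambda_{e}$ keeps the resulting product $\prod_{p\mid d,\,p\leq(\log q)^{A}}(1+p^{\sigma}/p)$ of size $(\log\log q)^{O(1)}$, which together with the decay $e^{-\tau r}$ of $|\beta|$ is sufficient to recover the claimed bound.
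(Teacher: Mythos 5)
Your approach — Chinese Remainder Theorem plus a direct geometric-series evaluation of $T_e$ — is genuinely different from the paper's. The paper smooths the cutoff, expresses $n\equiv a\pmod r$ via additive characters (reducing to a supremum over $w<r$ with phase $e\bigl(n(\tfrac{w}{r}+\beta)\bigr)$), applies Poisson summation in $n$ after the substitution $n=em$, and then uses a Diophantine approximation of $\tfrac{w}{r}+\beta$ with denominator $v$; the decisive observation is that the main Fourier coefficient survives only if $v\mid e$. Your main-term computation when $|er\beta|<\tfrac12$ is correct and arguably cleaner, and the coprimality hypothesis you flag is harmless: when $(a,(r,d))>1$ the left side of \eqref{eq:mainstat} vanishes identically, and when $(a,(r,d))=1$ every $e\mid d$ with nonempty $T_e$ automatically satisfies $(e,r)=1$, so CRT applies.

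The gap is in the fallback for $|er\beta|\geq\tfrac12$, which can occur whenever $\tau r\lesssim A(\log\log q)^3$. You bound $|T_e|\leq H/(er)+1$ trivially for $e>E:=1/(2r|\beta|)\geq e^{\tau r}/(2r)$ and invoke Rankin with $\sigma=1/(A\log\log q)$. But then $E^{-\sigma}\asymp e^{-\tau r/(A\log\log q)}$ while $\prod_{p\mid d,\,p\leq(\log q)^A}(1+p^{\sigma-1})\gg 1$, so the extracted decay is only $e^{-\tau r/(A\log\log q)}$, far short of the required $e^{-\tau r/2}$. No choice of $\sigma$ repairs this: pushing $\sigma$ toward $\tfrac12$ to get $E^{-\sigma}\ll e^{-\tau r/2}$ blows up the Euler product over primes $\leq(\log q)^A$ (even after truncating via $\omega(e)\leq(\log\log q)^2$). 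This regime is not a corner case — in case II.d of Proposition \ref{prop:ergchar} the lemma is invoked with $r=q_{n^\ast-1}\leq(\log\log q_n)^2$, by \eqref{eq:asq1}, squarely inside the fallback range; there $e^{-\tau r/(A\log\log q)}\geq (\log q)^{-\tau/A}$, and the Rankin bound $\gg \tfrac{H}{r}(\log q)^{-\tau/A}$ exceeds both $\tfrac{H}{e^{\tau r/2}}\tfrac{\varphi(d)}{d}$ and $\tfrac{H}{(\log q)^{A}}$.

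The reason no purely analytic tail estimate can work is that $|T_e|\leq H/(er)$ is only approached when $\|er\beta\|$ is small, and since $q^{-\varepsilon}\leq|\beta|\leq e^{-\tau r}$ forces the denominator of the best rational approximation to $\beta$ (and hence to $\tfrac{w}{r}+\beta$) to be $\gtrsim e^{\tau r}/r$, the $e$'s with $\|er\beta\|$ small are precisely those lying in a fixed divisibility class $v\mid e$ with $v\gtrsim e^{\tau r}/r$. It is the cost of this divisibility constraint — Lemma \ref{le:sieve2} gives $\sum_{v\mid e}\lambda_e/e\ll \tfrac{1}{\varphi(v)}\tfrac{\varphi(d)}{d}$, producing $1/\varphi(v)\ll e^{-\tau r/2}$ — that saves the estimate. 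The Poisson step in the paper's proof makes this visible through the surviving frequency $\ell=ea/v$. A tail bound on $\sum_{e>E}|\lambda_e|/e$ that is blind to \emph{which} $e$ are bad, rather than merely how many exceed $E$, cannot recover this saving.
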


\begin{proof}
  We start by introducing a smooth function $W$ such that $W(x) = 1$ for $[ (\log q)^{-A}, 1 - (\log q)^{-A}]$ and $W$ is compactly supported in $[0, 1]$ with $W^{(k)}(x) \ll_{k} (\log q)^{A k}$ for all $x \in \mathbb{R}$.  Then, with a loss of $\ll H (\log q)^{-A}$, we can express the left-hand side of \eqref{eq:mainstat} as
  $$
  \sum_{\substack{(n, d) = 1 \\ n \equiv w \pmod{r}}} W\Big ( \frac{n - y}{H} \Big ) e(n \beta).
  $$
  We express the condition $n \equiv w \pmod{r}$ using additive characters. Therefore, it is enough to bound
  $$
  \sup_{0 \leq w < r} \Big | \sum_{\substack{(n,d) = 1}} W \Big ( \frac{n - y}{H} \Big ) e \Big ( n \Big ( \frac{w}{r} + \beta \Big ) \Big ) \Big |.
  $$
  Using Lemma \ref{le:sieve}, we write
  $$
  \mathbf{1}_{(n,d) = 1} = \sum_{\substack{e| n \\ e \leq z}} \lambda_e + O (d(n) \mathbf{1}_{\omega(n) > (\log\log q)^2})  + O \Big ( d(n) \sum_{\substack{p | d \\ p | n \\ p > (\log q)^{A}}} 1 \Big )
  $$
  with $z = \exp(A (\log\log q)^3)$ and $\lambda_e$ the same sieve coefficients as in Lemma \ref{le:sieve}.
  The contribution of the error term is negligible by Lemma \ref{le:trivialbounding}.
  It therefore suffices to bound
  $$
  \sum_{e \leq z} \lambda_{e} \sum_{n} W \Big ( \frac{n e - y}{H} \Big ) \exp \Big ( 2\pi i n \Big ( \frac{e w}{r} + e \beta \Big ) \Big ).
  $$
  By the Poisson summation, the inner sum is equal to
  $$
  \sum_{\ell} \int_{\mathbb{R}} W \Big ( \frac{e x - y}{H} \Big ) \exp \Big ( 2\pi i x \Big ( \frac{e w}{r} + e \beta \Big ) \Big ) \exp ( - 2 \pi i x \ell) dx.
  $$
  After a change of variable $(e x - y) / H \longleftrightarrow x$ (and writing $d$ instead of $e$ to avoid clashes of notation), this is equal to
  \begin{equation}\label{eq:geometric}
  \sum_{d \leq z} \lambda_d \cdot \frac{H}{d} \sum_{\ell} e \Big ( \frac{y}{d} \Big ( \frac{d w}{r} + d \beta - \ell \Big ) \Big ) \widehat{W} \Big ( \frac{H}{d} \Big ( \frac{d w}{r} + d \beta - \ell \Big ) \Big ).
  \end{equation}
 To analyze this, write
 \begin{equation} \label{eq:diophaprox}
 \Big | \beta + \frac{w}{r} - \frac{a}{v} \Big | \leq \frac{1}{v Q}
 \end{equation}
 with $Q = H q^{- 3 \varepsilon / 4}$, $v < Q$ and $(a, v) = 1$.

 Suppose first that $q^{\varepsilon} < v < H q^{- 3 \varepsilon / 4}$. In that case since \eqref{eq:diophaprox} is $\ll H^{-1} q^{-\varepsilon / 4}$, and since $d \leq q^{\varepsilon / 2}$, we get
 $$
 \Big | d \beta + \frac{d w}{r} - \frac{d a}{v} \Big | \leq \frac{q^{\varepsilon / 4}}{H}.
 $$
 However, since $q^{\varepsilon} \leq v \leq H q^{-3 \varepsilon / 4}$ and $d \leq q^{\varepsilon / 2}$, for any integer $\ell$,
 $$
 \Big | \frac{d a}{v} - \ell \Big | \geq \frac{q^{3 \varepsilon / 4}}{H}.
 $$
 Therefore, combining the above two inequalities,
 $$
 \Big | d \beta + \frac{d w}{r} - \ell \Big | > \frac{q^{3 \varepsilon / 4}}{H}
 $$
 and thus, \eqref{eq:geometric} is negligible by the fast decay rate of $\widehat{W}$ (since $d \leq q^{\varepsilon / 2}$). Therefore, there remains the case of $v < q^\varepsilon$. Notice that writing $w/r - a /v = t / (r v)$ for some $t \in \mathbb{Z}$, we see from \eqref{eq:diophaprox} that if $t \neq 0$ then
 $$
 |\beta| \asymp \frac{|t|}{r v}.
 $$
 Also, if $t = 0$ then we would have $|\beta| \ll H^{-1} q^{3 \varepsilon / 4}$ but this is impossible since we assume that $q^{-\varepsilon} \leq |\beta|$. So, in particular, $t \neq 0$ and it holds that $|\beta| \asymp |t| / (r v)$. Since we also have $|\beta| \leq e^{-\tau r}$, it follows from the previous equation that $v > e^{\tau r} / r$.

% Notice that since $\textcolor{red}{q^{-\epsilon}}<|\beta| < e^{-\tau r}$ we have $v > e^{\tau r} / r$\textcolor{red}{I don't see where we need a lower bound on $v$. If we need, I don't see how this bound follows}.

 If \eqref{eq:geometric} is non-negligible, that is, if there exists an integer $\ell$ such that
 $$
 \Big |d \beta + \frac{d w}{r} - \ell \Big | \leq \frac{q^{\varepsilon}}{H},
 $$
 then combining this with \eqref{eq:diophaprox}, we get
 $$
 \Big | \frac{d a}{v} - \ell \Big | \leq \frac{2 q^{\varepsilon}}{H}
 $$
 and since $v \leq q^{\varepsilon}$ and $(a,v) = 1$, this implies that $v | d$.

 Conversely, if $v | d$ then \eqref{eq:geometric} is equal to
 $$
 \frac{H}{d} \cdot e \Big ( - y \Big ( \frac{w}{r} + \beta - \frac{a}{v} \Big ) \Big ) \widehat{W} \Big ( H \Big ( \frac{w}{r} + \beta - \frac{a}{v} \Big ) \Big ) + O(q^{-\varepsilon / 4})
 $$
 (with the main term coming from $\ell = a d / v$).
 Therefore, it remains to estimate
 $$
 H \sum_{\substack{d \leq z \\ v | d}} \frac{\lambda_d}{d}
 $$
 and Lemma \ref{le:sieve2} shows that this is
 $$
 \ll_{A} \frac{H}{\varphi(v)} \cdot \frac{\varphi(d)}{d} + \frac{H}{(\log q)^{A / 2}}\ll \frac{H}{e^{\tau r /2 }} \cdot \frac{\varphi(d)}{d} + \frac{H}{(\log q)^{A / 2}}
 $$
 as needed.

\end{proof}

As a fairly immediate consequence of Lemma \ref{lm:n7}, we obtain:

\begin{lemma} \label{lm:n6}
  Let $(r, q) = 1$, $d | q$. Then, for $H > q^{1/5}$ and $r \leq (\log q)^{100}$,  we have
  $$
 \sup_{|\beta|\leq e^{-\tau r}}\Big | \sum_{\substack{n \in [y, y + H] \\ (n, d) = 1 \\ n \equiv a \pmod{r}}} e(n \beta) - \frac{1}{\varphi(r)} \sum_{\substack{n \in [y, y + H] \\ (n, rd) = 1}} e(n \beta) \Big | \ll \frac{\varphi(d)}{d} \cdot \frac{H}{e^{\tau r / 2}} + \frac{H}{(\log q)^{200}}
 $$
 for every $y \leq q$. Moreover, for every $A > 1000$, $q \geq 1$, $H\geq q^{1/10}$, $y\leq q$ and $q'\leq q^2$,
\begin{equation}\label{eq:j3'}
\sum_{\substack{n\in [y,y+H] \\ (n,q')=1}}1 = \frac{H \varphi(q')}{q'} + O_{A} \Big ( \frac{H}{(\log q)^{A}} \Big ).
\end{equation}
\end{lemma}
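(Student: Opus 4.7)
My plan is to prove the two statements separately, using sieve theory for the second (the counting identity \eqref{eq:j3'}) and an adaptation of the proof of Lemma~\ref{lm:n7} for the first.

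For \eqref{eq:j3'}, I apply the sieve decomposition of Lemma~\ref{le:sieve} to $\mathbf{1}_{(n,q')=1}$, with $d$ replaced by $q'$ and the parameter $q$ therein replaced by $\max(q,q')$ (which only affects constants). The sieve error is controlled by Lemma~\ref{le:trivialbounding}, giving $O(H/(\log q)^{A/2})$; the main term $H\sum_{e\leq z}\lambda_e/e$ evaluates to $H\varphi(q')/q'+O_A(H/(\log q)^{A-1})$ by Lemma~\ref{le:sieve2} at $v=1$; and the floor remainders sum to $\sum_e O(1) = O(z) = \exp(O((\log\log q)^3))$, which is sub-polynomial in $q$ and hence negligible against $H/(\log q)^A$ since $H\geq q^{1/10}$.

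For the first statement I split according to the size of $|\beta|$. In the range $q^{-1/1000}\leq|\beta|\leq e^{-\tau r}$, Lemma~\ref{lm:n7} applies directly with $\delta=1/5$ and $\varepsilon=1/1000$ (so $\delta>100\varepsilon$), yielding $|S(a)|\ll H\varphi(d)/(de^{\tau r/2})+H/(\log q)^A$, where $S(a)$ denotes the first sum in the claim. Since $(r,d)=1$, the Chinese remainder theorem splits the condition $(n,rd)=1$ into $\varphi(r)$ reduced residue classes modulo $r$ each satisfying $(n,d)=1$; applying Lemma~\ref{lm:n7} to each class and summing bounds $|T|/\varphi(r)$ (where $T$ denotes the inner sum of the second term) by the same quantity, and the triangle inequality completes this case.

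In the remaining range $|\beta|<q^{-1/1000}$, I re-execute the proof of Lemma~\ref{lm:n7}. After smoothing the indicator, expanding $\mathbf{1}_{n\equiv a\,(\mathrm{mod}\,r)}$ via additive characters, applying the sieve, and using Poisson summation, one obtains $S(a)=M_1(a)+E_1$, where the ``$t\neq 0$'' residual $E_1$ is bounded by $H\varphi(d)/(de^{\tau r/2})+H/(\log q)^A$ exactly as in Lemma~\ref{lm:n7}, and $M_1(a)$ is the ``$t=0$'' main term. Crucially, since $\lambda_e\neq 0$ forces $e\mid d$ and $(r,d)=1$, the equation $\ell/e=w/r$ (exactly) forces $w=0$; hence only the frequency $w=0$ contributes to $M_1(a)$, yielding $M_1(a)=r^{-1}\widehat{W}(H\beta)\,e(y\beta)\,H\varphi(d)/d$, independent of $a$. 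The same procedure applied to $T/\varphi(r)$, using $\mathbf{1}_{(n,r)=1}=r^{-1}\sum_{w=0}^{r-1}c_r(w)e(nw/r)$, produces $T/\varphi(r)=M_2+E_2$ with the same residual bound and $M_2=M_1(a)$ (since $c_r(0)=\varphi(r)$). Subtracting, $|S(a)-T/\varphi(r)|\leq|E_1|+|E_2|$ satisfies the desired bound. The main obstacle is this adaptation and the verification of the identity $M_1(a)=M_2$; the hypothesis $(r,d)=1$ is precisely what collapses the otherwise delicate Ramanujan-sum computation of the main term to the single frequency $w=0$.
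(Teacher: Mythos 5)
Your proof is correct, and for the second assertion \eqref{eq:j3'} it matches the paper's route (sieve weight from Lemma~\ref{le:sieve}, Lemma~\ref{le:trivialbounding} for the errors, Lemma~\ref{le:sieve2} for the main term), as does your treatment of the range $q^{-1/1000}\leq|\beta|\leq e^{-\tau r}$ by invoking Lemma~\ref{lm:n7} on each term and splitting $(n,rd)=1$ into $\varphi(r)$ residue classes.

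For the complementary range $|\beta|<q^{-1/1000}$, however, your argument differs from the paper's. The paper avoids re-running the Poisson/Diophantine machinery altogether: it tiles $[y,y+H]$ by short intervals $I=[x,x']$ of length $q^{\varepsilon/4}$, uses $|(n-x)\beta|\leq q^{-3\varepsilon/4}$ to replace $e((n-x)\beta)$ by $1$ at an acceptable cost, and thereby reduces to the \emph{pure counting} estimate that each of the two terms equals $\frac{|I|}{r}\cdot\frac{\varphi(d)}{d}+O(|I|/(\log q)^{A-1})$, which it proves by a direct application of Lemmas~\ref{le:sieve}, \ref{le:trivialbounding} and \ref{le:sieve2}. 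You instead re-execute the Poisson-summation expansion from the proof of Lemma~\ref{lm:n7}, expand $\mathbf{1}_{n\equiv a\,(r)}$ (resp.\ $\mathbf{1}_{(n,r)=1}$) in additive characters (resp.\ Ramanujan sums), and show that since $\lambda_e\neq0\Rightarrow e\mid d$ and $(r,d)=1$ the only non-negligible frequency is $w=0$, with $c_r(0)=\varphi(r)$ reproducing the $1/\varphi(r)$ normalisation; the two main terms then cancel identically. Both are valid; the paper's interval-cutting approach cleanly decouples the additive twist $\beta$ from the arithmetic and reuses the sieve lemmas verbatim, whereas yours is more unified (one Poisson analysis covering the whole lemma) at the cost of re-deriving a variant of Lemma~\ref{lm:n7}. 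One small imprecision: you describe the residual $E_1$ as bounded ``exactly as in Lemma~\ref{lm:n7}.'' In the small-$\beta$ regime the $e^{-\tau r/2}$-term from that lemma does not arise through the same Diophantine mechanism (which needs $|\beta|\geq q^{-\varepsilon}$); instead the off-main Poisson tail is polynomially small because $\|ew/r\|\geq 1/r\geq(\log q)^{-100}$ whereas the $\widehat{W}$ cutoff scale is $\ll e q^{o(1)}/H\ll q^{-1/6}$, and the dominant error comes from the sieve approximation, $O(H/(\log q)^{A-1})$. The stated bound is therefore correct, but the attribution is slightly off.
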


\begin{proof}
  Suppose first that $q^{-\varepsilon} \leq |\beta| \leq e^{-\tau r}$. In this case, the result follows from Lemma \ref{lm:n7} which shows that both terms are individually bounded by
  $$
  \frac{H}{e^{\tau r / 2}} \cdot \frac{\varphi(d)}{d} + \frac{H}{(\log q)^{200}}.
  $$
  Suppose now that $|\beta | \leq q^{-\varepsilon}$. Cutting into intervals $I = [x,y] \subset [0, q]$ of length $q^{\varepsilon / 4}$, it remains to show that
  $$
  \Big | \sum_{\substack{n \in I \\ (n, d) = 1\\ n \equiv a \pmod{r}}} e((n - x) \beta) - \frac{1}{\varphi(r)} \sum_{\substack{n \in I \\ (n, rd) = 1}} e((n - x) \beta) \Big | \ll \frac{\varphi(d)}{d} \cdot \frac{H}{e^{\tau r / 2}} + \frac{H}{(\log q)^{200}}.
  $$
  Since $|(n - x) \beta| \leq q^{- 3\varepsilon / 4}$ for all $n \in I = [x,y]$, it suffices in fact to show that
  \begin{equation} \label{eq:whatwewant}
  \Big | \sum_{\substack{n \in I \\ (n,d) = 1 \\ n \equiv a \pmod{r}}} 1 - \frac{1}{\varphi(r)} \sum_{\substack{n \in I \\ (n, rd ) = 1}} 1 \Big | \ll \frac{\varphi(d)}{d} \cdot \frac{H}{e^{\tau r / 2}} + \frac{H}{(\log q)^{200}}.
  \end{equation}
  By Lemma \ref{le:sieve} and Lemma \ref{le:trivialbounding}, the first term on the left-hand side is for any $A > 0$ equal to
  \begin{equation} \label{eq:almosteasyfinish}
  \sum_{e \leq v} \lambda_e \sum_{\substack{n \in I \\ n \equiv a \pmod{r} \\ e | n}} 1 + O \Big ( \frac{|I|}{(\log q)^{A}} \Big ),
  \end{equation}
  where $z = \exp(A (\log\log q)^2)$ and $\lambda_e$ the same coefficients as defined in Lemma \ref{le:sieve}.

  Since $\lambda_e$ is supported on integers such that $p | e \implies p | d$, we have $(e, r) = 1$. It follows that \eqref{eq:almosteasyfinish} is, by Lemma \ref{le:sieve2}, equal to
  $$
  \frac{|I|}{r} \sum_{e \leq v} \frac{\lambda_e}{e} + O \Big ( \frac{|I|}{(\log q)^{A}} \Big ) = \frac{|I|}{r} \cdot \frac{\varphi(d)}{d} + O \Big ( \frac{1}{(\log q)^{A - 1}} \Big ).
  $$
  Like-wise a computation (based on Lemma \ref{le:sieve}, Lemma \ref{le:trivialbounding} and Lemma \ref{le:sieve2}) reveals that
  $$
  \frac{1}{\varphi(r)} \sum_{\substack{n \in I \\ (n, rd ) = 1}} 1 = \frac{1}{\varphi(r)} \cdot \frac{\varphi(r d)}{r d} \cdot |I| + O \Big ( \frac{|I|}{(\log q)^{A - 1}} \Big ) = \frac{|I|}{r} \cdot \frac{\varphi(d)}{d} + O \Big ( \frac{|I|}{(\log q)^{A - 1}} \Big ).
  $$
  Choosing $A = 300$, we therefore obtain \eqref{eq:whatwewant}. The bound \eqref{eq:j3'} follows in an identical manner by inserting the sieve weight of Lemma \ref{le:sieve} and appealing to Lemma \ref{le:trivialbounding} and Lemma \ref{le:sieve2}.

\end{proof}

We also have the following corollary (in which we do not assume that $(r,q)=1$):
\begin{corollary}\label{cor:j3}
  Let $r \leq (\log q)^{50}$ and $H' \geq q^{1/3}$. Then, for every $y \leq q$ and every $(a,(r,q))=1$, we have
  \begin{multline} \label{eq:j3}
  \sup_{|\beta|\leq e^{-\tau r}} \Big | \sum_{\substack{(m, q) = 1 \\ m \equiv a \pmod{r} \\ m \in [y, y + H']}} e(m \beta) - \frac{1}{\varphi(r)} \sum_{\substack{(m,\frac{rq}{(r,q)}) = 1 \\ m \in [y, y + H']}} e(m \beta) \Big | \ll \\\frac{(r,q)}{\varphi((r,q))}\cdot\frac{\varphi(q)}{q}\cdot\frac{H'}{e^{\tau r/2}} +\frac{H'}{(\log q)^{100}}.
  \end{multline}
\end{corollary}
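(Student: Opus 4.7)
\emph{Proof plan.} The plan is to reduce the coprimality conditions so that both sums are amenable to Lemma \ref{lm:n6}. Set $s := (r, q)$ and write $q = q_s \cdot q^\dagger$, where $q^\dagger$ is the largest divisor of $q$ coprime to $r$. Then $(q_s, q^\dagger) = 1$, $q_s$ and $s$ have identical prime supports, and $(r, q^\dagger) = 1$. Since $m \equiv a \pmod{r}$ forces $m \equiv a \pmod{s}$, the hypothesis $(a, s) = 1$ gives $(m, s) = 1$, hence $(m, q_s) = 1$; thus $(m, q) = 1 \Leftrightarrow (m, q^\dagger) = 1$. Moreover, the primes dividing $r q^\dagger$ are precisely those dividing $r$ or $q$, so $(m, r q^\dagger) = 1 \Leftrightarrow (m, rq/s) = 1$. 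Consequently the two sides of \eqref{eq:j3} equal, respectively,
\[
\sum_{\substack{(m, q^\dagger) = 1 \\ m \equiv a \pmod{r} \\ m \in [y, y + H']}} e(m \beta) \quad \text{and} \quad \frac{1}{\varphi(r)} \sum_{\substack{(m, r q^\dagger) = 1 \\ m \in [y, y + H']}} e(m \beta).
\]

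Next, I will apply Lemma \ref{lm:n6} with modulus $\tilde q := q^\dagger \cdot P$ and with $d := q^\dagger$, where $P$ is a prime in $[q^{1/2}, 2 q^{1/2}]$ coprime to $r$; such a $P$ exists for $q$ sufficiently large by the prime number theorem together with $\omega(r) \ll \log\log q$. The enlargement of the modulus by $P$ is inserted purely to ensure $\log \tilde q \geq (\log q)/2$, which makes the log-size hypothesis of Lemma \ref{lm:n6} hold comfortably. The remaining hypotheses are routine to verify: $(r, \tilde q) = 1$ follows from $(r, q^\dagger) = (r, P) = 1$; $q^\dagger \mid \tilde q$; $\tilde q^{1/5} \leq (2 q^{3/2})^{1/5} \leq H'$ because $H' \geq q^{1/3}$; and $r \leq (\log q)^{50} \leq (\log \tilde q)^{100}$ for $q$ large. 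Lemma \ref{lm:n6} then yields
\[
\sup_{|\beta| \leq e^{-\tau r}} \Big| \mathrm{LHS} - \mathrm{RHS} \Big| \ll \frac{\varphi(q^\dagger)}{q^\dagger} \cdot \frac{H'}{e^{\tau r / 2}} + \frac{H'}{(\log \tilde q)^{200}}.
\]

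To match the shape of the error in \eqref{eq:j3}, I will use the multiplicativity of $n/\varphi(n)$ together with $(q_s, q^\dagger) = 1$ and the fact that $q_s$ and $s$ have the same prime divisors, which gives
\[
\frac{\varphi(q^\dagger)}{q^\dagger} = \frac{\varphi(q)/q}{\varphi(q_s)/q_s} = \frac{s}{\varphi(s)} \cdot \frac{\varphi(q)}{q}.
\]
Since $(\log \tilde q)^{200} \geq ((\log q)/2)^{200} \gg (\log q)^{100}$, the two error pieces coincide with those asserted in the corollary.

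The main technical subtlety is that $q^\dagger$ can be very small (indeed as small as $1$, when every prime of $q$ already divides $r$), which would invalidate the hypothesis $r \leq (\log q_{\mathrm{lem}})^{100}$ of Lemma \ref{lm:n6} if one took $q_{\mathrm{lem}} = q^\dagger$ directly; the auxiliary prime $P$ is introduced precisely to bypass this degeneracy without altering the two coprimality conditions produced by the reduction.
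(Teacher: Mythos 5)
Your proof is correct and follows the same basic strategy as the paper --- reduce the coprimality conditions so that Lemma \ref{lm:n6} becomes applicable --- but you carry out the reduction more carefully, and in doing so you repair a small gap in the paper's one-line argument. The paper invokes Lemma \ref{lm:n6} with modulus $q' = q/(r,q)$ and $d = q'$, asserting $(q',r)=1$; this is false in general when $q$ is not squarefree (already $q=8$, $r=2$ gives $q'=4$ and $(q',r)=2$). Your decomposition $q = q_s\cdot q^\dagger$ with $q^\dagger$ the prime-to-$r$ part is the robust one: it gives $(r,q^\dagger)=1$ by construction, the two sums in \eqref{eq:j3} correctly become $(m,q^\dagger)=1$ and $(m,rq^\dagger)=1$ under the hypothesis $(a,(r,q))=1$ exactly as you argue, and the multiplicativity computation $\varphi(q^\dagger)/q^\dagger = \frac{(r,q)}{\varphi((r,q))}\cdot\frac{\varphi(q)}{q}$ delivers the stated error shape. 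The auxiliary prime $P$ is the right device to handle the degeneracy $q^\dagger$ small (even $q^\dagger=1$), where the size hypothesis $r\leq(\log q_{\mathrm{lem}})^{100}$ of Lemma \ref{lm:n6} would otherwise fail.

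One cosmetic caveat, shared with the paper's own proof: Lemma \ref{lm:n6} is stated with $y$ bounded by the modulus, and with $\tilde q = q^\dagger P$ and $P\asymp q^{1/2}$ one can have $y>\tilde q$ when $q^\dagger$ is small (the paper's $q'=q/(r,q)\leq q$ has the same issue). This is harmless: the constraint ``$y\leq q$'' in Lemma \ref{lm:n7} --- from which Lemma \ref{lm:n6} is deduced --- is not actually used in its proof, since $y$ enters only as an interval translation and a unimodular phase in the Poisson summation, neither of which affects the bounds. If you prefer not to appeal to that observation, take $P\asymp\max(q^{1/2},\,q/q^\dagger)$ instead: then $q\leq\tilde q\ll q^{3/2}$, so both $\tilde q^{1/5}\leq H'$ and $y\leq\tilde q$ hold literally, and the rest of your argument is unchanged.
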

\begin{proof} The corollary is a simple consequence of Lemma \ref{lm:n6}. Notice that if $a\leq r $ is such that $(a,(r,q))=1$, then $(m,q)=1$, $m\equiv a \pmod r$ is equivalent to $(m,\frac{q}{(r,q)})=1$, $m\equiv a \pmod r$. Therefore, for every $\beta\in \R$,
 $$
 \sum_{\substack{(m, q) = 1 \\ m \equiv a \pmod{r} \\ m \in [y, y + H']}} e(m \beta)=\sum_{\substack{(m, \frac{q}{(r,q)}) = 1 \\ m \equiv a \pmod{r} \\ m \in [y, y + H']}} e(m \beta).
$$
It remains to use Lemma \ref{lm:n6} for $q'=\frac{q}{(r,q)}$, $d=q'$ and $r'=r$ (notice that $(q',r')=1$).
\end{proof}

\section{Ergodicity of weighted sums}\label{sec:ergchar}
In this section we prove the following result which is also of independent interest.

\begin{proposition}\label{prop:ergchar}For every uniquely ergodic $T=T_{\alpha,g}$, and every $n\in \N$ sufficiently large, there exists
$$
p_n\in [ \log^2 q_n, 2 \log^2 q_n]\cap \cP~\footnote{The set on the RHS is always non-empty by the PNT. We denote by $\cP$ the set of primes.}
$$
such that
\be\label{eq:erg}
\lim_{n\to+\infty} \min_{z_n\in \{q_n,p_nq_n\}}\max_{d|z_n} \sup_{(x,y)\in \T^2}\frac{d}{z_n\varphi(d)}\Big|\sum_{\substack{k\leq z_n\\(k,d)=1}}f(T^k(x,y))\Big|=0.
\ee
for every $f\in C(\T^2)$ with zero mean.
\end{proposition}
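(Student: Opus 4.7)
The plan is to mirror the four-case decomposition Z1--Z4 from the outline in Section~\ref{outline}. By uniform approximation of $f$ by trigonometric polynomials, it is enough to prove \eqref{eq:erg} for $f = e_{b,c}$ with $(b,c)\neq(0,0)$. When $c=0$ the inner sum reduces to a Weyl-type sum over reduced residues for the irrational rotation $x\mapsto x+\alpha$, and one concludes by expanding $\mathbf{1}_{(k,d)=1}$ via Lemma~\ref{le:sieve} and invoking unique ergodicity on each residue class. Henceforth assume $c\neq 0$. By the prime number theorem, fix a prime $p_n\in[\log^2 q_n,\,2\log^2 q_n]$ with $(p_n,q_{n-1})=1$, and define $z_n\in\{q_n,p_nq_n\}$ by the mutually exclusive cases Z1--Z4.

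Cases Z1, Z2, Z3 will be handled in a unified way. Fix $d\mid z_n$. I would first check that the structural conditions defining these cases, together with $p_n\leq 2\log^2 q_n$, imply $z_n\leq q_n\min(q_{n+1}/q_{n^\ast},\,e^{2\tau q_n})$, so that Lemma~\ref{cor:jad2} yields
$$
T^k(x,y)\;=\;T^{k\bmod q_{n-1}}(x,y)\;+\;o(1)
$$
uniformly in $k\leq z_n$ and $(x,y)\in\T^2$. Next, I would verify arithmetically that in each case $\varphi(d)z_n/(d\,q_{n-1})\to\infty$; this is where $(p_n,q_{n-1})=1$ and $p_n\geq\log^2 q_n$ are essential (in Z1 the bound is direct, while in Z2, Z3 the factor $p_n$ compensates for the possible largeness of $q_{n-1}$). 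Splitting $k$ modulo $q_{n-1}$ then rewrites the sum as $\sum_{a\leq q_{n-1}}e_{b,c}(T^a(x,y))\,N_a+o(z_n)$, where $N_a:=\#\{k\leq z_n:(k,d)=1,\;k\equiv a\pmod{q_{n-1}}\}$. A $q$-analogue of Friedlander's result -- proved by combining the sieve coefficients of Lemma~\ref{le:sieve} with the variance estimate of Lemma~\ref{lm:n5} -- gives
$$
\sum_{a\leq q_{n-1}}\Big|N_a-\frac{\varphi(d)}{d}\cdot\frac{z_n}{q_{n-1}}\Big|\;=\;o\Big(\frac{\varphi(d)z_n}{d}\Big)
$$
under the same divergence hypothesis. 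Combining with unique ergodicity of $T$ applied to $\frac{1}{q_{n-1}}\sum_{a\leq q_{n-1}}e_{b,c}(T^a(x,y))=o(1)$ uniformly in $(x,y)$ completes Z1--Z3.

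The case Z4 is the principal obstacle. Here hypothesis \eqref{eq:sdk} combined with the Fourier decay \eqref{ujwzrost} forces $q_{k^\ast-1}\leq(\log\log q_k)^2$ and, crucially, $\sup_{x\in\T}|g_{k^\ast-1}(x)|\geq 1/\log^4 q_k$. I would set $H=q_{k^\ast}^{1/2-\varepsilon}\geq q_k^{1/2-2\varepsilon}$ and partition $[0,q_k]$ into blocks of length $H$. On each block $[u,u+H]$, Corollary~\ref{cor:jad} supplies the approximation
$$
e_{b,c}(T^\ell(x,y))\;\approx\;e_{b,c}\!\left(T^{(\ell-u)\bmod q_{k^\ast-1}}(T^u(x,y))\right)e\big(c(\ell-u)\beta_u\big),\qquad \beta_u:=g_{k^\ast-1}(x+u\alpha).
$$
When $|\beta_u|\geq q_k^{-\varepsilon}$, a sieved Weyl-type estimate along the lines of Lemma~\ref{lm:n7} shows that the inner sum over $\ell\equiv a\pmod{q_{k^\ast-1}}$ saves a factor $q_{k^\ast-1}^{-1}$ over the trivial bound, which is enough.

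The remaining task in Z4 is to control the exceptional set $\{u\leq q_k:|\beta_u|<q_k^{-\varepsilon}\}$, uniformly in $x$. Since $g_{k^\ast-1}$ is a trigonometric polynomial of degree $o(\log q_k)$ with sup-norm at least $1/\log^4 q_k$, Nazarov's inequality (Theorem~\ref{thm:naz}) yields $\mathrm{Leb}(E)=o(1)$ where $E:=\{x:|g_{k^\ast-1}(x)|\leq q_k^{-\varepsilon/2}\}$; and because $\sup_\T|g'_{k^\ast-1}|\to 0$ by exponential decay of Fourier coefficients, any interval $I\subset\T$ of length $\asymp q_k^{-\varepsilon/2}$ meeting $E^c$ lies entirely in $\{|g_{k^\ast-1}|\geq q_k^{-\varepsilon}\}$. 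Covering $\T$ by such intervals and using the Denjoy-Koksma inequality to count visits of $\{x+u\alpha\}_{u\leq q_k}$ to each $I$ yields the desired $o(q_k)$ bound. The main difficulty is thus the delicate interplay in Z4 between Nazarov's polynomial inequality, Denjoy-Koksma equidistribution, and the sieved short exponential-sum estimate; a secondary technical hurdle in Z1--Z3 is the uniform verification of $\varphi(d)z_n/(dq_{n-1})\to\infty$ over all $d\mid z_n$, which dictates the specific thresholds used to split into cases Z1--Z4.
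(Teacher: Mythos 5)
There are two genuine gaps in your proposal, both on the "easy" side of the argument (the paper's Case I and Case Z3); your Z4 treatment and the sieve/unique-ergodicity endgame for Z1--Z2 are essentially correct.

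First, you omit the case $n^\ast=n$ entirely. The four cases Z1--Z4 in the outline are all subordinate to the hypothesis $n^\ast<n$; when $n^\ast=n$ (so $q_n\geq e^{\tau q_{n-1}}$) the paper runs a separate argument: take $z_n=q_n$, split $[0,q_n]$ into blocks of length $\approx q_n^{1/2-\epsilon}$, apply Corollary~\ref{cor:jad} on each block to produce a \emph{linear} polynomial phase $e(c(k-z_i)\beta_i)$ with $|\beta_i|\leq e^{-\tau q_{n-1}}$, and then invoke Lemma~\ref{lm:n6} (the short-interval additive-exponential estimate over reduced residues mod $q_{n-1}$). Your Z1--Z3 rigidity argument cannot absorb this case, because applying Lemma~\ref{cor:jad2} with index $n-1$ requires $z_n\leq q_{n-1}\min(q_n/q_{(n-1)^\ast},e^{2\tau q_{n-1}})$, and when $n^\ast=n$ one has no control on $(n-1)^\ast$ relative to $n^\ast$; the mechanism is structurally different.

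Second, your unified rigidity step for Z1--Z3 does not actually work in Z3. You claim the structural conditions give $z_n\leq q_n\min(q_{n+1}/q_{n^\ast},e^{2\tau q_n})$ and that Lemma~\ref{cor:jad2} then yields $T^k\approx T^{k\bmod q_{n-1}}$; but that hypothesis (index $n$) only produces an approximation mod $zq_n$, not mod $q_{n-1}$. To get the mod-$q_{n-1}$ conclusion one must apply the lemma with $n-1$ in place of $n$, which requires $z_n\leq q_{n-1}\min(q_n/q_{n^\ast},e^{2\tau q_{n-1}})$. In Z1 and Z2 this holds, but in Z3 the defining inequalities only give $q_{n-1}/q_{n^\ast}\leq 16\log^2 q_n$ with no useful lower bound, so $q_{n-1}q_n/q_{n^\ast}$ can be far smaller than $z_n=p_nq_n\gtrsim q_n\log^2 q_n$; the diophantine data alone is insufficient. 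The paper instead proves the rigidity statement in Z3 via Lemma~\ref{lem:an2}, which is a separate computation whose conclusion crucially exploits the smallness hypothesis \eqref{eq:asdfg} on the Fourier coefficients $a_m$ in the relevant window to kill the contribution of $g_{n^\ast-1}$ in $S_{kq_{n-1}}(g)$. Your proposal never registers that Z3 requires an extra analytic input beyond Lemma~\ref{cor:jad2}, and in fact the very hypothesis that \emph{defines} Z3 (the Fourier bound) goes unused in your Z3 treatment, which should be a warning sign.
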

\begin{proof}
It is enough to show \eqref{eq:erg} for $f=e_{b,c}$, $b,c\in \Z$. We will consider several cases:\\
\textbf{I.} $n^{\ast}=n$. This implies that $q_n\geq e^{\tau q_{n-1}}$. In this case, we will show \eqref{eq:erg} with $z_n=q_n$. Given a small $\epsilon>0$, let $H\in [\frac{1}{2} q_n^{1/2-\epsilon},q_n^{1/2-\epsilon}]\cap \Z$ be such that $q_{n-1}|H$ (this assumption is only to simplify the notation below) and let $I_i=[iH,(i+1)H)$ for $i=0,\ldots, [\frac{q_n}{H}]-1$. Notice that the interval $[[\frac{q_n}{H}]H,q_n]$ has length $\leq H$, and since $\frac{dH}{q_n\varphi(d)}=o(1)$ for every $d|q_n$,~\footnote{We recall that $\varphi(d)\geq d/(e^\gamma\log\log d+3/\log\log d)$ for $d>2$.} it can be ignored. We have
\be\label{eq:asd21}
\sum_{\substack{k\leq z_n\\(k,d)=1}}e_{b,c}(T^k(x,y))=\sum_{a\leq q_{n-1}}\sum_{i=0}^{[q_n/h]-1}\sum_{\substack{k\in I_i\\(k,d)=1\\k\equiv a\mod q_{n-1}}}e_{b,c}(T^k(x,y))+O(H).
\ee
Let $z_i=iH$ (then $q_{n-1}|z_i$). Then $k\in I_i$ implies that $|k-z_i|\leq q_n^{1/2-\epsilon}$. Let $(x_i,y_i):=T^{z_i}(x_i,y_i)$. By Corollary~\ref{cor:jad} with $q_n$ and $q_{n-1}$, $m=k-z_i$, $\delta=1/2+\epsilon$ and $w=1$, it follows that
$$
T^{k}(x,y)=T^{k-z_i}(x_i,y_i)=T^{k \mod q_{n-1}}(x_i,y_i+P_{n-1}(x_i,k-z_i))+o(1),
$$
where $P_{n-1}$ is a polynomial of degree at most~$1$ (since $\delta=1/2+\epsilon$). Therefore, by Proposition~\ref{lem:toadd1}, $P_{n-1}(x_i,k-z_i)=(k-z_i)\beta_i$, where
\begin{equation}\label{warnabeta}|\beta_i|\leq e^{-\tau q_{n-1}}.\end{equation}
 Using this, we get
\begin{multline}\label{eq:ad1}
\sum_{\substack{k\in I_i\\(k,d)=1\\k\equiv a\mod q_{n-1}}}e_{b,c}(T^k(x,y))=e_{b,c}(T^a(x_i,y_i)) \sum_{\substack{k\in I_i\\(k,d)=1\\k\equiv a\mod q_{n-1}}}e_c((k-z_i)\beta_i) +\\o( \sum_{\substack{k\in I_i\\(k,d)=1\\k\equiv a\mod q_{n-1}}}1 ),
\end{multline}
where $o(\cdot)$ does not depend on $d$. The last term after summing over $i$ and $a\leq q_{n-1}$ is of order $o(\frac{q_n}d\cdot\varphi(d))=o(\frac{\varphi(d)q_n}{d})$, and hence will be ignored.
Let
$$
h_{i,a}:=\sum_{\substack{k\in I_i\\(k,d)=1\\k\equiv a\mod q_{n-1}}}e_c((k-z_i)\beta_i)
$$
and let
$$
v_i=\frac{1}{\varphi(q_{n-1})}\sum_{\substack{(m,dq_{n-1})=1\\ m\in I_i}}e_c((m-z_i)\beta_i).
$$
Summing \eqref{eq:ad1} over $a\leq q_{n-1}$ (and recalling that we ignore the last term), we get
\be\label{eq:ad12}
\sum_{\substack{k\in I_i\\(k,d)=1}}e_{b,c}(T^k(x,y))=v_i\sum_{a\leq q_{n-1}}e_{b,c}(T^a(x_i,y_i))+O\Big(\sum_{a\leq q_{n-1}}|h_{i,a}-v_i|\Big).
\ee
Notice that by \eqref{eq:j3'} with $q'=dq_{n-1}\leq q_n^2$ (and remembering that $|I_i|\geq q_n^{1/3}$),
 $$
|v_i|\leq \frac{1}{\varphi(q_{n-1})}\sum_{\substack{(m,dq_{n-1})=1\\ m\in I_i}}1=\frac{1}{\varphi(q_{n-1})}O\Big(\frac{\varphi(dq_{n-1})H}{dq_{n-1}}\Big)=O\Big(\frac{\varphi(d)H}{dq_{n-1}}\Big),
$$
where we use that $(d,q_{n-1})=1$ (since $d|q_n$ and $(q_n,q_{n-1})=1$). So, by unique ergodicity,
$$
|v_i\sum_{a\leq q_{n-1}}e_{b,c}(T^a(x_i,y_i))|=o\Big(\frac{\varphi(d)H}{d}\Big).
$$
Now, $n=n^\ast$ and~\eqref{warnabeta} allow us to use Lemma~\ref{lm:n6} with $r=q_{n-1}$ which after summing over $a\leq q_{n-1}\leq \log^{50}q_n$, gives
$$
\sum_{a\leq q_{n-1}}|h_{i,a}-v_i|\leq \frac{\varphi(d)}d\frac{H}{e^{\tau q_{n-1}/2}}q_{n-1}+
\frac{Hq_{n-1}}{(\log q_n)^{200}}=
o\Big(\frac{\varphi(d)H}{d}\Big).
$$
Using this and summing \eqref{eq:ad12} over $i$, we get
$$
\sum_{\substack{k\leq q_n\\(k,d)=1}}e_{b,c}(T^k(x,y))=\frac{q_n}{H}o\Big(\frac{\varphi(d)H}{d}\Big)=o\Big(\frac{\varphi(d)q_n}{d}\Big).
$$
This finishes the proof of \textbf{I} (with an arbitrary $p_n$ in the required interval).
\\
\textbf{II.} $n^{\ast}<n$. Notice that by the definition of $n^{\ast}$, $q_n< e^{\tau q_{n-1}}$.
Let $w:\N\to \N$ be a function that goes slowly to $+\infty$, say, $w(n)=\log \log \log n$.

\textbf{II.a.} $q_{n-1}\leq  \frac{q_{n}}{w(n)\log \log q_n}$. In this case, we will show \eqref{eq:erg} with $z_n=q_n$. We split
\be\label{eq:a111}
\sum_{\substack{k\leq q_n\\(k,d)=1}}e_{b,c}(T^k(x,y))=\sum_{a\leq q_{n-1}}\sum_{\substack{k\leq  q_n\\ k\equiv a \mod q_{n-1}\\(k,d)=1}}e_{b,c}(T^k(x,y)).
\ee
By Lemma \ref{cor:jad2} with $n-1$ in place of $n$ (and noticing that $n^\ast=(n-1)^\ast$) and $z=1$, noticing that by assumptions $q_{n}\leq \min(\frac{q_nq_{n-1}}{q_{n^\ast}}, e^{\tau q_{n-1}})$, we get $k\equiv a \mod q_{n-1}$ implies that
\be\label{eq:msa2}
d(T^k(x,y),T^a(x,y))=o(1).
\ee
Notice that for every $d|q_n$,
$$
\frac{\varphi(d)q_n}{q_{n-1}d}\geq \frac{q_n}{q_{n-1}\log\log d}\geq \frac{q_n}{q_{n-1}\log \log q_n}\geq w(n)\to +\infty,
$$
since we are in~\textbf{II.a} case.
Therefore, by Lemma \ref{lm:n5} for $q=q_n$ and $r=q_{n-1}$, \eqref{eq:msa2}, and unique ergodicity, we obtain
\begin{multline*}
\sum_{\substack{k\leq q_n\\(k,d)=1}}e_{b,c}(T^k(x,y))=\sum_{a\leq q_{n-1}}e_{b,c}(T^a(x,y)) [\sum_{\substack{k\leq q_n\\ k\equiv a \mod q_{n-1}\\(k,d)=1}}1]+ o\Big(\frac{\varphi(d)q_n}{d}\Big)=\\
\frac{\varphi(d)q_n}{d}\frac1{q_{n-1}}\sum_{a\leq q_{n-1}}e_{b,c}(T^a(x,y))+O\Big(\sum_{a\leq q_{n-1}}\Big|\sum_{\substack{k\leq q_n\\ k\equiv a \mod q_{n-1}\\(k,d)=1}}1-
\frac{\varphi(d)q_n}{q_{n-1}d}\Big|\Big)+
o\Big(\frac{\varphi(d)q_n}{d}\Big)=\\
o\Big(\frac{\varphi(d)q_n}{d}\Big).
\end{multline*}
This finishes the proof in this case.\\

\textbf{II.b.}  $q_{n-1}\geq  \frac{q_{n}}{w(n)\log \log q_n}$ and $q_{n^\ast}\leq \frac{q_{n-1}}{16\log^2q_{n}}$. In this case, we will show \eqref{eq:erg} with $z_n=p_nq_n$ for {\bf{some}} $p_n\in [\log^2 q_n,2\log^2q_n]\cap \cP$. Namely, let $p_n\in [\log^2 q_n,2\log^2q_n]\cap \cP$ be any number such that $(p_n,q_{n-1})=1$. To see that such a $p_n$ exists, notice that by the prime number theorem
\be\label{prod:pr}
\prod_{p\in [\log^2 q_n,2\log^2q_n]\cap \cP}p\geq \Big(\log^{2}q_n\Big)^{\frac{\log^2q_n}{4\log \log q_n}}=\Big(\log q_n\Big)^{\log^{3/2}q_n}\geq q_n>q_{n-1}
\ee
(recall that $(p_n,q_{n-1})>1$ implies that $p_n|q_{n-1}$).
By the bounds of $p_n$, $p_nq_n<e^{p_n}$, so
\be\label{eq:newbu}
q_{n-1}\leq q_n=\frac{z_n}{p_n}\leq \frac{z_n}{\log z_n}.
\ee
Note that $(p_n,q_{n-1})=(q_n,q_{n-1})=1$, implies  $(z_n,q_{n-1})=1$. Since we are in case~\textbf{II.b.},   $z_n=p_nq_n\leq 4q_n\log^2q_n\leq \frac{q_n q_{n-1}}{q_{n^\ast}}$. Moreover, $z_n\leq 4q_n\log^2 q_n\leq e^{2\tau q_{n-1}}$ (since $q_n\leq e^{\tau q_{n-1}}$). Therefore, we can use Lemma \ref{cor:jad2}, with $n-1$  in place of $n$, $z=1$ to get that for every $k\leq z_n\leq q_{n-1}\min(\frac{q_{n}}{q_{n^\ast}},e^{2\tau q_{n-1}})$, $k \equiv a \mod q_{n-1}$, we have
\be\label{eq:fty}
d(T^k(x,y),T^a(x,y))=o(1).
\ee
Notice that by the bounds on $p_n$, $z_n=p_nq_n\geq q_n\log^2q_n\geq q_n\log^{3/2} z_n$. So, for every $d|z_n$, in view of~\eqref{eq:newbu},
$$
\frac{\varphi(d)z_n}{q_{n-1}d}\geq \frac{z_n}{q_{n-1}\log\log d}\geq \frac{z_n}{q_{n-1}\log \log z_n}\geq \log^{1/2}z_n\to +\infty.
$$ We split
\be\label{eq:glor}
\sum_{\substack{k\leq z_n\\(k,d)=1}}e_{b,c}(T^k(x,y))=\sum_{a\leq q_{n-1}}\sum_{\substack{k\leq z_n\\ k\equiv a \mod q_{n-1}\\(k,d)=1}}e_{b,c}(T^k(x,y)).
\ee
By Lemma \ref{lm:n5} for $q=z_n$ and $r=q_{n-1}$, \eqref{eq:fty} and unique ergodicity, we get
\begin{multline}\label{eq:pidpo}
\sum_{\substack{k\leq z_n\\ (k,d)=1}}e_{b,c}(T^k(x,y))=\sum_{a\leq q_{n-1}}e_{b,c}(T^a(x,y)) [\sum_{\substack{k\leq  z_n\\ k\equiv a \mod q_{n-1}\\(k,d)=1}}1]+ o\Big(\frac{\varphi(d)q_n}{d}\Big)=\\
\frac{\varphi(d)z_n}{q_{n-1}d}\sum_{a\leq q_{n-1}}e_{b,c}(T^a(x,y))+O\Big(\sum_{a\leq q_{n-1}}\Big|\sum_{\substack{k\leq z_n\\ k\equiv a \mod q_{n-1}\\(k,d)=1}}1-\frac{\varphi(d)z_n}{q_{n-1}d}\Big|\Big)+o\Big(\frac{\varphi(d)z_n}{d}\Big)=\\
o\Big(\frac{\varphi(d)z_n}{d}\Big).
\end{multline}
This finishes the proof in this case.\\

\textbf{II.c.}  $q_{n-1}\geq  \frac{q_{n}}{w(n)\log \log q_n}$, $q_{n^\ast}\geq \frac{q_{n-1}}{16\log^2 q_{n}}$ {\bf and} (see \eqref{eq:gn})
\be\label{eq:d111}
\max_{\substack{|m|\in [q_{n^\ast-1},\frac{\log q_{n^\ast}}{\tau'^2}]\\q_{n^\ast-1}|m}}|a_m|\leq \frac{1}{\log^4 q_n}.
\ee
Let  $p_n\in [\log^2 q_n,2\log^2q_n]\cap \cP$ be any number such that $(p_n,q_{n-1})=1$ (analogously to \eqref{prod:pr}, we show that such $p_n$ exists). Let $z_n:=p_nq_n$. By Lemma \ref{lem:an2}, we get that for $k\leq z_n\leq 2q_n\log^2q_n$,  $k\equiv a \mod q_{n-1}$,
$$
d(T^k(x,y),T^a(x,y))=o(1).
$$
The proof follows now the same lines as the proof of \textbf{II.b.}, i.e.\ we repeat~\eqref{eq:glor} and~\eqref{eq:pidpo}.

\textbf{II.d.}  $q_{n-1}\geq  \frac{q_{n}}{w(n)\log \log q_n}$, $q_{n^\ast}\geq \frac{q_{n-1}}{16\log^2q_n}$ and (see \eqref{eq:gn})
$$
\max_{\substack{|m|\in [q_{n^\ast-1},\frac{\log q_{n^\ast}}{\tau'^2}]\\q_{n^\ast-1}|m}}|a_m|\geq \frac{1}{\log^4 q_n}.
$$
We then take $z_n:=q_n$. If $m$ is a number reaching the max above, then $e^{-\tau' q_{n^\ast-1}}\geq e^{-\tau' m}\geq |a_{m}|\geq \frac{1}{\log^4 q_n}$, and this implies that
\be\label{eq:asq1}
q_{n^\ast-1}\leq [\log \log q_n]^2.
\ee
Moreover, by \eqref{eq:gn}, since $a_m$ is a Fourier coefficient of $g_{n^\ast-1}$, by bounding the $L^2$ norm by the supremum norm, we obtain
\be\label{sup:la}
\sup_{x\in\T} |g_{n^{\ast}-1}(x)| \geq \frac{1}{\log^4 q_n}.
\ee

Let $H:=q_{n^{\ast}}^{1/2-\epsilon}\geq q_{n-1}^{1/2-\epsilon}/(16\log^2q_n)^{1/2-\epsilon}\geq q_{n-1}^{1/2-3/(2\epsilon)}\geq q_{n}^{1/2-2\epsilon}$ by our choice of $w$. In this case, we will show \eqref{eq:erg} with $z_n=q_n$. We will show that
\be\label{eq:shoi}
\sum_{u<q_n}\Big|\sum_{\substack{k\in [u,u+H]\\(k,d)=1}}e_{b,c}(T^k(x,y))\Big|=o\Big(\frac{q_nH\varphi(d)}
{d}\Big).
\ee
Then \eqref{eq:erg} will follow, since by the above we can split $[0,q_n]$ (up to error $\leq H$) into disjoint intervals $\{I_i\}$ (of length $H$) satisfying
$$
\Big|\sum_{\substack{k\in I_i\\(k,d)=1}}e_{b,c}(T^k(x,y))\Big|=
o\Big(\frac{H\varphi(d)}{d}\Big).
$$
The result then follows by summing over $i$. Let us show \eqref{eq:shoi}. Let $k\in [u,u+H]$, $k\equiv a+u \mod q_{n^{\ast}-1}$. Note that $k-u\leq H=q_{n^\ast}^{1/2-\epsilon}$, so by Corollary~\ref{cor:jad} with $n$ replaced by $n^\ast$, $w=1$ and  $\delta=1/2+\epsilon$,  and denoting $(x_u,y_u)=T^u(x,y)$, we obtain
$$
T^k(x,y)=T^{k-u}(x_u,y_u)=T^{a}(x_u,y_u+(k-u)\beta_u)+o(1),
$$
as  $\deg P_{n^{\ast}-1}\leq 1$ by our choice of $\delta$. Moreover, by Proposition \ref{lem:toadd1},
\be\label{beu}
\beta_u=g_{n^\ast-1}(x+u\alpha).
\ee
Using this and decomposing into residue classes $\mod q_{n^\ast-1}$, we get
\begin{multline}\label{eq:fde}
\sum_{\substack{k\in [u,u+H]\\(k,d)=1}}e_{b,c}(T^k(x,y))=\\\sum_{a\leq q_{n^\ast-1}}e_{b,c}(T^a(x_u,y_u)[\sum_{\substack{k\in [u,u+H]\\(k,d)=1\\k\equiv a \mod q_{n^\ast-1}}}e_{b,c}((k-u)\beta_u)] +o(\sum_{\substack{k\in [u,u+H]\\(k,d)=1}}1).
\end{multline}
Notice that if $(a,(q_{n^\ast-1},d))>1$, then the above sum is empty. If $(a,(q_{n^\ast-1},d))=1$ and
\be\label{eq:betu}
|\beta_u|\geq \frac{1}{q_n^{\epsilon}},
\ee
then $q_n^{-\epsilon}<|\beta_u|<e^{-\tau q_{n^\ast-1}}$ (see~\eqref{eq:bocoe}) and since $H>q_n^{\frac12-2\epsilon}>q_n^{100\epsilon}$, by Lemma~\ref{lm:n7} with $r=q_{n^{\ast}-1}$ and $q=q_n$, we get
$$
\Big|\sum_{\substack{k\in [u,u+H]\\(k,d)=1\\k\equiv a \mod q_{n^\ast-1}}}e_{b,c}((k-u)\beta_u)\Big|=
o\Big(\frac{\varphi(d)H}{dq_{n^\ast-1}^2}\Big),
$$
where the proper bound $\frac1{(\log q_n)^A}=o\Big(\frac{\varphi(d)}{dq^2_{n^\ast-1}}\Big)$ for the second summand on the RHS in~\eqref{eq:mainstat} follows from~\eqref{eq:asq1}.
Hence, summing over $a\leq q_{n^\ast-1}$, using~\eqref{eq:fde}
and~\eqref{eq:j3'} (which applies since $H>q_n^{1/2-2\vep}>q_n^{1/10}$), we get
$$
\sum_{\substack{k\in [u,u+H]\\(k,d)=1}}e_{b,c}(T^k(x,y))=
o\Big(\frac{\varphi(d)H}{dq_{n^\ast-1}}\Big)+
o\Big(\frac{\varphi(d)H}{d}\Big)=
o\Big(\frac{\varphi(d)H}{d}\Big).
$$
So, by \eqref{eq:betu},
\begin{multline*}
\sum_{u<q_n}\Big|\sum_{\substack{k\in [u,u+H]\\(k,d)=1}}e_{b,c}(T^k(x,y))\Big|=\sum_{u:|\beta_u|\leq \frac{1}{q_n^{\epsilon}}}\Big|\sum_{\substack{k\in [u,u+H]\\(k,d)=1}}e_{b,c}(T^k(x,y))\Big|+
o\Big(\frac{q_n\varphi(d)H}{d}\Big).
\end{multline*}
Since for a fixed $u$, $$\Big|\sum_{\substack{k\in [u,u+H]\\(k,d)=1}}e_{b,c}(T^k(x,y))\Big|\leq\frac Hd\cdot\varphi(d),$$
equation~\eqref{eq:shoi} follows by showing
$$
\Big|\{u\leq q_n\;:\; |\beta_u|\leq q_n^{-\epsilon}\}\Big|=o(q_n).
$$
This however follows by \eqref{sup:la}, \eqref{beu} and Lemma \ref{lem:nazq}. This finishes the proof of~\textbf{II.d.} and hence also the proof of Proposition~\ref{prop:ergchar}.
\end{proof}

\begin{lemma}\label{lem:nazq} Fix $\epsilon>0$. Let $n^\ast <n$ and assume that
\be\label{eq:aaa}
\sup_{x\in\T} |g_{n^{\ast}-1}(x)| \geq \frac{1}{\log^4 q_n}.
\ee
Then (uniformly) for every $x\in \T$,
$$
\Big|\{u\leq q_n\;:\; |g_{n^\ast-1}(x+u\alpha)|\leq q_n^{-\epsilon}\}\Big|=o(q_n).
$$
\end{lemma}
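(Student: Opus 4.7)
The strategy is to exhibit a set $F \subset \T$ of vanishing Lebesgue measure that contains the sub-level set $\{z \in \T : |g_{n^\ast-1}(z)| \leq q_n^{-\epsilon}\}$, and then to bound the number of orbit points $\{x+u\alpha\}_{u\leq q_n}$ falling into $F$ by combining $Leb(F)=o(1)$ with the Denjoy-Koksma inequality. The measure bound on $F$ will come from Nazarov's inequality (Theorem~\ref{thm:naz}), exploiting the low Fourier complexity of $g_{n^\ast-1}$.

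First I would introduce the auxiliary sub-level set $E := \{z \in \T : |g_{n^\ast-1}(z)| \leq q_n^{-\epsilon/2}\}$ and apply Theorem~\ref{thm:naz} with $I = \T$. By the definition~\eqref{eq:gn}, the nonzero frequencies of $g_{n^\ast-1}$ are multiples of $q_{n^\ast-1}$ of absolute value at most $\log q_{n^\ast}/\tau'^2$, so the number of Fourier terms is bounded by $N_0 := 2\log q_{n^\ast}/(\tau'^2 q_{n^\ast-1}) + 2$. Combining the hypothesis~\eqref{eq:aaa} (i.e.\ $\sup_\T |g_{n^\ast-1}| \geq \log^{-4}q_n$) with $\sup_E |g_{n^\ast-1}| \leq q_n^{-\epsilon/2}$ and $\log q_{n^\ast} \leq \log q_n$ yields
$$Leb(E) \leq C \bigl( q_n^{-\epsilon/2}\log^4 q_n \bigr)^{1/(N_0-1)} \leq \exp\!\Bigl(-\tfrac{\epsilon \tau'^2}{8}\, q_{n^\ast-1} + o(q_{n^\ast-1})\Bigr).$$
Since $n^\ast\to\infty$ as $n\to\infty$ by~\eqref{eq:n,ast2}, one has $q_{n^\ast-1}\to\infty$, hence $Leb(E) = o(1)$.

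Next I would observe that $g_{n^\ast-1}$ has uniformly small derivative: term-by-term differentiation and the bound $|a_m| \leq e^{-\tau'|m|}$ from~\eqref{ujwzrost} give
$$\sup_\T |g'_{n^\ast-1}| \ll \sum_{|m|\geq q_{n^\ast-1}} |m|\, e^{-\tau'|m|} \ll q_{n^\ast-1}\, e^{-\tau' q_{n^\ast-1}/2} = o(1).$$
Partition $\T$ into $M := \lceil q_n^{\epsilon/2}\rceil$ consecutive intervals $I_1,\ldots,I_M$ of length at most $2q_n^{-\epsilon/2}$. If $I_i \cap E^c \neq \emptyset$, pick $z_0\in I_i$ with $|g_{n^\ast-1}(z_0)|>q_n^{-\epsilon/2}$; then the mean value theorem gives, for every $z\in I_i$,
$$|g_{n^\ast-1}(z)| \geq q_n^{-\epsilon/2} - 2q_n^{-\epsilon/2}\sup_\T |g'_{n^\ast-1}| \geq \tfrac{1}{2}q_n^{-\epsilon/2} > q_n^{-\epsilon}$$
once $n$ is large enough. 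Consequently
$$\{z\in\T : |g_{n^\ast-1}(z)|\leq q_n^{-\epsilon}\}\ \subseteq\ F := \bigcup_{\substack{i\leq M\\ I_i\subseteq E}} I_i,$$
and in particular $Leb(F)\leq Leb(E)=o(1)$.

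Finally I would invoke the Denjoy-Koksma inequality for the rotation by $\alpha$: the orbit $\{x+u\alpha\}_{u<q_n}$ meets any interval $I\subset\T$ in $q_n|I|+O(1)$ points, uniformly in $x$ and $I$. Summing over the at most $M = O(q_n^{\epsilon/2})$ intervals $I_i\subseteq F$ yields
$$\bigl|\{u\leq q_n : \{x+u\alpha\}\in F\}\bigr| \leq q_n\, Leb(F) + O(M) = o(q_n) + O(q_n^{\epsilon/2}) = o(q_n),$$
uniformly in $x\in\T$, and the lemma follows from the containment above. The main obstacle is getting $Leb(E)=o(1)$ out of Nazarov: this hinges on the crucial fact that $q_{n^\ast-1}\to\infty$ (a consequence of unique ergodicity through Lemma~\ref{lem:gothed}), together with the fact that the number of Fourier modes of $g_{n^\ast-1}$ grows only like $\log q_n/q_{n^\ast-1}$, which is tame relative to $q_{n^\ast-1}$.
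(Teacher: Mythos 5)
Your proof is correct and follows essentially the same route as the paper's: Nazarov's inequality to show the sub-level set $E$ has small measure, the uniformly small derivative of $g_{n^\ast-1}$ to pass from the threshold $q_n^{-\epsilon/2}$ to $q_n^{-\epsilon}$ on whole intervals, and Denjoy-Koksma to convert the measure estimate into an orbit count. The only cosmetic difference is that you count orbit visits to the bad set $F$ directly while the paper lower-bounds visits to the complementary good intervals; this is an equivalent presentation of the same argument.
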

\begin{proof}Notice first that $g_{n^{\ast}-1}$ is a complex polynomial whose number of terms is  at most $2\frac{\log q_{n^\ast}}{q_{n^\ast-1}}=o(\log q_n)$. Let $E:=\{x\in \T\;:\; |g_{n^\ast-1}(x)|<q_n^{-\epsilon/2}\}$. Then, by \eqref{eq:aaa} and Theorem~\ref{thm:naz},
\be\label{eq:lebe}
Leb(E)\leq C\Big[\frac{\log^4 q_n}{q_n^\epsilon}\Big]^{1/o(\log q_n)}=o(1),
\ee
since $\epsilon>0$ is fixed and $n$ grows. Notice that if $z\in E^c$ and $|z'-z|\leq \frac{1}{q_n^{\epsilon/2}}$, then (remembering that $\sup |g'_j|\to0$ due to the exponential decay of the coefficients of $g$)
$$
|g_{n^\ast-1}(z')-g_{n^\ast-1}(z)|\leq \sup_{\theta\in \T}|g'_{n^\ast-1}(\theta)|\frac{1}{q_n^{\epsilon/2}}=
o\Big(\frac{1}{q_n^{\epsilon/2}}\Big),
$$
and so $|g_{n^\ast-1}(z')|\geq \frac{1}{q_n^{\epsilon}}$. Decompose $\T$ into disjoint intervals $\{I_i\}_{i=1}^\ell$ of  equal length $\sim \frac{1}{q_n^{\epsilon/2}}$. By the above, if $I_i\cap E^c\neq \emptyset$, then
$$
\inf_{z\in I_i}|g_{n^\ast-1}(z)|\geq q_n^{-\epsilon}.
$$
Let $J:=\{i\;:\; I_i\cap E^c\neq \emptyset\}$. By \eqref{eq:lebe}, $|J|=\ell-o(\ell)$. By the Denjoy-Koksma inequality, for every $i\leq \ell$ and every $x\in \T$,
$$
\Big|\sum_{m\leq q_n}\chi_{I_i}(x+m\alpha)-q_nLeb(I_i)\Big|\leq 2,
$$
which implies that
$$\Big|\sum_{m\leq q_n}\chi_{I_i}(x+m\alpha)\Big|\geq q_n^{1-\epsilon/2}-2.$$
 Therefore, and since $|I_i|\sim \frac{1}{q_n^{\epsilon/2}}$, so  $\ell=[q_n^{\epsilon/2}]$, we obtain
$$
\Big|\{u\leq q_n\;:\; |g_{n^\ast-1}(x+u\alpha)|\geq q_n^{-\epsilon/2}\}\geq (\ell -o(\ell))q_n^{1-\epsilon/2}=q_n-o(q_n).
$$
This finishes the proof of Lemma \ref{lem:nazq}.
\end{proof}

\begin{lemma}\label{lem:an2}Let $n$ be such that $n^\ast<n$, $q_{n-1}\geq q_n^{1/2}$ and
\be\label{eq:asdfg}
\sup_{\substack{|m|\in [q_{n^\ast-1},\frac{\log q_{n^\ast}}{\tau'^2}]\\q_{n^\ast-1}|m}}|a_m|\leq \frac{1}{\log^4 q_n}.
\ee
Then, for every $m\leq 10q_n\log^2 q_n$,
$$
d(T^{m}(x,y),T^{m \mod q_{n-1}}(x,y))=o(1),
$$
uniformly over $m$ and $(x,y)\in\T^2$.
\end{lemma}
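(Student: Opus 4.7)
The plan is to write $m=kq_{n-1}+r$ with $0\le r< q_{n-1}$. The hypothesis $q_{n-1}\ge q_n^{1/2}$ together with $m\le 10q_n\log^2q_n$ then gives $k\le 10q_n^{1/2}\log^2 q_n$. Writing $T^m(x,y)=(x+m\alpha,y+S_m(g)(x))$ and invoking the cocycle identity $S_m(g)(x)=S_r(g)(x)+S_{kq_{n-1}}(g)(x+r\alpha)$, the conclusion reduces to two uniform estimates: $\|kq_{n-1}\alpha\|=o(1)$ and $\sup_x|S_{kq_{n-1}}(g)(x)|=o(1)$. The first is immediate, since $\|kq_{n-1}\alpha\|\le k/q_n\le 10\log^2 q_n/q_n^{1/2}=o(1)$.

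For the ergodic sum I expand $g=\sum_j g_j$ via \eqref{eq:gn}. A key preliminary observation is that $g_j=0$ whenever $j+1$ is \emph{not} a qualifying index, meaning $q_{j+1}<e^{\tau q_j}$: since $\tau=\tau'^2/2$ under the standing assumption $\tau'<1/10$, this forces $\log q_{j+1}/\tau'^2<q_j$, so the defining range of $g_j$ becomes empty. Because $n^\ast$ is the largest qualifying index $\le n$ and $n^\ast<n$, this kills $g_j$ for every $j\in\{n^\ast,\ldots,n-1\}$. Hence only three groups contribute to $S_{kq_{n-1}}(g)$: the tail $j\ge n$, the middle piece $g_{n^\ast-1}$, and the low pieces $g_j$ with $1\le j\le n^\ast-2$. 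The tail is easy: since frequencies satisfy $|m|\ge q_n$, combining $|S_{kq_{n-1}}(e_m)(x)|\le kq_{n-1}$ with $|a_m|\le e^{-\tau'|m|}$ gives a total bound $\ll kq_{n-1}e^{-\tau' q_n/2}=o(1)$.

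For the two remaining groups I use the diophantine fact $\|q_{n-1}\alpha\|\le 1/q_n$, which combined with $|m|\le\log q_{n^\ast}/\tau'^2\ll\log q_n$ yields $\|mq_{n-1}\alpha\|\le|m|/q_n$ and hence, for $m=rq_j$ inside $g_j$,
\[
|S_{kq_{n-1}}(e_m)(x)|\le \frac{\pi k\|m q_{n-1}\alpha\|}{2\|m\alpha\|}\le \frac{2kq_jq_{j+1}}{q_n},
\]
using $\|rq_j\alpha\|\ge r/(2q_{j+1})$; crucially, this bound is independent of $r$. For $g_{n^\ast-1}$ the hypothesis \eqref{eq:asdfg} supplies the decisive coefficient bound $|a_m|\le 1/\log^4 q_n$, and multiplying by the pointwise estimate above, then summing over the at most $2\log q_{n^\ast}/(\tau'^2 q_{n^\ast-1})$ admissible frequencies, produces
\[
\sup_x|S_{kq_{n-1}}(g_{n^\ast-1})(x)|\ll \frac{kq_{n^\ast}}{q_n\log^3 q_n}\ll \frac{q_{n^\ast}}{q_{n-1}\log q_n}\ll \frac{1}{\log q_n}=o(1),
\]
invoking $k\le 10q_n\log^2 q_n/q_{n-1}$ and $q_{n^\ast}\le q_{n-1}$. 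For the low pieces, $|a_m|\le e^{-\tau' r q_j}$ and a geometric-series summation in $r$ yield $|S_{kq_{n-1}}(g_j)(x)|\ll kq_jq_{j+1}e^{-\tau' q_j}/q_n$. Combining $q_{n^\ast}\ge e^{\tau q_{n^\ast-1}}$ with $q_{n^\ast}\le q_n$ gives $q_{n^\ast-1}\le\log q_n/\tau$, and hence $q_{j+1}\le q_{n^\ast-1}\le\log q_n/\tau$ for $j\le n^\ast-2$; summing over $j$ bounds the total low-piece contribution by $\ll (\log^3 q_n/q_n^{1/2})\sum_{j\ge 1}q_je^{-\tau' q_j}=o(1)$, the numerical series converging because the denominators grow at least geometrically.

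The main technical obstacle is the middle piece $g_{n^\ast-1}$: a naive supremum bound $|S_{kq_{n-1}}(g_{n^\ast-1})(x)|\le kq_{n-1}\sup|g_{n^\ast-1}|$ is far too weak (off by a factor of order $q_n$), so one must exploit the cocycle oscillation via $\|q_{n-1}\alpha\|\le 1/q_n$, and only then does the coefficient bound from \eqref{eq:asdfg} combine with this oscillation gain to produce the required $o(1)$ estimate.
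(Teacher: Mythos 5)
Your proof is correct and follows essentially the same path as the paper's: reduce to bounding $\|kq_{n-1}\alpha\|$ and $\sup_x|S_{kq_{n-1}}(g)(x)|$, observe that $g_j$ vanishes identically for $j\in\{n^\ast,\ldots,n-1\}$ because $q_{j+1}<e^{\tau q_j}$ kills the frequency window, then split the remaining ergodic sum into the tail $j\ge n$ (handled trivially), the low pieces $j\le n^\ast-2$ (handled via the diophantine gain $\|q_{n-1}\alpha\|\le 1/q_n$ together with exponential decay of $a_m$), and the middle piece $g_{n^\ast-1}$ (handled via the same diophantine gain together with hypothesis \eqref{eq:asdfg}). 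The only cosmetic difference is in the middle piece: you bound $|a_m|$ by $\log^{-4}q_n$ outright and cancel the $q_{n^\ast-1}$ in the product (number of terms) $\times$ (geometric-sum bound), whereas the paper interpolates via $|a_m|^{1/4}|a_m|^{3/4}$ to retain a bit of exponential decay; both routes give the same $o(1)$ conclusion. Likewise you bound $|S_{kq_{n-1}}(e_m)|$ in one step using $\|kq_{n-1}m\alpha\|\le k|m|\|q_{n-1}\alpha\|$ while the paper first passes to $k\,\sup_x|S_{q_{n-1}}(g)(x)|$ by the cocycle identity — these are equivalent up to absolute constants.
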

\begin{proof}It is enough to show that for every $k\leq \frac{10q_n\log^2 q_n}{q_{n-1}}$,
$$
d(T^{kq_{n-1}}(x,y),(x,y))=o(1),
$$
uniformly over $k$ and $(x,y)\in \T^2$. Notice that
$$
\|kq_{n-1}\alpha\|\leq\frac{10q_n\log^2q_n}{q_{n-1}}\frac{1}{q_n}=o(1),
$$
since $q_{n-1}\geq q_n^{1/2}$. Therefore, we only need to show that for $k\leq \frac{10q_n\log^2 q_n}{q_{n-1}}$, we have
$$
|S_{kq_{n-1}}(g)(x)|=o(1).
$$
Recall that $g=\widetilde{g}=\sum_{\ell\geq1}g_\ell$. Clearly, by the cocycle identity, we have
$$
\sup_{x\in\T}|S_{kq_{n-1}}(g)(x)|\leq k\sup_{x\in\T}|S_{q_{n-1}}(g)(x)|\leq k\sum_{\ell\geq1}\sup_{x\in\T}|S_{q_{n-1}}(g_\ell)(x)|.$$
But by the definition of $n^{\ast}$ (see \eqref{eq:n,ast}), for $s\in[n^\ast,n]$ the interval
$[q_{s-1},\log q_s/(\tau'^2)]$ is empty as
$$
\frac{\log q_s}{\tau'^2}<\frac{\tau q_{s-1}}{\tau'^2}<\frac{(1/2)\tau'^2 q_{s-1}}{\tau'^2}=\frac12 q_{s-1}.$$
Therefore, there are no frequencies in $[q_{n^\ast},q_n]$ ($g_\ell=0$ for $\ell\in[n^\ast,n]$). Moreover, for the frequencies  at least $q_n$, we apply the exponential decaying rate of Fourier coefficients to obtain
$$\sum_{\ell\geq n}|S_{q_{n-1}}(g_\ell)|=q_{n-1}O(e^{-\tau q_n}),$$
and, clearly,
$$
\frac{10 q_n \log^2 q_n}{q_{n-1}}\cdot q_{n-1} o(e^{-\tau q_n})=o(1).$$
It follows that it is enough to show that
$$
\frac{10q_n \log^2 q_n}{q_{n-1}}\sum_{\ell<n^\ast}\sup_{x\in\T }|S_{q_{n-1}}(g_\ell)(x)|=o(1).
$$

Recall that for every $m\in \Z$,
\be\label{eq:sw2}
S_{q_{n-1}}(e_m)(x)=e_m(x)\frac{e_m(q_{n-1}\alpha)-1}{e_m(\alpha)-1}.
\ee
Moreover,
\be\label{eq:sw3}
\Big|\frac{e_m(q_{n-1}\alpha)-1}{e_m(\alpha)-1}\Big|\ll \frac{\|mq_{n-1}\alpha\|}{\|m\alpha\|}\leq \min\Big(q_{n-1},\frac{m\|q_{n-1}\alpha\|}{\|m\alpha\|}\Big).
\ee
We will separately consider the cases $S_{q_{n-1}}(g_{n^\ast-1}(\cdot))$ and $\sum_{\ell<n^\ast-1}|S_{q_{n-1}}(g_\ell)(x)|$. Let first $\ell<n^{\ast}-1$ and let $|m|\in [q_\ell, q_{\ell+1}]$. Then
$$
|S_{q_{n-1}}(e_m)(x)|\leq \frac{|m|\|q_{n-1}\alpha\|}{\|m\alpha\|}\leq  \frac{2|m| q_{\ell+1}}{q_{n}}.
$$
Notice that since $\ell+1<n^{\ast}$, it follows by the definition of $n^\ast$ and  $n^\ast<n$ that $q_{\ell+1}\leq q_{n^\ast-1}\leq [\log q_{n^\ast}]^2\leq [\log q_{n-1}]^2$. Therefore,
$$
|a_mS_{q_{n-1}}(e_m)(x)|\leq 2|ma_m|\frac{\log^{2}q_{n-1}}{q_n}\leq e^{-\tau |m|/2} \frac{\log^{2}q_{n-1}}{q_n}.
$$
So, since $q_{n-1}\geq q_n^{1/2}$,
$$
\frac{10q_n \log^2 q_n}{q_{n-1}}|a_mS_{q_{n-1}}(e_m)(x)|\leq 10e^{-\tau |m|/2}\frac{\log^4 q_n}{q_{n-1}}\ll e^{-\tau m/2} \frac{1}{q_{n-1}^{1/2}}.
$$
Therefore,
$$
\frac{10q_n\log^2 q_n}{q_{n-1}}\sum_{\ell<n^\ast-1}\sup_{x\in\T }|S_{q_{n-1}}(g_\ell)(\cdot)|=O(q_{n-1}^{-1/2})=o(1).
$$
Notice that we did not use \eqref{eq:asdfg} in this case. It remains to bound
$$
\sup_{x\in \T}|S_{q_{n-1}}(g_{n^\ast-1})(x)|.
$$
Let $|m|\in [q_{n^\ast-1}, q_{n^\ast}]$.  Notice that by \eqref{eq:sw2} and \eqref{eq:sw3},
$$
|S_{q_{n-1}}(e_m)(x)|\leq \frac{2|m| q_{n^\ast}}{q_{n}}.
$$
Using \eqref{eq:asdfg}, $|a_m|\leq e^{-\tau |m|}$ and $n^{\ast}<n$,
$$
|a_mS_{q_{n-1}}(e_m)(x)|\leq  2|m||a_m|^{1/4} |a_m|^{3/4}\frac{q_{n-1}}{q_{n}}\leq e^{-\tau |m|/8} \frac{q_{n-1}}{q_n\log^{3}q_n}.
$$
Therefore,
$$
\frac{10q_n\log^2 q_n}{q_{n-1}}|a_mS_{q_{n-1}}(e_m)(x)|\leq  10e^{-\tau |m|/8}\frac{1}{\log q_n}.
$$
Summing over $|m|\in [q_{n^\ast-1}, q_{n^\ast}]$ gives
$$
\frac{10q_n \log^2 q_n}{q_{n-1}}S_{q_{n-1}}(g_{n^\ast-1})(x)=o(1).
$$
This finishes the proof.
\end{proof}

\part{Equidistribution along primes}

\section{Number theoretic lemma}

Here, we will collect a number of standard lemmas that will be frequently used in the upcoming sections. We also collect a number of more mundane lemmas that would otherwise obstruct the flow of the argument.

\begin{lemma}[The hybrid large sieve]
  Let $D(s, \chi) = \sum_{n \leq N} a(n) \chi(n) n^{-s}$. Then,
  \begin{equation} \label{eq:toboundlarge}
  \sum_{\chi \pmod{q}} \int_{|t| \leq T} |D(\tfrac 12 + it, \chi)|^2 dt \ll (\varphi(q) T + N) \sum_{\substack{n \leq N \\ (n, q) = 1}} \frac{|a(n)|^2}{n}.
  \end{equation}
\end{lemma}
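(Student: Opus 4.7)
My plan is to follow the classical expand-and-orthogonality approach. Opening the square and swapping orders of summation and integration gives
$$
\sum_{\chi \pmod q} \int_{|t|\leq T} |D(\tfrac12+it,\chi)|^2 dt = \sum_{m,n \leq N} \frac{a(m)\overline{a(n)}}{\sqrt{mn}} \Bigl(\sum_{\chi \pmod q} \chi(m)\overline{\chi(n)}\Bigr) \int_{-T}^{T} (n/m)^{it}\,dt.
$$
Character orthogonality collapses the inner sum to $\varphi(q)\mathbf{1}_{m \equiv n \,(q),\,(mn,q)=1}$, while the $t$-integral equals $2T$ when $m=n$ and $2\sin(T\log(n/m))/\log(n/m)$ otherwise, bounded trivially by $\min(2T, 2/|\log(n/m)|)$.

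The diagonal $m=n$ contribution is exactly $2T\varphi(q)\sum_{(n,q)=1} |a(n)|^2/n$, which accounts for the $\varphi(q)T$ term. For the off-diagonal contribution, I will use that $m\equiv n \pmod q$ with $m\neq n$ forces $|m-n|\geq q$, so $|\log(n/m)| \gg q/\max(m,n)$, and then invoke the Montgomery--Vaughan Hilbert-type inequality
$$
\Bigl|\sum_{r\neq s} \frac{x_r \overline{x_s}}{\lambda_r - \lambda_s}\Bigr| \leq C \sum_r \frac{|x_r|^2}{\delta_r}, \qquad \delta_r := \min_{s \neq r}|\lambda_r - \lambda_s|,
$$
applied with $\lambda_n = \log n$ restricted to one of the $\varphi(q)$ reduced residue classes $a \pmod q$. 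For such a class the minimal gap satisfies $\delta_n \gg q/n$. Taking $x_n = a(n)/\sqrt n$, summing the resulting bound over residue classes, and using that the outer character-orthogonality factor of $\varphi(q)$ combines with $1/\delta_n \sim n/q$ to give
$$
\varphi(q)\sum_{(n,q)=1} \frac{|a(n)|^2}{n} \cdot \frac{n}{q} \ll N \sum_{(n,q)=1} \frac{|a(n)|^2}{n},
$$
completes the bound.

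The main obstacle is the off-diagonal estimate: a crude pointwise bound $|\log(n/m)| \gg q/N$ would lose a factor of $N/q$ and only produce $N^2/q$ rather than $N$ on the right. What makes the argument tight is the Montgomery--Vaughan Hilbert-type inequality, which exploits that pairs with nearly maximal contribution are forced to have the largest gaps; this delivers the cleanly additive bound $\sum |x_n|^2/\delta_n$ instead of a double-sum loss. A minor technical point is that the sharp form of the Montgomery--Vaughan inequality is stated with a constant $\pi$ (or similar) and requires the $\lambda_n$ to be real and distinct within each residue class, both of which hold trivially here since $\lambda_n = \log n$ is strictly increasing in $n$. As an alternative, one can equivalently deploy Gallagher's lemma to reduce the $t$-integral to a discrete sum over well-spaced points and then apply the standard large sieve for Dirichlet characters; either route yields the claimed inequality.
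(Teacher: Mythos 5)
Your argument is correct and reconstructs the standard Montgomery--Vaughan proof of the hybrid large sieve; the paper itself offers no proof, simply citing the statement as Theorem 6.4 of Montgomery's \emph{Topics in Multiplicative Number Theory}, so in effect you are supplying the proof of the result being cited rather than taking a genuinely different route. One presentational wrinkle to tighten: after character orthogonality the off-diagonal kernel is exactly $2\sin\bigl(T\log(n/m)\bigr)/\log(n/m)$, which is not of the Hilbert form $1/(\lambda_m-\lambda_n)$, so the pointwise bound $\min(2T,2/|\log(n/m)|)$ you record is not what gets fed into the Montgomery--Vaughan inequality. Instead one writes the sine as a difference of exponentials and absorbs the phases $e^{\pm iT\log n}$ into the coefficients $x_n=a(n)n^{-1/2}$ (equivalently, applies the Montgomery--Vaughan mean-value theorem for $\int_0^T|\sum_n x_n e^{i\lambda_n t}|^2\,dt$ directly, class by reduced residue class). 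With that adjustment the rest of your computation is exact: the spacing $\delta_n\gg q/n$ within each reduced class, summing over the $\varphi(q)$ classes, and the final reduction $\tfrac{\varphi(q)}{q}\sum_{(n,q)=1}|a(n)|^2\le N\sum_{(n,q)=1}|a(n)|^2/n$ all go through, and your alternative via Gallagher's lemma plus the additive large sieve is likewise valid.
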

\begin{proof}
  This is \cite[Theorem 6.4]{MontgomeryTopics}.
\end{proof}

\begin{lemma}[Classical large sieve]
  Let $D(\chi) = \sum_{n \leq N} a(n) \chi(n)$. Then,
  $$
  \sum_{\chi \pmod{q}} |D(\chi)|^2 \ll ( \varphi(q) + N ) \sum_{\substack{n \leq N \\ (n, q) = 1}} |a(n)|^2.
  $$
\end{lemma}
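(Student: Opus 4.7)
The plan is to expand the square and reduce to a pure orthogonality computation, with no need to invoke the full multiplicative large sieve. Writing
\[
\sum_{\chi \pmod q} |D(\chi)|^2 = \sum_{n_1, n_2 \leq N} a(n_1) \overline{a(n_2)} \sum_{\chi \pmod q} \chi(n_1) \overline{\chi(n_2)},
\]
I would use the standard orthogonality relation for Dirichlet characters modulo $q$: the inner sum equals $\varphi(q)$ when $(n_1 n_2, q) = 1$ and $n_1 \equiv n_2 \pmod q$, and vanishes otherwise. This reduces the left-hand side to
\[
\varphi(q) \sum_{\substack{n_1, n_2 \leq N \\ (n_1 n_2, q) = 1 \\ n_1 \equiv n_2 \pmod q}} a(n_1) \overline{a(n_2)}.
\]

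Next I would bound the bilinear form by AM-GM: $|a(n_1) \overline{a(n_2)}| \leq \tfrac{1}{2}(|a(n_1)|^2 + |a(n_2)|^2)$. By symmetry in $n_1$ and $n_2$, the sum is controlled by
\[
\varphi(q) \sum_{\substack{n_1 \leq N \\ (n_1, q) = 1}} |a(n_1)|^2 \cdot \#\{n_2 \leq N : n_2 \equiv n_1 \pmod q\}.
\]
The inner count is elementary: fixing the residue class $n_1 \pmod q$, the number of $n_2 \in [1, N]$ in that class is at most $1 + N/q$. Substituting this bound gives
\[
\sum_{\chi \pmod q} |D(\chi)|^2 \;\leq\; \varphi(q)\bigl(1 + N/q\bigr) \sum_{\substack{n \leq N \\ (n, q) = 1}} |a(n)|^2 \;\leq\; \bigl(\varphi(q) + N\bigr) \sum_{\substack{n \leq N \\ (n, q) = 1}} |a(n)|^2,
\]
using $\varphi(q)/q \leq 1$, which is the desired inequality.

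There is essentially no obstacle here: the entire argument is character orthogonality plus a trivial counting in a single residue class, and it requires none of the deeper input (e.g.\ Selberg's majorant, duality, or the multiplicative large sieve over varying moduli) that one needs for the full large sieve inequality summed over $q$. In particular, we do not need to separate by conductor into primitive characters, because the claim is stated for a single modulus and the diagonal terms $n_1 = n_2$ already contribute the $\varphi(q) \sum |a(n)|^2$ part of the bound.
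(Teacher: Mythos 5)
Your proof is correct. The paper's ``proof'' of this lemma is simply a citation to \cite[Theorem~6.2]{MontgomeryTopics}, so there is no internal argument to compare against; you have instead supplied the standard self-contained derivation. Your argument---expand the square, apply orthogonality of Dirichlet characters modulo $q$ to collapse the character sum to the diagonal congruence class, then control the resulting bilinear form by AM--GM and a trivial count of residues in $[1,N]$---is exactly the textbook route for the single-modulus version. Your closing observation is also accurate: for a fixed modulus $q$ one needs nothing beyond orthogonality, and the harder machinery (Gallagher's or Selberg's methods, duality, extremal majorants) only enters when one sums over all primitive characters to moduli $q \leq Q$ and aims for the sharp constant $N-1+Q^2$. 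The only cosmetic point is that after applying orthogonality the condition $(n_1 n_2, q)=1$ forces $(n_2,q)=1$ in the inner count, but since you are only using an upper bound $\leq 1 + N/q$ this does not affect anything.
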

\begin{proof}
  This is \cite[Theorem 6.2]{MontgomeryTopics}.
\end{proof}

\begin{lemma}[Mean-value theorem] \label{le:mvt}
  Let $D(s) = \sum_{n \leq N} \alpha_n n^{-s}$ be a Dirichlet polynomial.
  Then,
  $$
  \int_{|t| \leq T} |D(it)|^2 dt \ll (T + N) \sum_{n \leq N} |\alpha_n|^2.
  $$
  \end{lemma}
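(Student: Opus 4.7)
The plan is to expand $|D(it)|^2$ as a double Dirichlet-type series, integrate term-by-term over $|t|\le T$, and control the off-diagonal using a Hilbert-type inequality of Montgomery--Vaughan. This is the classical route to the mean-value theorem.

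First I would write
$$
|D(it)|^2 = \sum_{m,n\le N} \alpha_n \overline{\alpha_m} \Big(\frac{n}{m}\Big)^{it},
$$
and integrate to obtain
$$
\int_{|t|\le T}|D(it)|^2\,dt \;=\; 2T\sum_{n\le N}|\alpha_n|^2 \;+\; \sum_{\substack{m,n\le N\\ m\ne n}} \alpha_n\overline{\alpha_m}\,\frac{2\sin\!\big(T\log(n/m)\big)}{\log(n/m)}.
$$
The diagonal already delivers the $T\sum|\alpha_n|^2$ piece of the desired bound, so the task reduces to showing that the off-diagonal contribution is $\ll N\sum_{n\le N}|\alpha_n|^2$.

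For this, I would use the trivial bound $|\sin(T\log(n/m))/\log(n/m)|\le 1/|\log(n/m)|$ and invoke the Montgomery--Vaughan Hilbert-type inequality: for strictly increasing real numbers $\lambda_n$ with $\delta_n := \min_{m\ne n}|\lambda_n-\lambda_m|$, one has
$$
\Big|\sum_{\substack{m,n\\ m\ne n}}\frac{\alpha_n\overline{\alpha_m}}{\lambda_n-\lambda_m}\Big| \;\ll\; \sum_n \frac{|\alpha_n|^2}{\delta_n}.
$$
Applied with $\lambda_n=\log n$, so that $\delta_n\gg 1/n$, this gives
$$
\Big|\sum_{m\ne n}\frac{\alpha_n\overline{\alpha_m}}{\log(n/m)}\Big| \;\ll\; \sum_{n\le N} n\,|\alpha_n|^2 \;\le\; N\sum_{n\le N}|\alpha_n|^2,
$$
which is exactly the bound required for the off-diagonal. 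Adding the two contributions yields the lemma.

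The only genuinely non-trivial ingredient is the Montgomery--Vaungham Hilbert-type inequality; I would simply cite it (it is standard, e.g.\ \cite[Chapter 7]{MontgomeryTopics}) rather than reprove it. As an alternative, I note that the lemma can be derived as the $q=1$ special case of the hybrid large sieve \eqref{eq:toboundlarge}: writing $D(it) = \sum_{n\le N} \beta_n n^{-1/2-it}$ with $\beta_n := \alpha_n n^{1/2}$ converts the claim into $\int_{|t|\le T}|\sum_n \beta_n n^{-1/2-it}|^2\,dt \ll (T+N)\sum_{n\le N}|\beta_n|^2/n$, which is exactly \eqref{eq:toboundlarge} for the trivial character modulo $1$. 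Either route works; I would present the direct expansion above as it is self-contained modulo the cited inequality.
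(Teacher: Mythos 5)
Your main direct argument has a genuine gap. After expanding and integrating, the off-diagonal is
$$
\sum_{m\neq n}\alpha_n\overline{\alpha_m}\,\frac{2\sin\bigl(T\log(n/m)\bigr)}{\log(n/m)},
$$
and you propose to use the trivial bound $|\sin(T\log(n/m))/\log(n/m)|\leq 1/|\log(n/m)|$ and then apply the Montgomery--Vaughan Hilbert-type inequality. But these two steps are incompatible: the Hilbert-type inequality controls the \emph{signed} sum $\bigl|\sum_{m\neq n}\alpha_n\overline{\alpha_m}/(\lambda_n-\lambda_m)\bigr|$ and relies crucially on the sign of the kernel; once you have passed to $\sum_{m\neq n}|\alpha_n||\alpha_m|/|\log(n/m)|$ the cancellation is gone and the inequality no longer applies. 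Moreover, that absolute-value sum is genuinely too large: taking $\alpha_n=1$ for $N/2\leq n\leq N$, one finds $\sum_{m\neq n}1/|\log(n/m)|\asymp N^2\log N$, whereas the target is $\ll N\sum|\alpha_n|^2\asymp N^2$, so a logarithm is lost and the lemma as stated would not follow. The standard repair is to avoid the trivial bound on the sine: write $2\sin(T\log(n/m))/\log(n/m)=\bigl((n/m)^{iT}-(m/n)^{iT}\bigr)/\bigl(i\log(n/m)\bigr)$, absorb the factor $n^{\pm iT}$ into the coefficients (which leaves their modulus unchanged), and apply the signed Hilbert inequality separately to each of the two resulting bilinear forms.

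Your alternative route --- specializing the hybrid large sieve \eqref{eq:toboundlarge} to $q=1$ after the substitution $\beta_n=\alpha_n n^{1/2}$ --- is correct and would serve perfectly well as a complete proof. The paper itself uses a third, more elementary device: it dominates the sharp cut-off by a smooth majorant $\Phi$ whose Fourier transform is supported in $[-1,1]$, so that after expanding, the factor $\widehat{\Phi}(T\log(n/m))$ kills all off-diagonal terms with $|n-m|>N/T$; the surviving $\ll N/T$ near-diagonal terms are then bounded trivially. This avoids the Hilbert inequality entirely and is self-contained. If you want to keep the Montgomery--Vaughan route, fix the sine step as indicated above and cite the signed inequality; otherwise, either the smoothing argument or the $q=1$ large-sieve specialization gives the cleanest path.
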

  \begin{proof}
    Let $\Phi$ be a smooth non-negative function such that $\Phi(x) \gg 1$ for $|x| \leq 1$ and $\text{supp }\widehat{\Phi} \subset [-1,1]$. Then,
    $$
    \int_{|t| \leq T} |D(it)|^2 dt \leq\ \int_{\mathbb{R}} |D(it)|^2 \Phi \Big ( \frac{t}{T} \Big ) dt = \sum_{n,m} \alpha_n \overline{\alpha_m} T \widehat{\Phi} \Big ( T \log \frac{n}{m} \Big ).
      $$
      Writing $n = m + h$, we obtain that the contribution of terms with $|h| > N / T$ is zero. Therefore, the above is equal to
      $$
   \ll   T \sum_{n \leq N} |\alpha_n|^2  + T \sum_{0 < h < N / T} \sum_{n \leq N - h} |\alpha_n| \cdot |\alpha_{n + h}|
   $$
   and applying the inequality $|\alpha_{n} \alpha_{n + h}| \leq |\alpha_{n}|^2 + |\alpha_{n + h}|^2$, we obtain
   $$
   T \sum_{n \leq N} |\alpha_n|^2 + T \Big ( \frac{N}{T} + 1 \Big ) \sum_{n \leq N} |\alpha_n|^2
   $$
   which gives the claim.
  \end{proof}

  \begin{lemma}[Vaughan's identity] \label{le:vaughan}
    For $n > z \geq 1$,
    $$
    \Lambda(n) = \sum_{\substack{d | n \\ d \leq z}} \mu(d) \ln \frac{n}{d} - \sum_{\substack{d  c | n \\ d, c \leq z}} \mu(d) \Lambda(c) + \sum_{\substack{d c | n  \\ d > z, c > z}} \mu(d) \Lambda(c).
    $$
    \end{lemma}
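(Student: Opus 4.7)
The plan is to derive the identity purely algebraically from the two Dirichlet convolution identities $\Lambda = \mu * \log$ and $\log = \Lambda * 1$, together with M\"obius orthogonality $\sum_{d\mid m}\mu(d)=\mathbf{1}_{m=1}$; the hypothesis $n>z$ enters only at the point where the orthogonality is applied.

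First, write
$$
\Lambda(n)=\sum_{d\mid n}\mu(d)\log(n/d)=\sum_{\substack{d\mid n\\ d\leq z}}\mu(d)\log(n/d)+\sum_{\substack{d\mid n\\ d> z}}\mu(d)\log(n/d).
$$
The first piece is already the first term of the target identity. In the second piece substitute $\log(n/d)=\sum_{c\mid n/d}\Lambda(c)$, obtaining
$$
\sum_{\substack{dc\mid n\\ d>z}}\mu(d)\Lambda(c),
$$
and split this further according as $c\leq z$ or $c>z$. The $c>z$ part is the third term in the claim.

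It remains to identify the contribution from $d>z$, $c\leq z$ as the negative of the middle term. For this, consider
$$
\sum_{\substack{dc\mid n\\ c\leq z}}\mu(d)\Lambda(c)=\sum_{\substack{c\mid n\\ c\leq z}}\Lambda(c)\sum_{d\mid n/c}\mu(d).
$$
Whenever $\Lambda(c)\neq 0$ and $c\leq z$, we have $c\leq z<n$, so $n/c>1$, and the inner M\"obius sum vanishes. Splitting the outer sum on the left by $d\leq z$ versus $d>z$ therefore gives
$$
\sum_{\substack{dc\mid n\\ c\leq z,\,d>z}}\mu(d)\Lambda(c)=-\sum_{\substack{dc\mid n\\ c\leq z,\,d\leq z}}\mu(d)\Lambda(c),
$$
which, plugged into the earlier decomposition, produces the middle term of the identity with exactly the stated sign.

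No step presents any serious obstacle; Vaughan's identity is essentially a bookkeeping exercise built on the two standard convolution identities for $\Lambda$ and M\"obius cancellation. The only point that requires care is the sign tracking in the final substitution and the explicit use of the hypothesis $n>z$ to ensure $n/c>1$ for every $c\leq z$ contributing to the $c\leq z$ sum.
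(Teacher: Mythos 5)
Your derivation is correct and complete. The paper does not actually prove the identity; it simply cites \cite[Proposition 13.4]{IwaniecKowalski}. What you have supplied is the standard elementary derivation from $\Lambda=\mu*\log$ and $\log=\Lambda*1$ together with M\"obius orthogonality, and it is carried out without gaps: the split at $d\leq z$ versus $d>z$, the substitution $\log(n/d)=\sum_{c\mid n/d}\Lambda(c)$, the further split at $c\leq z$ versus $c>z$, and the identification of the $c\leq z$, $d>z$ piece as the negative of the middle term via the vanishing of $\sum_{\substack{dc\mid n\\ c\leq z}}\mu(d)\Lambda(c)$ all check out, with the hypothesis $n>z$ used exactly where it is needed to guarantee $n/c>1$ so that the inner M\"obius sum vanishes. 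Compared to the usual formal-Dirichlet-series manipulation in Iwaniec--Kowalski (writing $-\zeta'/\zeta$ as a polynomial identity in $\zeta$, $\zeta'$, and truncated inverses), your route is more elementary and keeps everything at the level of finite sums; the formal-series route is more mnemonic and generalizes readily to Heath-Brown's identity, which the paper also uses. Either is a legitimate proof here.
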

    \begin{proof}
      See \cite[Proposition 13.4]{IwaniecKowalski}.
    \end{proof}

\begin{lemma}[Heath-Brown identity]
  For any integer $k \geq 1$,
  $$
  - \frac{\zeta'}{\zeta}(s) = \sum_{j = 1}^{k} (-1)^j \binom{k}{j} \zeta(s)^{j - 1} \zeta'(s) M(s)^j - \frac{\zeta'}{\zeta}(s) \cdot (1 - \zeta(s) M(s))^k,
  $$
  where
  $$
  M(s) = \sum_{n \leq z} \frac{\mu(n)}{n^s}.
  $$
\end{lemma}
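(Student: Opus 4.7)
The identity is purely formal — it holds for any two functions (or formal Dirichlet series) $\zeta(s)$ and $M(s)$, with no input from the actual definitions of the Riemann zeta function or the truncated M\"obius series. My plan is therefore to deduce it from the binomial theorem applied to $(1-\zeta(s)M(s))^{k}$, followed by an algebraic rearrangement.

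The starting point is the tautological split
$$
-\frac{\zeta'}{\zeta}(s) \;=\; -\frac{\zeta'}{\zeta}(s)\bigl[\,1-(1-\zeta(s)M(s))^{k}\,\bigr] \;-\; \frac{\zeta'}{\zeta}(s)(1-\zeta(s)M(s))^{k}.
$$
The first step is to expand $(1-\zeta(s)M(s))^{k}$ by the binomial theorem and isolate the constant term:
$$
1-(1-\zeta(s)M(s))^{k} \;=\; -\sum_{j=1}^{k}\binom{k}{j}(-1)^{j}\zeta(s)^{j}M(s)^{j}.
$$
The second step is to multiply this by $-\zeta'(s)/\zeta(s)$; the two minus signs cancel, and one factor of $\zeta(s)$ in each summand is consumed by the quotient $\zeta'/\zeta$, giving
$$
-\frac{\zeta'}{\zeta}(s)\bigl[\,1-(1-\zeta(s)M(s))^{k}\,\bigr] \;=\; \sum_{j=1}^{k}(-1)^{j}\binom{k}{j}\zeta(s)^{j-1}\zeta'(s)M(s)^{j}.
$$
Substituting back into the split yields the claimed identity.

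There is no genuine analytic obstacle here: the whole proof is the binomial theorem followed by one multiplication and one rearrangement, and the only thing that really needs to be checked is sign bookkeeping (one sign from the binomial expansion, one from $-\zeta'/\zeta$, and one from the $1-(1-\zeta M)^{k}$ split). What makes this identity \emph{useful} is not its proof but its structural content, which will be exploited later: the sum over $j$ decomposes $\Lambda(n)$ into $\le 2k$-fold convolutions in which $j$ of the factors are coefficients of $M(s)$ and hence supported on integers $\le z$, while the remainder $-(\zeta'/\zeta)(1-\zeta M)^{k}$, being a $k$-fold product of $(1-\zeta M)$ whose Dirichlet coefficients vanish on integers $\le z$, contributes nothing on integers $n \le z^{k}$. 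These interpretations, however, are needed only for the applications of the identity, not for the identity itself.
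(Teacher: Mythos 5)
Your proof is correct and takes the same route as the paper, whose entire proof reads ``This is a trivial consequence of the binomial theorem.'' You have simply spelled out the sign bookkeeping that the authors leave to the reader.
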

\begin{proof}
  This is a trivial consequence of the binomial theorem.
\end{proof}
\begin{lemma}[Linnik identity] \label{le:linnik}
  We have
  $$
  - \sum_{k} \frac{(-1)^k}{k} \cdot d^{\star}_{k, z}(n) = \frac{1}{\alpha} \cdot \mathbf{1}_{\substack{n = p^{\alpha} \\ n > z}},
  $$
  where $d_{k,z}^{\star}(n)$ counts the number of representations of $n$ as $n_1 \ldots n_k$ with $n_i$ such that $p | n_i \implies p > z$ and $n_i > 1$ for all $i = 1, \ldots, k$.
\end{lemma}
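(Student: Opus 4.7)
The plan is to establish the identity by comparing Dirichlet series coefficients. The starting point is the Dirichlet series
\[
F(s) := \sum_{\substack{n \geq 2 \\ p \mid n \implies p > z}} \frac{1}{n^s},
\]
which, by unique factorization and the fact that the summation is over integers composed only of primes exceeding $z$, equals $\prod_{p > z}(1 - p^{-s})^{-1} - 1$ and converges absolutely for $\Re s > 1$. I would then observe that $F(s)^k$ is precisely the generating series for $d^{\star}_{k,z}(n)$, since a Dirichlet convolution of $k$ copies of the coefficients of $F$ counts exactly the ordered factorizations $n = n_1 \cdots n_k$ with each $n_i > 1$ and $p \mid n_i \Rightarrow p > z$:
\[
F(s)^k = \sum_{n} \frac{d^{\star}_{k,z}(n)}{n^s}.
\]

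The next step is to note that for each fixed $n$ we have $d^{\star}_{k,z}(n) = 0$ for $k > \Omega(n)$, so the sum $\sum_k \frac{(-1)^k}{k} d^{\star}_{k,z}(n)$ terminates and the left-hand side of the claimed identity is unambiguously a Dirichlet series. In the half-plane $\Re s > 1$ (in fact, even for $\Re s$ slightly beyond this, using that $|F(s)| < 1$ eventually), the series $\sum_{k \geq 1} \frac{(-1)^{k-1}}{k} F(s)^k$ converges absolutely, and by the Taylor expansion of $\log(1+y)$ we obtain
\[
-\sum_{k \geq 1} \frac{(-1)^k}{k} F(s)^k \;=\; \log\bigl(1 + F(s)\bigr).
\]

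From the Euler product representation $1 + F(s) = \prod_{p > z}(1 - p^{-s})^{-1}$, taking logarithms and expanding $-\log(1 - p^{-s}) = \sum_{\alpha \geq 1} \frac{p^{-\alpha s}}{\alpha}$ yields
\[
\log\bigl(1 + F(s)\bigr) = \sum_{p > z} \sum_{\alpha \geq 1} \frac{1}{\alpha} \cdot \frac{1}{p^{\alpha s}} \;=\; \sum_{\substack{n = p^{\alpha} \\ p > z}} \frac{1}{\alpha} \cdot \frac{1}{n^s}.
\]
Since $p > z$ forces $n = p^\alpha > z$, and conversely any $n$ producing a nonzero contribution on the left-hand side must (via the definition of $d^{\star}_{k,z}$) have all its prime factors exceeding $z$ and therefore must be a pure prime power $p^\alpha$ with $p > z$, comparing Dirichlet coefficients of the two sides gives the stated identity. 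The only point requiring any care is the justification of rearranging the formal series; this is handled by working in the region of absolute convergence and then invoking the uniqueness of Dirichlet series expansions, so there is no substantive obstacle.
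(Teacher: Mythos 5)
Your proposal is correct and takes essentially the same approach as the paper: the paper defines $P(s) = \prod_{p \leq z}(1 - p^{-s})$ so that $\zeta(s)P(s)$ is precisely your $1 + F(s)$, takes logarithms of both the Euler product and the series expansion of $\log(1 - (1 - \zeta P))$, and compares Dirichlet coefficients. You spell out the convergence and coefficient-identification steps a bit more explicitly, but the underlying argument is identical.
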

\begin{proof}
  Let $P(s) = \prod_{p \leq z} (1 - p^{-s})$. Consider then
  $$
  \log (\zeta(s)P(s)) = \log(1 - (1 - \zeta(s)P(s))) = - \sum_{k} \frac{(-1)^k}{k} \cdot (1 - \zeta(s)P(s))^{k}.
  $$
  The lemma follows on comparing the coefficients of the Dirichlet polynomials on the left-hand side and the right-hand side.
\end{proof}

  \begin{lemma} \label{le:technical}
    Let $\mathcal{E}$ be a subset of tuples of the form $(t, \chi)$ with $|t| \leq x$ and $\chi$ a character $\pmod{q}$. Let $D(s, \chi)= \sum_{n \leq x} a(n) \chi(n) n^{-s}$ be a Dirichlet polynomial such that $\sum_{n \leq x} \frac{|a(n)|^2}{n} \ll (\log x)^{500}$. Then, for $\varphi(q) \leq H \leq x$,
    \begin{align*}
     \frac{1}{\varphi(q)} \sum_{\chi \neq \chi_{0}} \sum_{y < x} & \Big | \int_{\substack{|t| \leq x \\ (t,\chi) \not \in \mathcal{E}}} D(\tfrac 12 + it, \chi) \cdot \frac{(y + H)^{1/2 + it} - y^{1/2 + it}}{1/2 + it} dt \Big |^2 \\ & \ll \frac{H^2 \log x}{\varphi(q)} \sum_{\chi \neq \chi_{0} \pmod{q}} \int_{\substack{|t| \leq (x / H) (\log x)^{1000} \\ (t,\chi) \not \in \mathcal{E}}} |D(\tfrac 12 + it, \chi)|^2 dt +  \frac{H^2 x}{\varphi(q) (\log x)^{500}}.
    \end{align*}
  \end{lemma}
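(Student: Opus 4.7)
The plan is to split the inner integral at the threshold $T^* := (x/H)(\log x)^{1000}$. I rewrite
$$
K_y(t) := \frac{(y+H)^{1/2+it} - y^{1/2+it}}{1/2+it} = \int_y^{y+H} u^{-1/2+it}\,du
$$
and swap the order of integration, so that the inner integral on the LHS becomes $\int_y^{y+H} u^{-1/2} G_\chi(u)\,du$, where $G_\chi(u) := \int_{|t|\leq x,\,(t,\chi)\notin\mathcal{E}} D(\tfrac12+it,\chi)\,u^{it}\,dt$. Decomposing $G_\chi = G_\chi^\ell + G_\chi^h$ according to whether $|t|\leq T^*$ or $T^*<|t|\leq x$ gives matching decompositions $I_y^\ell(\chi)$ and $I_y^h(\chi)$ of the original integral; the low-frequency piece will produce the first term on the RHS, while the high-frequency piece will contribute only to the error term.

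For $I_y^\ell(\chi)$, I apply Cauchy--Schwarz in $u$ (noting $\log(1+H/y)\leq H/y$) and sum over $y<x$, swapping order of summation and integration to obtain
$$
\sum_{y<x}|I_y^\ell(\chi)|^2 \leq H \int |G_\chi^\ell(u)|^2 \omega(u)\,du,
$$
where $\omega(u) \ll \min(H/u,\log x)$. Split the $u$-range at $u=2H/\log x$: on the range $u>2H/\log x$ the Mellin--Plancherel identity
$$
\int_\mathbb{R} |G_\chi^\ell(e^w)|^2\,dw = 2\pi \int_{|t|\leq T^*,\,(t,\chi)\notin\mathcal{E}} |D(\tfrac12+it,\chi)|^2\,dt
$$
produces exactly the first term of the RHS after summing over $\chi$ and dividing by $\varphi(q)$; on the range $u\leq 2H/\log x$, expanding the square and using $\bigl|\int_0^N u^{i\tau}\,du\bigr|\leq N/(1+|\tau|)$ yields $\int_0^N|G_\chi^\ell|^2\,du \ll N \log x \int_{|t|\leq T^*,(t,\chi)\notin\mathcal{E}}|D|^2\,dt$, which is absorbed into the same first term.

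For $I_y^h(\chi)$, integration by parts on $K_y(t)$ gives $|K_y(t)|\leq 2\sqrt{x}/|t|$, hence $\int_{T^*<|t|\leq x}|K_y|^2\,dt \leq 8H(\log x)^{-1000}$. Since a direct Cauchy--Schwarz combined with the hybrid large sieve would lose a factor of $x$, I instead exploit cancellation in $y$ via the telescoping identity $I_y^h(\chi) = \widetilde J_{y+H}(\chi) - \widetilde J_y(\chi)$, where
$$
\widetilde J_y(\chi) := \int_{T^*<|t|\leq x,\,(t,\chi)\notin\mathcal{E}} \frac{D(\tfrac12+it,\chi)\,y^{1/2+it}}{1/2+it}\,dt,
$$
so that $\sum_{y<x}|I_y^h|^2 \leq 4\sum_{y\leq 2x}|\widetilde J_y|^2$. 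Expanding the square, using $\bigl|\sum_{y\leq 2x} y^{i\tau}\bigr| \ll x/(1+|\tau|)$ and the symmetric bound on $|D(t_1)D(t_2)|$, and then estimating $\sum_\chi \int_{|t|>T^*,|t|\leq x}|D|^2/|1/2+it|^2\,dt$ by a dyadic decomposition in $|t|$ with the hybrid large sieve in each window, produces the target error $H^2 x/(\varphi(q)(\log x)^{500})$; the weight $|1/2+it|^{-2}$ is essential here, as it localizes the resulting mass near $|t|\asymp T^*$ rather than spreading it over the full high-frequency range.

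The main obstacle is achieving the correct power of $\log x$ in the high-frequency step: each application of Cauchy--Schwarz (for the square-expansion, dyadic summation, and large sieve) potentially introduces a logarithmic loss, and these must be kept in balance with the $(\log x)^{1000}$ cushion built into the choice of $T^*$. It is the interplay between the decay of $|1/2+it|^{-2}$, the cancellation exhibited by $\sum_{y\leq 2x}y^{i\tau}$, and the hybrid large sieve in a narrow dyadic annulus that together allow one to trade the spurious factor of $x$ for the desired factor of $\varphi(q)(\log x)^{500}$.
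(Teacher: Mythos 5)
The overall architecture — splitting the $t$-integral at $T^* = (x/H)(\log x)^{1000}$, handling the low frequencies by Cauchy--Schwarz and Mellin--Plancherel, and handling the high frequencies by exploiting cancellation in $y$ together with the hybrid large sieve — is the right one, and your low-frequency treatment is a clean alternative to the paper's argument: it yields $\sum_{y<x} |I_y^\ell(\chi)|^2 \ll H^2\log x \int_{|t|\leq T^*, (t,\chi)\notin\mathcal E}|D|^2\,dt$ without needing to dyadically cover the $y$-range or introduce a smooth bump, and I don't see a problem with that part.

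The high-frequency step, however, has a real gap. After writing $I_y^h(\chi)=\widetilde J_{y+H}(\chi)-\widetilde J_y(\chi)$ and expanding $\sum_{y\leq 2x}|\widetilde J_y|^2$, the object that arises is $\sum_{y\leq 2x} y^{1+i\tau}$ with $\tau=t_1-t_2$ (note the extra factor $y$, not $y^{i\tau}$ as written). Your argument rests on this sum being $\ll x^2/(1+|\tau|)$, which is what the \emph{integral} $\int_0^{2x} u^{1+i\tau}\,du$ satisfies. The discrete sharp-cutoff sum does not inherit this: by van der Corput's second derivative test, $\sum_{y\sim M} y^{i\tau}$ is only $\ll M/|\tau|^{1/2}+|\tau|^{1/2}$, so one only gets $\sum_{y\leq 2x} y^{1+i\tau} \ll x^2/|\tau|^{1/2}$-type bounds, which are far weaker than what your argument needs once $|\tau|$ exceeds roughly $x/H$. (As a sanity check: for $|\tau|\asymp x$ your claimed bound asserts $O(1)$ total cancellation in $\sum_{y\leq 2x}y^{i\tau}$, but van der Corput only gives $O(x^{1/2})$.) The paper sidesteps this exactly by replacing the sum over $y$ with an integral $\int y^{1+it-iu}\Phi(y/(2^{-L}x))\,dy$ against a smooth compactly supported $\Phi$, and two integrations by parts then give the decay $\ll (2^{-L}x)^2/(1+|t-u|^2)$; the smoothness of the majorant is what produces the square in the denominator and is not a cosmetic choice. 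There is a second, related issue: even if the sharp-cutoff bound $\ll x^2/(1+|\tau|)$ were available, the symmetrization $|D_1||D_2|\leq\tfrac12(|D_1|^2+|D_2|^2)$ followed by $\int_{|t_2|\leq x}\frac{dt_2}{1+|t_1-t_2|}\ll\log x$ costs a factor $\log x$ that the paper's $\frac{1}{1+|t_1-t_2|^2}$ kernel does not incur, and this pushes your error term to $H^2 x/(\varphi(q)(\log x)^{499})$, falling short of the stated $(\log x)^{500}$ when $H\asymp\varphi(q)$. Both problems are fixed by introducing a smooth majorant of $\mathbf{1}_{y<x}$ before expanding the square, as the paper does; as written, the telescoping step does not close.
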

  \begin{proof}
    This result is essentially standard and is implicit for instance in \cite[Lemma 14]{MatoRad}. We will repeat the proof here for the convenience of the reader. We start by splitting $y$ into dy-adic intervals $100 H < 2^{-L - 1} x \leq y \leq 2^{-L} x$ with $0 \leq L \leq \log x$ and the interval $[0, 100 H]$. We will first handle the contribution coming from the $y \in [2^{-L - 1} x, 2^{-L} x]$ and then discuss the remaining (easier) case of $y \in [0, 100 H]$.

    First notice that
    \begin{align*}
    \frac{(y + H)^{s} - y^s}{s} & = \frac{y}{2 H} \int_{H / y}^{3 H / y} y^s \cdot \frac{(1 + u)^s - 1}{s} du \\ & - \frac{ y + H}{2 H} \int_{0}^{2 H /(y + H)} (y + H)^{s} \cdot \frac{(1 + u)^s - 1}{s} du.
    \end{align*}
    Using this identity, we see that
    \begin{align*}
    \Big | \int_{\substack{|t| \leq x \\ (t,\chi) \not \in \mathcal{E}}} D(s, \chi ) \cdot & \frac{(y + H)^{s} - y^{s}}{s} ds \Big |
    \ll \Big | \int_{\substack{|t| \leq x \\ (t,\chi) \not \in \mathcal{E}}} D(s, \chi) y^s \cdot \frac{(1 + u)^s - 1}{s} ds \Big |\\ & + \Big | \int_{\substack{|t| \leq x \\ (t,\chi) \not \in \mathcal{E}}} D(s, \chi) (y + H)^s \cdot \frac{(1 + v)^s - 1}{s} ds \Big | \ , \ s = \tfrac 12 + it
    \end{align*}
    for some $|u|,|v| \ll H / y \ll 2^{L} H / x$. The treatment of the second term involving $(y + H)^{s}$ is identical because after a change of variable $y \mapsto y - H$, the variable $y$ is still localized in an interval of length $H$ starting at a point $\gg H$, since $2^{-L - 1} x \geq 100 H$. For this reason, we will omit this term from further discussion.

    It remains therefore to bound
    \begin{equation} \label{eq:oupla}
    \sum_{L} \frac{1}{\varphi(q)} \sum_{\chi \neq \chi_{0}} \int_{\mathbb{R}} \Big | \int_{\substack{|t| \leq x \\ (t, \chi) \not \in \mathcal{E}}} D(s, \chi) y^s \cdot \frac{(1 + u)^s  - 1}{s} ds \Big |^2 \Phi \Big ( \frac{y}{2^{-L} x} \Big ) d y \ , \ |u| \ll \frac{2^L H}{x}
    \end{equation}
    with $\Phi$ some smooth non-negative function such that $\Phi(x) \geq 1$ for all $x \in [1/2, 1]$ (notice that this expression is an upper bound for the sum over $y < x$). Expanding the square, interchanging the integral signs and using the bound
    $$
    \frac{(1 + u)^s - 1}{s} \ll \min \Big ( \frac{H 2^{L}}{x} , \frac{1}{1 + |t|} \Big ),
    $$
   we conclude that the integral over $y$ in \eqref{eq:oupla} is
    \begin{align*}
    \ll \int_{\substack{|t| \leq x \\ (t, \chi) \not \in \mathcal{E}}} & \int_{\substack{|u| \leq x \\ (u, \chi) \not \in \mathcal{E}}} |D(\tfrac 12 + iu, \chi) D(\tfrac 12 + it, \chi)| \\ & \times \min \Big ( \frac{H 2^{L}}{x} , \frac{1}{1 + |t|} \Big ) \min \Big ( \frac{H 2^{L}}{x}, \frac{1}{1 + |u|} \Big ) \Big | \int_{\mathbb{R}} y^{1 + it - iu} \Phi \Big ( \frac{y}{2^{-L} x} \Big ) dy \Big | dt du.
    \end{align*}
    By the integration by parts, this is
    \begin{align*}
    \ll 2^{-2L} x^2 & \int_{\substack{|t|, |u| \leq x \\ (t, \chi), (u,\chi) \not \in \mathcal{E}}} |D(\tfrac 12 + iu, \chi)D(\tfrac 12 + it, \chi)| \\ &  \times \min \Big ( \frac{H 2^{L}}{x} , \frac{1}{1 + |u|} \Big ) \min \Big ( \frac{H 2^L}{x}, \frac{1}{1 + |t|} \Big ) \cdot \frac{dt du}{1 + |t - u|^2}.
    \end{align*}
    Using the inequality $2 a b \leq a^2 + b^2$ (applied to each of the $D(\cdot) \min(\ldots)$) then gives the bound
    $$
    \ll 2^{-2L} x^2 \int_{\substack{|t| \leq x \\ (t, \chi) \not \in \mathcal{E}}} |D(\tfrac 12 + it, \chi)|^2 \cdot \min \Big ( \frac{H^2 2^{2L}}{x^2}, \frac{1}{1 + |t|^2} \Big ) dt.
    $$
    The part of the integral with $|t| \leq (x / H) (\log x)^{1000}$ gives after summing over $L$ and $\chi$, a contribution which is
    $$
    \ll \frac{H^2 \log x}{\varphi(q)} \sum_{\chi \neq \chi_{0} \pmod{q}} \int_{\substack{|t| \leq (x / H) (\log x)^{1000} \\ (t, \chi) \not \in \mathcal{E}}} |D(\tfrac 12 + it, \chi)|^2 dt.
    $$
    It therefore remains to bound the part with $|t| \geq (x / H) (\log x)^{1000}$ which is
    \begin{equation} \label{eq:badpart}
    \ll \sum_{0 \leq L \leq \log x} 2^{-2L} x^2 \cdot \frac{1}{\varphi(q)} \sum_{\chi \neq \chi_{0} \pmod{q}} \int_{\substack{|t| > (x / H) (\log x)^{1000}}} |D(\tfrac 12 + it, \chi)|^2 \cdot \frac{dt }{1 + |t|^2}.
    \end{equation}
    Dissecting the range $t$ over dy-adic intervals, we see that
    \begin{align*}
    & \frac{1}{\varphi(q)} \sum_{\chi \pmod{q}} \int_{|t| > (x / H) (\log x)^{1000}} |D(\tfrac 12 + it, \chi)|^2 \cdot \frac{dt}{1 + |t|^2} \\ & \ll \frac{H^2}{x^2} \cdot \frac{1}{(\log x)^{2000}} \sum_{R} 2^{-2 R} \cdot \frac{1}{\varphi(q)} \sum_{\chi \pmod{q}} \int_{|t| \sim 2^{R} (x / H) (\log x)^{1000}} |D(\tfrac 12 + it, \chi)|^2 dt
    \end{align*}
    by the large sieve, the assumptions on the coefficients of $D(\cdot)$ and by $\varphi(q)\leq H$, this is
  $$
   \ll \frac{1}{\varphi(q)} \frac{H^2}{x^2} \cdot \frac{1}{(\log x)^{2000}} \cdot \sum_{R} 2^{-2 R} \cdot \Big ( \frac{\varphi(q) 2^R x}{H} (\log x)^{1000} + x \Big ) (\log x)^{500}
  $$$$
    \ll \frac{H^2}{\varphi(q) x} \cdot\frac{1}{(\log x)^{500}}.
$$
    This shows that \eqref{eq:badpart} is
    $$
    \ll \frac{H^2 x}{\varphi(q) (\log x)^{500}}
    $$
    as required.

    Finally, it remains to deal with the contribution of $y \in [0, 100 H]$. This is sligtly easier and so we will be briefer.
    First, it suffices to use
$$
    \Big | \int_{\substack{|t| \leq x \\ (t, \chi) \not \in \mathcal{E}}} D(s, \chi)\cdot \frac{(y + H)^s - y^s}{s} ds \Big |^2 \ll
 $$$$
    \Big | \int_{\substack{|t| \leq x \\ (t, \chi) \not \in \mathcal{E}}} D(s, \chi) \cdot \frac{(y + H)^s}{s} ds \Big |^2 + \Big | \int_{\substack{|t| \leq T \\ (t, \chi) \not \in \mathcal{E}}} D(s, \chi) \cdot \frac{y^{s}}{s} ds \Big |^2,
    $$
    where $s = \tfrac 12 + it$.
    Once again we can focus on the second term involving $y^{s}$ since the treatment of the first term with $(y + H)^{s}$ is similar because after the change of variable $y + H \mapsto y$, the variable $y$ still belongs to an interval of length $\gg H$ ending at a point which is $\gg H$.

    Therefore, it remains to estimate
    $$
    \frac{1}{\varphi(q)} \sum_{\chi \neq \chi_{0}} \int_{\mathbb{R}} \Big | \int_{\substack{|t| \leq x \\ (t, \chi) \not \in \mathcal{E}}} D(s, \chi) \cdot \frac{y^s}{s} ds \Big |^2 \Phi \Big (\frac{y}{100 H} \Big ) dy
    $$
    with $\Phi$ a smooth, non-negative, compactly supported function such that $\Phi(x) \geq 1$ for $x \in [0,1]$. Expanding the square, we get
    \begin{equation} \label{eq:easypeasy}
    \frac{1}{\varphi(q)} \sum_{\chi \neq \chi_{0}} \int_{\substack{|t|, |u| \leq x \\ (t, \chi), (u, \chi) \not \in \mathcal{E}}} \frac{D(\tfrac 12 + it, \chi)}{\tfrac 12 + i t} \frac{\overline{D(\tfrac 12 + iu, \chi)}}{\tfrac 12 - iu} \int_{\mathbb{R}} y^{1 + it - iu} \cdot \Phi \Big ( \frac{y}{100 H} \Big ) dy dt du.
    \end{equation}
    By the integration by parts,
    $$
    \int_{\mathbb{R}} y^{1 + it - iu} \Phi \Big ( \frac{y}{100 H} \Big ) dy \ll \frac{H^2}{1 + |t - u|^2}.
    $$
    Therefore, \eqref{eq:easypeasy} is
    $$
    \ll \frac{H^2}{\varphi(q)} \sum_{\chi \neq \chi_{0}} \int_{\substack{|t| \leq x \\ (t, \chi) \not \in \mathcal{E}}} |D(\tfrac 12 + it, \chi)|^2 \cdot \frac{dt}{1 + |t|^2}.
    $$
    Using that $H \leq x$, we can now bound this by
    $$
    \ll \frac{H^2}{\varphi(q)} \sum_{\chi \neq \chi_{0}} \int_{\substack{|t| \leq x (\log x)^{1000} / H \\ (t, \chi) \not \in \mathcal{E}}} |D(\tfrac 12 + it, \chi)|^2 dt + \frac{H^2}{\varphi(q)} \int_{\substack{(\log x)^{1000} \leq |t| \leq x}} |D(\tfrac 12 + it, \chi)|^2 \cdot \frac{dt}{1 + |t|^2}.
    $$
    Splitting the second term into dy-adic intervals $2^{L} \leq |t| \leq 2^{L + 1}$ and applying the hybrid large sieve, we see that the contribution of the second term is
    $$
    \ll \frac{H^2}{\varphi(q)}  \sum_{(\log x)^{1000} \leq 2^{L}} 2^{-2L} \Big ( \varphi(q) 2^{L} + x \Big ) \cdot (\log x)^{500} \ll H^2 (\log x)^{-500} + \frac{H^2 x}{\varphi(q)} \cdot (\log x)^{-1500}
    $$
    and this is sufficient.
  \end{proof}

  \begin{lemma}[Cancellations in Dirichlet polynomials over almost primes] \label{le:canceldirpoly}
    Let $A > 10$ be given. Let $\chi$ be a character of conductor $\leq (\log N)^{A}$ and
    $t$ be such that $(\log N)^{A^2} \leq |t| \leq N^{A}$.
    Then, uniformly in $1 \leq w \leq \sqrt{N}$,
    $$
    \Big | \sum_{\substack{n \sim N \\ p | n \implies p > w}} \frac{\mu(n) \chi(n)}{n^{1/2 + it}} \Big | \ll \frac{\sqrt{N}}{(\log N)^{A}} \text{ and } \Big | \sum_{\substack{n \sim N \\ p | n \implies p > w}} \frac{\chi(n)}{n^{1/2 + it}} \Big | \ll \frac{\sqrt{N}}{(\log N)^{A}}.
    $$
    In addition,
    $$
    \Big | \sum_{p \sim P} \frac{\chi(p) \log p}{p^{1/2 + it}} \Big | \ll \frac{\sqrt{P} \mathbf{1}_{\chi = \chi_0}}{1 + |t|} + \frac{\sqrt{P}}{(\log P)^A}.
    $$
  \end{lemma}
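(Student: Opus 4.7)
The plan is to deduce all three estimates from Perron's formula, shifting contours into the Vinogradov--Korobov zero-free region for $L(s,\chi)$. Starting from the Euler products
\begin{align*}
\sum_{p\mid n\Rightarrow p>w}\frac{\mu(n)\chi(n)}{n^s}&=\frac{1}{L(s,\chi)}\prod_{p\le w}\!\bigl(1-\chi(p)p^{-s}\bigr)^{-1}=:F_{\mu}(s,\chi),\\
\sum_{p\mid n\Rightarrow p>w}\frac{\chi(n)}{n^s}&=L(s,\chi)\prod_{p\le w}\!\bigl(1-\chi(p)p^{-s}\bigr)=:F_{1}(s,\chi),
\end{align*}
and from $-L'/L(s,\chi)=\sum_n\Lambda(n)\chi(n)n^{-s}$ (the prime-power contribution in the third claim absorbed in a trivial $O(\sqrt{P})$ error), a smoothed Perron formula expresses each target sum as a contour integral of the form
\begin{align*}
\frac{1}{2\pi i}\int_{(2)}F\bigl(s+\tfrac12+it,\chi\bigr)\cdot\frac{(2M)^{s}-M^{s}}{s}\,ds \;+\; O\bigl(M^{-10}\bigr),
\end{align*}
with $M\in\{N,P\}$ and $F\in\{F_\mu,F_1,-L'/L\}$.

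Next I would truncate the integral at height $|\mathrm{Im}(s)|\le N^{CA}$ for a suitable $C$ and shift the contour to the vertical line $\mathrm{Re}(s)=-\tfrac12+c'/(\log N)^{2/3+\epsilon}$. Since $\chi$ has conductor at most $(\log N)^{A}$ and the imaginary part of $s+\tfrac12+it$ remains bounded by $N^{O(A)}$, the Vinogradov--Korobov zero-free region guarantees that $L(w,\chi)$ has no zeros in the region $\mathrm{Re}(w)\ge 1-c/(\log N)^{2/3+\epsilon}$, the possible real Siegel exception being absorbed by Siegel's ineffective theorem. The factor $(2M)^{s}-M^{s}$ vanishes at $s=0$, so no residue arises there, and the only residue encountered is at $s=\tfrac12-it$ from the pole of $L(\cdot,\chi_0)$ at $1$: this residue vanishes for $F_\mu$ (as $1/L$ has a zero there), yields a contribution of size $O\bigl(\sqrt{N}/(|t|\log w)\bigr)$ for $F_1$ using Mertens' theorem for the finite Euler product $\prod_{p\le w}(1-p^{-1})\asymp 1/\log w$, and contributes precisely the main term $\mathbf{1}_{\chi=\chi_0}\cdot\sqrt{P}/(\tfrac12+it)$ for $-L'/L$ in the third claim. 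The assumption $|t|\ge(\log N)^{A^{2}}$ then renders all non-main residues $\ll \sqrt{M}/(\log N)^{A}$.

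On the shifted line, $|M^{s}|=\sqrt{M}\exp\bigl(-c'(\log M)(\log N)^{-2/3-\epsilon}\bigr)$, which beats $(\log M)^{-A}$ whenever $\log M\gg(\log N)^{2/3+2\epsilon}$; smaller ranges are dispatched by the trivial bound which is itself $\ll\sqrt{M}/(\log M)^{A}$ in that regime. Combined with the standard polylogarithmic bounds $|1/L(w,\chi)|,\,|L'(w,\chi)/L(w,\chi)|\ll(\log N)^{O(1)}$ inside the zero-free region (obtained via the Hadamard factorization of $L(s,\chi)$ together with a Borel--Carath\'eodory argument), and the elementary estimate $\prod_{p\le w}|1-\chi(p)p^{-s}|^{\pm 1}\ll(\log w)^{O(1)}\ll(\log N)^{O(1)}$ valid uniformly for $1\le w\le\sqrt{N}$ on the shifted line, the shifted integral contributes $\ll\sqrt{M}/(\log N)^{A}$. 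The main technical obstacle is verifying that the $L'/L$ and $1/L$ bounds in the zero-free region depend only polylogarithmically on $N$ and not on $A$ in a way that would absorb the saving $(\log N)^{A}$; this is classical for characters of conductor at most $(\log N)^A$ and short intervals of $t$, but must be tracked carefully because the saving we require is power-of-log rather than merely $\exp(-c(\log N)^{1/3})$ size.
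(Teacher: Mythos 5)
Your contour-shift argument for the third bound (the sum over primes) is essentially the same as the paper's, and that part is fine modulo a sign error: you want $\mathrm{Re}(s)=\tfrac12-c'/(\log N)^{2/3+\epsilon}$, so that the $L$-function is evaluated on a line $\mathrm{Re}=1-c'/(\log N)^{2/3+\epsilon}$ inside the zero-free region (as written, $\mathrm{Re}(s)=-\tfrac12+\cdots$ places the argument near $\mathrm{Re}=0$).

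For the first two bounds your argument has a genuine gap, precisely at the step you flag as a technicality: the claim that
$\prod_{p\le w}\bigl|1-\chi(p)p^{-s}\bigr|^{\pm1}\ll(\log N)^{O(1)}$ uniformly for $1\le w\le\sqrt N$ on the shifted line is false for large $w$. Write $\sigma=1-\delta$ with $\delta\asymp(\log N)^{-2/3-\epsilon}$. Then, taking $\chi=\chi_0$ (allowed here) and $\tau$ near $0$, one has
$\log\prod_{p\le w}(1-p^{-\sigma})^{-1}=\sum_{p\le w}p^{-\sigma}+O(1)$, and by the prime number theorem
$\sum_{p\le w}p^{-\sigma}\asymp\frac{w^{\delta}}{\delta\log w}$
once $\delta\log w\to\infty$. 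For $w=\sqrt N$ this is $\exp\bigl(\tfrac{c'}{2}(\log N)^{1/3-\epsilon}\bigr)/\bigl(\tfrac{c'}{2}(\log N)^{1/3-\epsilon}\bigr)$, which beats any power of $\log N$; the Euler product is then doubly exponential in $(\log N)^{1/3-\epsilon}$, not polylogarithmic. More generally, your bound fails as soon as $\log w\gg(\log N)^{2/3+\epsilon}$, i.e.\ exactly when the contour does not penetrate far enough to make $w^{\delta}$ bounded. Note there is also a structural tension you cannot evade by shifting less deeply: to win a factor $(\log N)^{-A}$ from $|M^{s}|$ you need $\delta\log M\gg\log\log N$, which for $M\asymp N$ forces $\delta\gg(\log\log N)/\log N$, already enough to make $w^{\delta}$ grow whenever $\log w\gg\log N/\log\log N$.

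This is exactly why the paper splits into two regimes. For $w<\exp((\log N)^{2/3+\epsilon})$ your contour argument (or rather the paper's equivalent: resolving the roughness condition by M\"obius over divisors $d\le N^{1/100}$ with small prime factors and then shifting) is legitimate, since then $\delta\log w\ll1$ and the Euler/divisor factor is genuinely polylogarithmic. For $w\ge\exp((\log N)^{2/3+\epsilon})$ one cannot shift past the finite Euler product; instead the paper applies Ramar\'e's identity to factor out a prime $p\in(w,\sqrt N]$, reducing the rough M\"obius/unit sum to a bilinear form in which the prime variable has length $\ge w$, and then a contour integral separates the variables so that the saving comes from the \emph{third} bound (the prime Dirichlet polynomial) applied at a modulus $P\ge w>\exp((\log N)^{2/3+\epsilon})$, where the zero-free region alone gives the required $(\log P)^{-A}$ without touching the finite Euler product at all. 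In short, the first two bounds should be \emph{deduced} from the third after a case split, not proved by shifting the full Euler product; as stated your argument does not close.
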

  \begin{proof}

    The third bound follows from the Korobov-Vinogradov zero-free region \cite[Chapter 9, Notes]{MontgomeryLectures}
    and contour integration as in \cite[Lemma 2]{MRShort}.
    We will only describe the proof of the first bound, since the proof of the second one is identical.

    Let $\varepsilon \in (0, \tfrac{1}{1000})$. The proof splits into two cases.

    \textbf{Case 1: $w > \exp((\log N)^{2/3 + \varepsilon}$.}
    By Ramar\'e's identity,
    \begin{align*}
      \sum_{\substack{n \sim N \\ p | n \implies p > w}} \frac{\mu(n) \chi(n)}{n^{1/2 + it}} & = \sum_{w < p \leq \sqrt{N}} \frac{\mu(p)\chi(p)}{p^{1/2 + it}} \sum_{\substack{q | m \implies q > w \\ m \sim N / p \\ (m,p) = 1}} \frac{\mu(m) \chi(m)}{m^{1/2 + it}} \cdot \frac{1}{\omega^{\star}(m; w)},
    \end{align*}
    where
    $$
    \omega^{\star}(m; w) = \sum_{w \leq p \leq \sqrt{N}} 1.
    $$
    We partition $p$ into dy-adic range $w \leq P \leq \sqrt{N}$
    and we express the condition $m p \sim N$ using a contour integral so that the above expression can be re-written as
    \begin{align*}
    \sum_{w \leq P \leq \sqrt{N}} \frac{1}{2\pi i} \int_{|u| \leq (\log N)^{3A}} \sum_{N / 4 P \leq m \leq 2 N / P} \frac{\mu(m)\chi(m)}{m^{1/2 + iu+ it}} \sum_{\substack{p \sim P \\ p \nmid m}} \frac{\mu(p)\chi(p)}{p^{1/2 + it + iu}} & \cdot \frac{N^{\sigma + iu} du}{\sigma + iu} \ , \ \sigma := \frac{1}{\log N} \\ & + O \Big ( \frac{\sqrt{N}}{(\log N)^{A}} \Big ).
    \end{align*}
    We now conclude by using
    $$
    \sum_{\substack{p \sim P \\ p \nmid m}} \frac{\chi(p)}{p^{1/2 + it + iu}} \ll \frac{\sqrt{P}}{(\log N)^{5 A}}
    $$
    and the trivial bound on the Dirichlet polynomial over $n$.

    \textbf{Case 2: $w < \exp((\log N)^{2/3 + \varepsilon})$.}
    On the other hand, if $w \leq \exp((\log N)^{2/3 + \varepsilon})$ then we notice that
    $$
    \sum_{\substack{n \sim N \\ p | n \implies p > w}} \frac{\mu(n) \chi(n)}{n^{1/2 + it}} =  \sum_{\substack{n \sim N \\ \omega(n) \leq (\log N)^{1/100}}} \frac{\mu(n) \chi(n)}{n^{1/2 + it}} \sum_{\substack{d | n \\ p | d \implies p \leq w \\ d \leq N^{1/100}}} \mu(d) + O \Big ( \frac{\sqrt{N}}{(\log N)^{A}} \Big ),
    $$
    where the condition $d \leq N^{1/100}$ is implied from the fact that $d$ has at most $(\log N)^{1/100}$ prime factors, and all of them are less than $w$.
    Interchanging the sum over $d$ and $n$, and trivially bounding the contribution of the integers $n$ with more than $(\log N)^{1/100}$ prime factors, we get that the first sum is equal to
    $$
    \sum_{\substack{p | d \implies p \leq w \\ d \leq N^{1/100}}} \frac{\mu(d)\chi(d)}{d^{1/2 + it}}
    \sum_{\substack{n \sim N \\ (n, d) = 1}} \frac{\mu(n) \chi(n)}{n^{1/2 + it}} + O \Big ( \frac{\sqrt{N}}{(\log N)^{A}} \Big ) \ll_{A} \frac{\sqrt{N}}{(\log N)^{A}}
    $$
    and this is $\ll_{A} \sqrt{N} / (\log N)^{A}$ using cancellations in the sum over $n$.

  \end{proof}

Recall that $p_{q} := p \mod {q}$, so $p_q \in [0, q - 1]$.

  \begin{lemma} \label{lem:us1}
For any $\varepsilon > 0$ and intervals $I \subset [0,N]$, $J \subset [0, q]$
    such that $|I| \geq q \cdot N^{5 \varepsilon}$ and $|J| > q^{\varepsilon}$, we have
    $$
    \sum_{\substack{p \in I \\ p_{q} \in J}} \log p \ll_{\varepsilon} \frac{|J|}{q} \cdot |I|.
    $$
  \end{lemma}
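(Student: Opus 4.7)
The plan is to bound the sum by dissecting according to the residue class $b$ of $p$ modulo $q$ and applying the Brun--Titchmarsh inequality to each arithmetic progression, then summing over $b \in J$ using a sieve bound on reduced residues in short intervals.

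First, I would write
$$
\sum_{\substack{p \in I \\ p_q \in J}} \log p \leq (\log N) \sum_{b \in J} \sum_{\substack{p \in I \\ p \equiv b \pmod q}} 1.
$$
For each $b \in J$ with $(b,q) > 1$, only primes $p \mid q$ can contribute, and there are $\ll \log q$ such primes in total, contributing $\ll (\log N)^2$ to the whole sum; this is negligible compared with $|I| \cdot |J|/q$ since $|I| \geq q N^{5\varepsilon}$ and $|J| \geq q^{\varepsilon}$. For $(b,q)=1$, the Brun--Titchmarsh inequality gives
$$
\sum_{\substack{p \in I \\ p \equiv b \pmod q}} 1 \leq \frac{2|I|}{\varphi(q) \log(|I|/q)}.
$$
The hypothesis $|I| \geq q N^{5\varepsilon}$ ensures $\log(|I|/q) \geq 5\varepsilon \log N$, so the right-hand side is $\ll_{\varepsilon} |I|/(\varphi(q) \log N)$. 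Multiplying by $\log N$ yields the per-residue bound $\ll_{\varepsilon} |I|/\varphi(q)$.

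Next, I would sum over $b \in J$ with $(b,q)=1$. The input needed is a sieve estimate of the form
$$
\#\{b \in J : (b,q) = 1\} \ll_{\varepsilon} \frac{\varphi(q)}{q} \cdot |J|,
$$
valid provided $|J| > q^{\varepsilon}$; this is exactly Lemma~\ref{le:shiureduced} applied to the interval $J \subset [0,q]$. Combining gives
$$
\sum_{\substack{p \in I \\ p_q \in J}} \log p \ll_{\varepsilon} \frac{|I|}{\varphi(q)} \cdot \frac{\varphi(q)}{q} \cdot |J| + (\log N)^{2} = \frac{|I| \cdot |J|}{q} + O((\log N)^2),
$$
and the error term is absorbed into the main term under the stated hypotheses on $|I|$ and $|J|$.

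There is no real obstacle here: the lemma is an upper-bound sieve statement and both ingredients (Brun--Titchmarsh on progressions and the reduced-residue sieve in short intervals) are standard and already invoked in the paper. The only points requiring attention are (i) verifying that the Brun--Titchmarsh range hypothesis $|I| > q$ is satisfied with an explicit power of $\log N$ to spare, which is guaranteed by $|I| \geq q N^{5\varepsilon}$, and (ii) discarding the $(b,q)>1$ contribution, which is trivially small.
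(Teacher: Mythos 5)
Your argument is correct, and it takes a genuinely different and cleaner route than the paper's. The paper's proof splits into two cases according to whether $|J| > q^3/N^{2-4\varepsilon}$ or not: in the large-$|J|$ regime it inserts the upper-bound sieve weight of Lemma~\ref{le:sieveprimes}, expands the condition $n_q \in J$ in Dirichlet characters modulo $q$, controls the non-principal characters via P\'olya--Vinogradov and the large sieve, and treats the principal character using Lemma~\ref{le:sieveprimes} together with Lemma~\ref{le:shiureduced}; in the complementary small-$|J|$ regime (which forces $N \lesssim q^2$) it covers $I$ by $\ll |I|/q$ blocks of length $q$, on each of which $p_q \in J$ becomes $p$ lying in a short interval of length $|J|$, and then applies short-interval Brun--Titchmarsh. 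You bypass this dichotomy entirely by fixing the residue class $b \in J$ and applying the Montgomery--Vaughan form of Brun--Titchmarsh in the progression $b \pmod q$ over $I$: the hypothesis $|I| \geq qN^{5\varepsilon}$ is exactly what makes $\log(|I|/q) \gg_\varepsilon \log N$, giving the per-class bound $\ll_\varepsilon |I|/(\varphi(q)\log N)$, and then Lemma~\ref{le:shiureduced} bounds the number of relevant classes by $\ll_\varepsilon (\varphi(q)/q)|J|$. The two proofs share the reduced-residue count of Lemma~\ref{le:shiureduced} as an ingredient, but yours avoids both the character decomposition and the case split, so it is shorter and more transparent; there is no hidden reason the paper's more elaborate route was necessary. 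One small presentational point: for the $(b,q)>1$ contribution the correct reasoning is that a prime $p \equiv b \pmod q$ must equal the prime $(b,q)$, so there are $\ll \omega(q) \ll \log q$ such primes in total, which is what you assert; it is worth stating this implication explicitly rather than leaving the jump implicit.
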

  \begin{proof}

    Since $I \subset [1, N]$, we have
    $$
    \sum_{\substack{p \in I \\ p_{q} \in J}} \log p \ll \log N \sum_{\substack{p \in I \\ p_{q} \in J}} 1.
    $$
    It will therefore suffice to prove the bound
    $$
    \sum_{\substack{p \in I \\ p_{q} \in J}} 1 \ll_{\varepsilon} \frac{|J|}{q} \cdot \frac{|I|}{\log N}.
    $$

      We separate the proof into two cases. First, consider the case where $|J| > q^3 / N^{2 - 4 \varepsilon}$.
    Let $\lambda_d$ denote the sieve coefficients coming from Lemma \ref{le:sieveprimes} so that
    $$
    \mathbf{1}_{p \in I} \leq \sum_{d \leq z} \lambda_d
    $$
    with $z = N^{\varepsilon}$. Therefore,
    $$
    \sum_{\substack{p \in I \\ p_{q} \in J}} 1 \ll \sum_{\substack{n \in I \\ n_{q} \in J}} \Big ( \sum_{\substack{ d | n \\ d \leq z}} \lambda_d \Big ).
    $$
    Opening the later sum in characters, we find that it is equal to
    $$
    \frac{1}{\varphi(q)} \sum_{\chi \pmod{q}} \Big ( \sum_{n \in I} \chi(n) \Big ( \sum_{\substack{ d| n \\ d \leq z}} \lambda_d \Big ) \Big ) \cdot \Big ( \sum_{a \in J} \overline{\chi}(a) \Big ).
    $$
    We notice that by the Polya-Vinogradov inequality, for $\chi \neq \chi_{0}$,
    $$
    \sum_{n \in I} \chi(n) \Big ( \sum_{\substack{d | n \\ d \leq z}} \lambda_d \Big ) = \sum_{d \leq z} \lambda_d \chi(d) \sum_{\substack{n \in I / d}} \chi(n) \ll z \sqrt{q} \log q.
  $$
  Moreover, by the large sieve,
  $$
  \frac{1}{\varphi(q)} \sum_{\chi \pmod{q}} \Big | \sum_{a \in J} \chi(a) \Big | \ll \sqrt{|J|}.
  $$
  Therefore, the contribution of the non-principal characters is
  $$
  \ll z \sqrt{q} \log q \sqrt{|J|}.
  $$
  Finally, the contribution of the principal character is
  $$
  \ll \Big ( \frac{1}{\varphi(q)} \sum_{\substack{n \in J \\ (n, q) = 1}} 1 \Big ) \sum_{\substack{n \in I \\ (n, q) = 1}} \Big ( \sum_{\substack{d | n \\ d \leq z}} \lambda_d \Big ) \ll \frac{|J|}{q} \sum_{n \in I} \Big ( \sum_{\substack{d | n \\ d \leq z}} \mu(d) \Big ) \ll \frac{|J|}{q} \cdot \frac{|I|}{\log z}
  $$
  by Lemma \ref{le:sieveprimes} and Lemma \ref{le:shiureduced}.
  This gives a final bound of the form
  $$
  \ll_{\varepsilon} \frac{|J|}{q} \cdot \frac{|I|}{\log N} + N^{\varepsilon} \sqrt{q} \log q \sqrt{|J|} \ll_{\varepsilon} \frac{|J|}{q} \cdot \frac{|I|}{\log N}  $$
  by our assumption that $|J| > q^3 / N^{2 - 4 \varepsilon}$.

  Let us now consider the case $|J| < q^3 / N^{2 - 4 \varepsilon}$ in which case necessarily $N \leq q^2$. We cover the interval $I$ with $\ll |I| / q$ disjoint intervals $I_1, \ldots, I_{k} \subset [0, 2N]$ of length $q$.  On each such sub-interval we notice that
  $$
  \sum_{\substack{p \in I_j \\ p_{q} \in J}} 1 = \sum_{p \in I_j^{\star}} 1,
  $$
  where $I_j^{\star}$ is an interval of length $|J|$. Since $N \leq q^2$ and $|J| > q^{\varepsilon}$, it follows by the Brun-Titchmarsh theorem that
  $$
  \sum_{p \in I_{j}^{\star}} 1 \ll_{\varepsilon} \frac{|J|}{\log N}.
  $$
  Summing back over all $I_j$, this gives the required bound.
\end{proof}

\section{Hybrid Huxley's results} \label{sec:eqprimes1}

%\textcolor{red}{
%\begin{lemma}For every $\eta>0$ and for all  polynomials $P=\sum_{i=1}^d\beta_in^i$ with $d\leq \frac{1}{\eta^2}$ and $|\beta_1|\leq e^{-\tau r}$, every $H\geq x^{2/3-\eta}$, $r\leq \log^A x$ and $m\leq x$
%\be\label{eq:nasa}
%\sum_{\substack{(a,r)=1\\a\leq r}}\Big|\sum_{\substack{k\in [m,m+H]\\ k\equiv a\mod r\\ (k,r)=1}}e(P(k-m))-\sum_{\substack{k\in [m,m+H]\\ k\equiv a_0\mod r\\ (k,r)=1}}e(P(k-m))\Big|=o(H),
%\ee
%as $r\to +\infty$ (uniformly) over $P$.
%\end{lemma}}

In this section we will prove the following ``hybrid'' version of Huxley's theorem.

\begin{theorem} \label{thm:nr1}
  Let $\varepsilon, \xi \in (0, \tfrac{1}{1000})$. Suppose that $H / q > x^{1/6 + \varepsilon}$ and $H \leq x$. Then, for $r \leq q^{1-\xi}$ with $(r, q) = 1$, we have
  \begin{equation*}
  \sum_{y < x} \sum_{v = 1}^{r} \Big | \sum_{\substack{p \in [y, y + H] \\ p_{q} \equiv v \pmod{r}}} \log p - \frac{H}{r} \Big | \ll_{\varepsilon,\xi} \frac{H x}{(\log x)^{100}}.
  \end{equation*}
Moreover, if $H=x$ then
$$
\sum_{v = 1}^{r} \Big | \sum_{\substack{p \in [0,  H] \\ p_{q} \equiv v \pmod{r}}} \log p - \frac{H}{r} \Big | \ll_{\varepsilon,\xi} \frac{H }{(\log H)^{100}}.$$
\end{theorem}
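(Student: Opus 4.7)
The plan is to combine duality, Dirichlet character orthogonality, and a Huxley-style mean value estimate restricted to ``good'' characters. First, for each $y, v$, pick signs $\epsilon_{y,v} \in \{\pm 1\}$ realizing the absolute values, and open the condition $p_{q} \equiv v \pmod r$ via characters modulo $q$:
\[
\mathbf{1}_{\{p_{q} \equiv v \pmod r,\ (p,q)=1\}} \;=\; \sum_{\substack{0 \leq a < q \\ (a,q)=1 \\ a \equiv v \pmod r}}\frac{1}{\varphi(q)}\sum_{\chi \pmod q}\chi(p)\overline{\chi}(a).
\]
Since $(r,q)=1$, the number of reduced residues modulo $q$ in a fixed residue class modulo $r$ is $\varphi(q)/r$, so the principal character yields the main term $H/r$ up to an error absorbed by Huxley's classical short-interval prime number theorem on average. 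After moving the signs $\epsilon_{y,v}$ inside the character sum via duality, it remains to bound
\[
\frac{1}{\varphi(q)}\Big|\sum_{\chi \neq \chi_0}\sum_{y<x}\Psi_{y,\chi}\cdot \sum_{p \in [y, y+H]}\chi(p)\log p\Big|, \qquad \Psi_{y,\chi} := \sum_{\substack{0 \leq a < q \\ (a,q)=1}}\epsilon_{y,\,a \bmod r}\,\overline{\chi}(a).
\]

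Next, use Perron's formula with a smoothing to express the inner sum over $p$ as a contour integral on $\mathrm{Re}(s) = 1/2$ of the Dirichlet polynomial $P(s,\chi)=\sum_{n \leq 2x}\Lambda(n)\chi(n)n^{-s}$ against $((y+H)^s-y^s)/s$, discarding prime powers at negligible cost. Partition the tuples $(t,\chi)$ with $|t|\leq T := (x/H)(\log x)^{C}$ into a bad set $\mathcal B$ where $|P(\tfrac 12+it,\chi)| \geq \sqrt{x}/(\log x)^{B}$ and its good complement $\mathcal G$. By standard large-values/zero-density estimates in the style of Huxley \cite{Huxley}, the ``bad'' contribution is sufficiently sparse (after discretization at unit scale in $t$, the number of bad tuples is $\ll (\log x)^{O(1)}$) in the range $H/q > x^{1/6+\varepsilon}$.

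For the bad tuples, use the trivial bound $\ll H$ on the inner sum over $p$, pull the $y$-sum outside, and reduce to proving
\[
\sup_{\chi \neq \chi_0}\sum_{0 \leq v < r}\Big|\sum_{\substack{0 \leq a < q,\ (a,q)=1 \\ a \equiv v \pmod r}}\chi(a)\Big| \;\ll\; \sqrt{rq}\,d(q)\log q,
\]
which I would establish by Cauchy--Schwarz in $v$, orthogonality of additive characters, and the completion method applied to the resulting incomplete character sums $\sum_{a}\chi(a)\overline{\chi}(a+hr)$ (with the $h=0$ diagonal giving the main contribution). Combined with $|\mathcal B| \ll (\log x)^{O(1)}$ and the hypothesis $r \leq q^{1-\xi}$, this yields a bad-part contribution of order $\sqrt{r/q}\cdot(\log x)^{O(1)}\cdot Hx \ll Hx/(\log x)^{100}$.

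For the good tuples, Cauchy--Schwarz in $(y,\chi)$ combined with the classical large sieve, which gives $\sum_{y,\chi}|\Psi_{y,\chi}|^2 \ll x\varphi(q)(\varphi(q)+q)$, together with Lemma~\ref{le:technical} applied to the contour integral, reduces matters to establishing the mean-value bound
\[
\frac{1}{\varphi(q)}\sum_{\chi \neq \chi_0}\int_{\substack{|t| \leq T \\ (t,\chi) \in \mathcal G}}|P(\tfrac 12+it,\chi)|^2\,dt \;\ll_A\; \frac{x}{(\log x)^A}.
\]
This is the main obstacle and the number-theoretic heart of the theorem: it is a hybrid version of Huxley's short-interval prime number theorem in which the restriction to $\mathcal G$ effectively provides an enhanced zero-free region for $L(s,\chi)$. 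It is established by decomposing $P(s,\chi)$ via Vaughan's identity (Lemma~\ref{le:vaughan}) into type I and type II Dirichlet polynomials, applying hybrid large-values estimates to each bilinear piece, and using fourth-moment bounds in the $\chi$-aspect; the restriction to $\mathcal G$ is precisely what allows us to cross the critical threshold $H/q > x^{1/6+\varepsilon}$ matching Huxley's classical exponent. Finally, the case $H = x$ is a simplification of the same argument: the $y$-average becomes trivial, the principal character gives the main term by the prime number theorem, and the non-principal characters are controlled by the Siegel--Walfisz theorem combined with the same character-sum bound displayed above.
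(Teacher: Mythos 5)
Your outline correctly identifies the paper's architecture: open the congruence $p_q \equiv v \pmod r$ into Dirichlet characters modulo $q$, write the prime sum over $[y,y+H]$ as a contour integral of $P(\tfrac12+it,\chi)$, split $(t,\chi)$ into a small exceptional set (controlled by the polylogarithmic count of Corollary~\ref{cor:exceptional} plus the character-sum estimate of Lemma~\ref{lem:character}) and a large complement handled by duality, Cauchy--Schwarz, the large sieve, Lemma~\ref{le:technical}, and a mean-value bound for $|P(\tfrac12+it,\chi)|^2$ restricted to non-exceptional tuples. That much is the paper's proof.

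The gap is in the mean-value estimate, which you correctly flag as the heart of the argument. You propose to decompose $P(s,\chi)$ by Vaughan's identity, but the paper's Lemma~\ref{le:huxley} uses Heath-Brown's identity with $k=3$ and $z=(H/q)^{2-3\varepsilon/4}>(2x)^{1/3}$, and the difference matters. Heath-Brown's identity forces every M\"obius-weighted factor to have length $\leq z\asymp x^{1/3}$, so that when the case analysis reaches a long factor $N_j>(H/q)^{2-\varepsilon/2}$ it is guaranteed to carry smooth coefficients ($1$ or $\log n$) and the fourth-moment bounds of Lemma~\ref{le:fourthmoment} and Lemma~\ref{le:largevaluessmooth} are applicable. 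Vaughan's identity produces a type~II piece $\sum_{dc\mid n,\,d,c>z}\mu(d)\Lambda(c)$ in which \emph{both} factors carry non-smooth coefficients and range up to $x/z\asymp x^{2/3}$, well above the cut-off $(H/q)^{2-\varepsilon/2}\asymp x^{1/3}$ beyond which smooth coefficients become essential. Once such a factor lands in the sub-case $N_j>(H/q)^{2-\varepsilon/2}$ with $\sigma\leq 3/4$, the fourth-moment estimate is unavailable and the argument cannot be closed at the threshold $H/q>x^{1/6+\varepsilon}$.

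Two secondary issues. First, for the $H=x$ statement you claim the non-principal characters are ``controlled by Siegel--Walfisz,'' but the modulus $q$ can be as large as $x^{5/6-\varepsilon}$, far outside the Siegel--Walfisz range $q\ll(\log x)^{A}$; you still need the same exceptional/non-exceptional split and the mean-value bound, just without the outer $y$-average. Second, you define the bad set by $|P(\tfrac12+it,\chi)|$ being large, whereas the paper's exceptional set $\mathcal E$ is defined by some \emph{sub}-Dirichlet polynomial with $\mu$, $1$, or $\log$ coefficients being large (Corollary~\ref{cor:exceptional}). This distinction is what makes the definition compatible with Lemma~\ref{le:huxley}: being outside $\mathcal E$ means every factor in every Heath-Brown decomposition is under control, which is precisely what the large-values case analysis uses.
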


Notice that taking $r = 1$ recovers the original result of Huxley in almost all short intervals. On the other end, taking $H = x$ and thinking of $p_{q}$ as $p$, one would recover a version of Huxley's theorem in arithmetic progressions to large moduli. We notice that such a version of Huxley's theorem (with $p$ in place of $p_{q}$) cannot be proven for arbitrary moduli $q$ using the current technology (because of the weakness of the zero-free region for $L(s, \chi)$) and we heavily exploit the fact that we are looking at the distribution in residue classes of $p_{q} := p \pmod{q} \in [0, q - 1]$ instead of $p$.

Using a rather similar proof, but with different input on the character sums, we will also prove the following variant of Theorem \ref{thm:nr1}.

\begin{theorem} \label{thm:nr2}
  Let $\varepsilon \in (0 , \tfrac{1}{1000})$ be given. Suppose that $(H / q) > x^{1/6 + \varepsilon}$ and $H \leq x$. Then, for $q^{1/2 - 1/10} \geq H' \geq q^{1/100}$, we have
  $$
  \sum_{y \leq x} \sum_{z < q} \sup_{\substack{\beta \in \mathbb{R} \\ 0 \leq v < r}} \Big | \sum_{\substack{p \in [y, y + H] \\ p_{q} \equiv v \pmod{r} \\ p_{q} \in [z, z + H']}} e(p_{q} \beta) \log p - \frac{H}{\varphi(q)} \sum_{\substack{(a,q) = 1 \\ 0 \leq a < q \\ a \equiv v \pmod{r} \\ a \in [z, z + H']}} e( a \beta) \Big | \ll_{\varepsilon} \frac{x H H'}{(\log x)^{100}}.
  $$
Moreover, if $H=x$ then
\begin{equation}\label{Hrownex}
\sum_{z < q} \sup_{\substack{\beta \in \mathbb{R} \\ 0 \leq v < r}} \Big | \sum_{\substack{p \leq H \\ p_{q} \equiv v \pmod{r} \\ p_{q} \in [z, z + H']}} e(p_{q} \beta) \log p - \frac{H}{\varphi(q)} \sum_{\substack{(a,q) = 1 \\ 0 \leq a < q \\ a \equiv v \pmod{r} \\ a \in [z, z + H']}} e( a \beta) \Big | \ll_{\varepsilon} \frac{ H H'}{(\log x)^{100}}.\end{equation}
\end{theorem}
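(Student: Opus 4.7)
The plan is to follow the outline sketched in the introduction, which reduces Theorem \ref{thm:nr2} to a hybrid analogue of Huxley's almost-all-short-intervals estimate, combined with an arithmetic sieve for the reduced residue side. First I would use a smooth Perron formula to express the indicator $\mathbf{1}_{p \in [y, y+H]}$ as a contour integral, while capturing $p_q \equiv v \pmod r$, $p_q \in [z, z+H']$ through Dirichlet characters modulo $q$. Expanding the congruence $a \equiv v \pmod r$ into additive characters and pulling the triangle inequality through, the constraint $v$ is absorbed into $\beta$; the supremum then reduces to one over $\beta \in \mathbb{R}$ alone, which by duality can be replaced by a pair of phases $e^{i\theta_{y,z}}, e(a \beta_{y,z})$ depending only on $y$ and $z$. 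This reduces the problem to estimating
\begin{equation*}
\sum_{y < x} \sum_{z < q} e^{i\theta_{y,z}} \cdot \frac{1}{\varphi(q)} \sum_{\chi \neq \chi_0 \!\!\pmod{q}} \Bigl( \tfrac{H}{\sqrt{x}} \int_{|t| \leq x/H} P(\tfrac{1}{2}+it,\chi) y^{it}\, dt \Bigr) \Bigl( \sum_{\substack{(a,q)=1 \\ a \in [z,z+H']}} \chi(a) e(a \beta_{y,z}) \Bigr),
\end{equation*}
where $P(s,\chi) = \sum_{p \leq x} \chi(p) p^{-s} \log p$.

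Next I would partition the pairs $(t,\chi)$ into a \emph{bad} set $\mathcal{E}$, where $|P(\tfrac12+it,\chi)| \geq \sqrt{x}/(\log x)^A$, and a \emph{good} set on which one has $\ll \sqrt{x}/(\log x)^A$ cancellations. For the \emph{bad} contribution, Huxley-type log-free density estimates for zeros of $L(s,\chi)$ (as used in his almost-all-short-intervals theorem) bound $\#\mathcal{E} \ll (\log x)^{O(1)}$ tuples when $H/q > x^{1/6+\varepsilon}$. The $t$-integral becomes essentially trivial, and it remains to control, on average over $y,z$,
\begin{equation*}
\sum_{z < q} \Bigl| \sum_{\substack{(a,q)=1 \\ a \in [z, z+H']}} \chi(a) e(a \beta_{y,z}) \Bigr|.
\end{equation*}
Since $\beta_{y,z}$ varies arbitrarily in $\beta$, I would apply Weyl differencing to square out the additive phase, reducing to bounds for $\sum_a \chi(a)\overline{\chi}(a+h)$ on short intervals, which in turn, after a further Weyl step, reduce to the Weyl bound for a sum involving $\chi(a)\overline{\chi}(a+h)\chi(a')\overline{\chi}(a'+h)$. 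The constraint $H' \geq q^{1/100}$ is what makes such a double Weyl step effective; the upper bound $H' \leq q^{1/2-1/10}$ prevents overflow beyond the completion range.

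For the \emph{good} contribution I would apply Cauchy--Schwarz in the variables $(y,\chi)$, after using duality in the form of Lemma~\ref{le:technical} to repackage the $y$-integral as an average of $|P(\tfrac12+it,\chi)|^2$ over a shorter range $|t| \leq (x/H)(\log x)^{1000}$. The Dirichlet polynomial average on the character side is bounded by the large sieve (using the trivial bound $|c(a,y)| \leq H'$ in the coefficient), giving $\ll (\varphi(q) + q)\cdot H'^2 \cdot x$. The remaining mean square
\begin{equation*}
\frac{1}{\varphi(q)} \sum_{\substack{\chi \neq \chi_0 \\ (t,\chi) \text{ good}}} \int_{|t| \leq (x/H)(\log x)^{1000}} |P(\tfrac12+it,\chi)|^2 dt \ll_A \frac{x}{(\log x)^A}
\end{equation*}
is the heart of the argument, and is exactly where the threshold $H/q > x^{1/6+\varepsilon}$ enters: it is the hybrid analogue of Huxley's short-interval PNT in which the restriction to ``good'' $(t,\chi)$ lets us pretend that $L(s,\chi)$ enjoys a zero-free region, so that after a Heath-Brown identity decomposition into type I and type II sums, the type II sums are bounded using the hybrid mean value theorem and Huxley's frequency-of-large-values estimates for Dirichlet polynomials. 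The main obstacle, and what forces the $1/6$ exponent, is precisely this Huxley-type large-values step; crossing below it would require a genuinely new treatment of type III sums, which we avoid. The final $H = x$ case is easier since the $y$-sum disappears, and one can apply the same argument without the outer $y$-average, losing only a factor $\log x$.
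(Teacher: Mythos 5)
Your proposal follows essentially the same route as the paper's proof: opening the congruences via Dirichlet characters plus additive characters (absorbing the $v$-constraint into $\beta$), the duality step introducing phases $e^{i\theta_{y,z}}e(a\beta_{y,z})$ depending only on $(y,z)$, the split into good and bad tuples $(t,\chi)$, handling the bad set by the trivial $|P|\ll\sqrt{x}$ bound together with the van der Corput/Weyl character-sum estimate (Lemma~\ref{le:character2}), and the Cauchy--Schwarz + large sieve + hybrid Huxley bound (via Lemma~\ref{le:technical} and Lemma~\ref{le:huxley}) for the good set. Two small corrections to the attribution: the $(\log x)^{O_\varepsilon(1)}$ bound on the size of the bad set (Corollary~\ref{cor:exceptional}) is obtained by a high-moment large-values argument and holds unconditionally, with no dependence on $H/q>x^{1/6+\varepsilon}$ --- that threshold only enters in the treatment of the good tuples through Lemma~\ref{le:huxley}; and the regime $q\leq(\log x)^{B}$ (where the character-sum savings are ineffective) must be dispatched separately, which the paper does via Theorem~\ref{thm:koukoulopoulos}.
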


\begin{remark}
  Notice that the result is non-trivial only for $r \leq (\log x)^{100}$.
  \end{remark}

We will be helped to a very large extent by the fact that we are working with $p_{q}$ instead of $p$. This has roughly the effect of a convolution, and off-loads the problem of obtaining cancellations in $\sum_{p \leq x} \chi(p) \log p$ onto the problem of obtaining cancellations in $\sum_{n} \chi(n)$ which is substantially easier. Theorem \ref{thm:nr2} can be thought of as the analogue (for $p_{q}$ instead of $p$) of the Fourier Uniformity problem for primes in the Huxley range. The latter remains an outstanding challenge.

\subsection{Lemma on large values of Dirichlet polynomials}

We say that a set $\mathcal{S}$ consisting of tuple  $(t, \chi)$ is \textit{well-spaced} if whenever  $(t, \chi) , (t', \chi) \in \mathcal{S}$ we
have either $t = t'$ or $|t - t'| \geq 1$.

\begin{lemma} \label{le:huxleylargevalues}
  Let $D(s, \chi) = \sum_{n \leq N} a(n) \chi(n) n^{-s}$.
  Let
  $$
  G = \sum_{n \leq N} \frac{|a(n)|^2}{n}.
  $$
  Let $\mathcal{S}$ be a set of well-spaced tuples $(t, \chi)$ such that for each $(t, \chi) \in \mathcal{S}$ we have $|t| \leq T$, $\chi \pmod{q}$ and
  $
  |D(\tfrac 12 + it, \chi)| > V.
  $
  Then  $|\mathcal{S}| \ll (\log q T)^2 \cdot (G N V^{-2} + G^3 N q T V^{-6})$.
\end{lemma}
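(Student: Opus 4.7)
The target estimate splits into two pieces, $|\mathcal{S}|\ll GNV^{-2}(\log qT)^{O(1)}$ and $|\mathcal{S}|\ll G^3NqTV^{-6}(\log qT)^{O(1)}$, and I would prove each separately via the hybrid large sieve. For the first, the well-spacing of $\mathcal{S}$ gives
\[
|\mathcal{S}|V^2 \,\leq\, \sum_{(t,\chi)\in\mathcal{S}}|D(\tfrac12+it,\chi)|^2 \,\leq\, \sum_{\chi\pmod q}\int_{|t|\leq T+1}|D(\tfrac12+it,\chi)|^2\,dt \,\ll\, (\varphi(q)T+N)G\,(\log qT)^{O(1)}
\]
by the hybrid large sieve, which rearranges to $|\mathcal{S}|\ll(qT+N)GV^{-2}(\log qT)^{O(1)}$.

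For the second piece, form the cube $D(s,\chi)^3=\sum_{m}b(m)\chi(m)m^{-s}$ with $b(m):=\sum_{m=n_1n_2n_3,\,n_i\leq N}a(n_1)a(n_2)a(n_3)$ supported in $[1,N^3]$. A standard divisor-sum computation, using $d_3(n_1n_2n_3)\leq d_3(n_1)d_3(n_2)d_3(n_3)$ together with the bound $\sum_{n\leq N}d_3(n)|a(n)|^2/n\ll G(\log N)^{O(1)}$, yields $\sum_m|b(m)|^2/m\ll G^3(\log N)^{O(1)}$. Since $|D^3|^2=|D|^6\geq V^6$ on $\mathcal{S}$, the hybrid large sieve applied to $D^3$ gives
\[
|\mathcal{S}|V^6 \,\leq\, \sum_{(t,\chi)\in\mathcal{S}}|D^3(\tfrac12+it,\chi)|^2 \,\ll\, (\varphi(q)T+N^3)G^3(\log qT)^{O(1)},
\]
hence $|\mathcal{S}|\ll(qT+N^3)G^3V^{-6}(\log qT)^{O(1)}$.

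It remains to take the minimum of these two bounds (together with the trivial estimate $|\mathcal{S}|\leq\varphi(q)(2T+1)\ll qT$) and verify that it is controlled by $GNV^{-2}+G^3NqTV^{-6}$ in every parameter range. A short case analysis handles this: the leftover $qT\cdot GV^{-2}$ from the first estimate is dominated by $G^3NqTV^{-6}$ whenever $V^4\leq G^2N$, while $N^3G^3V^{-6}$ is dominated by $NqTG^3V^{-6}$ whenever $N^2\leq qT$; the remaining (mutually complementary) corner regimes are handled by the trivial bound.

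The main obstacle is the absorption step in intermediate regimes where neither $V^4\leq G^2N$ nor $N^2\leq qT$ holds; in such a regime one must invoke a Hal\'asz--Montgomery refinement of the hybrid large sieve for $D^3$. Concretely, one dualizes the bound $|\mathcal{S}|^{1/2}V^3\leq \sum_{(t,\chi)\in\mathcal{S}}c_{t,\chi}D^3(\tfrac12+it,\chi)$ with $\sum|c_{t,\chi}|^2=1$, swaps the order of summation, applies Cauchy--Schwarz, and then exploits the well-spacing of $\mathcal{S}$ together with rapid decay of smooth Mellin sums $\sum_m w(m/M)m^{i\tau}$ (for equal characters) and Polya--Vinogradov bounds for incomplete character sums (for distinct characters) to replace the length factor $N^3$ by $N\cdot qT$ in the off-diagonal estimate, producing the claimed bound $|\mathcal{S}|\ll G^3NqTV^{-6}(\log qT)^{O(1)}$ in all ranges.
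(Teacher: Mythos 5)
The paper does not actually prove this lemma; it simply cites it as \cite[Lemma 10.2]{Harman}. So your attempt is a genuine reconstruction, and it needs to be judged on its own merits. Your two mean-value bounds are fine: the hybrid large sieve applied to $D$ gives $|\mathcal{S}|\ll (qT+N)GV^{-2}\log^{O(1)}$, and applied to $D^3$ gives $|\mathcal{S}|\ll (qT+N^3)G^3V^{-6}\log^{O(1)}$, with $\sum_m|b(m)|^2/m\ll G^3\log^{O(1)}$ handled by a divisor argument as you say.

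The problem is the absorption step. As you eventually concede, when both $V^4>G^2N$ and $N^2>qT$ hold, the leftover terms $qT\,GV^{-2}$ and $N^3G^3V^{-6}$ are each larger than the target, and the trivial bound $|\mathcal{S}|\leq\varphi(q)(2T+1)\ll qT$ does \emph{not} rescue this regime: with $V^2\geq G$ one has $G^3NqTV^{-6}\leq NqT\cdot(G/V^2)^3\cdot$ which can be far below $qT$, so $qT\not\ll G^3NqTV^{-6}$. So the ``short case analysis'' is genuinely incomplete, and everything rests on the Hal\'asz--Montgomery step you sketch for that regime.

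That step, however, does not deliver what you claim. Dualizing on $D^3$, swapping, applying Cauchy--Schwarz with a smooth majorant, and estimating the off-diagonal (Mellin decay for $\chi=\psi$, hybrid P\'olya--Vinogradov/convexity for $\chi\neq\psi$) gives the Hal\'asz--Montgomery inequality
\[
|\mathcal{S}|\,V^6\ \ll\ G^3\bigl(N^3+|\mathcal{S}|\,(qT)^{1/2}\bigr)\,(\log qT)^{O(1)},
\]
which yields $|\mathcal{S}|\ll G^3N^3V^{-6}\log^{O(1)}$ under the Hal\'asz condition $V^6\gg G^3(qT)^{1/2}\log^{O(1)}$, and gives nothing otherwise. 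The length factor stays $N^3$; it does not turn into $N\cdot qT$. There is no mechanism in the argument you describe that replaces $N^3$ by $NqT$, so the key term $G^3NqTV^{-6}$ of the lemma is never produced.

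The missing idea is Huxley's (and Montgomery's) \emph{subdivision} device, which is how this lemma is actually proved. One establishes the hybrid Hal\'asz--Montgomery bound for $D$ itself (not $D^3$): for a well-spaced set with $|t_r-t_0|\leq T_0$ and $\chi_r\pmod q$,
\[
\sum_r |D(\tfrac12+it_r,\chi_r)|^2\ \ll\ G\bigl(N+R\,(qT_0)^{1/2}\bigr)(\log qT)^{O(1)},
\]
so that $R\ll GNV^{-2}\log^{O(1)}$ whenever $V^2\geq C\,G(qT_0)^{1/2}(\log qT)^{O(1)}$. One then covers $[-T,T]$ by $O(T/T_0+1)$ intervals of length $T_0\asymp V^4G^{-2}q^{-1}(\log qT)^{-O(1)}$, chosen precisely so that the Hal\'asz condition holds on each; summing the bound $GNV^{-2}\log^{O(1)}$ over the $\ll G^2qTV^{-4}\log^{O(1)}+1$ intervals gives $R\ll G^3NqTV^{-6}\log^{O(1)}+GNV^{-2}\log^{O(1)}$, exactly the lemma. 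Without this subdivision, the $NqT$ term is simply not reachable by the route you outline.
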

\begin{proof}
  This is \cite[Lemma 10.2]{Harman}.
  \end{proof}

  \begin{lemma} \label{le:fourthmoment}

    Let $D(s, \chi) = \sum_{n} f(n) \chi(n) n^{-s} V (n / N)$ with either $f(n) = 1$ or $f(n) = \log n$ and $V$ a fixed smooth, compactly supported in $[0, \infty)$, function. Let $A >0 $ and assume that $N \leq (q T)^{A}$.
    Let $\mathcal{S}$ be a collection of well-spaced tuples $(t, \chi)$ with $|t| \leq T$ and $\chi \neq \chi_0 \pmod{q}$. Then,
    $$
    \sum_{(t, \chi) \in \mathcal{S}} |D(\tfrac 12 + it, \chi)|^4 \ll_{A} (q T) (\log q T)^{6}.
    $$
  \end{lemma}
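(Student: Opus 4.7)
The plan is to reduce the claim to the classical hybrid fourth moment bound for Dirichlet $L$-functions, via a Mellin transform argument that replaces the smooth Dirichlet polynomial by a contour integral of $L(1/2+iu,\chi)$.

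First I would unify the two cases. Writing $\log n = \log N + \log(n/N)$ gives
$$D(s,\chi) = (\log N)\cdot D_V(s,\chi) + D_W(s,\chi),$$
where $W(x) := V(x)\log x$ (after a harmless splitting of $V$ near $0$ if $V(0)\neq 0$). Since $\log N \ll_A \log(qT)$, it suffices to establish the bound with $V$ replaced by any fixed Schwartz cutoff $\Phi$. I will therefore treat only
$$D_\Phi(s,\chi) = \sum_n \chi(n)\,\Phi(n/N)\,n^{-s},$$
and recover the statement by summing the contributions.

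The second step is Mellin inversion. For $\sigma$ large one has
$$D_\Phi(s,\chi) = \frac{1}{2\pi i}\int_{(\sigma)} \widetilde\Phi(w)\,N^w\,L(s+w,\chi)\,dw.$$
Because $\chi\neq\chi_0$, $L(s,\chi)$ is entire, so the contour can be shifted to $\Re(w)=0$ with no residue. On this line $|N^w|=1$ and $\widetilde\Phi(iv)$ decays faster than any polynomial in $|v|$. Two applications of Cauchy--Schwarz (using $\int|\widetilde\Phi(iv)|\,dv=O(1)$) yield
$$|D_\Phi(\tfrac12+it,\chi)|^4 \ll \int_{\mathbb R} |\widetilde\Phi(iv)|\,\bigl|L\bigl(\tfrac12+i(t+v),\chi\bigr)\bigr|^4\,dv.$$

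Third, I would sum over $(t,\chi)\in\mathcal S$ and change variable $u=t+v$. Since the set $\mathcal S$ is well-spaced, for each fixed $\chi$ the $t$'s satisfy $|t_i-t_j|\geq 1$, and the Schwartz decay of $\widetilde\Phi$ gives
$$\sum_{t\,:\,(t,\chi)\in\mathcal S} |\widetilde\Phi(i(u-t))| \ll 1$$
uniformly in $u$. Tails with $|u|>T+(\log qT)^{C}$ contribute at most $O_A\bigl((qT)^{-10}\bigr)$, using the trivial convexity bound $|L(1/2+iu,\chi)|\ll (q(1+|u|))^{1/4+\varepsilon}$ together with $N\leq (qT)^A$. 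Consequently
$$\sum_{(t,\chi)\in\mathcal S} |D_\Phi(\tfrac12+it,\chi)|^4 \ll \sum_{\substack{\chi\pmod q\\ \chi\neq\chi_0}} \int_{|u|\leq T+(\log qT)^{C}} |L(\tfrac12+iu,\chi)|^4\,du + O(1).$$

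Finally, I would invoke Heath--Brown's hybrid fourth moment estimate,
$$\sum_{\substack{\chi\pmod q\\ \chi\neq\chi_0}} \int_{|u|\leq T} |L(\tfrac12+iu,\chi)|^4\,du \ll qT(\log qT)^{4},$$
which is the essential ingredient and the only deep input. Combining this with the factor $(\log N)^4\ll_A (\log qT)^4$ coming from the $f(n)=\log n$ case gives a bound of the form $qT(\log qT)^{O(1)}$ with implicit constant depending on $A$, absorbed into the $\ll_A$. The main (only) obstacle is the invocation of Heath--Brown's fourth moment bound; everything else is a routine Mellin/large-sieve manipulation enabled by the smoothness of $V$ and the well-spacing of $\mathcal S$.
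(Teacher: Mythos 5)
Your proof is correct and follows essentially the same route as the paper: Mellin inversion reduces the claim to the hybrid fourth moment bound for Dirichlet $L$-functions (Theorem~10.1 in Montgomery's \emph{Topics}, equivalently the Heath--Brown estimate you invoke), which is the only substantive input. The paper's variations are cosmetic---it keeps the Mellin contour at $\mathrm{Re}(w)=1/\log N$ rather than shifting to $\mathrm{Re}(w)=0$, and it passes from the discrete sum over $\mathcal{S}$ to the continuous fourth-moment integral via sub-harmonicity of $|L|^4$ over disks of radius $1/\log(qT)$ instead of your change-of-variable plus well-spacing/Schwartz-decay step---which is why it lands exactly on $(\log qT)^6$ while your handling of the $f(n)=\log n$ case costs an extra (harmless for the downstream applications) $(\log qT)^{O(1)}$.
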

  \begin{proof}
    Notice that
    $$
    D(s, \chi) = \frac{1}{2\pi i} \int_{(1/\log N)} L(s + w, \chi) \widetilde{V}(w) N^w dw,
    $$
    where $\widetilde{V}(w) := \int_{0}^{\infty}V(x) x^{w - 1} dx $ is the Mellin transform of $V$.  By Holder's inequality and the decay of $\widetilde{V}$, for all $A > 0$,
    $$
    |D(s, \chi)|^4 \ll_{A} \int_{\mathbb{R}} \Big | L \Big ( s + \frac{1}{\log N} + i u, \chi \Big ) \Big |^{4} \cdot \frac{du}{1 + |u|^{A}}.
    $$
    Therefore, it remains to show that for $|u| \leq (q T)^{\varepsilon}$, for any $\varepsilon > 0$,
    \begin{equation} \label{eq:discrete}
    \sum_{(t, \chi) \in \mathcal{S}} |L(\tfrac 12 + it + \frac{1}{\log N} + i u, \chi)|^4 \ll (q T) \cdot (\log q T)^{6} .
    \end{equation}
    In order to do this notice that by sub-harmonicity,
    $$
    |L(\tfrac 12 + it + \frac{1}{\log N} + iu)|^4 \leq \frac{1}{|D|}\iint_{D} |L(\tfrac 12 + it + \frac{1}{\log N} + x + iy + iu, \chi)|^4 dx dy,
    $$
    where $D$ is a disk of radius $1/\log (q T)$. Therefore, \eqref{eq:discrete} is bounded by
    $$
    (\log (q T))^2 \int_{-2/\log (q T)}^{2/\log (q T)} \sum_{\chi \pmod{q}} \int_{-T - (q T)^{\varepsilon}}^{T + (q T)^{\varepsilon}} |L(\tfrac 12 + it + x, \chi)|^4 dt dx.
    $$
    The result now follows from \cite[Theorem 10.1]{MontgomeryTopics}.
  \end{proof}

  \begin{lemma} \label{le:largevaluessmooth}

    Let $D(s, \chi) = \sum_{n} f(n) \chi(n) n^{-s} V (n / N)$ with either $f(n) = 1$ or $f(n) = \log n$ and $V$ a fixed, smooth, compactly supported function.
    Let $\mathcal{S}$ be a set of well-spaced $(t, \chi)$ such that for $(t, \chi) \in \mathcal{S}$ we have $|t| \leq T$, $\chi \neq \chi_0 \pmod{q}$ and
    $
    |D(\tfrac 12 + it, \chi)| > V
    $
    Let $A > 0$ and assume that $N \leq (q T)^{A}$. Then,
    $$
    |\mathcal{S}| \ll_{A} q T (\log q T)^{6} \cdot V^{-4}.
    $$
  \end{lemma}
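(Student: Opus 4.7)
The plan is to deduce this immediately from the fourth moment bound in Lemma \ref{le:fourthmoment}. The argument is a standard ``sum to max'' trick: if every tuple $(t,\chi)$ in $\mathcal{S}$ satisfies $|D(\tfrac12+it,\chi)| > V$, then raising to the fourth power and summing gives
\[
V^4 \cdot |\mathcal{S}| \;<\; \sum_{(t,\chi)\in\mathcal{S}} |D(\tfrac12+it,\chi)|^4.
\]
Since $\mathcal{S}$ is well-spaced, $|t|\le T$, $\chi\ne\chi_0\pmod q$, and the coefficient hypotheses on $D(s,\chi)$ (namely $f(n)\in\{1,\log n\}$ with smooth cutoff $V(n/N)$ and $N\le(qT)^A$) match exactly the hypotheses of Lemma \ref{le:fourthmoment}. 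Applying that lemma yields
\[
\sum_{(t,\chi)\in\mathcal{S}} |D(\tfrac12+it,\chi)|^4 \;\ll_A\; qT\,(\log qT)^6.
\]
Combining these two inequalities gives $|\mathcal{S}|\ll_A qT(\log qT)^6 V^{-4}$, which is the desired bound.

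There is no real obstacle here — the lemma is essentially just a restatement of Lemma \ref{le:fourthmoment} in ``large values'' form. The only thing to verify is that the well-spacedness hypothesis of $\mathcal{S}$ in Lemma \ref{le:largevaluessmooth} is the same notion as in Lemma \ref{le:fourthmoment}, and that the restriction $\chi\ne\chi_0$ is preserved, both of which are immediate. (One should also note the minor notational clash that the symbol $V$ is used both for the smooth cutoff and the threshold value; this does not affect the argument since the two roles are never combined.)
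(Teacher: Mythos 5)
Your proof is correct and is exactly the argument the paper intends, which simply states that the lemma ``is an immediate consequence of Lemma \ref{le:fourthmoment}''. The Chebyshev-type step $V^4|\mathcal{S}| < \sum_{(t,\chi)\in\mathcal{S}}|D(\tfrac12+it,\chi)|^4 \ll_A qT(\log qT)^6$ is precisely the intended deduction, and your note about the notational reuse of $V$ is a fair observation but harmless.
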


  \begin{proof}
    This is an immediate consequence of Lemma \ref{le:fourthmoment}.
  \end{proof}

%  \begin{lemma}[Halasz mean-value theorem] \label{le:halasz}
%    Let $\mathcal{T} $ be a well-spaced set such that if $(t, \chi) \in \mathcal{T}$ then $|t| \leq T$ and $\chi$ is a character $\pmod{q}$. Then, for any Dirichlet polynomial $D(s, \chi) = \sum_{n \leq N} a(n) \chi(n) n^{-s}$ we have,
%    $$
%    \sum_{\substack{(t, \chi) \in \mathcal{T}}} |D(\tfrac 12 + it, \chi)|^2 \ll \log^2 ( q T) \cdot ( N + |\mathcal{T}| \cdot \sqrt{q T} ) \sum_{n \leq N} \frac{|a(n)|^2}{n}.
%    $$
%  \end{lemma}
%  \begin{proof}
%    By duality this is equivalent to
%    $$
%    \sum_{n \leq N} \Big | \sum_{(t, \chi) \in \mathcal{T}} c(t, \chi) \chi(n) n^{it} \Big |^2 \ll \log^2(q T) (N + |\mathcal{T}| \cdot \sqrt{q T}) \sum_{(t, \chi) \in \mathcal{T}} |c(t, \chi)|^2
%    $$
%    Opening the square we obtain,
%    $$
%    N \sum_{(t, \chi) \in \mathcal{T}} |c(t, \chi)|^2 + \sum_{\substack{(t, \chi) \in \mathcal{T} \\ (t', \psi) \in \mathcal{T} \\ (t, \chi) \neq (t', \psi)}} |c(t, \chi) c(t', \psi)| \cdot \Big | \sum_{n \leq N} \chi(n) \overline{\psi(n)} n^{it - it'} \Big |
%    $$
%    By Poisson summation the sum over $n$ is bounded by $\ll \sqrt{q T} \log^2 (q T)$. Therefore using the inequality $|c(t, \chi) c(t', \psi)| \leq |c(t, \chi)|^2 + |c(t', \psi)|^2$ the second term is bounded by
%    $$
%    \sqrt{q T} \log^2 (q T) |\mathcal{T}| \sum_{(t, \chi) \in \mathcal{T}} |c(t, \chi)|^2
%    $$
%    as needed.
%  \end{proof}

  \begin{lemma} \label{le:exceptional}
    Let $\{a(n)\}$ be a sequence of complex numbers. Suppose that $|a(n)| \ll d_{r}(n) (1 + \log n)$ for some $r \geq 2$ and all $n \geq 1$.
    Let $\mathcal{E} = \mathcal{E}(A; T; q; x; \varepsilon)$ be the set of well-spaced tuples $(t, \chi)$ with $|t| \leq T$ and $\chi \pmod{q}$ for which
    $$
    \sup_{(q T)^{\varepsilon} \leq M \leq x} \frac{(\log x)^{A}}{M^{1/2}} \Big | \sum_{n \leq M} a(n) \chi(n) n^{-\tfrac 12 - it} \Big | \gg 1.
    $$
    Then, $|\mathcal{E}(A; T; q; x; \varepsilon)| \ll  (\log x)^{32 r^2 / \varepsilon^2 + 8 A / \varepsilon + 4}$.
  \end{lemma}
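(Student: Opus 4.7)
The plan is to apply the large-values bound of Lemma \ref{le:huxleylargevalues} to a sufficiently high power of the truncated Dirichlet polynomial $D_M(s, \chi) := \sum_{n \leq M} a(n)\chi(n) n^{-s}$, with the exponent chosen so that the second term in that bound (which carries the factor $qT$) is absorbed.

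After a dyadic partition of the range of $M$ in the definition of $\mathcal{E}$ together with a standard smoothing of the sharp cutoff at $M$ (each contributing at most a factor $(\log x)^{O(1)}$), it suffices to show that for each fixed dyadic $M_0 \in [(qT)^\varepsilon/2, 2x]$, the set $\mathcal{E}_{M_0} := \{(t, \chi) : |D_{M_0}(\tfrac 12 + it, \chi)| \geq M_0^{1/2}/(\log x)^{A + C_0}\}$ (with $C_0$ an appropriate absolute constant) has cardinality bounded polylogarithmically. Setting $k = \lceil 1/\varepsilon \rceil$, we consider $D_{M_0}(s, \chi)^k = \sum_{m \leq M_0^k} b(m) \chi(m) m^{-s}$. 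Expanding the $k$-fold convolution and using $|a(n)| \ll d_r(n) \log n$, the coefficients satisfy $|b(m)| \leq (\log x)^k d_{kr}(m)$, giving the $L^2$-mean bound
$$
G := \sum_{m \leq M_0^k} |b(m)|^2/m \ll (\log x)^{2k + (kr)^2}.
$$
For $(t, \chi) \in \mathcal{E}_{M_0}$ we have $|D_{M_0}(s, \chi)^k| \geq V := M_0^{k/2}/(\log x)^{k(A + C_0)}$, so Lemma \ref{le:huxleylargevalues} applied to $D_{M_0}^k$ with length $N = M_0^k$ yields
$$
|\mathcal{E}_{M_0}| \ll (\log qT)^2 \Big(G(\log x)^{2k(A+C_0)} + G^3 qT (\log x)^{6k(A+C_0)}/M_0^{2k}\Big).
$$

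The key point is that $M_0 \geq (qT)^\varepsilon/2$ together with $2k\varepsilon \geq 2$ yields $M_0^{2k} \gg (qT)^2$, so the second term is dominated by the first (for $qT$ sufficiently large in terms of $\varepsilon, r, A$; the opposite regime is handled by the trivial bound $|\mathcal{E}| \leq qT$). This gives $|\mathcal{E}_{M_0}| \ll (\log x)^{2k + (kr)^2 + 2kA + O(1)}$, and summing over the $O(\log x)$ dyadic choices of $M_0$ with $k \leq 1/\varepsilon + 1$ produces a bound well within the claimed $(\log x)^{32r^2/\varepsilon^2 + 8A/\varepsilon + 4}$. The main technical wrinkle is the initial smoothing step---reducing the supremum over $M$ to a statement about a single dyadic cutoff---which requires either a careful smooth partition of unity in $n$ together with a bound for the boundary block at $n \approx M_0$, or alternatively a Perron-type integral representation of $D_M$ in terms of a fixed longer Dirichlet polynomial; a naive union bound over $M$ would be disastrously wasteful, but both of the preceding manoeuvres are standard.
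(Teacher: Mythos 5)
The plan hinges on reducing the supremum over $M$ to a statement about a single, \emph{fixed} Dirichlet polynomial $D_{M_0}$ and then invoking Lemma~\ref{le:huxleylargevalues}. That reduction is where the argument breaks. A large value of $\sum_{n \leq M} a(n)\chi(n)n^{-1/2-it}$ for some $M \in [M_0, 2M_0]$ does \emph{not} imply a large value of $\sum_{n \leq M_0} a(n)\chi(n)n^{-1/2-it}$: the boundary block $\sum_{M_0 < n \leq M}$ can carry the entire large value, and it still has a sharp, $(t,\chi)$-dependent cutoff. Both of the ``standard manoeuvres'' you mention—a smooth partition of unity in $n$, or a Perron contour representation—remove the sharp cutoff only by trading it for an integral over a contour variable $v$, so that all one knows afterwards is that $\int_{|v| \leq Y} |D_X(\tfrac12 + it + iv, \chi)|\,\frac{dv}{1+|v|}$ is large, an \emph{integral} large value, not a pointwise one. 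Lemma~\ref{le:huxleylargevalues} needs pointwise large values at a well-spaced collection of points, and pigeonholing in $v$ does not rescue the situation: the maximizers $v_0(t,\chi)$ can conspire so that the shifted points $t + v_0(t,\chi)$ cluster, destroying the well-spacedness. So as written, the step ``$\mathcal{E} \subseteq \bigcup_{M_0}\mathcal{E}_{M_0}$, now bound each $|\mathcal{E}_{M_0}|$ by Lemma~\ref{le:huxleylargevalues}'' has a genuine hole; it is not a technicality that can be deferred.

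The paper's proof does not go through Lemma~\ref{le:huxleylargevalues} at all, and this is the point. After pigeonholing to a dyadic scale $N$, raising to a power $k$, and removing the supremum by a Perron integral—up to here you are broadly parallel—it faces exactly the integral-large-value situation above, and handles it by \emph{duality}: it introduces phases $e^{i\theta_{t,\chi,v}}$ to strip the absolute values, interchanges the order of summation, applies Cauchy--Schwarz in $n$, and closes with a mean-value/Poisson-summation estimate (the choice $k > 3/\varepsilon$ ensures $N^k > (qT)^3$, which makes the off-diagonal terms negligible). This circumvents the need for a pointwise large-values lemma entirely. Your version would go through if you replaced Lemma~\ref{le:huxleylargevalues} by an integral-form (Hal\'asz--Montgomery-type) large-values statement—or simply reproduced the duality argument—but then you would be following essentially the paper's route rather than a genuinely shorter one. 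The exponent bookkeeping in your write-up is fine and even leaves room to spare; the gap is structural, not numerical.
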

  \begin{proof}

    Let $R$ denote the cardinality of $\mathcal{E}(A; T; q; x; \varepsilon)$. By the pigeonhole principle, there exists a $(q T)^{\varepsilon} \leq N = 2^k \leq x$ and a subset $\mathcal{E}' \subset \mathcal{E}$ of cardinality $\gg R / \log x$ such that for all $(t, \chi) \in \mathcal{E}'$,
  $$
  \sup_{(q T)^{\varepsilon} \leq u \leq N} \Big | \sum_{n \leq u} a(n) \chi(n) n^{-1/2 - it} \Big | \gg \frac{\sqrt{N}}{(\log x)^{A}}
  $$
  for all $(t, \chi) \in \mathcal{E}'$.
  Let $k$ be the smallest integer $> 3/\varepsilon$. Let $\beta_{n}$ denote coefficients such that
  $$
  \beta_{n} = \sum_{n = n_1 \ldots n_k} a(n_1) \ldots a(n_{k}).
  $$
  Note that $|\beta_{n}| \ll_{\varepsilon} d_{rk}(n) (1 + \log n)^{k}$ because $|a(n)| \leq d_r(n) ( 1 + \log n)$. Moreover,
  $$
  \Big | \sum_{n \leq u} a(n) \chi(n) n^{-1/2 - it} \Big |^{k} = \Big | \sum_{n \leq u^k} \beta_{n} \chi(n) n^{-1/2 - it} \Big |.
  $$
  And in particular,
  $$
  \frac{N^{k/2}}{(\log x)^{A k}} < \sup_{u < N} \Big | \sum_{n < u^k} \beta_{n} \chi(n) n^{-1/2 - it} \Big | = \sup_{u < N^k} \Big | \sum_{n < u} \beta_n \chi(n) n^{-1/2 - it} \Big |.
  $$
  Let $M = N^k$ so that $M > (q T)^{\varepsilon k} > (q T)^3$. It suffices therefore to estimate the number of tuples $(t, \chi)$ for which
  $$
  \sup_{u < M} \Big | \sum_{\ell < u} \beta_{\ell} \chi(\ell) \ell^{-1/2 - it} \Big | > \frac{\sqrt{M}}{(\log x)^{A k}}.
  $$

  Notice that the supremum over $u\leq M$ can be easily removed using a contour integral, i.e.\ writing for some small $\delta > 0$,
  $$
  \sum_{n \leq u} \beta_n \chi(n) n^{-1/2 - it} = \frac{1}{2\pi i} \int_{-1 / \log M - i M^{\delta}}^{1/\log M + i M^{\delta}} \Big ( \sum_{n \leq M} \beta_n n^{-s} \chi(n) n^{-1/2 - it} \Big ) \cdot \frac{u^{s}}{s} ds + O(M^{1 - \delta + o(1)}),
  $$
  so that
  $$
  \frac{\sqrt{M}}{(\log x)^{A k}} \ll \int_{-M^{\delta}}^{M^{\delta}} \Big | \sum_{n \leq M} \beta_{n} n^{-s} \chi(n) n^{-1/2 - it} \Big | \cdot \frac{dv}{1 + |v|} \ , \ s = \frac{1}{\log M} + i v.
  $$
  Let $\mathcal{T}$ be the set of tuples $(t, \chi)$ for which the above holds, so that $|\mathcal{T}| \gg R / \log x$ and
  \begin{equation} \label{eq:abouttogodual}
  \frac{R}{\log x} \cdot \frac{\sqrt{M}}{(\log x)^{A k}} \ll \sum_{(t, \chi) \in \mathcal{T}} \int_{-M^{\delta}}^{M^{\delta}} \Big | \sum_{n \leq M} \gamma_{n} n^{- 1/2 - it - iv} \chi(n) \Big | \cdot \frac{dv}{1 + |v|}
  \end{equation}
  for some coefficients $|\gamma_n| \ll |\beta_{n}| \ll d_{rk}(n) ( 1 + \log n)^k$.
  We find phases $\theta_{t, \chi, v} \in \mathbb{R}$ so that the right-hand side can be re-written as
\begin{align*}
\sum_{(t, \chi) \in \mathcal{T}} & \int_{-M^{\delta}}^{M^{\delta}} e^{i \theta_{t, \chi, v}} \sum_{n \leq M} \gamma_n n^{-1/2 - it - i v} \chi(n) \cdot \frac{dv}{1 + |v|} = \\ & \sum_{n \leq M} \gamma_n n^{-1/2} \int_{-M^{\delta}}^{M^{\delta}} \sum_{(t, \chi) \in \mathcal{T}} e^{i \theta_{t, \chi, v}} n^{-it - iv} \chi(n) \cdot \frac{dv}{1 + |v|}.
\end{align*}
By Cauchy's inequality and the bound $|\gamma_n| \leq d_{rk}(n) (1 + (\log n)^k)$, the above is
$$
\ll \Big ( (\log x)^{(rk)^2 + 2k} \Big )^{1/2} \cdot \Big ( \sum_{n \leq M} \Big | \int_{-M^{\delta}}^{M^{\delta}} \sum_{(t,\chi) \in \mathcal{T}} e^{i \theta_{t, \chi, v}} n^{- it - iv} \chi(n) \cdot \frac{dv}{1 + |v|} \Big |^2 \Big )^{1/2}.
$$
Expanding the square in the right-hand side, we get
\begin{equation} \label{eq:poisex}
\int_{-M^{\delta}}^{M^{\delta}} \int_{-M^{\delta}}^{M^{\delta}} \sum_{\substack{(t, \chi) \in \mathcal{T} \\ (t', \chi') \in \mathcal{T}}}
\sum_{n \leq M} n^{ - i (t - t') - i (v - u)} \chi(n) \overline{\chi'}(n) \cdot \frac{dv}{1 + |v|} \cdot \frac{du}{1 + |u|}.
\end{equation}
By the Poisson summation, the above is bounded by
\begin{align*}
M \sum_{\substack{(t, \chi) \in \mathcal{T} \\ (t', \chi) \in \mathcal{T}}}
\int_{-M^{\delta}}^{M^{\delta}} \int_{-M^{\delta}}^{M^{\delta}} \frac{1}{1 + |t - t' + v - u|}\cdot \frac{du}{1 + |u|} \cdot & \frac{dv}{1 + |v|} + O_{\varepsilon}((q T)^{1/2 + 1/100} R^2) \\ & \ll M R (\log T)^3
\end{align*}
since $M > (q T)^3$ and $R \leq T q$ by a trivial bound.
Plugging this into \eqref{eq:abouttogodual}, we obtain
$$
\frac{R}{\log x} \cdot \frac{\sqrt{M}}{(\log x)^{A k}} \ll_{\varepsilon} (\log x)^{(r k)^2 / 2 + k + 3} \cdot \sqrt{M} \sqrt{R}.
$$
Simplifying this inequality, yields
  $$
  R \ll (\log x)^2 \cdot (\log x)^{2 (rk)^2 + 2 A k + 2} \leq (\log x)^{32 r^2 / \varepsilon^2 + 8 A / \varepsilon + 4}.
  $$

\end{proof}

\subsection{Hybrid Huxley's theorem on a typical set of characters}

In this section we will establish a result of Huxley type on the set of $(t, \chi)$ lying outside of the exceptional set defined in the corollary below. The exceptional set defined in the lemma below is no longer required to be well-spaced.

\begin{corollary} \label{cor:exceptional}
  Let $\varepsilon, A, B, T, q, x > 0$ be given.
  Let $V$ be a smooth function, compactly supported in $[1,2]$ with $V^{(k)}(y) \ll_{k} (\log x)^{B k}$ for all $k \geq 1$ and $y \in \mathbb{R}$.
  Let $\mathcal{E}_{V}(A; T; q; x; \varepsilon)$ be the set of $(t, \chi)$ with $|t| \leq T$ and $\chi \pmod{q}$ for which there exists an $(q T)^{\varepsilon} < N < x$ (allowed to depend on $(t, \chi)$) such that either
  \begin{equation} \label{eq:largest}
  \Big | \sum_{n} \frac{\mu(n) \chi(n)}{n^{1/2 + it}} \cdot V \Big ( \frac{n}{N} \Big ) \Big |  \geq \frac{\sqrt{N}}{(\log x)^{A}} \text{ or } \Big | \sum_{n} \frac{f(n) \chi(n)}{n^{1/2 + it}} \cdot V \Big ( \frac{n}{N} \Big ) \Big | \geq \frac{\sqrt{N}}{(\log x)^{A}}
  \end{equation}
  with $f(n) = 1$ or $f(n) = \log n$. Then,
  $$\sum_{\chi} \int_{(t,\chi) \in \mathcal{E}_{V}(A; T; q; x; \varepsilon)}1\; dt \ll_{B} (\log x)^{32 / \varepsilon^2 + 8 A / \varepsilon + 4 + 2B}.
  $$
\end{corollary}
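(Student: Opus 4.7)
The corollary follows from Lemma~\ref{le:exceptional} after two standard reductions: (i) replacing each smooth sum by partial sums via integration by parts, and (ii) converting Lebesgue measure in $t$ into the cardinality of a well-spaced set of $(t,\chi)$.

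For (i), let $a(n) \in \{\mu(n),1,\log n\}$ and set $F(u) := \sum_{n \leq u} a(n)\chi(n) n^{-1/2-it}$. Partial summation gives
$$\sum_n \frac{a(n)\chi(n)}{n^{1/2+it}}\,V\!\Big(\frac{n}{N}\Big) \;=\; -\int_N^{2N} F(u)\,V'(u/N)\,\frac{du}{N}.$$
Since $V'$ is supported in $[1,2]$ with $\|V'\|_\infty \ll (\log x)^{B}$, the hypothesis that one of the smooth sums is $\geq \sqrt{N}/(\log x)^A$ forces $\sup_{N \leq u \leq 2N} |F(u)|/\sqrt{u} \gg (\log x)^{-A-B}$. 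Hence such a $(t,\chi)$ lies in the set $\mathcal{E}(A+B+O(1);T;q;2x;\varepsilon)$ of Lemma~\ref{le:exceptional}, applied with $|a(n)| \leq 1+\log n$ (the $r=1$ case; inspection of the proof of Lemma~\ref{le:exceptional} shows that it applies for $r \geq 1$, and with $r=1$ the exponent in the bound reduces to $32/\varepsilon^2 + 8A/\varepsilon + 4$).

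For (ii), for each character $\chi$ let $S_\chi := \{t \in [-T,T] : (t,\chi) \in \mathcal{E}_V\}$. A greedy selection extracts a pairwise $1$-separated subset $\mathcal{T}_\chi \subseteq S_\chi$ of cardinality $|\mathcal{T}_\chi| \geq \operatorname{meas}(S_\chi) - O(1)$: at each step, discarding the unit neighbourhood of the chosen point removes at most unit Lebesgue measure from the pool, so the selection runs for $\sim \operatorname{meas}(S_\chi)$ steps. The union $\bigsqcup_\chi \mathcal{T}_\chi \times \{\chi\}$ is well-spaced and, by~(i), contained in $\mathcal{E}(A+B+O(1);T;q;2x;\varepsilon)$. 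Lemma~\ref{le:exceptional} therefore bounds its cardinality, and hence $\sum_\chi \operatorname{meas}(S_\chi)$, by $(\log x)^{32/\varepsilon^2 + 8(A+B)/\varepsilon + 4}$. A preliminary dyadic pigeonholing restricting $N$ to a dyadic range costs only an extra factor of $\log x$, absorbed into the implicit constant.

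\textbf{Main obstacle.} The naive reduction above loses a factor $(\log x)^{8B/\varepsilon}$ in the $B$-dependence, whereas the statement asserts the sharper $(\log x)^{2B}$. Recovering this requires processing the smooth sum through Mellin inversion: writing $V(n/N) = \tfrac{1}{2\pi i}\int \widetilde V(w)\,N^w n^{-w}\,dw$ and exploiting the Fourier-type decay $|\widetilde V(i\tau)| \ll_k ((\log x)^{B}/(1+|\tau|))^k$, one truncates the inversion integral to $|\tau| \ll (\log x)^{B(1+o(1))}$. This converts the multiplicative loss in the partial-sum threshold into an additive broadening of the well-spaced set by intervals of length $(\log x)^{B}$, whose effect on the final count is only $(\log x)^{O(B)}$, delivering the claimed $+2B$ rather than $+O(B/\varepsilon)$.
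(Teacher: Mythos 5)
Your proposal ultimately lands on the paper's own route: open the smooth sum via Mellin inversion, use the rapid decay of $\widetilde V$ to truncate at $|u|\ll(\log x)^{2B}$ (which converts the problem to sharp-cutoff partial sums without the multiplicative $B$-dependent loss), discretize the exceptional set into a well-spaced family, and invoke Lemma~\ref{le:exceptional}. The partial-summation step in your (i) should simply be dropped rather than presented and then repaired, and your greedy selection in (ii) is a routine variant of the paper's device of picking the maximizer on each unit interval (note that discarding a unit neighbourhood removes up to two units of measure, not one, so $|\mathcal T_\chi|\gg\operatorname{meas}(S_\chi)$ rather than $\geq\operatorname{meas}(S_\chi)-O(1)$ — this does not affect the conclusion).
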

\begin{proof}

  First notice that
  $$
  \sum_{n} \frac{a(n) \chi(n)}{n^{1/2 + it}} \cdot V \Big ( \frac{n}{N} \Big ) = \frac{1}{2\pi} \int_{\mathbb{R}} \sum_{N \leq n \leq 2N} \frac{a(n) \chi(n)}{n^{1/2 + it + iu}} \cdot \widetilde{V}(i u) N^{i u} du,
  $$
  where $\widetilde{V}(s) := \int_{\mathbb{R}} V(y) y^{s - 1} dy$ is the Mellin transform of $V$. Second, the Mellin transform $\widetilde{V}(i u)$ has rapid decay and already for $|u| > (\log x)^{2B}$ it is bounded by $\ll_{A, \varepsilon, B} (\log x)^{-A}$. Therefore, one can restrict to $|u| \leq (\log x)^{2B}$. Consequently,
  $$\sum_{\chi} \int_{(t, \chi) \in \mathcal{E}_{V}(A; T; q; x; \varepsilon)} 1 dt \ll (\log x)^{2B} \sum_{\chi} \int_{(t, \chi) \in \mathcal{E}(A; T; q; x; \varepsilon)} 1\;dt,$$
  where $\mathcal{E}(A; T; q; x; \varepsilon)$ is the set of those $(t, \chi)$ at which either
  $$
  \Big | \sum_{N \leq n \leq 2N} \frac{\mu(n) \chi(n)}{n^{1/2 + it}} \Big |  \geq \frac{\sqrt{N}}{(\log x)^{A}} \text{ or } \Big | \sum_{N \leq n \leq 2N} \frac{f(n) \chi(n)}{n^{1/2 + it}} \Big | \geq \frac{\sqrt{N}}{(\log x)^{A}}
  $$
for some $N\in [(qT)^{\varepsilon}, x]$.
Let
\begin{equation}
  M_{N}(s, \chi) = \sum_{n \leq N} \mu(n) \chi(n) n^{-s} \text{ and } D_{i, N}(s) = \sum_{n \leq N} f_{i}(n) \chi(n) n^{-s}
\end{equation}
with $f_{1}(n) = 1$ and $f_{2}(n) = \log n$. Note that if for instance the left-hand side of \eqref{eq:largest} holds then either $|M_{N}(\tfrac 12 + it)| \gg \sqrt{N} (\log N)^{-A}$ or $|M_{2 N} (\tfrac 12 + it)| \gg \sqrt{N} (\log N)^{-A}$.

  Cover $[-T, T]$ by intervals $I$ of unit length. For each interval $I$ and character $\chi$, let $(t_{I}, \chi)$ denote the tuple that maximizes
  \begin{equation} \label{eq:max}
  \max(|D_{1, N}|, |D_{2,N}|, |M_{N}|, |D_{1, 2N}|, |D_{2,2N}|, |M_{2N}|) ( \tfrac 12 + it, \chi)
  \end{equation}
  as $t$ ranges over $I$ and $\chi$ ranges over all characters $\pmod{q}$.
  In the very unlikely case that there are two or more choices for $t_{I}$, we pick one arbitrarily.
  Therefore, for each $I$ and $\chi$ there is a unique $(t_{I}, \chi)$ that maximizes \eqref{eq:max}.

  Let $\mathcal{T}$ be the subset of $\{(t_{I}, \chi) : I  , \chi \pmod{q}\}$ for which \eqref{eq:max} is $\gg \sqrt{N} (\log N)^{-A}$. Then,
  $$
  \sum_{\chi} \int_{t \in \mathcal{E}(A; T; q; x; \varepsilon)} dt \leq \sum_{I} \sum_{(t_{I}, \chi) \in \mathcal{T}} \int_{\substack{t \in I}} dt \ll |\mathcal{T}|.
  $$
  Taking every other interval $I$, we can separate $\mathcal{T}$ into a union of two well-spaced sets $\mathcal{T}_{1} \cup \mathcal{T}_{2}$. Applying Lemma \ref{le:exceptional} to $\mathcal{T}_{1}, \mathcal{T}_{2}$, establishes the result.
  \end{proof}

The following lemma is a hybrid version of Huxley's theorem to large progressions and short intervals, under (essentially) the assumption of a good prime number theorem. This latter assumption is encapsulated in our requirement that $(t, \chi) \not \in \mathcal{E}_{V}(A; T; q; x; \varepsilon)$, where $\mathcal{E}_{V}(A; T; q; x; \varepsilon)$ is the same set as in Corollary \ref{cor:exceptional}.

\begin{lemma} \label{le:huxley}
  Let $x \geq 1$ be given.
Let
  $$
  P(s; \chi) = \sum_{p} \frac{\chi(p) \log p}{p^s} \cdot W \Big ( \frac{p}{x} \Big )
  $$
  with $W$ a smooth function such that $W^{(k)} (y) \ll_{k} \delta^{-k}$ for all $k \geq 1$ and $y \in \mathbb{R}$
  with $\delta \leq (\log x)^{750}$, and $W$ supported in $[1,2]$.
Let $T = 2 x (\log x)^{1000} / H$ and $\varepsilon \in (0, \tfrac{1}{10000})$ be given. Suppose that $(H / q) > x^{1/6 + \varepsilon}$ and $H \leq x$. Let $A > 4000 / \varepsilon $ be given. Then,
  \begin{equation} \label{eq:hb}
  \sum_{\substack{\chi \neq \chi_{0} \pmod{q}}} \int_{\substack{(t, \chi) \not \in \mathcal{E}(A; T; q; x; \varepsilon/1000) \\ |t| \leq T / 2}} |P(\tfrac 12 + it,\chi)|^2 \ dt \ll \frac{x}{(\log x)^{A / 2}},
  \end{equation}
where $\mathcal{E}(A; T; q; x; \varepsilon / 1000)$ is the same set as the set $\mathcal{E}_{W}(A; T; q; x; \varepsilon / 1000)$ defined in Corollary \ref{cor:exceptional}.
\end{lemma}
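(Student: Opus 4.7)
The plan is to prove \eqref{eq:hb} by combining a Heath-Brown decomposition of the prime-counting Dirichlet polynomial with Huxley's large-values theorem (Lemma \ref{le:huxleylargevalues}), using the exceptional set $\mathcal{E}$ to shrink the effective range of magnitudes over which the large-values sum must be controlled. I would begin by unfolding the smooth cutoff $W$ via its Mellin transform and restricting the resulting contour to $|u| \leq (\log x)^{1000}$; at the cost of a negligible tail, this reduces the problem to bounding the $L^2$-norm, on the set $\{(t,\chi) \notin \mathcal{E},\ |t|\leq T/2\}$, of $Q(\tfrac 12+it,\chi) := \sum_{n\sim x}\Lambda(n)\chi(n)n^{-1/2-it}$. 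Applying Heath-Brown's identity with suitable $k$ and parameter $z$, one decomposes $Q$ as a sum of $O((\log x)^{O(1)})$ pieces, each a product $D_1(s,\chi)\cdots D_r(s,\chi)$ with $r = O_k(1)$, each $D_i$ of length $N_i$ and coefficients of type $1$, $\log n$, or $\mu(n)$, and $\prod_i N_i \asymp x$.

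For each such piece, I would group the factors $D_i$ into two halves forming Dirichlet polynomials $A(s,\chi)B(s,\chi)$ of lengths $N_A, N_B$ with $N_AN_B \asymp x$. The crucial input from $\mathcal{E}$ is that for $(t,\chi)\notin\mathcal{E}$, each underlying $D_i$ of length $N_i \in [(qT)^{\varepsilon/1000}, x]$ satisfies the pointwise bound $|D_i(\tfrac 12+it,\chi)| \leq \sqrt{N_i}/(\log x)^A$, and hence $|A(\tfrac 12+it,\chi)| \leq \sqrt{N_A}/(\log x)^{A-O(1)}$ outside $\mathcal{E}$. In the dyadic partition $|A|\sim V$, this restricts the effective range of $V$ from $[1, \sqrt{N_A}]$ down to $[1, \sqrt{N_A}/(\log x)^{A-O(1)}]$, supplying the polylogarithmic savings that the unconditional fourth-moment bounds cannot provide on their own.

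To estimate $\sum_\chi \int_{(t,\chi)\notin\mathcal{E}}|AB|^2\,dt$, I would split the dyadic $V$-range into a ``small-$V$'' regime (where the contribution is bounded directly by applying the hybrid large sieve to $AB$ together with the trivial bound $|A|^2 \ll V^2$) and a ``medium-to-large-$V$'' regime, where the well-spaced count of $(t,\chi)$ with $|A|\sim V$ is controlled by Lemma \ref{le:huxleylargevalues} and then combined with the fourth-moment bound for $B$ provided by Lemma \ref{le:fourthmoment} via Cauchy--Schwarz. Summing over dyadic $V$, the worst case arises at the balanced point $N_A \asymp N_B \asymp \sqrt x$, where the $V^{-6}$ term in Lemma \ref{le:huxleylargevalues} dominates and yields a contribution of the form $\sqrt{x}\,(qT)^{3/4}(\log x)^{O(1)-A/2}$. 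Substituting $T = 2x(\log x)^{1000}/H$ and using the hypothesis $H/q > x^{1/6+\varepsilon}$ yields $qT \ll x^{5/6-\varepsilon}(\log x)^{1000}$, so that the total estimate is $\ll x/(\log x)^{A/2}$ for $A$ taken sufficiently large (in terms of $\varepsilon$).

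The principal obstacle is precisely this balanced-bilinear regime: the exponent $1/6$ in the hypothesis $H/q > x^{1/6+\varepsilon}$ matches exactly the $V^{-6}$ factor in Huxley's large-values estimate, and any relaxation would require either a breakthrough in large-values bounds or a nontrivial handling of type-III sums --- the same obstruction that is circumvented in the Matom\"aki--Shao direction discussed in the introduction. The role of $\mathcal{E}$ is exactly to provide the $(\log x)^A$-saving needed to improve the unconditional mean-value bound $O(x(\log x)^{O(1)})$ to the sharper $O(x(\log x)^{-A/2})$ claimed in \eqref{eq:hb}; without restricting to the complement of $\mathcal{E}$, no such saving is accessible, since the diagonal of $\sum_\chi\int|P|^2$ alone is of size $\varphi(q)T\log x \asymp x^{5/6-\varepsilon}(\log x)^{1001}$ and the off-diagonal (after averaging in $\chi$) contributes a genuine $\Theta(x)$ term.
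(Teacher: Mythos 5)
Your overall strategy --- Heath--Brown decomposition, use of $\mathcal{E}$ to gain $(\log x)^A$ in the large-values count, and Lemma~\ref{le:huxleylargevalues} --- is the right one, but the specific bilinear execution has two genuine gaps, one structural and one numerical.

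First, you propose to group the Heath--Brown factors into $A\cdot B$ with $N_A\asymp N_B\asymp\sqrt{x}$ and then apply Lemma~\ref{le:fourthmoment} to $B$. But Lemma~\ref{le:fourthmoment} is only proved for a \emph{single} smoothed Dirichlet polynomial with coefficients $1$ or $\log n$; it does not apply to a product of several factors, and in particular not to a product that may contain M\"obius pieces. The paper's proof avoids this by never applying the fourth-moment bound to the ``complementary'' factor $R$: in the one case where Lemma~\ref{le:fourthmoment} is used (when one factor $N_j$ exceeds $(xq/H)^{1/2+\varepsilon}$ and is therefore smooth), the fourth moment of $R$ is instead controlled by the hybrid large sieve applied to $R^2$. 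In all other cases a pigeonholed single-factor $N_j$ carries the large-values estimate while the rest is bounded in $L^2$ by the large sieve.

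Second, and more seriously, the numerology does not close at the balanced point. Writing $R(V)$ for the count of well-spaced tuples with $|A|\sim V$, Lemma~\ref{le:huxleylargevalues} on $A$ of length $N_A\asymp\sqrt{x}$ gives $R(V)\ll(\log x)^{2}\bigl(N_AV^{-2}+N_AqTV^{-6}\bigr)$. After Cauchy--Schwarz and a fourth-moment bound of size $O(qT+N_B^2)$ on $B$, the contribution at scale $V$ is roughly $V^{-1}N_A^{1/2}(qT)^{1/2}x^{1/2}(\log x)^{O(1)}$ once $V^4\ll qT$, which is \emph{increasing} as $V$ decreases. The large sieve for small $V$ gives $\asymp V^2(qT)(\log x)^{O(1)}$. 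Equating these two bounds at $N_A=N_B=\sqrt{x}$, $qT\asymp x^{5/6}$ gives a crossover at $V\approx x^{1/9}$ and a common value $\approx x^{19/18}$, which is larger than $x$, not smaller. (Your own stated bound $\sqrt{x}(qT)^{3/4}\ll x^{9/8}$ is likewise too big.) The paper gets around this by \emph{amplifying the large-values factor}: in the case $N_j\leq(H/q)^{2-\varepsilon/2}$, $\sigma>3/4$, they apply Lemma~\ref{le:huxleylargevalues} not to $N_j$ but to $N_j^g$, with the power $g$ chosen so that $N_j^g$ sits in the window $\bigl[x(q/H)^2x^{\varepsilon/24},\ x^{1-\varepsilon/24}\bigr]$. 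This simultaneously makes the $V^{-6}$ term small because the effective polynomial has length close to $x$ rather than $\sqrt{x}$, and leaves enough room for the $(\log x)^A$ saving from $\sigma\leq 1-A\log\log x/\log x$ to bite. Without this amplification step the balanced bilinear form simply does not achieve the Huxley exponent $1/6$.

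Finally, a smaller omission: the specific Heath--Brown parameters ($k=3$, $z=(H/q)^{2-3\varepsilon/4}$) matter --- they ensure that every M\"obius factor has length $\leq(H/q)^{2-3\varepsilon/4}$, which is what makes the amplification window available and what forces any factor longer than $(H/q)^{2-\varepsilon/2}$ to be smooth. Your sketch leaves these unspecified, which is precisely why the case analysis (which pieces are smooth, which are M\"obius-bounded) cannot be carried out.
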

\begin{proof}

  Applying Heath-Brown's identity with $k = 3$ and $z = (H / q)^{2 - 3 \varepsilon / 4} > (2x)^{1/3}$, so that the coefficients of $(1 - \zeta(s) M(s))^{k}$ are zero on integers $n \in [1, 2x]$,
  allows us to write $P(\tfrac 12 + it, \chi)$ as a linear combination of Dirichlet polynomials of the form
  $$
  \sum_{\substack{\substack{n_1, \ldots, n_i \leq 2 x \\ n_4, \ldots, n_j \leq (H / q)^{2 - 3 \varepsilon / 4}}}} \frac{\mu(n_1) \ldots \mu(n_i) f(n_4)}{(n_1 \ldots n_i \cdot n_4 \ldots n_j)^{1/2 + it}} W \Big ( \frac{n_1 \ldots n_i \cdot n_4 \ldots n_j}{x} \Big )
  $$
  with $1 \leq i\leq 3, 4\leq j \leq 6$ and $f(n) = 1$ or $f(n) = \log n$. Let $V$ be a partition of unity, that is, a smooth compactly supported function with support in $[1, 2]$ and
  such that
  $$1 = \sum_{N \in \mathcal{N}} V \Big ( \frac{n}{N} \Big )$$
  for all integers $n \geq 1$ and with $N$ running along a set of integers $\mathcal{N}$ such that $|\mathcal{N} \cap [-X, X]| \ll \log X$ for any $X > 100$.
  We introduce such a partition of unity on each of the variables $n_1, \ldots, n_j$. Finally, we separate variables in $W(n_1 \ldots n_i \cdot n_4 \ldots n_j / x)$ by opening $W$ as a Mellin transform. As a result, we can bound $|P(\tfrac 12 + it)|$ as a linear combination of at most $(\log x)^{100}$ expressions of the form
  \begin{equation} \label{tobeintegrated}
  \int_{\mathbb{R}} |\widetilde{W}(i u)| \cdot | \prod_{i \in I} N_i(\tfrac 12 + iu + it)| du
\end{equation}
with $I$ a subset of $\{1, \ldots, 6\}$ and
  with
  \begin{align*}
    N_i(s, \chi) & := \sum_{n \sim N_i} \frac{f_i(n) \chi(n)}{n^{s}} \cdot V \Big ( \frac{n}{N_i} \Big ) \ , \ f_{i}(n) = \log n \text{ or } f_{i}(n) = 1 \ , \ 1 \leq i \leq 3 \\
    N_i(s, \chi) & := \sum_{n \sim N_i} \frac{\mu(n) \chi(n)}{n^s} \cdot V \Big ( \frac{n}{N_i} \Big ) \ , \ 4 \leq i \leq 6.
  \end{align*}
  and where $N_i \in \mathcal{N}$ are such that $N_1, N_2, N_3 \leq x$ and $N_4, N_5, N_6 \leq (H / q)^{2 - 3 \varepsilon / 4}$.

  Finally, since $\widehat{W}(u)$ decays rapidly starting with $|u| > (\log x)^{751}$, integrating \eqref{tobeintegrated} over $t$ and applying the Cauchy-Schwarz inequality allows us to remove the integration over $u$, at the price of increasing the integration over $|t| \leq T / 2$ to integration up to $|t| \leq T$ (since $T / 2 > (\log x)^{1000} > (\log x)^{751}$ always).

  These preliminary transformations allow us to bound \eqref{eq:hb} by
  \begin{equation} \label{eq:hb2}
  (\log x)^{1000} \sup_{\substack{N_4, N_5, N_{6} \leq (H / q)^{2 - 3 \varepsilon / 4} \\ N_1,N_2, N_{3} \leq x}} \sum_{\chi \neq \chi_{0}} \int_{\substack{(t, \chi) \not \in \mathcal{E}(A; T; q; x; \varepsilon/1000) \\ |t| \leq T}} |(N_1 \ldots N_6)(\tfrac 12 + it, \chi)|^2 dt,
  \end{equation}
  where $N_i \in \mathcal{N}$ and $N_i(s, \chi)$ are as above.
  We will now obtain a satisfactory bound for each of the possible cases.

\subsubsection{A first reduction} Suppose first that there exists an $N_i$ with $(x q / H)^{\varepsilon / 1000} < N_i < H / q$. In that case write $N_1 \ldots N_{6}(s, \chi)$ as $N_i(s, \chi) R(s, \chi)$ and apply an $L^{\infty}$ bound on $N_i(s, \chi)$ and the large sieve on $R(s,\chi)$. This shows that the contribution of such a term is
  $$
  \ll (\log x)^{2000} \cdot \frac{N_{i}}{(\log x)^{2A}} \cdot \Big ( \frac{x q}{H} \cdot (\log x)^{1000} + \frac{x}{N_i} \Big ) \ll \frac{x}{(\log x)^{2A - 3000}}
  $$
  and therefore, this is acceptable provided that $A$ is sufficiently large.  Thus, it remains to deal with a Dirichlet polynomial of the form $M(s,\chi) (\prod_{i \in I} N_i)(s,\chi)$, where $I$ is a subset of $\{1, \ldots, 6\}$ and $M(s,\chi)$  is of length at most $(x q / H)^{3\varepsilon/500}$ and $N_i$ are the same Dirichlet polynomials as before, but they have length $> H / q$.

  \subsubsection{The large values argument}

  Let $\mathcal{T}_{j, \sigma}$
 be the set of $(t, \chi)\not \in \mathcal{E}$ such that for $t \in \mathcal{T}_{j, \sigma}$,
  $$
  |N_{j}(\tfrac 12 + it, \chi)| \asymp N_j^{\sigma - \tfrac 12} \ , \ |N_{i}(\tfrac 12 + it, \chi)| \ll N_i^{\sigma  - \tfrac 12}
  $$
  for all $i \neq j$.
  Notice that since $(t, \chi) \not \in \mathcal{E}$, we have $\sigma \leq 1 - \frac{A \log\log x}{\log x}$.

  Moreover, by the pigeonhole principle, there exist  $1 \leq j \leq 6$ and $\tfrac 12 \leq \sigma \leq 1 - \frac{A \log\log x}{\log x}$ such that \eqref{eq:hb2} is bounded by
  \begin{equation} \label{eq:hb3}
    \ll (\log x)^{1001} \sum_{\chi \neq \chi_{0} \pmod{q}} \int_{\substack{|t| \leq x ( \log x)^{1000} / H \\ (t, \chi) \in \mathcal{T}_{j, \sigma}}} |M(\tfrac 12 +it, \chi) \prod_{i \in I} N_i(\tfrac 12 + it, \chi)|^2 dt,
  \end{equation}
  where $I$ is a subset of $\{1, \ldots, 6\}$.

  \subsubsection{The case $N_j \leq (H / q)^{2 - \varepsilon / 2}$ and $\sigma \leq \tfrac 34$}

  In this case, we bound the expression \eqref{eq:hb3} by an $L^{\infty}$ bound applied to $N_j(s, \chi)$ and an $L^2$ bound applied to the remaining Dirichlet polynomials. This shows that the contribution of this case is
  $$
  \ll N_j^{1/2} \cdot (\log x)^{2000} \cdot \Big ( \frac{q x}{H} + \frac{x}{N_j} \Big ) \ll (\log x)^{2000} \cdot \Big ( \frac{H}{q} \Big )^{1 - \varepsilon / 4} \cdot \frac{x q}{H} + (\log x)^{2000} \cdot \frac{x}{N_j^{1/2}}
  $$
  and therefore, we see that this is $\ll x^{1 - \varepsilon / 100}$ which is completely sufficient.

  \subsubsection{The case $N_j \leq (H / q)^{2 - \varepsilon / 2}$ and $\sigma > \tfrac 34$}

  In this case, we bound \eqref{eq:hb3} by
  \begin{equation} \label{eq:hb4}
  (\log x)^{2000} \cdot M \cdot (x / M)^{2 \sigma - 1} \cdot |\mathcal{T}_{j, \sigma}|,
  \end{equation}
  where $M \leq (x q / H)^{3 \varepsilon / 500}$ is the length of the Dirichlet polynomial $M(s, \chi)$ and
  where, without loss of generality, we can assume that the set $\mathcal{T}_{j, \sigma}$ is well-spaced (by first bounding the integral over this set by the local maxima). By Lemma \ref{le:huxleylargevalues} applied to the Dirichlet polynomial $N_j(s, \chi)^g$ with $g \in \mathbb{N}$, this is
  \begin{equation} \label{eq:hb101}
  \ll M \cdot (x/ M)^{2 \sigma - 1} \cdot (\log x)^{3002} \cdot \Big ( N_j^{(2 - 2 \sigma)g} + \frac{x q}{H} \cdot N_j^{(4 - 6 \sigma)g} \Big ).
  \end{equation}
  We choose $g$ so that
  $$
  x \Big ( \frac{q}{H} \Big )^{2} x^{\varepsilon / 24} \leq N_j^g \leq x^{1 - \varepsilon / 24} \text{ and } N_j^{g + 1} \geq x^{1 - \varepsilon / 24}.
  $$
  Such a choice is possible since $N_j \leq (H / q)^{2 - \varepsilon / 2} \leq (H / q)^{2} x^{-\varepsilon / 12}$.

  We now find that the first term in \eqref{eq:hb101} is
  $$
  \ll (\log x)^{3002} \cdot M N_{j}^g \Big ( \frac{x}{M N_j^{g}} \Big )^{2 \sigma - 1} \ll \frac{x}{(\log x)^{A / 2}},
  $$
  since the maximum is attained at $\sigma = 1 - \frac{A \log\log x}{\log x}$ and $x / M N_j^g \gg x^{\varepsilon / 1000}$.

  The contribution of the second term is (bounding by the value at $\sigma = \tfrac{3}{4}$ and $\sigma = 1$)
  $$
  \ll (\log x)^{3002} \cdot \Big ( \frac{x^2 q}{H} \cdot N_j^{-2 g} + \frac{x^{3/2} q \sqrt{M}}{H} N_j^{-g/2} \Big ) \ll x^{1 - \varepsilon / 1000},
  $$
  where in the first expression we used that $N_j^{2g} \geq N_j^{g + 1} \geq x^{1 - \varepsilon / 24}$ and in the second the inequality $x^{1 + \varepsilon / 24} \cdot (q / H)^2 \leq N_j^g$ and the fact that $\sqrt{M} x^{-\varepsilon / 48} \leq x^{- \varepsilon / 500}$.

  \subsubsection{The case $N_j > (H / q)^{2 - \varepsilon / 2}$ and $\sigma \leq \tfrac 34$}

  In this case, $N_j(s, \chi)$ must correspond to a polynomial with smooth coefficients (since all the Dirichlet polynomials with non-smooth coefficients are of length $\leq (H / q)^{2 - 3 \varepsilon / 4}$). In particular, we bound \eqref{eq:hb3} in the same way as in \eqref{eq:hb4} but now apply the stronger
  bound
  $$
  |\mathcal{T}_{j, \sigma}| \ll (\log x)^{10} \cdot \frac{x q}{H} \cdot N_j^{2 - 4 \sigma}
  $$
  which is a consequence of Lemma \ref{le:largevaluessmooth}. This gives us the bound
  $$
  \ll (\log x)^{3010} \cdot \frac{x q}{H} \cdot M (x / M)^{2 \sigma - 1} \cdot N_j^{2 - 4 \sigma}.
  $$
  Since $\sigma \leq \tfrac 34$, we get (evaluating the above at $\sigma = \tfrac 12$ and $\sigma = \tfrac 34$)
  $$
  \ll (\log x)^{3010} \cdot \frac{x q}{H} \cdot \Big ( M + M (x / M)^{1/2} \cdot N_j^{-1} \Big ) \ll x^{1 - \varepsilon / 100} + \sqrt{M} x^{3/2} \cdot \Big ( \frac{q}{H} \Big )^{3 - 3 \varepsilon / 4}
  $$
  and since $H / q > x^{1/6 + \varepsilon}$ and $M \leq x^{3 \varepsilon / 500}$, the above is
  $
  \ll x^{1 - \varepsilon / 100} + x^{1 - 2 \varepsilon}.
  $

  \subsubsection{The case $N_j > (x q / H)^{1/2 + \varepsilon}$}

  In this case, $N_j$ must once again correspond to a smooth Dirichlet polynomial. In particular, writing the Dirichlet polynomial $M(s, \chi) \prod_{i \in I} N_i(s, \chi)$ as $N_j(s, \chi) R(s, \chi)$ and applying the Cauchy-Schwarz inequality, we can bound \eqref{eq:hb2} as
  \begin{align*}
  \ll (\log x)^{2000} & \cdot \Big ( \sum_{\chi \neq \chi_{0} \pmod{q}} \int_{|t| \leq x (\log x)^{1000} / H} |N_j(\tfrac 12 + it, \chi)|^4 dt \Big )^{1/2} \\ & \times \Big ( \sum_{\chi \neq \chi_{0} \pmod{q}} \int_{|t| \leq x (\log x)^{1000} / H} |R(\tfrac 12 + it, \chi)|^4 dt \Big ) .
  \end{align*}
  By the large sieve and Lemma \ref{le:fourthmoment}, this is
  $$
  \ll (\log x)^{4000} \cdot \Big ( \frac{x q}{H} \Big )^{1/2} \cdot \Big ( \frac{x q}{H} + \Big ( \frac{x}{N_j} \Big )^2 \Big )^{1/2} \ll x^{1 - \varepsilon / 100}
  $$
 since $N_j > (x q / H)^{1/2 + \varepsilon}$.

  \subsubsection{The case $(H / q)^{2 - \varepsilon / 2} < N_j < (x q / H)^{1/2 + \varepsilon}$ and $\sigma > \tfrac 34$}

  We apply the bound of \eqref{eq:hb4} and then use Lemma \ref{le:huxleylargevalues} applied to $N_j(s, \chi)^2$ to see that \eqref{eq:hb3} is
  $$
  \ll M (x / M)^{2 \sigma - 1} \cdot (\log x)^{3002} \cdot \Big ( N_j^{2 (2 - 2 \sigma)} + \frac{x q}{H} \cdot N_j^{2 (4 - 6 \sigma)} \Big ).
  $$
  The contribution of the first term is
  $$
  \ll M N_j^2 \cdot \Big ( \frac{x}{N_j^2 M} \Big )^{2 \sigma - 1} \cdot (\log x)^{3002} \ll \frac{x}{\log^{A / 2} x}
  $$
  since $M N_j^2 \leq x^{1 - 1 / 12}$ and $\sigma \leq 1 - \frac{A \log\log x}{\log x}$. On the other hand, since $N_j > x^{1/3 - \varepsilon}$, the contribution of the second term is maximized at $\sigma = \tfrac{3}{4}$, and thus is
  $$
  \ll M (x / M)^{1/2} (\log x)^{3002} \cdot \frac{x q}{H} \cdot N_j^{-1} \ll \sqrt{M} x^{3/2} (\log x)^{3002} \cdot \Big ( \frac{q}{H} \Big )^{3 - \varepsilon / 2}
  $$
  and since $H / q > x^{1/6 + \varepsilon}$ and $M \leq (x q / H)^{3 \varepsilon / 500}$, this is
  $
\ll x^{1 - \varepsilon}
$
which is more than enough.

\end{proof}

\subsection{Lemma on character sums}

We will also need a number of results on character sums. For the proof of Theorem \ref{thm:nr1} we will need the following lemma which is a consequence of Poisson summation and the large sieve for additive characters. The proof is a little bit laborious due to our choice of using sharp cut-offs.
\begin{lemma}\label{lem:character}
For any $\chi \neq \chi_0 \pmod{q}$ and $(r, q) = 1$, we have
  \begin{equation} \label{eq:easy}
    \sum_{v = 1}^{r} \Big | \sum_{\substack{a < q \\ a \equiv v \pmod{r}}} \chi(a) \Big | \ll (r q)^{1/2} \cdot d(q) \log q.
  \end{equation}

\end{lemma}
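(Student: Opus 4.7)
The plan is to apply Cauchy--Schwarz in $v$ to reduce the left-hand side to an $L^2$-type quantity, then use Parseval on $\mathbb{Z}/r\mathbb{Z}$ to express that $L^2$ sum in terms of a single additively twisted character sum $T(h)=\sum_{a<q}\chi(a)e(ha/r)$, and finally bound $\max_h|T(h)|$ by the standard completion method applied to the primitive character inducing $\chi$.

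More concretely, denoting $S_v:=\sum_{a<q,\,a\equiv v\,(\mathrm{mod}\,r)}\chi(a)$, I would first use $\sum_v|S_v|\le\sqrt r\bigl(\sum_v|S_v|^2\bigr)^{1/2}$. Detecting the congruence $a\equiv v\pmod r$ through additive characters on $\mathbb{Z}/r\mathbb{Z}$ gives
$$
S_v=\frac{1}{r}\sum_{h=0}^{r-1}e(-hv/r)\,T(h),\qquad T(h):=\sum_{a=0}^{q-1}\chi(a)\,e(ha/r),
$$
and Parseval (via the orthogonality relation $\sum_{v\pmod r}e(-(h-h')v/r)=r\mathbf{1}_{h=h'}$) yields $\sum_v|S_v|^2=\frac{1}{r}\sum_h|T(h)|^2\le\max_h|T(h)|^2$. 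Hence it suffices to prove $\max_h|T(h)|\ll d(q)\sqrt q\log q$, which combined with the preceding step delivers the claimed bound $\sqrt{rq}\cdot d(q)\log q$.

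For the bound on $|T(h)|$, I would write $\chi=\chi^{\ast}\chi_{0,q}$ with $\chi^{\ast}$ primitive of conductor $q^{\ast}\mid q$, apply Möbius to the coprimality $\mathbf{1}_{(a,q)=1}$, and obtain
$$
T(h)=\sum_{d\mid q}\mu(d)\chi^{\ast}(d)\sum_{1\le m\le(q-1)/d}\chi^{\ast}(m)\,e(hdm/r).
$$
Each inner sum can be opened by the Gauss-sum expansion $\chi^{\ast}(m)=\frac{1}{\tau(\bar\chi^{\ast})}\sum_{(b,q^{\ast})=1}\bar\chi^{\ast}(b)\,e(bm/q^{\ast})$; the resulting geometric sums in $m$ are bounded by $\min\!\bigl(M,1/(2\|\alpha+b/q^{\ast}\|)\bigr)$, which itself is majorized by $1/(2\|\alpha+b/q^{\ast}\|)$, and summing over the $1/q^{\ast}$-spaced values $\alpha+b/q^{\ast}\pmod 1$ gives $\sum_{b=1}^{q^{\ast}-1}1/\|\alpha+b/q^{\ast}\|\ll q^{\ast}\log q^{\ast}$. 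Dividing by $|\tau(\bar\chi^{\ast})|=\sqrt{q^{\ast}}$ produces the $M$- and $\alpha$-uniform bound $\bigl|\sum_{m\le M}\chi^{\ast}(m)e(m\alpha)\bigr|\ll\sqrt{q^{\ast}}\log q^{\ast}$. Summing over the at most $2^{\omega(q)}\le d(q)$ squarefree $d\mid q$ then yields $|T(h)|\ll d(q)\sqrt{q^{\ast}}\log q\le d(q)\sqrt q\log q$.

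The main (mild) subtlety is that the twisted Polya estimate must be uniform in the length $M=\lfloor(q-1)/d\rfloor$, which can substantially exceed $q^{\ast}$ after Möbius expansion — the trivial bound $\min(M,\cdot)\le M$ would be useless here. The key observation is that replacing the minimum by $1/(2\|\alpha+b/q^{\ast}\|)$ throws away the $M$-dependence while keeping a harmonic-type sum of size $q^{\ast}\log q^{\ast}$; this is precisely what makes the bound work uniformly and, after summing over $d\mid q$, produces the factor $d(q)$ in the final inequality.
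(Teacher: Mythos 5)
Your Cauchy--Schwarz + Parseval reduction to the twisted sum $T(h)=\sum_{a<q}\chi(a)e(ha/r)$ is fine, but the subsequent step --- passing to $\max_h|T(h)|$ and then claiming the uniform bound $\max_h|T(h)|\ll d(q)\sqrt q\log q$ --- does not hold, and the gap is precisely in the twisted P\'olya--Vinogradov estimate you invoke. The inequality
$$\Big|\sum_{m\le M}\chi^{\ast}(m)e(m\alpha)\Big|\ll\sqrt{q^{\ast}}\log q^{\ast}$$
is \emph{not} uniform in both $M$ and $\alpha$. Take $\alpha=-a/q^{\ast}$ with $(a,q^{\ast})=1$ and $M=kq^{\ast}$: each complete period contributes $\bar\chi^{\ast}(-a)\tau(\chi^{\ast})$ of modulus $\sqrt{q^{\ast}}$, so the sum has modulus $\asymp k\sqrt{q^{\ast}}=M/\sqrt{q^{\ast}}$, which is unbounded in $M$. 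The same defect shows up inside your completion argument: the claim ``replacing the minimum by $1/(2\|\alpha+b/q^{\ast}\|)$ throws away the $M$-dependence while keeping a harmonic sum of size $q^{\ast}\log q^{\ast}$'' is false, because for the arguments $\alpha=hd/r$ the quantity $\|\alpha+b/q^{\ast}\|$ can be as small as $1/(rq^{\ast})$ for a $b$ with $(b,q^{\ast})=1$ (only the resonant $b\equiv0\bmod q^{\ast}$ is killed by $\bar\chi^{\ast}$), and then that single term of the harmonic sum already contributes $\asymp rq^{\ast}$, not $q^{\ast}\log q^{\ast}$. After the Gauss-sum normalization $1/\sqrt{q^{\ast}}$ this gives a contribution $\asymp r\sqrt{q^{\ast}}$ to $|T(h)|$, so $\max_h|T(h)|$ can exceed $\sqrt q\,d(q)\log q$ by roughly a factor $r/\sqrt{q/q^{\ast}}$, which is not $\ll 1$ in general.

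In short: discarding the $\min(M,\cdot)$ truncation is exactly where the argument breaks, and the $\max_h$ reduction is too lossy to absorb the resulting $M$- and $r$-dependence. The paper sidesteps both issues: it applies the triangle inequality over $d\mid f$ \emph{before} Cauchy--Schwarz, keeps the full $\ell$-average $\tfrac1r\sum_\ell|\cdot|^2$ from orthogonality rather than taking a sup over $\ell$, and then, inside the completion method, retains the truncation $F(x)\ll\min(1,\,der/(qx))$ and splits into the three cases $de/q\ge1$, $de/q<1\le der/q$, and $der/q\le1$ to get an exact $L^2$-in-$\ell$ bound. That careful $L^2$ accounting over $\ell$ (instead of your pointwise bound over $h$) is what makes the final estimate come out to $\sqrt{rq}\,d(q)\log q$.
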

\begin{remark}
Notice that this is essentially optimal as the best error term that we expect for the sum over $a$ is $\sqrt{q / r}$.
\end{remark}
\begin{proof}
  Let $\chi^{\star} \mod e$ be a primitive character inducing $\chi$, so that $q = e f$. Therefore,
  \begin{align*}
    \sum_{\substack{a < q \\ a \equiv v \pmod{r}}} \chi(a) = \sum_{\substack{a < q \\ (a, f) = 1 \\ a \equiv v \pmod{r}}} \chi^{\star}(a)
    = \sum_{d | f} \mu(d) \chi^{\star}(d) \sum_{\substack{a < q / d \\ d a \equiv v \pmod{r}}} \chi^{\star}(a).
  \end{align*}
  Therefore, \eqref{eq:easy} is bounded by
  $$
  \leq \sum_{d | f} \sum_{v = 1}^{r} \Big | \sum_{\substack{a < q / d \\ d a \equiv v \pmod{r}}}. \chi^{\star}(a) \Big |.
  $$
  Applying the Cauchy-Schwarz inequality, we bound this by
  $$
  r^{1/2} \sum_{d | f}  \Big ( \sum_{v = 1}^{r} \Big |  \sum_{\substack{a < q / d \\ d a \equiv v \pmod{r}}} \chi^{\star}(a) \Big |^2 \Big )^{1/2}.
  $$
  We now express the condition $d a \equiv v \pmod{r}$ using additive characters, so that the second inner sum is equal to
  \begin{equation}\label{eq:orthogonality}
  \sum_{v = 1}^{r} \Big | \frac{1}{r} \sum_{0 \leq \ell < r} e \Big ( - \frac{\ell v}{r} \Big ) \sum_{a < q / d} \chi^{\star}(a) e \Big ( \frac{d \ell a}{r} \Big ) \Big |^2 = \frac{1}{r} \sum_{0 \leq \ell < r} \Big | \sum_{a < q / d} \chi^{\star}(a) e \Big ( \frac{d \ell a }{r} \Big ) \Big |^2.
  \end{equation}
  We now use the completion method to write
  \begin{equation} \label{eq:completion}
  \sum_{a < q / d} \chi^{\star}(a) e \Big ( \frac{d \ell a}{r} \Big ) = \frac{q}{d e} \cdot \frac{1}{r} \sum_{x < e r} \Big ( \sum_{y < e r} \chi^{\star}(y) \exp \Big ( \frac{2\pi i y x}{e r} \Big ) e \Big ( \frac{d \ell y}{r} \Big ) \Big ) \cdot \frac{1}{q/d} \overline{\sum_{1 \leq y < q / d} \exp \Big ( \frac{2\pi i x y}{e r} \Big ) }.
  \end{equation}
  Writing $y = a e + b r$ with $a \mod{r}$ and $b \mod{e}$ (recall that $(r,e)=1$ since $e | q$ and $(r,q) = 1$), we find that the sum over $y < e r$, divided by $r$, is equal to
  \begin{align*}
  \frac{1}{r} \sum_{\substack{b \mod e}} \chi^{\star}(b r) \exp \Big ( & \frac{2 \pi i x b}{e} \Big ) \sum_{a \mod {r}} \exp \Big ( \frac{2\pi i d \ell a e}{r} + \frac{2 \pi i x a}{r} \Big ) \\ & =
  \mathbf{1}_{x \equiv - d \ell e \mod{r}} \cdot \chi^{\star}(r) \sum_{b \mod e} \chi^{\star}(b) \exp \Big ( \frac{2 \pi i x b}{e} \Big ) .
  \end{align*}
  Therefore, \eqref{eq:completion} can be re-written as
  $$
\frac{q}{de} \sum_{\substack{1 \leq x < e r \\ x \equiv - d \ell e \mod{r}}} \chi^{\star}(r) G_{\chi^{\star}}(x) F(x),
 $$
 where
  $$
  F(x) := \frac{1}{q/d} \overline{\sum_{1 \leq y < q / d} \exp \Big ( \frac{2 \pi i x y}{e r} \Big )} \ll \min \Big (1, \frac{d e r / q}{x} \Big ) \text{ and }
  G_{\chi^{\star}}(x) := \sum_{b \mod e} \chi^{\star}(b) \exp \Big ( \frac{2 \pi i b x}{e} \Big ).
  $$
  By \cite[Lemma 5.4]{Exceptional}, we have $G_{\chi^{\star}}(x) \ll \sqrt{e}$.
  Therefore, the above sum is
  $$
  \ll \sqrt{e} \cdot \frac{q}{d e} \sum_{\substack{1 \leq x < e r \\ x \equiv - d \ell e \mod{r}}} \min \Big ( 1, \frac{d e r / q}{x} \Big ).
  $$
  Thus, we get a total bound for \eqref{eq:easy} of
  $$
  \sum_{d | f} r^{1/2} \cdot \Big ( \frac{e}{r} \sum_{\ell = 1}^{r} \Big | \frac{q}{d e} \sum_{\substack{1 \leq x < e r \\ x \equiv - d \ell e \mod{r}}} \min \Big ( 1, \frac{d e r / q}{x} \Big ) \Big |^2 \Big )^{1/2}.
  $$
  Since $(d e , r) = 1$, we can re-write the above as
  \begin{align} \label{eq:blabla}
  \sum_{d | f} r^{1/2} \cdot \Big ( \frac{e}{r} \sum_{y = 1}^{r} \Big | \frac{q}{d e} \sum_{\substack{1 \leq x < e r \\ x \equiv y \mod{r}}} \min \Big ( 1, \frac{d e r / q}{x} \Big ) \Big |^2 \Big )^{1/2}.
  \end{align}
  If $d e / q \geq 1$ then
  $$
  \sum_{\substack{1 \leq x < e r \\ x \equiv y \mod{r}}} \min \Big ( 1 , \frac{d e r / q}{x} \Big ) \ll \frac{d e}{q} \cdot \log q
  $$
  and so \eqref{eq:blabla}
  is $\ll d(q) (r e)^{1/2} \log q$. Consider now the case of $d e / q < 1$  and $d e r / q \geq 1$.
  Splitting the sum over $x$ into sub-sums of length $d e r / q$, we can bound \eqref{eq:blabla} by
  $$
 \sum_{d | f}  r^{1/2} \cdot \Big ( \frac{q^2}{d^2 e r} \sum_{y = 1}^{r} \Big | \sum_{0 \leq \ell < q^2} \frac{1}{\ell + 1} \sum_{\substack{\ell d e r / q \leq x \leq (\ell + 1) d e r / q \\ x \equiv y \mod{r}}} 1  \Big |^2 \Big )^{1/2}.
  $$
  By Cauchy's inequality, this is less than
  $$
  \sum_{d | f} r^{1/2} \cdot \Big ( \frac{q^2}{d^2 e r} \cdot (\log q) \sum_{1 \leq \ell \leq q^2} \frac{1}{\ell} \sum_{1 \leq y \leq r} \Big | \sum_{\substack{\ell d e r / q \leq x \leq (\ell + 1) d e r / q  \\ x \equiv y \mod{r}}} 1 \Big |^2 \Big )^{1/2}.
  $$
  The sum over $x$ is now bounded by $\ll 1$ (since $d e / q \leq 1$) and the sum over $y$ is constrained to an interval containing at most $\frac{d e r}{q} \geq 1$ terms (since otherwise the sum over $x$ is empty). This gives a final bound of
  $$
  \ll \sum_{d | f} r^{1/2} \cdot \Big ( \frac{q^2}{d^2 e r} \cdot (\log q)^2 \cdot \frac{d e r}{q} \Big )^{1/2} \ll \sum_{d | f} \Big ( \frac{r q}{d} \Big )^{1/2} \cdot \log q
  $$
  which is sufficient.
  Finally, it remains to handle the case when $d e r / q \leq 1$. In this case, we can bound \eqref{eq:blabla} by
  \begin{align*}
  \sum_{d | f} r^{1/2} \cdot \Big ( \frac{e}{r} \sum_{y = 1}^{r} \Big | \sum_{\substack{0 \leq j \leq e}} \frac{1}{j + y / r} \Big |^2 \Big )^{1/2}
  & \ll \sum_{d | f} r^{1/2} \cdot \Big ( \frac{e}{r} \sum_{y = 1}^{r} \frac{r^2}{y^2} + e \log^2 q \Big )^{1/2} \\ & \ll \sum_{d | f} \Big ( r \sqrt{e} + ( e r)^{1/2} \log q \Big )
  \end{align*}
  and it remains to notice that since $d e r / q \leq 1$, we have $\sqrt{r} \leq \sqrt{q} / \sqrt{d e}$ and hence, $r \sqrt{e} \leq \sqrt{r} \sqrt{q / d}$ which is sufficient. The claim is therefore verified in all cases.

\end{proof}

  For the proof of Theorem \ref{thm:nr2}, we will also need the following estimate for character sums. The proof depends on van der Corput's inequality and the Weil bound.
    \begin{lemma} \label{le:character2}
Let $\varepsilon, \delta > 0$ and $q \geq 1$.   We have for $H' \ll q^{1/2 - \delta}$ and $\chi$ a non-principal character $\pmod{q}$,
$$ \sum_{z < q} \sup_{\substack{\beta \in \mathbb{R}}} \Big | \sum_{\substack{a < q \\ a \in [z, z + H']}} \chi(a) e(a \beta) \Big | \ll_{\varepsilon} q^{1 - \delta/4 + \varepsilon} H' + q H'^{3/4}.
     $$
  \end{lemma}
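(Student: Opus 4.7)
The plan is to use van der Corput's inequality to remove the dependence on $\beta$, then follow up with Cauchy--Schwarz in the outer sum over $z$, and finally reduce to a fourth-moment character sum that is controlled by Weil's theorem. The key observation is that for $a_n = \chi(n)\,e(n\beta)$, the van der Corput cross terms satisfy $a_{n+k}\,\overline{a_n} = e(k\beta)\,\chi(n+k)\,\overline{\chi(n)}$, so the phase $e(k\beta)$ vanishes on passage to absolute values, making the right-hand side independent of $\beta$.

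Concretely, I would first apply van der Corput with parameter $1 \leq K \leq H'$ to each inner sum, obtaining
$$
\Big|\sum_{\substack{a < q \\ a \in [z, z+H']}} \chi(a)\,e(a\beta)\Big|^2 \ll \frac{H'+K}{K}\sum_{|k|<K}|T_k(z)|,
\qquad T_k(z) := \sum_{n \in I_{z,k}} \chi(n+k)\,\overline{\chi(n)},
$$
where $I_{z,k}$ is a subinterval of $[z, z+H']$ of length $\leq H'$. The contribution of $k = 0$ summed in $z$ is trivially $\leq qH'$. For $k \neq 0$ I would apply Cauchy--Schwarz twice --- first on the outer sum over $z$ (against the uniform bound in $\beta$) and then again on $\sum_z |T_k(z)|$ --- and expand the square: setting $h = n'-n$, the inner sum in $z$ produces a factor of at most $H'$ (counting the admissible $z$'s) and leaves
$$
\sum_{z<q}|T_k(z)|^2 \ll H'\sum_{|h|<H'}\Big|\sum_{n<q}\chi(n)\,\chi(n+h+k)\,\overline{\chi(n+k)\,\chi(n+h)}\Big|.
$$

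The diagonal $h=0$ contributes at most $H'q$, and for $h \neq 0$ the inner sum is a character sum of $\chi$ composed with the rational function $R(n) = n(n+h+k)/((n+k)(n+h))$. Generically the four linear factors are distinct, and in the only degenerate configurations $h = \pm k$ the function $R$ reduces to $n(n+2k)/(n+k)^2$ or $n^2/((n+k)(n-k))$. In each case the linear-factor exponent pattern has $\gcd$ equal to $1$, so $R$ is not a $d$-th power for any $d > 1$ and Weil's bound (together with the usual extension to composite moduli via multiplicativity) gives $\ll_\varepsilon q^{1/2+\varepsilon}$ uniformly in $h,k$. Combining these inputs, one gets $\sum_{z<q}|T_k(z)|^2 \ll_\varepsilon qH' + q^{1/2+\varepsilon}H'^2$, and hence by Cauchy--Schwarz $\sum_{z<q}|T_k(z)| \ll_\varepsilon q\sqrt{H'} + q^{3/4+\varepsilon}H'$. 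Choosing $K = H'$ and assembling yields $\Sigma \ll_\varepsilon qH'^{3/4} + q^{7/8+\varepsilon}H'$; since $H' \ll q^{1/2-\delta}$ implies $q^{7/8} \leq q^{1-\delta/4}$ whenever $\delta \leq 1/2$ (the case $\delta > 1/2$ forces $H' \ll 1$ and is trivial), the stated bound follows. The main technical obstacle is verifying the Weil hypothesis in the degenerate cases $h = \pm k$ and adapting Weil's bound to arbitrary (not necessarily prime) moduli $q$; both are standard, but the exponent-pattern analysis for $R$ must be done carefully.
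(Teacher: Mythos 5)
Your proof has the same overall architecture as the paper's (van der Corput to remove the phase $e(a\beta)$, Cauchy--Schwarz to square out, and a Weil-type bound for a four-factor character sum), but there is a genuine gap at the Weil step that your framing obscures.

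The claim that ``Weil's bound (together with the usual extension to composite moduli via multiplicativity) gives $\ll_\varepsilon q^{1/2+\varepsilon}$ uniformly in $h,k$'' is false. For composite $q$, the $q$-analogue of Weil for a product $\chi(n)\chi(n+h+k)\overline{\chi}(n+k)\overline{\chi}(n+h)$ (this is Lemma~7 of Burgess, which the paper cites) carries a gcd factor: the correct bound is of the shape $8^{\omega(q)}\,q^{1/2}\,\bigl(q,\,hk(h-k)\bigr)$, and the gcd can be as large as $q$ for bad choices of $h,k$ (e.g.\ $q$ a prime power and $h,k$ sharing a large common factor with $q$). Verifying that the exponent pattern $(1,1,-1,-1)$ has gcd $1$, so that $R$ is not a perfect power, is correct but does not kill the gcd factor; that factor comes from the CRT/prime-power reduction, not from the Weil hypothesis. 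As a result, a pointwise $q^{1/2+\varepsilon}$ bound, and hence the clean estimate $\sum_z |T_k(z)|^2 \ll qH' + q^{1/2+\varepsilon}H'^2$ and its sequel $\sum_z|T_k(z)| \ll q\sqrt{H'}+q^{3/4+\varepsilon}H'$, cannot be justified uniformly in $k$.

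What is missing is the averaging step. Once you insert the gcd factor, you must handle $\sum_{k}\bigl(\sum_{h}(q, hk(h-k))\bigr)^{1/2}$, which requires one further Cauchy--Schwarz in $k$ (bringing you to the double sum $\sum_{k,h}(q,hk(h-k))$) followed by a divisor-counting argument: each integer $n$ in the relevant range has $\ll q^\varepsilon$ representations as $hk(h-k)$, so $\sum_{k,h}(q,hk(h-k)) \ll q^\varepsilon \sum_{n \ll K H'^2}(q,n) \ll q^{2\varepsilon}KH'^2$. This is exactly the argument the paper makes (in the variables $h,w$ coming from its own van der Corput/Cauchy--Schwarz split), and it is the crux of the proof for composite $q$, not a ``standard'' adaptation. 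Once you carry this out, your ordering of Cauchy--Schwarz applications does in fact yield the same final bound $qH'^{3/4}+q^{1-\delta/4+\varepsilon}H'$ (the intermediate $q^{7/8+\varepsilon}H'$ you wrote down is an artifact of the false uniform Weil bound; the honest calculation recovers a term $q^{7/8+\varepsilon}H'^{5/4}$, which one then absorbs into $q^{1-\delta/4+\varepsilon}H'$ using $H'\ll q^{1/2-\delta}$). So the approach is sound and essentially the paper's, but the proof as written does not close because of the missing gcd average.
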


  \begin{proof}
    By the Cauchy-Schwarz inequality,  it suffices to bound
    $$
    q^{1/2} \cdot \Big ( \sum_{z < q} \sup_{\beta} \Big | \sum_{\substack{a < q \\ a \in [z, z + H']}} \chi(a) e(a \beta) \Big |^2 \Big )^{1/2}.
    $$
    By van der Corput's inequality \cite[Lemma 1]{MontgomeryLectures},
    $$
    \Big | \sum_{\substack{a < q \\ a \in [z, z + H']}} \chi(a) e(a \beta) \Big |^2 \ll H'^{3/2} + \frac{H'}{\sqrt{H'}} \sum_{\substack{0 < h < \sqrt{H'}}} \Big | \sum_{\substack{a < q \\ a \in [z, z + H']}} \chi(a) \overline{\chi(a + h)} \Big |.
    $$
    The first term gives a total contribution of $\ll q^{1/2} H'^{3/4}$. The second term gives a contribution of
    $$
    \ll q^{1/2} H'^{1/2} \Big ( \frac{1}{\sqrt{H'}} \sum_{0 < h < \sqrt{H'}} \sum_{z < q} \Big | \sum_{\substack{a < q \\ a \in [z, z + H']}} \chi(a) \overline{\chi}(a + h) \Big | \Big )^{1/2}.
    $$
    By another application of the Cauchy-Schwarz inequality, we get
    \begin{equation} \label{eq:beforebeforeweil}
    q^{3/4} H'^{1/2} \Big ( \frac{1}{\sqrt{H'}} \sum_{0 < h < \sqrt{H'}} \sum_{z < q} \Big | \sum_{\substack{a < q \\ a \in [z, z + H']}} \chi(a) \overline{\chi}(a + h) \Big |^2 \Big )^{1/4}.
    \end{equation}
    Expanding, we see that
    \begin{equation} \label{eq:beforeweil}
    \sum_{z < q} \Big | \sum_{\substack{a < q \\ a \in [z, z + H']}} \chi(a) \overline{\chi}(a + h) \Big |^2
     \ll q H' + \sum_{\substack{0 < a \neq a' < H' \\ z < q }} \chi(z + a) \overline{\chi}(z + a') \chi(z + a + h) \overline{\chi}(z + a' + h),
   \end{equation}
   where we alter the terms with $z > q - H'$ giving rise to an additional error $\ll H'^3 \ll q H'$.
     By \cite[Lemma 7]{Burgess},
     $$
     \sum_{z < q} \chi(z + a) \overline{\chi}(z + a') \chi(z + a + h) \overline{\chi}(z + a' + h) \ll 8^{\omega(q)} q^{1/2} (q, h (a - a') (a - a' + h)).
     $$
     Therefore, \eqref{eq:beforeweil} is
     $$
     \ll q H' + \sqrt{q} 8^{\omega(q)} \cdot H' \sum_{0 < w < H'} (q, h w (w + h)).
     $$
     We notice that any $n < H'^{5/2}$ has at most $q^{\varepsilon}$ representations as $h w (w + h)$. Thus,
     $$\frac{1}{\sqrt{H'}} \sum_{0 < h < \sqrt{H'}} \sum_{0 < w < H'} (q, h w (w + h)) \leq \frac{q^{\varepsilon}}{\sqrt{H'}} \sum_{0 < n < H'^{3/2}} (q, n). $$
     Splitting according to the possible values $d = (q, n)$, we find that the above is
     $$
     \ll \frac{q^{\varepsilon}}{\sqrt{H'}} \sum_{\substack{d | q \\ d \leq H'^{5/2}}} d \sum_{\substack{0 \leq n \leq H'^{5/2} \\ d | n}} 1 \ll \frac{q^{\varepsilon}}{\sqrt{H'}} \sum_{\substack{d | q \\ d \leq H'^{5/2}}} d \Big ( \frac{H'^{5/2}}{d} + 1 \Big ) \ll H'^2 q^{2 \varepsilon} + \min \Big ( H'^2, \frac{q}{\sqrt{H'}} \Big ) q^{2 \varepsilon}.
     $$
     Therefore,
     $$
     \frac{1}{\sqrt{H'}} \sum_{0 < h < \sqrt{H'}} \sum_{z < q} \Big | \sum_{\substack{a < q \\ a \in [z, z + H']}} \chi(a) \overline{\chi}(a + h) \Big |^2 \ll q H' + q^{1/2 + 3 \varepsilon}  H'^3 \ll q H' + q^{1 + 3 \varepsilon - \delta} H'^2.
     $$
     This gives rise to the final bound $\ll q^{1 + \varepsilon - \delta / 4} H' + q H'^{3/4}$
     which is sufficient.
   \end{proof}

  \subsection{Proof of Theorem \ref{thm:nr1}}

  We are now ready to prove Theorem \ref{thm:nr1}. This depends on a combination of Lemma \ref{le:huxley} and \ref{lem:character}.
  Notice first that in the range $q \leq (\log x)^{A}$ for any fixed $A > 0$, Theorem \ref{thm:nr1} is an immediate consequence of the following variant of Huxley's theorem due to Koukoulopoulos.
  \begin{theorem} \label{thm:koukoulopoulos}
  Let $A > 0$ and $\varepsilon > 0$ be given. Let $x \geq H > x^{1/6 + \varepsilon}$. Then, uniformly in $q \leq (\log x)^{A}$ and $(a,q) =1 $,
$$
\sum_{y < x} \Big | \sum_{\substack{p \in [y, y + H]  \\ p \equiv a \pmod{q}}} \log p - \frac{H}{\varphi(q)} \Big | \ll \frac{H x}{(\log x)^{A}}.
$$
  \end{theorem}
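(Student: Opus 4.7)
My plan is to reduce the theorem to a mean-square estimate for character sums in short intervals, and then to deduce that estimate from Lemma~\ref{le:huxley}. First, opening the progression condition via orthogonality of Dirichlet characters $\bmod\ q$,
$$\sum_{\substack{p\in[y,y+H]\\ p\equiv a\!\!\!\pmod q}}\!\!\!\!\log p-\frac{H}{\varphi(q)}=\frac{1}{\varphi(q)}\Big(\sum_{\substack{p\in[y,y+H]\\(p,q)=1}}\!\!\!\!\log p-H\Big)+\frac{1}{\varphi(q)}\sum_{\chi\ne\chi_0\pmod q}\overline{\chi}(a)\,\psi_H(y;\chi),$$
where $\psi_H(y;\chi):=\sum_{p\in[y,y+H]}\chi(p)\log p$. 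The contribution of the principal character is controlled on average by Huxley's original theorem in short intervals (the $q=1$, $\chi=\chi_0$ special case of the machinery used in Lemma~\ref{le:huxley}), with an error of size $Hx/(\log x)^{100A}$. Thus it suffices to show that for each non-principal $\chi\pmod q$,
$$\sum_{y<x}|\psi_H(y;\chi)|\ll \frac{Hx}{(\log x)^{10A}\varphi(q)}\cdot\varphi(q)=\frac{Hx}{(\log x)^{10A}}.$$

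By the Cauchy--Schwarz inequality the task reduces to the $L^2$ estimate $\sum_{y<x}|\psi_H(y;\chi)|^2\ll H^2x/(\log x)^{20A}$. I express $\psi_H(y;\chi)$ through Perron's formula as a contour integral against the Dirichlet polynomial $P(s;\chi)=\sum_{p}\chi(p)(\log p)p^{-s}W(p/x)$, after a standard smoothing that costs $O(H/(\log x)^{100A})$ per $y$. Invoking Lemma~\ref{le:technical} with $\mathcal{E}$ to be chosen, the $L^2$ mean is controlled by
$$\frac{H^2\log x}{\varphi(q)}\sum_{\chi\ne\chi_0}\int_{\substack{|t|\leq (x/H)(\log x)^{1000}\\(t,\chi)\notin\mathcal{E}}}|P(\tfrac12+it,\chi)|^2\,dt+\frac{H^2x}{\varphi(q)(\log x)^{500}}.$$
I take $\mathcal{E}=\mathcal{E}(B;T;q;x;\varepsilon/1000)$ as in Corollary~\ref{cor:exceptional} with $B$ a large multiple of $A$, so that Lemma~\ref{le:huxley} gives the generic contribution $\ll x/(\log x)^{B/2}$, which is acceptable.

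The remaining and main obstacle is to show that, because $q\leq(\log x)^A$ is very small, the exceptional set $\mathcal{E}$ is in fact empty (so nothing is lost by restricting away from it), and thus the bound for the generic integral covers the full mean. For this I appeal to the Korobov--Vinogradov zero-free region combined with Siegel's theorem: every non-principal $L(s,\chi)$ with $\chi\pmod q$, $q\leq(\log x)^A$, has no zero inside a region $\sigma\geq 1-c(\log(|t|+3))^{-2/3}(\log\log(|t|+3))^{-1/3}$, with the possible exceptional Siegel zero ruled out ineffectively by $q\leq(\log x)^A$. Standard contour shifting then yields, for any $(\log qT)^{\varepsilon}\leq M\leq x$, any $|t|\leq T$, and $f\in\{1,\mu,\log\}$,
$$\Big|\sum_{n\leq M}f(n)\chi(n)n^{-1/2-it}\Big|\ll \sqrt{M}\exp(-c(\log x)^{3/5-o(1)}),$$
which is $\ll \sqrt{M}/(\log x)^B$ for every fixed $B$, so $(t,\chi)\notin\mathcal{E}$. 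Hence the generic estimate from Lemma~\ref{le:huxley} suffices with $B/2>20A$; collecting the bounds and summing over the $\varphi(q)\leq(\log x)^A$ characters completes the proof. The ineffective nature of the constant arising from Siegel's theorem is irrelevant since the conclusion already implicitly allows ineffective constants.
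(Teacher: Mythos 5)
Your reduction to the estimate
$$\sum_{\chi\neq\chi_0\pmod q}\sum_{y<x}\bigl|\psi_H(y;\chi)\bigr|\ll \varphi(q)\,\frac{Hx}{(\log x)^{A}}$$
via orthogonality and a Siegel--Walfisz treatment of the principal character is sound, and expressing $\psi_H(y;\chi)$ through Perron and Lemma~\ref{le:technical} is the right starting point. The genuine gap is the claim that for $q\leq(\log x)^A$ the exceptional set $\mathcal{E}$ of Corollary~\ref{cor:exceptional} is \emph{empty}, justified by ``Korobov--Vinogradov plus Siegel plus standard contour shifting.'' The set $\mathcal{E}$ is defined through a \emph{uniform in $t$} pointwise bound on Dirichlet polynomials at $\Re s=\tfrac12$, namely
$\bigl|\sum_{n\leq M}f(n)\chi(n)n^{-1/2-it}\bigr|\leq \sqrt{M}(\log x)^{-A}$
for all $(qT)^{\varepsilon}\leq M\leq x$, all $|t|\leq T$, and $f\in\{1,\mu,\log\}$. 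The Korobov--Vinogradov region is a statement about the zeros of $L(s,\chi)$ near $\Re s=1$; contour shifting from it gives bounds on the \emph{full} partial sums $\sum_{n\leq u}\mu(n)\chi(n)n^{-it}$ after which Abel summation brings you to the half-line, and this does handle the $f=\mu$ (and, with more work, $f=\log$) cases. But for $f=1$ the Dirichlet series is $L(s,\chi)$ itself, which is entire, so no zero-free region is relevant at all: the quantity $\sum_n\chi(n)n^{-1/2-it}V(n/M)$ is, via Mellin, essentially controlled by $\sup_{|u|\ll(\log x)^{O(1)}}|L(\tfrac12+i(t+u),\chi)|$, and requiring this to be $\ll \sqrt M/(\log x)^B$ uniformly over $|t|\leq T$ and all $M\geq x^{\varepsilon/1000}$ is a Lindel\"of-strength statement. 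Nothing in the Korobov--Vinogradov machinery gives you this; subconvexity (e.g.\ $|L(\tfrac12+it,\chi)|\ll(q|t|)^{1/6+o(1)}$) falls far short once $M$ is near the bottom of its range and $|t|$ is near $T$. So $\mathcal{E}$ cannot be argued to be empty.

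This is not merely a cosmetic issue: if you concede that $\mathcal{E}$ is nonempty (the paper only shows it has measure $\ll(\log x)^{O(1)}$), the trivial bound $|P(\tfrac12+it,\chi)|\ll\sqrt x$ together with $\sum_y|(y+H)^{1/2+it}-y^{1/2+it}|/|1/2+it|\ll H\sqrt x$ gives an exceptional contribution of order $(\log x)^{O(1)}Hx$, which overwhelms the target $Hx/(\log x)^A$. In the paper's proof of Theorem~\ref{thm:nr1} this contribution is saved by the character-sum cancellation of Lemma~\ref{lem:character}, which produces a factor $\sqrt{r/q}$; but in the setting of Theorem~\ref{thm:koukoulopoulos} there is no auxiliary modulus $r$ and the modulus $q$ is tiny, so that device is unavailable. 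This is precisely why the paper does not reprove the statement within its exceptional-set framework and instead quotes Koukoulopoulos's theorem, which is proved directly from zero-density estimates and the explicit formula (a route that never needs pointwise control of Dirichlet polynomials on the critical line, only a count of zeros near $\Re s=1$, which is very favorable when $q$ is small). To repair your argument you would either have to replace $\mathcal{E}$ by a bespoke exceptional set tied to the zeros of the $L(s,\chi)$'s and run a zero-density argument, or simply cite the external result as the paper does.
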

\begin{proof}
  This follows by taking $Q = (\log x)^{A}$ in \cite[Theorem 1.2]{Koukoulopoulos} and dropping all but one term.
\end{proof}
  Therefore, it suffices to establish the variant stated below.
  \begin{theorem}
    Let $\varepsilon, \xi \in (0, \tfrac{1}{1000})$. Let $C(\varepsilon) = 10^{100} / \varepsilon^3$. Suppose that $(H / q) > x^{1/6 + \varepsilon}$, $H \leq x$ and $(\log x)^{100 C(\varepsilon) / \xi} < q$. Then, for all $r \leq q^{1 - \xi}$ with $(r,q) = 1$, we have
    \begin{equation} \label{eq:thminit}
    \sum_{y < x} \sum_{v = 1}^{r} \Big | \sum_{\substack{p \in [y, y + H] \\ p_{q} \equiv v \pmod{r}}}\log p - \frac{H}{r} \Big | \ll_{\varepsilon, \xi} \frac{H x}{(\log x)^{100}}.
    \end{equation}
  \end{theorem}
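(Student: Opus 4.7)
The plan is to dualize in $v$ with unit-modulus coefficients $e^{i\theta_{v,y}}$, so that
\[
\sum_{v \leq r} \Big| \sum_{\substack{p \in [y, y+H] \\ p_q \equiv v \pmod r}} \log p - \frac{H}{r} \Big| = \sum_v e^{i\theta_{v,y}} \Big[\sum_{\substack{p \in [y, y+H] \\ p_q \equiv v \pmod r}} \log p - \frac{H}{r}\Big],
\]
and to detect the congruence $p_q \equiv v \pmod r$ via Dirichlet characters modulo $q$ (the condition $(p,q)=1$ is automatic up to primes $p\mid q$, which contribute negligibly). This reduces the problem, modulo an appropriate main term, to controlling
\[
\frac{1}{\varphi(q)} \sum_{\chi \pmod q} \sum_{y < x} A_{\chi,y} \cdot P_{y,\chi},
\]
where $A_{\chi,y} := \sum_{a<q,\,(a,q)=1} e^{i\theta_{a \bmod r, y}} \chi(a)$ and $P_{y,\chi} := \sum_{p \in [y,y+H],\,(p,q)=1} \overline{\chi}(p) \log p$.

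The principal character $\chi = \chi_0$ contributes $\tfrac{N_v}{\varphi(q)} S_y$ for each pair $(v,y)$, where $N_v$ counts reduced residues modulo $q$ congruent to $v$ modulo $r$ and $S_y = \sum_{p \in [y,y+H]} \log p$. The difference from $H/r$ decomposes as $\bigl(\tfrac{N_v}{\varphi(q)} - \tfrac{1}{r}\bigr) S_y + \tfrac{1}{r}(S_y - H)$. The first summand is controlled by an effective version of Lemma~\ref{lm:n5}, admissible thanks to our hypothesis $q^{\xi} > (\log x)^{100 C(\varepsilon)}$; the second by Theorem~\ref{thm:koukoulopoulos} with $q=1$.

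For the non-principal contribution I partition the characters into \emph{bad} ones (those $\chi$ for which $(t,\chi) \in \mathcal{E}(A; T; q; x; \varepsilon/1000)$ for some $|t| \leq T := 2x(\log x)^{1000}/H$) and \emph{good} ones; by Corollary~\ref{cor:exceptional}, the number of bad characters is $\ll (\log x)^{C_1}$ for some $C_1 = C_1(\varepsilon, A)$. On good characters, I represent $P_{y,\chi}$ by Perron's formula, whose contour $|t| \leq T$ then lies entirely outside $\mathcal{E}$, and apply Cauchy-Schwarz in $(y, \chi)$: the character-sum side is $\sum_y \frac{1}{\varphi(q)} \sum_\chi |A_{\chi,y}|^2 \leq x\varphi(q)$ by orthogonality (since $|e^{i\theta_{a\bmod r, y}}|=1$), while the Dirichlet-polynomial side is estimated by feeding Lemma~\ref{le:huxley} into Lemma~\ref{le:technical}, producing a contribution of size $\ll xH/(\log x)^{A/4 - 1}$, acceptable for $A$ large.

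The main difficulty is the bad-character piece, where no cancellation in the prime sum is available and one is forced to use the trivial bound $|P_{y,\chi}| \ll H$. All of the saving must come from the character-sum estimate $\sum_v |C_{v,\chi}| \ll \sqrt{rq}\, d(q) \log q$ of Lemma~\ref{lem:character} together with the polylogarithmic bound on $\#\{\chi \text{ bad}\}$. This produces a bad contribution of order $\ll xH \sqrt{r/q} \cdot (\log x)^{C_1 + O(1)}$, and the gap must be closed exactly by $r/q \leq q^{-\xi} < (\log x)^{-100 C(\varepsilon)}$ combined with the choice $C(\varepsilon) = 10^{100}/\varepsilon^3 \gg C_1$. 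This last balancing, which is the most delicate point, is precisely what forces the lower bound $q > (\log x)^{100 C(\varepsilon)/\xi}$ in the hypothesis.
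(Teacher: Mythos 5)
Your proposal is essentially the paper's own proof, using exactly the same inputs (Corollary~\ref{cor:exceptional}, Lemma~\ref{lem:character}, Lemma~\ref{le:technical}, Lemma~\ref{le:huxley}, Theorem~\ref{thm:koukoulopoulos}) and the same good/bad dichotomy, with only cosmetic rearrangements: you dualize in $v$ before the character decomposition and classify whole characters (rather than tuples $(t,\chi)$) as bad, whereas the paper dualizes only on the non-exceptional part and splits the Perron contour per character. Note that the bound on the \emph{number} of bad characters follows from the cardinality estimate $|\mathcal T|$ inside the proof of Corollary~\ref{cor:exceptional} (or from Lemma~\ref{le:exceptional} directly), not from the statement of the corollary, which only controls the measure of $\mathcal E$.
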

  \begin{proof}

  Let $0 \leq W \leq 1$ be a smooth function such that $W(v) = 1$ for $0 \leq v \leq 1$, and compactly supported in $[- (\log x)^{-500}, 1 + (\log x)^{-500}]$ and such that $W^{(k)}(y) \ll_{k} (\log x)^{500k }$ for all $k \geq 1$ and all $y \in \mathbb{R}$. At the price of a negligible error term of size $\ll H x / (\log x)^{500}$, we can bound \eqref{eq:thminit} by
$$
\sum_{y < x} \sum_{v = 1}^{r} \Big | \sum_{\substack{p \in [y, y + H] \\ p_{q} \equiv v \pmod{r}}}\log p \cdot W \Big ( \frac{p}{x} \Big ) - \frac{H}{r} \Big | \ll_{\varepsilon, \xi} \frac{H x}{(\log x)^{100}}.
$$

    We start by expressing the congruence condition using characters, this allows to bound our main expression by
    \begin{align*}
   \sum_{y < x} \sum_{v = 1}^{r} \Big | \frac{1}{\varphi(q)} \sum_{\chi  \neq \chi_{0} \pmod{q}} \Big (  \sum_{p \in [y, y + H]} \chi(p) \log p W \Big ( \frac{p}{x} \Big ) \Big ) \Big ( \sum_{\substack{a < q \\ a \equiv v \pmod{r}}} \overline{\chi}(a) \Big ) \Big |  + \frac{x H}{(\log q)^{100}},
    \end{align*}
    where the contribution of the principal character is estimated using Theorem \ref{thm:koukoulopoulos} and the second part of Lemma \ref{lm:n6}.
    We now open the sum over primes using a contour integral, getting that
    $$
    \sum_{p \in [y, y + H]} \chi(p) \log p \cdot W \Big ( \frac{p}{x} \Big ) = \int_{|t| \leq x} \sum_{p} \frac{\chi(p) \log p}{p^{s}} \cdot W \Big ( \frac{p}{x} \Big ) \cdot \frac{(y + H)^{s} - y^{s}}{s} dt + O (1)
    $$
    with $s = \tfrac 12 + it$. The total contribution of the error term is $\ll q x \ll H x^{7/8}$ which is sufficient.
    Let
    $$
    P(s, \chi) = \sum_{p} \frac{\chi(p) \log p}{p^{s}} \cdot W \Big ( \frac{p}{x} \Big )
    $$
    and $\mathcal{E}_{W}(A; x; q; x; \varepsilon / 1000)$ with $A = 10^{10} / \varepsilon$ be the same set as in Corollary \ref{cor:exceptional}. We will abbreviate the notation by dropping the subscript $W$ from $\mathcal{E}_{W}$.

    We separate $(t, \chi)$ according to whether $(t, \chi) \in \mathcal{E}(A; x; q; x; \varepsilon / 1000)$ or $(t,\chi) \not \in \mathcal{E}(A; x; q; x; \varepsilon / 1000)$. In the first case, we notice that
    $$
    |P(\tfrac 12 + it, \chi)| \ll \sqrt{x}
    $$
    and that
    $$
    \sum_{y < x} \Big | \frac{(y + H)^{1/2 + it} - y^{1/2 + it}}{1/2 + it} \Big | \ll \sum_{y < x} \Big | \int_{y}^{y + H} x^{-1/2 + it} dx \Big | \ll H \sqrt{x}.
    $$
    Combining this with Lemma \ref{lem:character} and Corollary \ref{cor:exceptional}, we find that that contribution of the $(t, \chi) \in \mathcal{E}$ is
    \begin{align*}
    \ll \frac{1}{\varphi(q)} \sum_{\chi} & \int_{(t, \chi) \in \mathcal{E}(A; x; q; x; \varepsilon / 1000)} \sqrt{x} \cdot H \sqrt{x} \cdot dt \sup_{\chi \neq \chi_{0} \pmod{q}} \sum_{v = 1}^{r} \Big | \sum_{\substack{a < q \\ a \equiv v \pmod{r}}} \chi(a) \Big | \\ & \ll  \frac{x H}{\varphi(q)} \cdot (\log x)^{10^{20} / \varepsilon^2} \cdot \sqrt{r q} \cdot d(q) \log q.
    \end{align*}
    We therefore get
    $$
    \ll \sqrt{\frac{r}{q}} (\log x)^{C(\varepsilon) / 4} \cdot x H \cdot d(q) \log q
    $$
    and this gives an acceptable contribution since $r \leq q^{1 - \xi}$ and $q > (\log x)^{100 C(\varepsilon) / \xi}$.

    It now remains to handle the contribution of the non-exceptional $(t,\chi)$, that is, those $|t|\leq x$ and $\chi \pmod{q}$ for which $(t, \chi) \not \in \mathcal{E}(A; x; q; x; \varepsilon / 1000)$. Therefore, we need to bound
    $$
    \sum_{y < x} \sum_{v = 1}^{r} \Big | \frac{1}{\varphi(q)} \sum_{\chi \neq \chi_{0}} \Big ( \int_{\substack{|t| \leq x \\ (t, \chi)\not \in \mathcal{E}}} P(\tfrac 12 + it, \chi)\cdot  \frac{(y + H)^{1/2 + it} - y^{1/2 + it}}{1/2 + it} dt \Big ) \cdot \Big ( \sum_{\substack{a < q \\ a \equiv v \pmod{r}}} \overline{\chi}(a) \Big ) \Big |.
    $$
    We now introduce phases $\theta_{v,y} \in \mathbb{R}$ for which we can re-write the above expression as
    $$
    \sum_{y < x} \sum_{v = 1}^{r} e^{i \theta_{v,y}} \cdot \frac{1}{\varphi(q)} \sum_{\chi \neq \chi_{0}}\Big (  \int_{\substack{|t| \leq x \\ (t, \chi) \not \in \mathcal{E}}} P(\tfrac 12 + it, \chi) \cdot \frac{(y + H)^{1/2 + it} - y^{1/2 + it}}{1/2 + it} \Big ) \cdot \Big ( \sum_{\substack{a < q \\ a \equiv v \pmod{r}}} \overline{\chi}(a) \Big ).
    $$
    Notice that
    $$
    \sum_{v = 1}^{r} e^{i \theta_{v,y}} \sum_{\substack{a < q \\ a \equiv v \mod{r}}} \chi(a) = \sum_{a < q} \chi(a) e^{i \theta_{a \mod r, y}} = \sum_{a < q} \chi(a) c(a, y, r),
    $$
    where $c(a, y, r) = e^{i\theta_{a \mod r, y}}$ depend only on $a,r, y$ and have absolute value $1$.
    Therefore, we have re-written our main expression as
    $$
    \frac{1}{\varphi(q)} \sum_{\chi \neq \chi_{0}} \sum_{y< x} \Big ( \int_{\substack{|t| \leq x \\ (t, \chi) \not \in \mathcal{E}}} P(\tfrac 12 + it, \chi) \cdot \frac{(y + H)^{1/2 + it} - y^{1/2 + it}}{1/2 + it} dt \Big ) \cdot \Big ( \sum_{a < q} \overline{\chi}(a) c(a,y,r) \Big ).
    $$
    We now apply the Cauchy-Schwarz inequality and the large sieve which give us
    $$
    \sqrt{q x} \cdot \Big ( \frac{1}{\varphi(q)} \sum_{\chi \neq \chi_{0}} \sum_{y < x} \Big | \int_{\substack{|t| \leq x \\ (t, \chi) \not \in \mathcal{E}}} P(\tfrac 12 + it, \chi) \cdot \frac{(y + H)^{1/2 + it} - y^{1/2 + it}}{1/2 + it} dt \Big |^2 \Big )^{1/2}.
    $$
    By Lemma \ref{le:technical},
    \begin{align*}
   & \frac{1}{\varphi(q)} \sum_{\chi \neq \chi_{0} \pmod{q}} \sum_{y < x} \Big | \int_{\substack{|t| \leq x \\ (t, \chi) \not \in \mathcal{E}}} P(\tfrac 12 + it, \chi) \cdot \frac{(y + H)^{1/2 + it} - y^{1/2 + it}}{1/2 + it} dt \Big |^2 \\ & \ll \frac{H^2 \log x}{\varphi(q)} \sum_{\chi \neq \chi_{0}} \int_{\substack{|t| \leq (x / H) (\log x)^{1000} \\ (t, \chi) \not \in \mathcal{E}}} |P(\tfrac 12 + it. \chi)|^2 dt + \frac{H^2 x}{\varphi(q)(\log x)^{500}}.
    \end{align*}
    The error term gives an acceptable contribution.
    Therefore, we end up with the problem of showing that
    $$
    H \sqrt{q x \log x} \cdot \Big ( \frac{1}{\varphi(q)} \sum_{\chi \neq \chi_{0}} \int_{\substack{|t| \leq (x / H) (\log x)^{1000} \\ (t, \chi) \not \in \mathcal{E}}} |P(\tfrac 12 + it, \chi)|^2 dt \Big )^{1/2}
    $$
    is $\ll x H (\log x)^{-100}$.
    At this point appealing to Lemma \ref{le:huxley} (and using that $(t, \chi) \not \in \mathcal{E}(A;x;q;x;\varepsilon / 1000)$ implies $(t, \chi) \not \in \mathcal{E}(A; T ; q ; x;\varepsilon / 1000)$ with $T = 2 x (\log x)^{1000} / H$) gives a bound that is
    $
    \ll H \cdot x (\log x)^{-A / 4}
    $
    and this is completely sufficient.
  \end{proof}

  \subsection{Proof of Theorem \ref{thm:nr2}}
  We will only prove the first statement since the proof of the second assertion \eqref{Hrownex} will be identical.

  We can prove Theorem \ref{thm:nr2} by largely following the outline of the proof of Theorem \ref{thm:nr1} but using Lemma \ref{le:character2} instead of Lemma \ref{lem:character}. Once again if $q \leq (\log x)^{B}$ for some fixed $B > 0$ then Theorem \ref{thm:nr2} follows from Theorem \ref{thm:koukoulopoulos}. We quickly describe these details below.

  \begin{proof}[Proof of Theorem \ref{thm:nr2} for $q \leq (\log x)^{B}$]
    Since $q$ is small, $p_{q}$ takes on at most $(\log x)^{B}$ values. We can therefore, by the triangle inequality, separate the sum according to the value of $p \equiv w \pmod{q}$ which fixes the values $p_{q} = w$. This gives an upper bound of the form
    $$
    \sum_{y \leq x} \sum_{z \leq q} \sum_{\substack{0 \leq w < q \\ w \equiv v \pmod{r} \\ w \in [z, z + H']}} \Big | \sum_{\substack{p \in [y, y + H] \\ p \equiv w \pmod{q}}} \log p - \frac{H}{\varphi(q)} \Big |
    $$
    and the result is now an immediate consequence of Theorem \ref{thm:koukoulopoulos}.
\end{proof}

  Therefore, it suffices to prove the following slightly weaker variant.
  \begin{theorem}
    Let $\varepsilon \in (0 , \tfrac{1}{1000})$ be given. Let $C(\varepsilon) = 10^{100} / \varepsilon^3$. Suppose that $(H / q) > x^{1/6 + \varepsilon}$ and $H \leq x$. Then, for $q^{1/2 - 1/10} \geq  H' > (\log x)^{10 C(\varepsilon)}$, we have
    \begin{equation} \label{eq:mainthmeqeq}
    \sum_{y < x} \sum_{z < q} \sup_{\substack{\beta \in \mathbb{R} \\ 0 \leq v < r}} \Big | \sum_{\substack{p \in [y, y + H] \\ p_{q} \equiv v \pmod{r} \\ p_{q} \in [z, z + H']}} e(p_{q} \beta) \log p - \frac{H}{\varphi(q)} \sum_{\substack{(a,q) = 1 \\ 0 \leq a < q \\ a \equiv v \pmod{r} \\ a \in [z, z + H']}} e(a \beta) \Big | \ll_{A, \varepsilon} \frac{x H H'}{(\log x)^{100}}.
    \end{equation}
  \end{theorem}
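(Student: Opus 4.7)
The plan is to mirror the proof of Theorem \ref{thm:nr1} essentially line by line, replacing the character-sum input Lemma \ref{lem:character} with Lemma \ref{le:character2} and carrying along the extra $z$-average and the twist $e(p_{q}\beta)$. First, for $q \leq (\log x)^{B}$ with $B$ large, the pointwise condition $p_{q}\in[z,z+H']$, $p_{q}\equiv v\pmod r$ fixes $p$ into at most $(\log x)^{B}$ residue classes modulo $q$, so Theorem \ref{thm:koukoulopoulos} directly gives the bound; so assume $q > (\log x)^{B}$. Introduce a smooth majorant $W$ supported near $[0,1]$ and apply orthogonality modulo $q$, writing
\[
  \sum_{\substack{p\in[y,y+H]\\ p_q\in[z,z+H']\\ p_q\equiv v\pmod r}}e(p_q\beta)\log p\; W(p/x)
  = \sum_{\substack{(a,q)=1\\ a\equiv v\pmod r\\ a\in[z,z+H']}}\!\!e(a\beta)\cdot\frac{1}{\varphi(q)}\sum_{\chi\pmod q}\overline{\chi}(a)\!\!\sum_{p\in[y,y+H]}\!\!\chi(p)\log p\;W(p/x).
\]
The principal character delivers the main term plus an acceptable error via Theorem \ref{thm:koukoulopoulos} and \eqref{eq:j3'}. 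Expressing $\mathbf{1}_{a\equiv v\pmod r}$ through additive characters mod $r$ and applying the triangle inequality, the joint supremum over $v$ and $\beta$ collapses to a supremum over $\beta$ alone (at the cost of replacing $\beta$ by $\beta+\ell/r$ inside an average over $\ell$).

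Now open the inner sum over $p$ by a contour integral, producing the Dirichlet polynomial $P(\tfrac12+it,\chi)$ tested against $((y+H)^{1/2+it}-y^{1/2+it})/(1/2+it)$ on $|t|\leq x$. Partition tuples $(t,\chi)$ according to whether they lie in $\mathcal E(A;x;q;x;\varepsilon/1000)$ of Corollary \ref{cor:exceptional} (with $A=10^{10}/\varepsilon$). On the exceptional set, bound $|P(\tfrac12+it,\chi)|\ll\sqrt x$ and $\sum_{y<x}|\int \cdots dt|\ll H\sqrt{x}$ trivially, then estimate the character sum via Lemma \ref{le:character2} with $\delta=1/10$, giving $\sum_{z<q}\sup_\beta|\sum_{(a,q)=1,\,a\in[z,z+H']}\chi(a)e(a\beta)|\ll q^{1-1/40+\varepsilon}H'+qH'^{3/4}$. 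Together with the bound $\sum_\chi\int_{\mathcal E}dt\ll(\log x)^{C(\varepsilon)}$ from Corollary \ref{cor:exceptional}, the exceptional contribution is $\ll xH\,(\log x)^{C(\varepsilon)}(q^{-1/40+\varepsilon}H'+H'^{3/4})$, which is $\ll xHH'/(\log x)^{100}$ because $q > (\log x)^{B}$ for $B$ large and $H'>(\log x)^{10C(\varepsilon)}\geq(\log x)^{4(C(\varepsilon)+100)}$.

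For the non-exceptional tuples, insert phases $e^{i\theta_{y,z}}$ to remove the outer absolute value (duality), then bundle $z$ and $a$ into a single Dirichlet polynomial $\sum_{a<q}\chi(a)c(a,y)$ with $c(a,y)=\sum_{z:\,a\in[z,z+H']}e^{i\theta_{y,z}}e(a\beta_{y,z})$ satisfying $|c(a,y)|\leq H'$. Cauchy--Schwarz in $(y,\chi)$ together with the classical large sieve controls $\frac1{\varphi(q)}\sum_\chi|\sum_{a<q}\chi(a)c(a,y)|^2\ll (q+\varphi(q))H'\cdot H'$, yielding the bound
\[
  \ll H'\sqrt{qx}\cdot\Big(\tfrac{1}{\varphi(q)}\sum_{\chi\neq\chi_0}\sum_{y<x}\Big|\!\int_{\substack{|t|\leq x\\ (t,\chi)\notin\mathcal E}}\!P(\tfrac12+it,\chi)\tfrac{(y+H)^{1/2+it}-y^{1/2+it}}{1/2+it}\,dt\Big|^2\Big)^{\!1/2}.
\]
Lemma \ref{le:technical} reduces the inner second moment to $\ll H^2(\log x)/\varphi(q)\cdot \sum_{\chi\neq\chi_0}\int_{|t|\leq (x/H)(\log x)^{1000},(t,\chi)\notin\mathcal E}|P(\tfrac12+it,\chi)|^2\,dt$, which by Lemma \ref{le:huxley} is $\ll H^2 x/(\varphi(q)(\log x)^{A/2})$. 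Altogether, the non-exceptional contribution is $\ll xHH'/(\log x)^{A/4-O(1)}$, sufficient for our $A$.

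The main obstacle, and the reason for the hypotheses $q^{1/2-1/10}\geq H'>(\log x)^{10C(\varepsilon)}$, is beating the exceptional-set contribution: the Weil-type second term $qH'^{3/4}$ in Lemma \ref{le:character2} is only smaller than the target $qH'$ by a factor $H'^{1/4}$, so $H'$ must be polylogarithmic of sufficiently high degree to absorb the $(\log x)^{C(\varepsilon)}$ factor coming from the size of $\mathcal E$. The rest of the argument is an essentially routine adaptation of the structure used for Theorem \ref{thm:nr1}.
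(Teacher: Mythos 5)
Your proposal mirrors the paper's proof of Theorem \ref{thm:nr2} essentially step for step: the same reduction to $q>(\log x)^B$ via Theorem \ref{thm:koukoulopoulos}, the same opening into Dirichlet characters and contour integral, the same dichotomy on $\mathcal E(A;x;q;x;\varepsilon/1000)$ using Corollary \ref{cor:exceptional}, the same use of Lemma \ref{le:character2} (with $\delta=1/10$) on the exceptional tuples, and the same duality trick bundling $z$ and $a$ into $c(a,y)$ followed by Cauchy--Schwarz, the large sieve, Lemma \ref{le:technical}, and Lemma \ref{le:huxley} on the non-exceptional tuples. The argument is correct and differs from the paper only in cosmetic numerological bookkeeping (e.g.\ $q^{-1/40+\varepsilon}$ versus the paper's slightly weaker quoted $q^{-1/90}$).
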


  \begin{proof}

    As in the previous proof (i.e.\ proof of Theorem \ref{thm:nr1}) let $W$ be a smooth function such that $W(v) = 1$ for $v \in [0,1]$, $W$ is compactly supported in $[-(\log x)^{-500}, 1 + (\log x)^{-500}]$ and $W^{(k)} (y) \ll_{k} (\log x)^{500 k}$ for all $k \geq 1$ and all $y \in \mathbb{R}$. At the price of a negligible error term of size $\ll x H H' (\log x)^{-500}$, we can bound \eqref{eq:mainthmeqeq} by
    $$
    \sum_{y < x} \sum_{z < q} \sup_{\substack{\beta \in \mathbb{R} \\ 0 \leq v < r}} \Big | \sum_{\substack{p \in [y, y + H] \\ p_{q} \equiv v \pmod{r} \\ p_{q} \in [z, z + H']}} e(p_{q} \beta) \log p \cdot W \Big ( \frac{p}{x} \Big )  - \frac{H}{\varphi(q)} \sum_{\substack{(a,q) = 1 \\ a \equiv v \pmod{r} \\ a \in [z, z + H']}} e(a \beta) \Big |.
    $$
    We proceed just as before opening the expression into characters. This gives us the following bound
    $$
    \sum_{y < x} \sum_{z < q} \sup_{\substack{\beta \in \mathbb{R} \\ 0 \leq  v < r}} \Big | \frac{1}{\varphi(q)} \sum_{\chi \neq \chi_{0}} \Big ( \sum_{p \in [y, y + H]} \chi(p) \log p  W \Big ( \frac{p}{x} \Big ) \Big ) \Big ( \sum_{\substack{a < q \\ 0 \leq a < q \\ a \equiv v \pmod{r} \\ a \in [z, z + H']}} \chi(a) e(a \beta) \Big ) \Big | + \frac{x H H'}{(\log x)^{100}},
    $$
    where the contribution of the principal character is estimated using Theorem \ref{thm:koukoulopoulos}.
    Expressing the condition $a \equiv v \pmod{r}$ using additive characters and using the triangle inequality, we see that we can bound the above expression by
    $$
    \leq \sum_{y < x} \sum_{z < q} \sup_{\beta \in \mathbb{R}} \Big | \frac{1}{\varphi(q)} \sum_{\chi \neq \chi_0} \Big ( \sum_{p \in [y, y + H]} \chi(p) \log p \cdot W \Big ( \frac{p}{x} \Big ) \Big ) \cdot \Big ( \sum_{\substack{a < q \\ a \in [z , z + H']}} \chi(a) e(a \beta) \Big ) \Big |.
    $$
    Since we take a supremum over $\beta$, we can instead write $\beta$ as a function of $y$ and $z$ so that the above expression takes the form
    $$
    \sum_{y < x} \sum_{z < q} \Big | \frac{1}{\varphi(q)} \sum_{\chi \neq \chi_{0}} \Big ( \sum_{p \in [y, y + H]} \chi(p) \log p \cdot W \Big ( \frac{p}{x} \Big ) \Big ) \cdot \Big ( \sum_{\substack{a < q \\ a \in [z, z + H']}} \chi(a) e(a \beta_{y,z}) \Big ) \Big |.
    $$
    Finally, we pick phases $\theta_{y,z} \in \mathbb{R}$ for which the above can be re-written as
    \begin{equation} \label{eq:tobounder}
    \sum_{y < x} \sum_{z < q} e^{i\theta_{y,z}} \cdot \frac{1}{\varphi(q)} \sum_{\chi \neq \chi_{0}} \Big ( \sum_{p \in [y, y + H] } \chi(p) \log p W \Big ( \frac{p}{x} \Big ) \Big ) \cdot \Big ( \sum_{\substack{a < q \\ a \in [z, z + H']}} \chi(a) e(a \beta_{y,z}) \Big ) .
    \end{equation}
    Just as before, using a contour integral, we write
    $$
    \sum_{p \in [y, y + H]} \chi(p) \log p \cdot W \Big ( \frac{p}{x} \Big ) = \frac{1}{2\pi} \int_{|t| \leq x} P(\tfrac 12 + it, \chi) \cdot \frac{(y + H)^{1/2 + it} - y^{1/2 + it}}{1/2 + it} \cdot dt + O(1),
    $$
    where
    $$
    P(s, \chi) = \sum_{p} \frac{\chi(p) \log p}{p^{s}} \cdot W \Big ( \frac{p}{x} \Big ).
    $$
    The total contribution of the error term is $\ll q x H' \ll x^{7/8} H H'$ and therefore negligible.
    We now look at the contribution of $(t, \chi) \in \mathcal{E}(C; x; q; x; \varepsilon / 1000)$ with $C = 10^{10} / \varepsilon$.
    Using the bounds
    $
    |P(\tfrac 12 + it, \chi)| \ll \sqrt{x},
    $
    we see that the contribution of $(t, \chi) \in \mathcal{E}$ to \eqref{eq:tobounder} is
    \begin{equation} \label{eq:tobounder2}
    \ll \frac{1}{\varphi(q)} \sum_{\chi \neq \chi_{0}} \int_{(t, \chi) \in \mathcal{E}} \sqrt{x} \cdot \sum_{y \leq x} \Big | \frac{(y + H)^{1/2 + it} - y^{1/2 + it}}{1/2 + it} \Big | \cdot \sum_{z < q} \Big | \sum_{\substack{a \in [z, z + H'] \\ 0 \leq a < q }} \chi(a) e(a \beta_{y,z}) \Big | dt .
    \end{equation}
    Applying Lemma \ref{le:character2}, then the trivial bound,
    $$
    \sum_{y < x} \Big | \frac{(y + H)^{1/2 + it} - y^{1/2 + it}}{1/2 + it} \Big | \ll H \sqrt{x}
    $$
    and finally Corollary \ref{cor:exceptional}, we see that
    \eqref{eq:tobounder2} is
    $$
    \ll \frac{q}{\varphi(q)} \cdot x H H' \cdot \Big ( \frac{(\log x)^{C(\varepsilon) / 100}}{(H')^{1/4}} + (\log x)^{C(\varepsilon) / 100} q^{- 1 / 90} \Big )
    $$
    which is negligible since $q^{1/2 - 1/10} \geq H' > (\log x)^{10 C(\varepsilon)}$.

    Therefore, it remains to handle the contribution of the non-exceptional $(t, \chi)$, that is,
    \begin{align} \label{eq:tobounder3}
    \sum_{y < x} \frac{1}{\varphi(q)} \sum_{\chi \neq \chi_{0}} \int_{\substack{|t| \leq x \\ (t, \chi) \not \in \mathcal{E}}} P(\tfrac 12 + it, \chi) & \frac{(y + H)^{1/2 + it} - y^{1/2 + it}}{1/2 + it} dt \\ \nonumber & \times \sum_{0 \leq a < q} \chi(a) \Big ( \sum_{\substack{z \in [a - H', a] \\ z < q}} e^{i \theta_{z,y}} e(a \beta_{z,y}) \Big ).
    \end{align}
    Let
    $$
    c(a, y) := \sum_{\substack{z \in [a - H', a] \\ z < q}} e^{i\theta_{z,y}} e(a \beta_{z,y}).
    $$
    By an application of the Cauchy-Schwarz  inequality, \eqref{eq:tobounder3} is
    \begin{align*}
    \ll & \Big ( \frac{1}{\varphi(q)} \sum_{\chi \neq \chi_{0} \pmod{q}} \sum_{y < x} \Big | \int_{\substack{|t| \leq x \\ (t, \chi) \not \in \mathcal{E}}} P(\tfrac 12 + it, \chi) \cdot \frac{(y + H)^{1/2 + it} - y^{1/2 + it}}{1/2 + it} dt \Big |^2 \Big )^{1/2} \\ & \times \Big ( \sum_{y < x} \frac{1}{\varphi(q)} \sum_{\chi \neq \chi_{0} \pmod{q}} \Big | \sum_{a < q} \chi(a) c(a, y) \Big  |^2 \Big )^{1/2}.
    \end{align*}
    We estimate the second term by applying the large sieve. This shows that the second term is
    $$
    \ll x \sum_{\substack{a < q\\ (a, q) = 1}} |c(a, y)|^2 \ll x \varphi(q) (H')^2.
    $$
    To estimate the first term we appeal to Lemma \ref{le:technical}. This shows that
    \begin{align*}
    \frac{1}{\varphi(q)}  \sum_{\chi \neq \chi_{0} \pmod{q}} & \sum_{y < x} \Big | \int_{\substack{|t| \leq x \\ (t, \chi) \not \in \mathcal{E}}} P(\tfrac 12 + it, \chi) \cdot \frac{(y + H)^{1/2 + it} - y^{1/2 + it}}{1/2 + it} dt \Big |^2 \\ & \ll \frac{H^2 \log x}{\varphi(q)} \sum_{\chi \neq \chi_{0} \pmod{q}} \int_{\substack{|t| \leq x ( \log x)^{1000} / H \\ (t, \chi) \not \in \mathcal{E}}} |P(\tfrac 12 + it, \chi)|^2 dt + \frac{H^2 x}{\varphi(q) (\log x)^{500}}.
    \end{align*}
    By Lemma \ref{le:huxley}, this is
    $$
    \ll \frac{H^2 x}{\varphi(q) (\log x)^{500}}.
    $$
    Combining all our previous estimates we conclude that \eqref{eq:tobounder3} is
    $$
    \ll \frac{x H H'}{(\log x)^{250}}
    $$
    as needed.

  \end{proof}

\section{Extension of results of Matom\"aki-Shao} \label{sec:eqprimes2}

We will need the following extension of a recent result of Matom\"aki-Shao \cite{MS}.

\begin{theorem}\label{thm:nr4}
  Let $\tau > 0$, $\eta \in (0, 10^{-7})$ and $k \geq 1$ be given.
  Let $N > H > N^{2/3 - \eta}$. Then, for all $N > N_0(\eta, k)$, uniformly in $r \leq (\log N)^{100}$, $(a,r) = 1$ and uniformly in polynomials $g(n) = \sum_{i = 1}^{k} \gamma_i (n - N)^{i}$ with $|\gamma_i| \ll H^{-i + 1}$ for all $i = 2, \ldots, k$ and $|\gamma_1| \leq e^{-\tau r} \leq \eta^4$, we have
  \begin{equation} \label{eq:matoshaothm}
  \Big | \sum_{\substack{p \equiv a \pmod{r} \\ N \leq p \leq N + H}} e(g(p)) \log p  - \frac{1}{\varphi(r)} \sum_{N \leq n \leq N + H} e(g(n)) \Big | \ll \eta \log \frac{1}{\eta} \cdot \frac{H}{\varphi(r)}.
  \end{equation}
\end{theorem}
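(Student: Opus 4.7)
My plan is to mirror the structure of Matom\"aki--Shao but replace their treatment of type-III sums with a sieve majorant in the style of Heath-Brown, which is what allows one to barely cross the exponent $2/3$ at the cost of the weaker error term $\eta\log(1/\eta)\cdot H/\varphi(r)$.

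First, since $r\le(\log N)^{100}$, I would open the congruence $p\equiv a\pmod r$ via Dirichlet characters mod $r$. The principal character contribution, after replacing $\Lambda$ by $1_{(n,r)=1}$ and applying the Siegel--Walfisz theorem (or Theorem \ref{thm:koukoulopoulos}) together with partial summation against the smooth phase $e(g(n))$ (whose derivative is tiny thanks to $|\gamma_j|\ll H^{-j+1}$ and $|\gamma_1|\le\eta^4$), produces the main term $\frac{1}{\varphi(r)}\sum e(g(n))$ up to an error $\ll H/(\varphi(r)(\log N)^{A})$. The task reduces to showing that for every non-principal $\chi\pmod r$,
\begin{equation*}
\Big|\sum_{N\le n\le N+H}\chi(n)\Lambda(n)e(g(n))\Big|\ll \eta\log\tfrac{1}{\eta}\cdot H.
\end{equation*}

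Next I would insert Linnik's identity (Lemma \ref{le:linnik}) at sifting level $z=N^{\eta}$, so that $\Lambda(n)$ for $z<n\le 2N$ decomposes into $O(1/\eta)$ terms $-\tfrac{(-1)^k}{k}d^{\star}_{k,z}(n)$ with $k\le k_0:=\lceil 1/\eta\rceil+1$. Inserting a dyadic partition on each $n_i\sim N_i$ produces $O_\eta((\log N)^{k_0})$ bilinear sums. Classify each tuple $(N_1,\dots,N_k)$ as \textbf{type I} if some $N_i\ge H^{1-\eta/10}$ (long smooth variable), \textbf{type II} if not but some subproduct lies in $[H^{\eta/10},H^{1-\eta/10}]$, and \textbf{type III} otherwise. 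For type I, after absorbing $\chi$ into the smooth variable and summing by parts, the polynomial phase $e(g(n))$ admits Weyl/Vinogradov estimates — the hypotheses $|\gamma_1|\le e^{-\tau r}$ and $|\gamma_j|\ll H^{-j+1}$ ensure that we are on a minor arc for every nontrivial coefficient so that a saving of $(\log N)^{-A}$ is available. For type II, apply Cauchy--Schwarz on the bilinear structure and square out to reduce to an exponential sum $\sum_m e(g(m)-g(m+h))$ averaged over $h$, which is again estimated by Weyl differencing/Vinogradov's mean value theorem up to degree $k$; this is the bilinear input from \cite{MS} and it gives $O(H(\log N)^{-A})$ per dyadic block. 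Summing over the $O_\eta((\log N)^{k_0})$ blocks costs only a power of $\log N$, still absorbed by the gain from Weyl.

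The main obstacle is the type-III contribution, precisely because $H<N^{2/3}$. Here every variable satisfies $N_i\in[N^\eta,H^{1-\eta/10}]$ with no subproduct in the type-II window, so no bilinear factorization is available, and the phase $e(g(n))$ provides no purchase on products of $k\asymp 1/\eta$ small variables of comparable sizes. Following Heath-Brown \cite{HeathBrown}, I would simply drop $\chi(n)e(g(n))$ by trivial bound and control
\begin{equation*}
\sum_{\substack{N\le n\le N+H \\ n\text{ type III}}}\sum_{\substack{k\le k_0}}\frac{d^\star_{k,z}(n)}{k}.
\end{equation*}
Every such $n$ is $N^\eta$-rough. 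By the fundamental lemma of the sieve (Lemma \ref{le:sieveprimes} together with Mertens), the density of $N^\eta$-rough integers in $[N,N+H]$ is $\sim e^{-\gamma}/(\eta\log N)$, so that the count of such $n$ weighted by the number of factorizations $d^\star_{k,z}(n)$ is $\ll H\cdot(\log N-\log z)/(\eta\log N) \sim (1-\eta)H/\eta$; summing the Linnik weights $1/k$ over $2\le k\le 1/\eta$ produces the additional $\log(1/\eta)$ factor, and a factor of $\eta^2$ is recovered from the fact that one needs at least two ``small'' variables for a type-III piece to arise after accounting for the sieve-kernel density. The net bound is $\ll \eta\log(1/\eta)\cdot H$, matching the claim. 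The verification that the constants assemble correctly — specifically, that the type-III count really saves $\eta$ rather than merely $1/\log N$ — is the delicate combinatorial step; it will rely on controlling $\sum_{n\in[N,N+H]}d_k(n)\mathbf 1_{P^-(n)>N^\eta}$ by Rankin's trick to keep the divisor moments from blowing up as $k\to 1/\eta$, which forces the assumption $N\ge N_0(\eta,k)$ stated in the theorem.
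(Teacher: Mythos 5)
The proposal goes wrong in several places, the most serious being the treatment of type-III sums and the failure to set up the oscillatory versus non-oscillatory dichotomy.

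\textbf{The oscillatory/non-oscillatory split is missing.} You assert that the hypotheses $|\gamma_1|\le e^{-\tau r}$ and $|\gamma_j|\ll H^{-j+1}$ ``ensure that we are on a minor arc for every nontrivial coefficient so that a saving of $(\log N)^{-A}$ is available.'' This is false: $\gamma_j=0$ for all $j$ is allowed, and then $e(g(n))\equiv 1$ with no cancellation whatsoever. The hypotheses give an \emph{upper} bound on the coefficients, not a lower bound keeping them away from rationals with small denominator. The paper's proof therefore splits into an oscillatory case (Proposition~\ref{prop:oscilatory}, where a diophantine condition holds and one gets cancellation both in the main term $\sum_n e(g(n))$ and in the prime sum) and a non-oscillatory case (Proposition~\ref{prop:nonoscilatory}, where one extracts a rational part $a_i/q$ of each $\gamma_i$, shows $a_i=0$ for $i\ge 2$, and reconstructs the main term via the short-interval Siegel--Walfisz theorem). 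Without that split your partial-summation step also breaks: the derivative of $g$ satisfies only $|g'(n)|\ll k^2$ from the hypotheses, not $\ll 1/H$, so ``the smooth phase $e(g(n))$'' cannot be summed by parts until the rational piece has been removed.

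\textbf{The type-II window is too wide, so the type-III region is misidentified.} The bilinear estimates available here (Proposition~\ref{prop:typeII} in the paper, following Matom\"aki--Shao) require both factor lengths $\le H(\log N)^{-B}$, i.e.\ a subproduct in $[N/H\cdot\mathrm{polylog},\,H/\mathrm{polylog}]$, which for $H\approx N^{2/3-\eta}$ is $[N^{1/3+\eta},N^{2/3-\eta}]$ up to logs. Your window $[H^{\eta/10},H^{1-\eta/10}]\approx[N^{\eta/15},N^{2/3-\eta/15}]$ is far wider from below; with it, every variable $n_i>N^{\eta}$ would already fall in the type-II range, so your classification leaves no type-III terms at all --- which cannot be right, since the $H<N^{2/3}$ obstruction is genuine. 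With the correct window, the $k=3$ configuration with all three $\alpha_i$ near $1/3$ is type III.

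\textbf{The trivial bound on type III does not save enough.} You propose to drop the phase and bound $\sum_{n\text{ type III}}\sum_k d^\star_{k,z}(n)/k$ with $z=N^\eta$. Take $k=3$ with each factor $\sim N^{1/3}$ and $N^\eta$-rough. The density of $N^\eta$-rough integers is $\sim e^{-\gamma}/(\eta\log N)$, and a short computation gives that the count of such triples in $[N,N+H]$ is $\asymp H/(\eta\log N)$ --- a factor $\eta^{-2}$ larger than the target $\eta\log(1/\eta)\,H/\log N$. Your claim that ``a factor of $\eta^2$ is recovered from the fact that one needs at least two small variables'' is not a proof, and I do not see how to make it one: the type-III region is not a log-scale window of width $\eta$ in a single variable but a genuinely two-dimensional set, yet the rough-integer density is $1/(\eta\log N)$ per variable, which eats both of the $\eta$'s you would need.

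\textbf{What the paper actually does differently.} The paper uses Linnik's identity with the \emph{large} sifting level $z=N^{1/3+100\eta}$, so the expansion terminates at $k=2$: the $k=1$ piece is a sum over $z$-rough integers, and the $k=2$ piece is, because $z^2>N/z$, a bilinear sum over two \emph{primes} which is immediately type-II friendly. The $k=1$ piece is then decomposed via Buchstab's identity down to $w=N^{\eta^4}$ (Lemma~\ref{le:buchstablemma}), and the only irreducible loss is the Buchstab term with a prime in the narrow window $[y,z]=[N^{1/3-100\eta},N^{1/3+100\eta}]$, which is discarded by Brun--Titchmarsh; this window has log-scale width $\asymp\eta$, giving the factor $\eta$, while the inner sum over primes in $[w,z]$ gives $\log(1/\eta)$. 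This is precisely how the error $\eta\log(1/\eta)\cdot H/(\varphi(r)\log N)$ is localized to a thin region rather than spread over the whole type-III configuration space as in your proposal. The remaining pieces are then fed through Vaughan's identity and Propositions~\ref{prop:typeI}, \ref{prop:typeII}, \ref{prop:preHarman}, \ref{prop:Harman}. The Heath-Brown idea is indeed what allows one to cross $2/3$, but it must be implemented in a way that confines the lost mass to a window of log-scale width $\eta$ --- the naive small-$z$ Linnik expansion with a blanket trivial bound on type III does not achieve this.
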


The proof separates into the oscillatory case in which the main term $\sum_{N \leq n \leq N + H} e(g(n))$ exhibits cancellations and the non-oscillatory case in which the main term is large.

\begin{proposition}[Oscillatory case] \label{prop:oscilatory} Let $\eta \in (0, 10^{-7})$ be given. Let $N > H > N^{2/3 - \eta}$. Let $g(n) = \sum_{i = 1}^{k} \gamma_i (n - N)^{k}$. If for all $q \leq (\log N)^{B}$, with $B$ sufficiently large in terms of $k$ and $1/\eta$, there exists an $i \in \{1, \ldots, k\}$ such that
  $$
  \| q \gamma_{i} \| \geq \frac{(\log N)^{B}}{H^i}
  $$
  then, for all $r \leq (\log N)^{100}$ and all $(a, r) = 1$,
  \begin{equation} \label{eq:matoshaoproper}
  \Big | \sum_{\substack{N \leq p \leq N + H \\ p \equiv a \pmod{r}}} e(g(p)) \log p \Big | \ll \eta \log \frac{1}{\eta} \cdot \frac{H}{\varphi(r)}
  \end{equation}
  and
  $$
  \Big | \sum_{\substack{N \leq n \leq N + H}} e(g(n)) \Big | \ll \frac{H}{(\log N)^{100}}.
  $$

\end{proposition}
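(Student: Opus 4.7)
The plan is to follow the Matom\"aki--Shao framework, namely to decompose $\Lambda(n)$ via Heath-Brown's identity into bilinear (Type I/II) and trilinear (Type III) pieces, and then to estimate each type using the minor arc hypothesis on the coefficients $\gamma_i$. For $H > N^{2/3+\varepsilon}$ a clever choice of parameters in Heath-Brown's identity avoids Type III contributions altogether, but in our range $H > N^{2/3-\eta}$ this is no longer possible. The resolution, due in spirit to Heath-Brown, is to bound the Type III contribution \emph{trivially} via a combinatorial sieve (Linnik's identity), which will cost exactly the factor $\eta\log(1/\eta)$ in the final error term.

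First I would open the congruence condition $p\equiv a \pmod{r}$ in Dirichlet characters modulo $r$. Since $r\leq (\log N)^{100}$ is tiny, standard Siegel--Walfisz-type considerations let us absorb the dependence on $r$ essentially into the $\varphi(r)^{-1}$ factor, and it suffices to bound
\[
\Big| \sum_{N\leq n\leq N+H} \Lambda(n)\, \chi(n)\, e(g(n)) \Big|
\]
for each non-principal $\chi\pmod r$ (principal character being handled by the same analysis as the trivial case). Applying Heath-Brown's identity of level $k_0 = \lceil 1/\eta \rceil$ with parameter $z = H^{1/k_0}$, we obtain a bounded linear combination of $(\log N)^{O(1)}$ sums of the form
\[
\sum_{\substack{n_1\cdots n_j \in [N,N+H] \\ n_i\sim N_i}} \Big(\prod_{i\leq j} \alpha_i(n_i)\Big)\, \chi(n_1\cdots n_j)\, e(g(n_1\cdots n_j)),
\]
where some variables carry $\mu$ or $\log$ weights and the remaining ones are unrestricted, with $\prod N_i \asymp N$ and each $N_i \leq H^{2/3-2\eta}$ if it carries a non-smooth coefficient.

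Next I would classify each resulting sum. If one variable is of size $N_i \leq H^{1/3}$, the sum is \emph{Type I} and can be treated by fixing the other variables and applying a Weyl-type bound to $\sum_{n\sim N_i} e(g(n m))$ in arithmetic progressions; the minor arc hypothesis $\|q\gamma_j\|\geq (\log N)^B/H^j$ for all $q\leq(\log N)^B$ gives power-of-log savings. If a variable can be grouped with others to form a bilinear product with both factors in the range $[H^{1/3}, H^{2/3}]$, the sum is \emph{Type II}, and Matom\"aki--Shao's bilinear estimate (van der Corput differencing combined with a Vinogradov-type bound exploiting the polynomial structure and the minor arc assumption) delivers a saving of $(\log N)^{-A}$ for any fixed $A$, provided $H>N^{2/3-\eta}$ and $\eta$ is small. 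These two types contribute $O_A(H/(\log N)^A)$ and hence are negligible.

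The remaining pieces are the \emph{Type III} sums, precisely those where every factorization into two blocks leaves one factor below $H^{1/3}$ or above $H^{2/3}$ with no middle bilinear split. This is the genuine obstruction. Here the plan is to abandon oscillation entirely and use Linnik's identity (Lemma \ref{le:linnik}) to write the contribution of these remaining von Mangoldt weights as an alternating sum $\sum_k (-1)^k/k \cdot d^\star_{k,z}(n)$. Bounding the alternating sum trivially by an upper-bound combinatorial sieve (Lemma \ref{le:sieveprimes}) gives
\[
\Big|\text{Type III contribution}\Big| \ll \sum_{k\leq 1/\eta} \frac{1}{k}\sum_{\substack{N\leq n\leq N+H \\ n\equiv a\pmod r}} d^\star_{k,z}(n) \ll \eta\log(1/\eta)\cdot \frac{H}{\varphi(r)},
\]
after summing the harmonic-type tail coming from $k\leq 1/\eta$ and using the bound from Lemma \ref{le:shiu}. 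Crucially, for $k>1/\eta$ the sum is empty because $z^{k}>N$ forces $d^\star_{k,z}(n)=0$ on integers $n\leq N+H$. This is the main obstacle and the source of the $\eta\log(1/\eta)$ loss, which in the final bound is genuine and cannot be avoided without new ideas on Type III sums of polynomial phases.

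Finally, the second bound on $|\sum_{N\leq n\leq N+H} e(g(n))|$ is considerably easier: it follows directly from Weyl's inequality applied to the polynomial $g$ of bounded degree, exploiting the minor arc hypothesis on some $\gamma_i$. Standard Weyl differencing $i-1$ times produces a linear exponential in $n$ whose frequency is a multiple of $\gamma_i$, and the minor arc assumption then yields cancellation of the requisite power-of-log quality. Combining the Type I, Type II and Type III estimates gives \eqref{eq:matoshaoproper}, completing the sketch.
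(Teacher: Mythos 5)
Your high-level plan --- control Type I/II pieces via Matom\"aki--Shao and absorb the Type III obstruction by a sieve at a loss of $\eta\log(1/\eta)$ --- captures the philosophy of the paper, and your treatment of the second display via Weyl (Lemma \ref{le:weyl}) is fine. However, the Type III step as written has a genuine gap. The Heath-Brown and Linnik decompositions cannot be stacked the way you describe: once you apply Heath-Brown's identity, the residual pieces carry M\"obius and smooth weights, not von Mangoldt, so there are no ``remaining von Mangoldt weights'' to feed into Linnik's identity. The paper instead applies Linnik's identity (Lemma \ref{le:linnik}) directly to $\Lambda$, with the much larger parameter $z = N^{1/3+100\eta}$ so that $z^3 > N$ and only the $k\le 2$ terms survive: the $k=2$ piece is a clean $p_1p_2$ bilinear sum fed into Proposition \ref{prop:typeII}, and the $k=1$ ($z$-rough) piece is then unfolded by Buchstab's identity in Lemma \ref{le:buchstablemma}.

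More seriously, your claimed bound $\sum_{k\le 1/\eta} \tfrac{1}{k}\sum_{n} d^\star_{k,z}(n) \ll \eta\log(1/\eta)\cdot H/\varphi(r)$ with $z = H^{1/\lceil 1/\eta\rceil} \approx N^{2\eta/3}$ does not hold. With $z$ that small, a typical $z$-rough $n\asymp N$ has about $1/\eta$ prime factors and $d^\star_{k,z}(n)$ is exponentially large in $1/\eta$ for $k\asymp 1/(2\eta)$; the smallness in Linnik's identity comes precisely from the alternating signs, which a term-by-term absolute-value bound destroys. In the paper the $\eta\log(1/\eta)$ loss arises elsewhere: one Buchstab step in Lemma \ref{le:buchstablemma} discards integers having a prime factor in the critical window $[N^{1/3-100\eta}, N^{1/3+100\eta}]$, and Brun--Titchmarsh together with Mertens bound that discard by $\eta\log(1/\eta)\cdot H/(\varphi(r)\log N)$. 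Your sketch also omits the resonance case: Proposition \ref{prop:typeII} requires a Diophantine condition on $q(i\gamma_i + (i+1)N\gamma_{i+1})$, and when it fails the phase behaves like $(mn)^{it}$, at which point the paper must invoke Proposition \ref{prop:preHarman} and the Baker--Harman--Pintz trilinear estimate (Proposition \ref{prop:Harman}) --- a substantial ingredient missing from your argument.
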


In \cite{MS}, Matom\"aki-Shao obtain under the same assumptions, cancellations in the left-hand side of \eqref{eq:matoshaoproper} for $H > N^{2/3 + \varepsilon}$ and $r = 1$. In contrast to our Proposition \ref{prop:oscilatory}, they obtain savings of an arbitrary power of the logarithm. We push their result slightly past the $N^{2/3}$ threshold, but at the cost of much weaker, barely non-trivial error terms.

We now state the much easier ``non-oscillatory case''.

\begin{proposition}[Non-oscillatory case] \label{prop:nonoscilatory}
  Let $B, \tau > 0$ and $\eta \in (0, 10^{-7})$ be given. Let $N > H > N^{2/3 - \eta}$. %Let $g(n) = \sum_{i = 1}^{k} \gamma_i (n - N)^{i}$.
  %Suppose that $|\gamma_{i}| \ll H^{-i + 1}$ for all $i = 2, \ldots, k$.
  %Suppose that there exists a $q \leq (\log N)^{B}$ such that for all $i \in \{1, \ldots, k\}$ we have,
  %$$
  %\| q \gamma_i \| \leq \frac{(\log N)^{B}}{H^i}.
  %$$
  Then, for all $N$ sufficiently large with $B$, uniformly in polynomials $g(n) = \sum_{i = 1}^{k} \gamma_i (n - N)^{i}$ such that,
  $|\gamma_1| \leq e^{-\tau r} \leq \eta^4$, $|\gamma_i| \ll H^{-i + 1}$ for $i = 2, \ldots, k$
  and for which there exists a $q \leq (\log N)^{B}$ such that $\|q \gamma_i \| \leq (\log N)^{B} H^{-i}$ for all $i \leq k$,
  $$
  \Big | \sum_{\substack{N \leq p \leq N + H \\ p \equiv a \pmod{r}}} e(g(p)) \log p - \frac{1}{\varphi(r)}\sum_{\substack{N \leq n \leq N + H}} e(g(n)) \Big | \ll \frac{\eta H}{\varphi(r)}
  $$
  for all $r \leq (\log N)^{100}$ and $(a,r) = 1$.
\end{proposition}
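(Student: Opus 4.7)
The plan is to decompose $g$ into a rational-denominator linear piece plus a slowly varying remainder, and then reduce to a prime number theorem in arithmetic progressions combined with Abel summation. For each $i$, let $b_i \in \mathbb{Z}$ be the nearest integer to $q\gamma_i$, so that $\gamma_i = b_i/q + \delta_i$ with $|\delta_i| \leq (\log N)^B/(qH^i)$. For $i \geq 2$, the hypothesis $|\gamma_i| \ll_k H^{-i+1}$ gives $|q\gamma_i| \ll_k (\log N)^B/H < 1/2$ for $N$ large in terms of $k$ and $B$, forcing $b_i = 0$. For $i = 1$ we cannot conclude $b_1 = 0$ in general, but the combination of $|\gamma_1| \leq e^{-\tau r} \leq \eta^4$ with $\|q\gamma_1\| \leq (\log N)^B/H$ yields the critical bound
\[
|b_1| \;\leq\; q|\gamma_1| + \|q\gamma_1\| \;\leq\; q\eta^4 + o(1) \;\leq\; q\eta^4
\]
for $N$ large. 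Hence $g(n) \equiv (b_1/q)(n-N) + \tilde h(n) \pmod 1$ on $[N, N+H]$, where $\tilde h(n) := \sum_{i} \delta_i (n-N)^i$ is smooth and slowly varying, with $|\tilde h'(t)| \ll_k (\log N)^B/(qH)$.

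\textbf{Step 2 (Decomposition, PNT in AP, Abel summation).} Set $M := qr/(q,r)$. Split the prime sum by residue classes $p \pmod{M}$ (using CRT to combine $p \equiv a \pmod r$ with $p - N \equiv c \pmod q$) and the integer sum by classes $n \pmod q$. The key algebraic observation is that on each such class, the periodic factor $e((b_1/q)(p-N))$ is \emph{exactly} constant in $p$: indeed $p - b$ is a multiple of $M$ and $(b_1/q) M = b_1 r/(q,r) \in \mathbb{Z}$. For each reduced class $b \pmod{M}$, Heath-Brown's short-interval prime number theorem (extended uniformly to arithmetic progressions of polylog modulus via Siegel--Walfisz and zero-density estimates; applicable since $H \geq N^{2/3 - \eta} \gg N^{7/12+\varepsilon}$ and $M \leq (\log N)^{B+100}$), combined with Abel summation against $e(\tilde h)$, yields for any $A > 0$:
\[
\sum_{\substack{N \leq p \leq N+H \\ p \equiv b \pmod{M}}} e(\tilde h(p))\log p \;=\; \frac{1}{\varphi(M)}\int_N^{N+H} e(\tilde h(t))\,dt \;+\; O_{A,B}\!\Big(\tfrac{H}{(\log N)^A}\Big),
\]
with an analogous Euler--Maclaurin expansion for the integer sub-sums. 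Summing over the at most $\varphi(M) \leq (\log N)^{B+100}$ residue classes and choosing $A$ large makes the accumulated error negligible.

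\textbf{Step 3 (Main-term analysis and main obstacle).} After assembly, the prime sum equals $(S_P/\varphi(M))\int e(\tilde h)\,dt$ up to negligible error, and the integer sum equals $(S_I/q)\int e(\tilde h)\,dt$ up to negligible error, where $S_P := \sum_{b \,(M),\, b \equiv a \,(r),\, (b,M)=1} e((b_1/q)(b - N))$ and $S_I := \sum_{c \,(q)} e(b_1 c/q)$. When $b_1 = 0$, $S_I = q$, and a short count based on the identity $\varphi(q)\varphi(r) = \varphi((q,r))\varphi(qr/(q,r))$ gives $S_P = \varphi(M)/\varphi(r)$; both sides of the target identity then equal $(1/\varphi(r))\int e(\tilde h)\,dt$, and we are done. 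When $b_1 \neq 0$, $|b_1| \leq q\eta^4 < q/2$ forces $q \nmid b_1$, so the geometric sum $S_I = 0$ exactly, whence the integer sum is only $O((\log N)^{2B})$. Meanwhile $S_P$ reduces via CRT to a twisted Ramanujan sum satisfying $|S_P|/\varphi(M) \leq 1/(\varphi(q/(q, b_1))\,\varphi(r))$; the input $|b_1| \leq q\eta^4$ gives $q/(q,b_1) \geq \eta^{-4}$ and $\varphi(q/(q,b_1)) \gg \eta^{-3}$, so the prime sum is $\ll \eta^3 H/\varphi(r) \ll \eta H/\varphi(r)$, as required. The analytic inputs throughout are standard; the real content is the structural observation in Step 1 that only $b_1$ can be nonzero and that even then $|b_1| \leq q\eta^4 \ll q$. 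Without this quantitative bound one would face the a priori delicate task of comparing the average of a polynomial phase $e(\tilde P(c))$ over reduced residues $c \pmod q$ to its average over all residues, a comparison which fails generically; the smallness of $b_1$ is precisely what makes the relevant Ramanujan sum $c_q(b_1)$ negligible and closes the delicate subcase.
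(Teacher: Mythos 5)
Your proposal follows the same essential route as the paper: the Diophantine decomposition $\gamma_i = b_i/q + \delta_i$, the structural observation that only $b_1$ can be nonzero and $|b_1|\ll q\eta^4$, a short-interval Siegel--Walfisz prime number theorem in arithmetic progressions combined with Abel summation, and finally a Ramanujan-sum cancellation when $b_1\neq 0$. The paper first normalizes to $(a_1,q)=1$, after which $q>e^{\tau r}$ and the Ramanujan sum is just $\mu(q)$; you track $(q,b_1)$ directly, which amounts to the same thing.

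The step that is not justified is the claim, invoked ``via CRT,'' that $|S_P|/\varphi(M)\le 1/\bigl(\varphi(q/(q,b_1))\,\varphi(r)\bigr)$. This is correct when $(q,r)=1$, where $M=qr$ genuinely splits and $S_P$ collapses to the Ramanujan sum $c_q(b_1)$. But when $(q,r)>1$ the factorization $\mathbb{Z}/M\mathbb{Z}\cong\mathbb{Z}/q\mathbb{Z}\times\mathbb{Z}/(r/(q,r))\mathbb{Z}$ fails in general, and so does the bound: taking $q=9$, $r=3$, $b_1=3$ gives $S_P=3\,e(b_1(a-N)/q)$ of modulus $3$, while your claimed bound gives $\varphi(9)/(\varphi(3)\varphi(3))=3/2$. (That particular example has $|b_1|>q\eta^4$, so it lies outside the proposition's hypotheses, but within the hypotheses one can still arrange the phase $e(b_1 b/q)$ to be constant on the residue class $b\equiv a\pmod r$ whenever $q/(q,b_1)\mid r$, in which case the trivial bound $|S_P|=\varphi(M)/\varphi(r)$ is attained and no cancellation occurs.) The paper avoids this by invoking Lemma~\ref{lm:n7}, a sieve estimate for sums $\sum_{(n,d)=1,\ n\equiv a\,(r)}e(n\beta)$ which makes no coprimality assumption between $d$ and $r$ --- that lemma is precisely what carries the extra content needed in the non-coprime case, and your sketch bypasses it. You should either add a careful treatment of $S_P$ when $(q,r)>1$ or appeal to the paper's sieve lemma as is done there.
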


This is a simple consequence of the Siegel-Walfisz theorem in short intervals.

\begin{proposition}[Siegel-Walfisz in short intervals] \label{prop:siegelwalfiszshort}
  Let $\varepsilon > 0$ and $A > 0$ be given. Then, for $(a, r) = 1$, $r \leq (\log N)^{A}$ and $H > N^{7/12 + \varepsilon}$,
  $$
  \sum_{\substack{N \leq p \leq N + H \\ p \equiv a \pmod{r}}} \log p = \frac{H}{\varphi(r)} + O_{A, \varepsilon} \Big ( \frac{H}{\varphi(r) (\log N)^{A}} \Big ).
  $$
  \end{proposition}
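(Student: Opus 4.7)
\textbf{Proof plan for Proposition \ref{prop:siegelwalfiszshort}.}

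The plan is to derive this from the classical Huxley short interval prime number theorem together with the Siegel--Walfisz theorem, by a standard passage through characters. First, one expands the indicator of the arithmetic progression into Dirichlet characters mod $r$:
\begin{equation*}
\sum_{\substack{N \leq p \leq N + H \\ p \equiv a \pmod{r}}} \log p = \frac{1}{\varphi(r)} \sum_{\chi \pmod{r}} \overline{\chi}(a) \bigl(\psi(N + H, \chi) - \psi(N, \chi)\bigr) + O(N^{1/2}),
\end{equation*}
where $\psi(x, \chi) = \sum_{n \leq x} \Lambda(n) \chi(n)$ and the $O(N^{1/2})$ absorbs proper prime powers.

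For the principal character $\chi_0$, one has $\psi(x, \chi_0) = \psi(x) + O((\log N)^2)$, and Huxley's classical theorem on primes in short intervals gives
\begin{equation*}
\psi(N + H) - \psi(N) = H + O_B\bigl(H / (\log N)^B\bigr)
\end{equation*}
for every $B > 0$, uniformly in $H > N^{7/12 + \varepsilon}$. The extraction of the main term $H / \varphi(r)$ for the original sum uses that $\overline{\chi_0}(a) = 1$ when $(a,r) = 1$.

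For each non-principal $\chi \pmod{r}$, one applies the truncated explicit formula
\begin{equation*}
\psi(N + H, \chi) - \psi(N, \chi) = - \sum_{\substack{L(\rho, \chi) = 0 \\ |\gamma| \leq T}} \frac{(N + H)^{\rho} - N^{\rho}}{\rho} + O\Bigl(\frac{N (\log N)^2}{T}\Bigr),
\end{equation*}
with $T$ to be chosen as $T = N(\log N)^{A + 10} / H$. One bounds $|(N + H)^{\rho} - N^{\rho}| \leq H N^{\beta - 1}\min(|\rho|, N/H) \ll H N^{\beta - 1}|\rho|$ in the relevant range of $\gamma$. The sum over zeros is then controlled by partial summation and the Huxley--Ingham zero-density estimate
\begin{equation*}
\sum_{\chi \pmod{r}} N(\sigma, T, \chi) \ll (r T)^{(12/5)(1 - \sigma) + \varepsilon}, \qquad \sigma \geq \tfrac{1}{2},
\end{equation*}
which is precisely what yields the exponent $7/12$. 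This forces one to bound the maximum of $N^{\beta - 1}(r T)^{(12/5)(1 - \sigma)}$ over $\sigma \leq \beta_{\max}$, where $\beta_{\max}$ is the largest real part of any zero $\rho$ of $L(s, \chi)$ for $\chi \pmod{r}$ with $|\gamma| \leq T$.

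The main point to verify is that $\beta_{\max} \leq 1 - c(\log N)^{-1}$ for all such zeros except possibly a single real ``Siegel zero'' associated to some real character. For $r \leq (\log N)^A$, the classical zero-free region of $L(s, \chi)$ yields $\beta \leq 1 - c / \log(r(|\gamma| + 3))$, and the Siegel--Walfisz theorem (through Siegel's ineffective bound $1 - \beta_1 \gg_{\varepsilon} r^{-\varepsilon}$) controls the one possible exceptional zero with a saving of $(\log N)^{-A-1}$. Combining this zero-free region with the density estimate and the choice of $T$ above gives an error of $O_{A,\varepsilon}(H \cdot (\log N)^{-A})$ in the contribution of each non-principal character; summing trivially over $\chi$ (there are $\leq (\log N)^A$ of them) and absorbing the loss into the ineffective constant completes the proof.

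The potential obstacle is the careful balancing of the zero density exponent $12/5$ against the short interval length $H > N^{7/12 + \varepsilon}$; however, since this is exactly Huxley's classical short interval PNT extended to arithmetic progressions of small moduli, the result is standard (see, e.g., \cite[Chapter 10]{IwaniecKowalski}), and may in fact be invoked directly rather than rederived.
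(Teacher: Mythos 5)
Your proof is correct, but takes a different route from the paper. The paper dispenses with the proposition in one line by citing the main theorem of Perelli--Pintz--Salerno on Bombieri--Vinogradov type estimates in short intervals and setting the modulus bound $Q=(\log x)^A$ there. You instead rederive the result from scratch via the classical zero-density pipeline: orthogonality of characters mod $r$, Huxley's short interval PNT for the principal character, truncated explicit formula, the hybrid Huxley--Ingham density estimate $\sum_{\chi \bmod r} N(\sigma,T,\chi) \ll (rT)^{(12/5)(1-\sigma)+\varepsilon}$, the classical zero-free region, and Siegel's ineffective bound to dispose of a possible exceptional zero. The numerology is right: the density exponent $12/5$ balanced against $T \asymp N/H$ is exactly what forces $H > N^{7/12+\varepsilon}$, and with $r \leq (\log N)^A$ Siegel's bound $1-\beta_1 \gg_\delta r^{-\delta}$ gives $N^{\beta_1-1} \ll \exp(-c(\log N)^{1-A\delta})$, which is super-polylogarithmic saving upon choosing $\delta < 1/A$. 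One small imprecision: the bound on $|(N+H)^\rho - N^\rho|/|\rho|$ should read $\ll \min(HN^{\beta-1}, N^\beta/|\gamma|)$, not what you wrote, but this does not affect the outcome. Your approach buys transparency and self-containment at the cost of length; the paper's citation is shorter but relies on the reader having access to the Perelli result. Either is acceptable; you even acknowledge at the end that the result "may be invoked directly rather than rederived," which is what the paper does.
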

  \begin{proof}
    This follows from setting $Q = (\log x)^{A}$ in the main result of \cite{Perelli}.
    \end{proof}

In the ``non-oscillatory case'' an additional assumption on the size of the coefficients of the polynomial $g(n)$ is important, since for example the conclusion of Proposition \ref{prop:nonoscilatory} fails for the polynomial $g(n) = (n - N) / 2$.

\subsection{The Type-I and Type-II information}

The proof of Proposition \ref{prop:oscilatory} will largely rely on the type-I and type-II information obtained by Matom\"aki-Shao in \cite{MS}. We will need slight generalizations of these type-I and type-II estimates to allow for an extra congruence condition. We quickly sketch below the necessary modifications in this subsection. We broke down the results of \cite{MS} into many smaller propositions to make checking simpler. Throughout, given a sequence $\{\alpha_n\}$, we will use the notation
$$
\| \alpha \|_{p} := \Big ( \sum_{M \leq m \leq 2M} |\alpha_m|^{p} \Big )^{1/p}
$$
to denote its $L^p$ norm.

First, we will need the following variant of the Weyl bound.

  \begin{lemma}\label{le:weyl}
    Let $k > 0$ and $g(n) = \sum_{i = 1}^{k} \gamma_{i} (n - N)^{i}$.
    If for all $q \leq (\log N)^{B}$ with $B$ sufficiently large in terms of $k$ there exists an $i \in \{1, \ldots, k\}$ such that
    $$
    \| q \gamma_i \| \geq \frac{(\log N)^{B}}{H^{i}}
    $$
    then, for all $r \leq (\log N)^{100}$, and all $0 \leq a < r$,
    $$
    \sum_{\substack{N \leq n \leq N + H \\ n \equiv a \pmod{r}}} e(g(n)) \ll \frac{H}{(\log N)^{1000}}.
    $$
  \end{lemma}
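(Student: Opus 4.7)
The plan is to make the substitution $n = rm + a$ so that the sum over the arithmetic progression becomes a complete sum of a polynomial exponential over an interval of length $\tilde H := H/r$, and then apply a quantitative Weyl inequality to the resulting polynomial. Writing $rm + a - N = rl + b$ after centering, where $l$ ranges over an interval of length $\tilde H$ and $|b| \leq r$, the polynomial $\hat g(l) := g(rl + b)$ has coefficients
\[
\hat\gamma_j \;=\; r^j \sum_{i=j}^{k}\binom{i}{j}\gamma_i\, b^{\,i-j},
\]
which form a lower-triangular (in $i$) transformation of the $\gamma_i$'s with $\hat\gamma_k = r^k\gamma_k$ on the diagonal.

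The main tool will be the contrapositive of a quantitative Weyl inequality in degree $k$: there exist constants $C_k, \kappa_k$ depending only on $k$ such that if $\bigl|\sum_{|l|\leq \tilde H} e(\hat g(l))\bigr| \geq \tilde H/(\log N)^{1000}$, then one can find $\tilde q \leq (\log N)^{\kappa_k}$ with
\[
\|\tilde q\,\hat\gamma_i\| \;\leq\; \frac{(\log N)^{\kappa_k}}{\tilde H^{i}} \qquad \text{for every } 1 \leq i \leq k.
\]
Such a statement can be obtained by iterated Weyl differencing (equivalent to the ``inverse'' form of Weyl's inequality as used, e.g., in Green--Tao or in \cite{MS}); I would just invoke it. Since $r \leq (\log N)^{100}$ and $\tilde H = H/r$, one has $1/\tilde H^i = r^i/H^i \leq (\log N)^{100 i}/H^i$, so the hypothesis above gives $\|\tilde q\,\hat\gamma_i\| \leq (\log N)^{\kappa_k + 100 k}/H^i$ for all $i$.

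From here, I would set $q := r^k \tilde q$, which still satisfies $q \leq (\log N)^{\kappa_k + 100 k}$, and transfer the Diophantine control back to the coefficients $\gamma_i$ by inverting the triangular system. Starting at the top, $q\gamma_k = \tilde q\,\hat\gamma_k$, so $\|q\gamma_k\| \leq (\log N)^{\kappa_k + 100k}/H^k$. For $j < k$ one recursively writes
\[
\gamma_j \;=\; r^{-j}\hat\gamma_j \,-\, \sum_{i>j} \binom{i}{j}\,\gamma_i\, b^{\,i-j},
\]
multiplies by $q$, and applies the triangle inequality; the factor $r^{-j}$ is absorbed into $q = r^k\tilde q$, while the remaining terms involve $\|q\gamma_i\|$ for larger $i$ (already controlled by induction) multiplied by $|b|^{i-j} \leq r^{i-j} \leq (\log N)^{100 k}$. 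Each inductive step therefore loses at most a factor of $(\log N)^{O(k)}$, so after $k$ steps we obtain
\[
\|q\gamma_i\| \;\leq\; \frac{(\log N)^{\kappa_k + C\, k^2}}{H^i} \qquad \text{for every } 1 \leq i \leq k,
\]
for some absolute $C$. Choosing the constant $B$ in the hypothesis of the lemma larger than $\kappa_k + C k^2 + 100 k$ then contradicts the assumption that for every $q \leq (\log N)^{B}$ there exists $i$ with $\|q\gamma_i\| \geq (\log N)^B/H^i$, which completes the proof. The main technical obstacle is not the substitution or the induction (both routine), but invoking a sufficiently clean ``inverse Weyl'' statement in the form above; I expect to either quote it directly from the polynomial equidistribution literature or sketch its derivation via repeated Weyl differencing.
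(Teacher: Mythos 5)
Your proof is correct, but it takes a genuinely different route from the paper's. The paper removes the congruence condition with additive characters: $\sum_{n\equiv a\,(r)} e(g(n))$ is majorized by $\sup_{0\le \ell<r}\bigl|\sum_{N\le n\le N+H} e(g(n)+n\ell/r)\bigr|$, and the key point is that this shifts \emph{only} the linear coefficient, $\gamma_1\mapsto\gamma_1+\ell/r$, leaving $\gamma_2,\dots,\gamma_k$ untouched. The paper then checks directly that the Weyl hypothesis (the ``direct'' Weyl bound of Montgomery) holds for the shifted polynomial: given $q\le(\log N)^C$, feed $qr$ into the lemma's hypothesis; if the index returned is $i\ge 2$ nothing changed, and if it is $i=1$ then $q\ell$ being an integer makes $\|qr(\gamma_1+\ell/r)\| = \|qr\gamma_1\|$ large. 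By contrast, you substitute $n=rm+a$, producing a new polynomial $\hat g$ over an interval of length $H/r$ whose coefficients $\hat\gamma_j = r^j\sum_{i\ge j}\binom{i}{j}\gamma_i b^{i-j}$ are a full lower-triangular transform of the original ones; you then invoke an \emph{inverse} Weyl statement to get simultaneous Diophantine control on all $\hat\gamma_j$ with a common small $\tilde q$, and undo the triangular system top-down with $q=r^k\tilde q$ to recover control on the $\gamma_j$. Both are sound. The paper's version is slicker because it perturbs a single coefficient and needs no coefficient bookkeeping; yours is more mechanical but perfectly fine, costing a polylog factor at each of the $k$ steps of the inversion — the binomial weights $\binom{i}{j}|b|^{i-j}\le\binom{i}{j}r^{i-j}\le (\log N)^{100k}$ and the $r^{k-j}$ prefactor are absorbed since $r\le(\log N)^{100}$ and $H$ is large — which is compatible with ``$B$ sufficiently large in terms of $k$.''
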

  \begin{proof}
    Pick $C$ sufficiently large in terms of $k$
    so that if for every $q \leq (\log N)^{C}$ there exists an $i \in \{1, \ldots, k\}$ such that
    $\| q \gamma_{i} \| > (\log N)^{C} / H^i$ then
    $$
    \sum_{N \leq n \leq N + H} e(g(n)) \ll \frac{H}{(\log N)^{1000}}.
    $$
    The existence of such a $C > 0$ follows from Weyl's bound  (see \cite[Theorem 2 in Chapter 2]{MontgomeryLectures}).
    We claim that $B = 2C + 100$ is admissible.

    We express the condition $n \equiv a \pmod{r}$ using additive characters so that it is enough to bound
    $$
    \sup_{0 \leq \ell < r} \Big | \sum_{N \leq n \leq N + H} e \Big ( g(n) + \frac{n \ell}{r} \Big ) \Big |.
    $$

    Let $(\ell, r)$ be the tuple that maximizes the above expression. If for all $q \leq (\log N)^{C}$ we have
    $\| q (\gamma_1 + \ell / r) | \geq (\log N)^{C} / H$ then we are done by taking $B = 2C + 100$ and using the Weyl's bound as above.

    Suppose therefore that for some $q \leq (\log N)^{C}$ and $0 \leq \ell < r$, we have
    \begin{equation} \label{eq:weyl}
    \| q (\gamma_1 + \ell / r) \| \leq (\log N)^{C} / H,
    \end{equation}
    then $\gamma_1 + \ell / r = a / q + O((\log N)^{C} / H)$, hence $\gamma_1 = a' / (r q) + O((\log N)^{C} / H)$.
    This however would imply that
    $\| q \gamma_{1} \| \leq (\log N)^{2C + 100} / H$ for some $q \leq (\log N)^{C + 100}$.
    So taking $B = 2C + 100$, it follows that if for each $q \leq (\log N)^{B}$ there exists an $i \in \{1, \ldots, k\}$ such
    that $\| q \gamma_{i} \| > (\log N)^{B} /H^i$ then \eqref{eq:weyl} cannot hold, and hence, by the Weyl bound, we obtain a
    saving of $H (\log N)^{-1000}$.

  \end{proof}

With this lemma in hand we begin with the type-I information.

\begin{proposition} \label{prop:typeI}

   Let $A > 1000$, $\eta \in (0, 10^{-6})$, $k \geq 1$ and $N^{2/3 - \eta} \leq H \leq N$ be given.
  Let $g(n) = \sum_{i = 1}^{k} \gamma_i (n - N)^{i}$ be a polynomial of degree $k \geq 1$. Let $f(\ell) = 1$ or $f(\ell) = \log \ell$.
  Suppose that $M \leq H (\log N)^{- B}$ for some $B$ sufficiently large with respect to $A$ and $k$. Then there exist a constant $C > 0$ sufficiently large with respect to $A$ and $k$ such that if for all $q \leq (\log N)^{C}$ there exists an $i \in \{1, \ldots, k\}$ such that
  $$
  \| q \gamma_{i} \| \geq \frac{(\log N)^{C}}{H^i},
  $$
  then, for all $r \leq (\log N)^{100}$,  complex coefficients $\alpha_m$ supported on $[M, 2M]$ and $(a,r) = 1$,
  $$
  \sum_{\substack{ m \sim M \\ N \leq \ell m \leq N + H \\ \ell  m \equiv a \pmod{r}}} e(g(\ell m)) \alpha_{m} f(\ell) \ll \frac{H / \sqrt{M}}{(\log N)^{A}} \cdot \| \alpha \|_{2}.
  $$
\end{proposition}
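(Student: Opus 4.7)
The plan is to reduce the claim to the congruence-free Type-I estimate of Matom\"aki--Shao \cite{MS} by opening the condition $\ell m \equiv a \pmod{r}$ via additive characters. Writing
\[
\mathbf{1}_{\ell m \equiv a \pmod{r}} = \frac{1}{r}\sum_{b=0}^{r-1} e\!\left(\tfrac{b(\ell m - a)}{r}\right),
\]
the triangle inequality reduces the problem to bounding, for each $b \in \{0,\ldots,r-1\}$,
\[
T_b := \sum_{m \sim M} \alpha_m \sum_{N \leq \ell m \leq N + H} f(\ell)\,e(g_b(\ell m))
\]
where $g_b(n) := g(n) + bn/r$, and then losing the factor $r \leq (\log N)^{100}$. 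It therefore suffices to prove the analogous bound for each $T_b$ with $A$ replaced by $A + 200$. Crucially, $g_b$ differs from $g$ only in its linear coefficient (which becomes $\gamma_1' := \gamma_1 + b/r$), with $\gamma_i' = \gamma_i$ for $i \geq 2$, so I would first verify that the minor-arcs hypothesis transfers: for any $q' \leq (\log N)^{C-100}$, apply the hypothesis at the multiplier $q = rq' \leq (\log N)^C$ to obtain an index $i$ with $\|q\gamma_i\| \geq (\log N)^C/H^i$. For $i \geq 2$ we have $\gamma_i' = \gamma_i$; for $i = 1$ we use $\|q\gamma_1'\| = \|q\gamma_1 + q'b\| = \|q\gamma_1\|$ since $q'b \in \mathbb{Z}$. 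In both cases $\|q'\gamma_i'\| \geq \|q\gamma_i'\|/r \geq (\log N)^{C-100}/H^i$, so $g_b$ satisfies the same hypothesis with constant $C-100$.

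Next, $T_b$ is a Type-I bilinear sum of the shape directly handled by Matom\"aki--Shao in \cite{MS}: given $M \leq H/(\log N)^B$ for a sufficiently large $B$, and the minor-arcs hypothesis on the coefficients $\gamma_i'$, their Type-I proposition yields a bound $T_b \ll (H/\sqrt{M}) \|\alpha\|_2 (\log N)^{-A-200}$. Their argument proceeds by Cauchy--Schwarz over $m$, followed by Weyl differencing $k-1$ times on the inner sum over $\ell$, reducing to a bound on a linear exponential sum; the diophantine input on the original $\gamma_i$'s (rather than on the coefficients of the polynomial $\phi_m(\ell) := g_b(\ell m)$ viewed as a polynomial in $\ell$) is exploited through a pigeonhole argument that produces a common diophantine witness on a positive proportion of the shifts introduced by the Weyl differencing. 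The auxiliary case $f(\ell) = \log \ell$ reduces to $f(\ell) = 1$ by partial summation, losing only a $\log N$ factor which is harmless. Putting these together, and choosing $B$ and $C$ large enough in terms of $A$ and $k$, yields the desired estimate.

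The main obstacle is not the Type-I estimate itself, which lies safely within the range handled by \cite{MS} (for Type-I sums $M$ small, the threshold $H > N^{2/3}$ plays no role, in contrast to the Type-III situation that motivates Theorem \ref{thm:nr4}), but rather the bookkeeping of logarithmic losses. The loss of $r \leq (\log N)^{100}$ from the additive character expansion, the loss of $(\log N)^{100}$ in the constant $C$ from the diophantine transfer, the $\log N$ loss from the partial summation for $f = \log$, and any further losses inside Matom\"aki--Shao's Type-I argument must all be absorbed into the freedom in choosing $B$ and $C$. Since their estimate delivers an arbitrary power of $\log N$ saving, every such loss is polynomial in $\log N$ and can be accommodated by enlarging $C$ and $B$ at the outset.
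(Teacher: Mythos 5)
Your proof is correct and arrives at the same estimate by essentially the same underlying mechanism (additive characters to dispose of the congruence, then Matom\"aki--Shao's Type-I argument), but the two proofs organize the reduction differently. The paper keeps the condition $\ell m \equiv a \pmod r$ in place through the Cauchy--Schwarz step and only expands it in additive characters deep inside the Weyl estimate, via its Lemma \ref{le:weyl}, which is a congruence-compatible replacement for the Weyl lemma used in \cite{MS}. You instead open the congruence at the outermost level, transforming the problem into a family of congruence-free Type-I sums $T_b$ with shifted phases $g_b(n) = g(n) + bn/r$, verify carefully that the minor-arc hypothesis transfers to $g_b$ (at the cost of a factor $r \leq (\log N)^{100}$, which is correctly absorbed by choosing $C$ large), and then appeal to the Matom\"aki--Shao proposition as a black box. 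Your route is slightly more modular since it avoids reopening their internal argument, at the cost of having to check the hypothesis transfer; the paper's route requires a small internal surgery (Lemma \ref{le:weyl} for their Lemma 3.1) but no external bookkeeping. One small inefficiency in your write-up: you count a loss of a factor $r$ from the character expansion, but the $1/r$ prefactor in $\mathbf{1}_{\ell m \equiv a \pmod r} = \tfrac{1}{r}\sum_{b<r} e(b(\ell m - a)/r)$ exactly cancels the sum over $b$, so there is in fact no loss at that step — the only genuine loss is the $(\log N)^{100}$ factor in the diophantine transfer, which you handle.
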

\begin{proof}
  Following Matom\"aki-Shao, we write the sum as
  $$
  \sum_{m \sim M} \alpha_m \sum_{\substack{N / m \leq \ell \leq N / m + H / m \\ \ell m \equiv a \pmod{r}}} e(g(\ell m)) f(\ell)
  $$
  and we apply the Cauchy-Schwarz inequality which leads to the problem of bounding
  $$
  \Big ( \sum_{m \sim M} |\alpha_m|^2 \Big )^{1/2} \cdot \Big ( \sum_{m \sim M} \Big | \sum_{\substack{N / m \leq \ell \leq N / m + H / m \\ \ell m \equiv a \pmod{r}}} e(g(\ell m)) f(\ell) \Big |^2 \Big )^{1/2}
  $$
  From here on, we proceed in the same way as Matom\"aki-Shao starting with the second display of the proof of their Proposition 2.1, with the only difference that we use Lemma \ref{le:weyl} instead of their Lemma 3.1.
  \end{proof}

  We will also need information on the type-II sums,
  $$
  \sum_{N \leq \ell m \leq N + H} \alpha_{\ell} \beta_{m} e(g(n m)).
  $$
  \begin{proposition} \label{prop:typeII}
    Let $A > 1000$, $\eta \in (0, 10^{-6})$ and $N^{2/3 - \eta} \leq H \leq N$ be given.
    Suppose that $\max(N / M, M) \leq H (\log N)^{-B}$ for $B > 1000$ sufficiently large with $A$ and $k$.
    Suppose that for all $q \leq (\log N)^{C}$ with $C$ sufficiently large with respect to $A$ and $k$,
    there is an $i \in \{1, \ldots, k\}$ such that\footnote{We set $\gamma_{k + 1} = 0$}
    $$
    \| q (i \gamma_{i} + (i + 1) N \gamma_{i + 1}) \| \geq \frac{(\log N)^{C}}{H^i}.
    $$
    Then, for any sequence of complex numbers $\{\alpha_{m}\}$ and $\{\beta_{n}\}$ supported on respectively $[M, 2M]$ and $[N / 4M, 4 N / M]$, we have, uniformly in $r \leq (\log N)^{100}$, $(a,r) = 1$,
    \begin{align*}
    \sum_{\substack{M \leq m \leq 2M \\ N \leq m n \leq N + H \\ m n \equiv a \pmod{r}}} & \alpha_m \beta_{n} e(g(m n)) \ll \frac{H}{(\log N)^{A}} \cdot \frac{M^{1/4}}{N^{1/2}} \cdot \| \alpha \|_{4} \cdot \| \beta \|_{2}.
    \end{align*}
  \end{proposition}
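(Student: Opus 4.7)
The plan is to adapt the type-II estimate of Matom\"aki--Shao \cite{MS} (their Proposition~2.2), incorporating the congruence $mn \equiv a \pmod{r}$ and invoking our Lemma~\ref{le:weyl} in place of the Weyl estimate used there. Since the loss from the congruence is essentially a factor of $r \leq (\log N)^{100}$, which is absorbed by the $(\log N)^{-A}$ saving (enlarging $A$ by $200$), the core issue is the type-II cancellation itself.

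First, I would remove the congruence condition by additive characters,
$$
\mathbf{1}_{mn \equiv a \pmod{r}} = \frac{1}{r} \sum_{\ell = 0}^{r - 1} e\!\left( \frac{\ell (mn - a)}{r} \right),
$$
which reduces the problem to bounding, for each $\ell$, the unrestricted type-II sum
$$
S_\ell := \sum_{\substack{m \sim M \\ N \leq mn \leq N + H}} \alpha_m \beta_n\, e\bigl(g_\ell(mn)\bigr), \qquad g_\ell(n) := g(n) + \tfrac{\ell n}{r},
$$
with the same claimed bound for each $\ell$. The polynomial $g_\ell$ differs from $g$ only in its linear coefficient, $\tilde{\gamma}_1 := \gamma_1 + \ell/r$, while $\tilde{\gamma}_i = \gamma_i$ for $i \geq 2$. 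I must verify that the Diophantine hypothesis of the proposition transfers to $g_\ell$: for any $q \leq (\log N)^{C}$, I apply the hypothesis to $qr \leq (\log N)^{C + 100}$ (increasing $C$ by $100$), obtaining an $i \in \{1, \ldots, k\}$ with $\|qr(i \gamma_i + (i+1) N \gamma_{i+1})\| \geq (\log N)^{C + 100}/H^i$. For $i \geq 2$ the quantity is unchanged. For $i = 1$, the addition $\gamma_1 \to \gamma_1 + \ell / r$ contributes $qr \cdot \ell / r = q \ell \in \mathbb{Z}$ under the norm $\|qr \cdot\|$, so again the inequality is preserved. Hence the Diophantine hypothesis holds for $g_\ell$ at level $C$ (passing from $qr$ back to $q$ by enlarging the implicit constant slightly).

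Next, I would run the Matom\"aki--Shao Cauchy-Schwarz scheme on $S_\ell$. The bound $\|\alpha\|_4 \|\beta\|_2$ arises from the two-step approach: apply Cauchy-Schwarz in the $n$-variable to bring out $\|\beta\|_2$; then expand the square as
$$
\sum_{n} \Bigl|\sum_m \alpha_m e(g_\ell(mn)) \mathbf{1}_{\cdots}\Bigr|^2 = \sum_{m_1, m_2} \alpha_{m_1} \overline{\alpha_{m_2}} \sum_{n} e(g_\ell(m_1 n) - g_\ell(m_2 n)) \mathbf{1}_{\cdots},
$$
and then apply Cauchy-Schwarz again in $(m_1, m_2)$ to extract $\|\alpha\|_4^2$, leaving a fourth-moment expression of the inner sum. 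After expansion, the inner $8$-fold sum over $(m_1,\ldots,m_4,n_1,\ldots,n_4)$ reduces, via standard manipulations (shifting $m_j = m + h_j$, $n_j = n + k_j$ and parametrising the differences), to bounding polynomial exponential sums in a single variable, whose leading coefficients are controlled by expressions such as $h k (i \gamma_i + (i+1) N \gamma_{i+1})$. At this step I would apply Lemma~\ref{le:weyl} to obtain cancellation of $(\log N)^{-A'}$ for $A'$ arbitrarily large, provided the off-diagonal terms dominate; the diagonal contributions (where $h_j = 0$ or $k_j = 0$) can be bounded trivially using that $\max(N/M, M) \leq H (\log N)^{-B}$ for $B$ sufficiently large, which provides the required $M^{1/4}/N^{1/2}$ factor.

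The main technical obstacle, as is typical for these type-II arguments, is bookkeeping: verifying that at each step the derived polynomial inherits a Diophantine condition to which Lemma~\ref{le:weyl} applies. Specifically, after the two Cauchy--Schwarz steps, the resulting polynomial is (roughly) $\Delta_{h_1, h_2, h_3} g_\ell(m \cdot)$ where $\Delta_h$ denotes a discrete difference, and its coefficients are the combinations $i \tilde{\gamma}_i + (i+1) N \tilde{\gamma}_{i+1}$ precisely for the reason that $g_\ell$ is evaluated at arguments of the form $m (N + O(H/M))$, and one Taylor-expands around $mN$; this is where the hypothesis of the proposition arises in exactly the stated form. The final summation over $h_1, h_2, h_3$ (the shift parameters) is controlled by a sieve/divisor argument, giving the factor $M^{1/4}/N^{1/2}$ after balancing, and the factor of $r$ lost in Step~1 is absorbed by taking $C$ in Lemma~\ref{le:weyl} large enough.
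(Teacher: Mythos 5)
Your proof is correct but detects the congruence $mn \equiv a \pmod{r}$ with additive characters, whereas the paper uses Dirichlet characters $\pmod{r}$. The multiplicative route has a structural advantage: since $\chi(mn) = \chi(m)\chi(n)$, the twist is absorbed into the arbitrary coefficients $\alpha_m \mapsto \alpha_m\chi(m)$ and $\beta_n \mapsto \beta_n\chi(n)$, leaving the polynomial $g$ completely untouched, so the Diophantine hypothesis applies verbatim and the proof of Proposition~2.2 in \cite{MS} carries over unchanged (the paper simply sets $\delta = (\log N)^{-A-32}$ there). Your additive route keeps $\alpha,\beta$ fixed but shifts the linear coefficient to $\gamma_1 + \ell/r$, and you must then transfer the Diophantine condition to the new polynomial $g_\ell$; you do this correctly, by applying the hypothesis at level $C + 100$ to the modulus $q'r$, observing that $q'r \cdot \ell/r \in \mathbb{Z}$ leaves the relevant $\|\cdot\|$-norm unchanged, and descending to modulus $q'$ via $\|q'\theta\| \geq \|q'r\theta\|/r$ at a harmless $(\log N)^{100}$ cost. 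Both reductions arrive at the same type-II machinery from \cite{MS} supplemented by Lemma~\ref{le:weyl}, so your argument is sound; the paper's version is simply shorter because no Diophantine transfer step is required. One cosmetic point in your sketch of the Cauchy--Schwarz scheme: after the two Cauchy--Schwarz applications the expanded sum should be four-fold, over $(m_1,m_2,n_1,n_2)$, rather than the eight-fold sum you describe, but since you defer to \cite{MS} for that computation this does not affect the soundness of your reduction.
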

  \begin{proof}
    Expressing the condition $\ell m \equiv a \pmod{r}$ using Dirichlet characters, we see that it is enough to bound
    $$
    \sup_{\chi \pmod{r}} \Big | \sum_{\substack{m \sim M \\ N \leq \ell m \leq N + H}} \alpha_{\ell} \chi(\ell) \beta_m \chi(m) e(g(\ell m)) \Big |.
    $$
    The result now follows by going through the proof of Proposition 2.2 in \cite{MS} with $\delta = (\log N)^{-A - 32}$.
  \end{proof}

  The diophantine condition in Proposition \ref{prop:typeII} excludes from consideration those $g$ for which
  \begin{equation} \label{eq:bad}
  \Big | \sum_{N \leq \ell m \leq N + H} \alpha_{\ell} \beta_{m} e (g (\ell m)) \Big | \approx \Big | \sum_{N \leq \ell m \leq N + H} \alpha_{\ell} \beta_{m} (\ell m)^{it} \Big |
  \end{equation}
  for some $|t| \leq N^{k + 1} / H^{k + 2}$. In order to handle these remaining cases, we need additional information on the sequences $\{\alpha_{\ell}\}$ and $\{\beta_{\ell}\}$. In particular, we will assume that either $\alpha_{\ell}$ or $\beta_{\ell}$ admits a bilinear structure. First let us establish a rigorous version of \eqref{eq:bad}. This result is implicit in \cite{MS}.

  \begin{proposition} \label{prop:preHarman}
    Let $A > 1000$, $D > 1000 A$, $\eta \in (0, 10^{-6})$, $k \geq 1$ and $N^{2/3 - \eta} \leq H \leq N$ be given.
    Let $g(n) = \sum_{i = 1}^{k} \gamma_i (n - N)^{i}$ be a polynomial of degree $k$.
    Suppose that for some $C$, there exists a $q \leq (\log N)^{C}$ such that for all $i \in \{1, \ldots, k\},$\footnote{We set $\gamma_{k + 1} = 0$.}
    $$
    \| q (i \gamma_i + (i + 1) N \gamma_{i + 1}) \| \leq \frac{(\log N)^{C}}{H^i}.
    $$
    Let $\{\alpha_{\ell}\}$ be a sequence of complex numbers supported on integers $\ell$ not having prime factors $\leq k! (\log N)^{C}$.
    Then, for $N$ sufficiently large with respect to $C$ and $k$, either of the following holds:
    \begin{enumerate} \item
      There exists $B > 0$ sufficiently large with respect to $A,C$ and $k$ such that for $H' = H (\log N)^{-B}$ and all $r \leq (\log N)^{100}$, $(a,r) = 1$,
    \begin{align*}
      \Big | \sum_{\substack{N \leq n \leq N + H \\ n \equiv a \pmod{r}}} \alpha_{n} e(g(n)) \Big | \ll k! (\log N)^{100} \cdot \frac{H}{H'} \sup_{\substack{\chi \pmod{k! q r}  \\ N \leq N' \leq N + H - H' \\ N (\log N)^{D} / H' \leq |t| \leq (N / H)^{k + 2}}} & \Big |\sum_{\substack{ N' \leq n \leq N' + H'}} \alpha_n \chi(n) n^{it} \Big | \\ & + \frac{H}{(\log N)^{A}} \sum_{N \leq n \leq N + H} |\alpha_n|.
    \end{align*}
  \item There exists an $E$ sufficiently large with respect to $A, C, D$ and $k$, and a $q \leq (\log N)^{E}$ such that for all $i \in \{1, \ldots, k\}$, we have $\| q \gamma_i \| \leq (\log N)^{E} / H^j$.
    \end{enumerate}
  \end{proposition}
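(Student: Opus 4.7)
The plan is to adapt the type-III analysis of Matom\"aki--Shao \cite{MS} to our setting with the additional constraint $n \equiv a \pmod{r}$. Starting from the algebraic identity
\[
n g'(n) \;=\; \sum_{i \geq 0}\bigl(i\gamma_i + (i+1) N \gamma_{i+1}\bigr)(n-N)^{i}
\]
(with $\gamma_0 = \gamma_{k+1} = 0$) and the diophantine hypothesis, I would first choose integers $b_i$ with $|q(i\gamma_i + (i+1)N\gamma_{i+1}) - b_i| \leq (\log N)^C/H^i$, so that on $[N,N+H]$ one has $n g'(n) = q^{-1}\sum_{i=0}^{k} b_i (n-N)^{i} + O_k((\log N)^C/q)$ with $|b_i| \ll qN/H^i$ (using the background bound $|\gamma_j| \ll H^{-j+1}$ from Theorem \ref{thm:nr4}). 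Integrating $g'(u) = (ug'(u))/u$ from $N$ to $n$ and invoking the explicit identity $\int_N^{n}(u-N)^{i}/u\,du = \sum_{j=1}^{i}(-N)^{i-j}(n-N)^{j}/j + (-N)^{i}\log(n/N)$, I obtain
\[
g(n) \;=\; g(N) + \frac{t}{2\pi}\log\frac{n}{N} + R(n-N) + O_k\!\Bigl(\frac{(\log N)^C H}{qN}\Bigr),
\]
where $t := (2\pi/q)\sum_i b_i (-N)^{i}$ and $R(m) = \sum_{j=1}^{k}\rho_j m^j$ is a polynomial with $j!q\rho_j \in \mathbb{Z}$ and $|\rho_j| \ll N/H^j$.

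Next I partition $[N,N+H]$ into $\lceil H/H'\rceil$ subintervals $[N', N'+H']$. On each, writing $n = N' + s$ with $0 \leq s \leq H'$, I Taylor-expand both $R(n-N)$ and $(t/2\pi)\log(1 + s/N')$ around $s = 0$ up to order $k$, absorbing constants into a prefactor $e(\lambda_{N'})$, to obtain
\[
e(g(n)) \;=\; e(\lambda_{N'})\,(n/N')^{it}\,e\bigl(h_{N'}(s)\bigr)\bigl(1 + O((\log N)^{-A-200})\bigr),
\]
where $h_{N'}(s) = \sum_{j=1}^{k}\tau_{N',j}s^j$ with $j!q\tau_{N',j}\in\mathbb{Z}$. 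A size check shows that if $B$ is taken sufficiently large with respect to $A, C, k$, then $|\tau_{N',j}|(H')^j = O((\log N)^{-A-200})$ for each $j \geq 1$, while the tail $O(|t|(H'/N)^{k+1})$ of the log expansion is negligible given $|t| \leq (N/H)^{k+2}$; hence $e(h_{N'}(s))$ is essentially constant on $[0,H']$. As a function of $n$, $e(h_{N'}(n-N'))$ is moreover periodic modulo $k!q$ (from the fractional parts of $\tau_{N',j}$), so one can expand it in Dirichlet characters mod $k!q$ via Fourier inversion on $(\mathbb{Z}/k!q\mathbb{Z})^{\times}$; this uses crucially that $\alpha_n$ is supported on $(n, k!(\log N)^C) = 1$. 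Combined with the expansion of $\mathbf{1}_{n \equiv a \pmod r}$ into Dirichlet characters mod $r$, each subinterval sum becomes a linear combination of at most $O(k!)$ sums of the form $\sum_{N' \leq n \leq N' + H'} \alpha_n \chi(n) n^{it}$ with $\chi$ mod $k!qr$. Summing the $\lceil H/H'\rceil$ subintervals via the triangle inequality yields the bound of case (1), with the error $O_k((\log N)^C H/(qN))$ absorbed into $(H/(\log N)^A)\sum|\alpha_n|$ via Shiu's theorem.

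The lower bound $|t| \geq N(\log N)^D/H'$ in the supremum is obtained by contradiction: if $|t|$ were smaller, the derivatives $(-1)^{j+1}t/(2\pi j (N')^j)$ arising in the log expansion would themselves be small enough to absorb into $h_{N'}$ with a new constant, and unwinding the relations between $b_i$, $\rho_j$, and the original $\gamma_j$ would force $\|q' \gamma_j\| \leq (\log N)^E/H^j$ for some $q' \leq (\log N)^E$ and every $j \in \{1,\ldots,k\}$, which is precisely case (2). The main technical obstacle will be the bookkeeping in the Taylor-expansion step: verifying uniformly in $N'$ that the coefficients $\tau_{N',j}$ really do satisfy $|\tau_{N',j}|(H')^j = o((\log N)^{-A-200})$ after simultaneously accounting for contributions from $R$ and from the log expansion, and choosing $B, D, E$ consistently so that the cut-off $|t| \leq (N/H)^{k+2}$ controls the log-tail on every subinterval while still permitting the ``case (2)'' dichotomy to close when $|t|$ is below the threshold $N(\log N)^D/H'$.
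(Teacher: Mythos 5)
Your route is genuinely different from the paper's and I find the opening identity attractive: the paper simply cites the analysis around equations (4.2) and (4.4) of Matom\"aki--Shao, whereas you derive the same structure from scratch by noting $ng'(n) = \sum_i (i\gamma_i + (i+1)N\gamma_{i+1})(n-N)^i$, approximating this by $q^{-1}\sum_i b_i(n-N)^i$ using the diophantine hypothesis, and then integrating $g'(u) = ug'(u)/u$ to split $g(n)$ into a rational-coefficient polynomial $R(n-N)$ plus $(t/2\pi)\log(n/N)$ plus a small error. This makes the decomposition into a ``periodic piece'' and a $(n/N')^{it}$ piece completely explicit, and it cleanly explains where the constraints in case~(2) come from. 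The subinterval decomposition, the character expansion via the support condition on $\alpha$, and the case-(2) dichotomy when $|t|$ is below the threshold are all in the same spirit as the paper.

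However, there is a genuine gap in the middle of your argument. You assert simultaneously that $j!q\,\tau_{N',j}\in\mathbb{Z}$ and that $|\tau_{N',j}|(H')^j = O((\log N)^{-A-200})$, and deduce that $e(h_{N'}(s))$ is ``essentially constant'' on $[0,H']$. These two properties are mutually exclusive for nonzero $\tau_{N',j}$: a nonzero rational with denominator dividing $k!q \leq k!(\log N)^C$ has absolute value $\geq (k!(\log N)^C)^{-1}$, so $|\tau_{N',j}|(H')^j \geq (H')^j/(k!(\log N)^C)$, which is polynomially large in $N$ for $H' \gg N^{1/2}$ — certainly not $O((\log N)^{-A-200})$. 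Concretely, the Taylor coefficients $\tau_{N',j} = \sum_{i\geq j}\binom{i}{j}(N'-N)^{i-j}\rho_i$ of $R$ re-centred at $N'$ can be $\gg 1$, and the exponential $e(h_{N'}(n-N'))$ is emphatically not close to a constant on the full interval $[N',N'+H']$; it is only constant on each \emph{residue class} $n \equiv v \pmod{k!q}$, which is exactly what the paper exploits by first splitting into such classes before invoking the Taylor approximation. The ``essentially constant'' claim is the step that conflates constancy on the whole interval with constancy on each residue class. If you delete that claim and rely solely on the periodicity mod $k!q$ (which you do correctly invoke) together with the character expansion, the argument goes through, but as written there is a genuine error. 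One further small point: the character expansion of $e(h_{N'}(n-N'))$ on $(\mathbb{Z}/k!q\mathbb{Z})^\times$ combined with the expansion of $\mathbf{1}_{n\equiv a\pmod r}$ produces $\ll \varphi(k!qr)$ terms (or $\ll\sqrt{\varphi(k!q)}$ after Cauchy--Schwarz on the Fourier coefficients), not ``at most $O(k!)$''; this does not matter for the eventual applications but should be stated correctly.
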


  \begin{proof}

    By the triangle inequality,
    $$
    \Big | \sum_{\substack{N \leq n \leq N + H \\ n \equiv a \pmod{r}}} \alpha_n e(g(n)) \Big | \leq \sum_{(v, k! q) = 1} \Big | \sum_{\substack{N \leq n \leq N + H \\ n \equiv a \pmod{r} \\ n \equiv v \pmod{k! q}}} \alpha_n e(g(n)) \Big |
    $$
    because $\alpha_n$ is supported on integers having no prime factors $\leq k! (\log N)^{C}$. Cover $[N, N + H]$ with $\ll (\log N)^{B}$ disjoint short intervals $I$ of length $H'$, we bound the above expression by
    $$
    \sum_{(v,k! q) = 1} \sum_{I} \Big | \sum_{\substack{n \in I \\ n \equiv a \pmod{r} \\ n \equiv v \pmod{k! q}}} \alpha_n e(g(n)) \Big |.
    $$
    By the argument in \cite{MS} following equation (4.2), given such an interval $I = [N', N' + H']$ of length $H'$ and given $(v, k! q) = 1$, we have for all
    $n \in I$, and $n\equiv v \pmod{k! q}$,
    $$
    e(g(n)) = \nu n^{it} + O((\log N)^{-A - C} / k!)
    $$
    provided that $B$ is taken sufficiently large with respect to $A,C$ and $k$, and where $|\nu| = 1$, $t = 2 \pi N' (\beta_1 + a/q)$ for some $a \in \mathbb{Z}$ with $|t| \leq (\log N)^{O(C)} (N / H)^{k + 1}$, and with the coefficients $\beta_i$ defined by
    \begin{equation} \label{eq:binomeasy}
    \gamma_j = \sum_{i = j}^{k} \binom{i}{j} (N - N')^{i - j} \beta_i.
    \end{equation}
    In particular, it follows from this that if $|t| \leq (\log N)^{D} N / H' = (\log N)^{D + B} N / H$ then
    $$
    \| q \beta_1 \| \ll \frac{(\log N)^{D + B}}{H}.
    $$
    And since (as shown in equation (4.4) of \cite{MS}) for all $j \in \{1, \ldots, k\}$,
    $$
    \Big | \Big ( \beta_j + \frac{a_j}{q j} \Big ) - \frac{(-1)^{j - 1}}{j N'^{j - 1}} \Big ( \beta_1 + \frac{a}{q} \Big ) \Big | \leq (\log N)^{O(C)} \cdot H^{-j}
    $$
    for some $a_j \in \mathbb{Z}$, it follows from $t = 2\pi N' (\beta_1 + a/q)$ and the assumption $|t| \leq (\log N)^{D + B} N / H$ that for $j \in \{2, \ldots, k\}$,
    $$
    \| k! q \beta_j \| \leq \frac{(\log N)^{O(C)}}{H^j} + \frac{(\log N)^{D + B}}{N^{j - 1} H}.
    $$
    Finally, from \eqref{eq:binomeasy}, we get for all $j \in \{1, \ldots, k\}$,
    $$
    \| k! q \gamma_j \| \leq  2^k \frac{(\log N)^{D + O(C) + B}}{H^j}.
    $$
    Picking $E = D + K C + B + \log k$ with $K$ sufficiently large, we note that $E$ depends on $A,D,C$ and $k$,
    and that there exists a $q \leq (\log N)^{E}$ such that
    $$
    \| q \gamma_j \| \leq \frac{(\log N)^{E}}{H^j}
    $$
    for all $j \in \{1, \ldots, k\}$.

  \end{proof}

  Finally, to rule out the possibility that the bilinear form $\alpha_m \beta_{n}$ resonates with $(m n)^{it}$, we will use the following result of Baker, Harman and Pintz. Note that in order to apply it, one of the sequences $\{\alpha_{m}\}$ or $\{\beta_{n}\}$ appearing in \eqref{eq:bad} needs to have an additional bilinear structure.

   \begin{proposition} \label{prop:Harman}
    Let $A > 1000$, $D > 1000A$, $k > 0$ and $\eta \in (0, 2 \times 10^{-6})$ be given. Let $N^{2/3 - \eta} \leq H \leq N$.
    Let $\{\alpha_{k}\}, \{\beta_{\ell}\}, \{\gamma_{v}\}$ be three sequences of complex numbers supported respectively on $[K, 2K]$, $[L, 2L]$ and $[V, 2V]$ with $K L V \asymp N$.
    Suppose that, for $|u| \leq N (\log N)^{D / 2} / H$, we have
    $$
    \Big | \sum_{\substack{V \leq v \leq 2V}} \frac{\gamma_{v}}{v^{1/2 + iu}} \Big | \ll (\log N)^{-10 A} \Big ( \sum_{V \leq v \leq 2V} |\gamma_{v}|^2 \Big )^{1/2}.
    $$
    Suppose that $\max(K / L, L / K) \leq N^{1/3 - 3 \eta}$ and $V \leq N^{5/9 - 2 \eta}$, then
    $$
    \sum_{\substack{K \leq k \leq 2K \\ L \leq \ell \leq 2L \\ V \leq v \leq 2V \\ N \leq k \ell v \leq N + H}} \alpha_{k} \beta_{\ell} \gamma_{v} \ll \frac{H}{(\log N)^{A}} \cdot \frac{1}{\sqrt{N}} \cdot \| \alpha \|_{2} \| \beta \|_{2} \| \gamma \|_{2}.
    $$
  \end{proposition}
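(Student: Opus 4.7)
The plan is as follows. First, I would encode the length condition $N \leq k\ell v \leq N + H$ via a smoothed Perron-type contour integral, shifting onto the critical line $\Re s = \tfrac{1}{2}$. This expresses the triple sum, up to negligible error, as
$$\frac{1}{2\pi}\int_{-T}^{T} A(\tfrac{1}{2}+it)\, B(\tfrac{1}{2}+it)\, C(\tfrac{1}{2}+it) \cdot \frac{(N+H)^{1/2+it}-N^{1/2+it}}{1/2+it}\, dt,$$
where $A(s) = \sum_k \alpha_k k^{-s}$, $B(s) = \sum_\ell \beta_\ell \ell^{-s}$, $C(s) = \sum_v \gamma_v v^{-s}$, and $T = N^{O(1)}$. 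The Mellin kernel has size $\min(H/\sqrt{N},\sqrt{N}/(1+|t|))$, so it is effectively supported in $|t| \lesssim N/H \leq N^{1/3+\eta}$. I would then split the integration at the threshold $T_0 := N(\log N)^{D/2}/H$.

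In the tail range $|t| > T_0$, the kernel decay $\sqrt{N}/|t|$ supplies a free saving of $(\log N)^{-D/2}$, which comfortably absorbs a dyadic Cauchy–Schwarz followed by the mean value theorem (Lemma \ref{le:mvt}) applied to $A,B,C$ on dyadic height intervals; here the constraints $KLV \asymp N$ and $K,L,V \leq N$ are used only trivially. The essential work lies in the core range $|t| \leq T_0$, where the hypothesis on $C$ is in force pointwise. After extracting the bound $|C(\tfrac{1}{2}+it)| \ll (\log N)^{-10A}\|\gamma\|_2$, the problem reduces to controlling $\int_{|t|\leq T_0}|A(\tfrac{1}{2}+it)||B(\tfrac{1}{2}+it)|\,dt$. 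A plain Cauchy–Schwarz plus the mean value theorem yields $T_0 \|\alpha\|_2\|\beta\|_2$, which is too weak. I would instead dyadically decompose according to the size of $|A(\tfrac{1}{2}+it)B(\tfrac{1}{2}+it)|$ and apply the Huxley-type large values estimate (Lemma \ref{le:huxleylargevalues}) to the bilinear Dirichlet polynomial $(AB)(s)$ of length $KL \asymp N/V$, whose coefficients are $\sum_{k\ell=n}\alpha_k\beta_\ell$ and whose $L^2$-norm is under control via Cauchy–Schwarz and the divisor bound. The hypotheses $\max(K/L,L/K) \leq N^{1/3-3\eta}$ and $V \leq N^{5/9-2\eta}$ force $K,L$ to be balanced and $KL \geq N^{4/9+2\eta}$, which is precisely what is needed so that the sextic term in the large values bound is subdominant to the second-moment term at height $T_0$.

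The principal obstacle is numerological: the $(\log N)^{-10A}$ saving from the hypothesis on $C$ must beat the $T_0 \asymp N^{1/3+\eta}$ loss incurred by a second moment integration of $AB$ over $[-T_0,T_0]$, so the naïve Cauchy–Schwarz must be replaced by the full strength of Huxley's large values theorem. The threshold $V \leq N^{5/9-2\eta}$ is exactly the value at which the sextic and second-moment terms in Lemma \ref{le:huxleylargevalues} for $AB$ come into balance, while $\max(K/L,L/K)\leq N^{1/3-3\eta}$ rules out degenerate factorizations which would defeat the large values input. The exponent $10A$ in the hypothesis on $C$ is similarly calibrated to absorb the $(\log N)^{O(D)}$ losses produced by the large values dissection together with the dyadic decomposition in both $|t|$ and in the size of $|AB|$.
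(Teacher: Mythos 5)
The paper's own proof of Proposition~\ref{prop:Harman} is a one-line citation to the case $g = 1$ of Harman's Lemma 7.3 (equivalently Lemma 9 of Baker--Harman--Pintz), together with the verification that $\min(4\theta-2,\tfrac{4\theta-1}{3},\tfrac{24\theta-13}{3}) = \tfrac{4\theta-1}{3} \geq \tfrac59 - 2\eta$ for $\theta > \tfrac23 - \eta$. You are instead attempting a direct proof, which is a legitimate thing to try, and your opening moves (Perron with a smoothed kernel, localizing to $|t| \lesssim N/H$, pulling out the pointwise bound on $C$) do track the overall shape of the BHP argument. But there is a genuine gap in your key step.

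You claim that after extracting $\sup|C| \ll (\log N)^{-10A}\|\gamma\|_2$ it suffices to show $\int_{|t|\le T_0}|AB|\,dt \ll (\log N)^{O(A)}\|\alpha\|_2\|\beta\|_2$, and that Lemma~\ref{le:huxleylargevalues} applied to $(AB)(s)$ closes this because $KL \geq N^{4/9+2\eta}$ makes the sextic term subdominant. This is not correct. The sextic term of Huxley's bound at the trivial threshold $R = T_0$ is $T_0^4/(KL)^2$, which is $\leq T_0$ precisely when $T_0 \leq (KL)^{2/3}$, i.e.\ $V \leq H^{3/2}/N^{1/2}$; for $H = N^{2/3-\eta}$ this caps $V$ at roughly $N^{1/2}$, whereas the proposition allows $V$ up to $N^{5/9-2\eta}$. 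In the range $N^{1/2} < V < N^{5/9}$ the sextic term dominates and the dyadic summation over the size of $|AB|$ leaves a residual factor of order $T_0 / (KL)^{1/3} \approx N^{5/27}$ in the worst case, a genuine power loss. Relatedly, even $\int_{|t|\le T_0}|AB|\,dt$ alone can be as large as $T_0/\sqrt{KL}\cdot\|\alpha\|_2\|\beta\|_2 \approx N^{1/9}\|\alpha\|_2\|\beta\|_2$ (take $\alpha$, $\beta$ each concentrated at a single integer), so the reduction to a logarithmic bound on that integral is impossible at the level of generality you are working at. A secondary issue is that the asserted control of $G = \sum_n |c_n|^2/n$ for $c_n = \sum_{k\ell=n}\alpha_k\beta_\ell$ via ``Cauchy--Schwarz and the divisor bound'' only yields $G \ll (KL)^{-1}(\sum_k|\alpha_k|^2 d(k))(\sum_\ell|\beta_\ell|^2 d(\ell))$, which is not $\ll (\log N)^{O(1)}\|\alpha\|_2^2\|\beta\|_2^2/(KL)$ for arbitrary complex coefficients.

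The correct argument (as in Harman or BHP) does not treat $AB$ as a single black-box polynomial, nor does it use the sup of $|C|$ in isolation. It performs a simultaneous large-values dissection in $|A|$, $|B|$, and $|C|$, uses the Halász--Montgomery inequality on each of the three factors separately, interpolates between the Montgomery second-moment bound, the Huxley sextic bound, and the trivial bound on a case-by-case basis, and uses the balance hypothesis $\max(K/L,L/K) \leq N^{1/3-3\eta}$ to control precisely the ``mixed'' regime where one of $A$, $B$ has small values while the other is large. This three-way balancing is where the exponents $4\theta-2$, $\tfrac{4\theta-1}{3}$, and $\tfrac{24\theta-13}{3}$ come from, and it is this book-keeping — not the balance of the sextic term for $AB$ — that yields the threshold $V \leq N^{5/9-2\eta}$.
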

  \begin{proof}
    This follows from the case $g = 1$ of \cite[Lemma 7.3]{Harman} (alternatively see \cite[Lemma 9]{BHP}) since for $\theta > 2/3 - \eta$,
    $$
    \gamma := \min \Big ( 4 \theta - 2, \frac{4 \theta - 1}{3} , \frac{24 \theta - 13}{3} \Big ) = \frac{4 \theta - 1}{3} \geq \frac{5}{9} - 2 \eta.
    $$
    See also for e.g.\ \cite[Lemma 2.3]{MS} for the details of this deduction. Note that \cite[Lemma 2.3]{MS} is more restrictive than necessary and stated with the exponent $\tfrac{4}{9}$ instead of the exponent $\tfrac{5}{9}$.
  \end{proof}

  \subsection{The oscillatory case}

In this subsection we will prove Proposition \ref{prop:nonoscilatory}.
 Therefore, we will assume that for all $q \leq (\log N)^{B}$, with $B$ sufficiently large with respect to $k$, there exists an index $i \in \{1, \ldots, k\}$ such that
  \begin{equation} \label{eq:diophantine}
  \| q \gamma_{i} \| \gg \frac{(\log N)^{B}}{H^{i}}
\end{equation}
and where $\gamma_i$ are coefficients of the polynomial $g(n) = \sum_{i = 1}^{k} \gamma_i (n - N)^{i}$.
  In this situation, if $B$ is sufficiently large in terms of $k$ then it follows from Lemma \ref{le:weyl} that for all $r \leq (\log N)^{100}$ and $0 \leq a < r$,
  $$
  \sum_{\substack{N \leq n \leq N + H \\ n \equiv a \pmod{r}}} e (g(n)) \ll \frac{N}{(\log N)^{1000}}.
  $$
  Removing the $\log p$ weight, it therefore remains to show that if $B$ in \eqref{eq:diophantine} is sufficiently large, then
  \begin{equation} \label{eq:toboundshao}
  \sum_{\substack{N \leq p \leq N + H \\ p \equiv a \pmod{r}}} e(g(p)) \ll \eta \log \frac{1}{\eta} \cdot \frac{H}{\varphi(r) \log N}.
  \end{equation}
  Notice that we can assume that $N^{2/3 - \eta} \leq H \leq N^{2/3 + \eta}$. If $H > N^{2/3 + \eta}$ then the conclusion follows from \cite[Theorem 1.3]{MS}.

  Taking $z = N^{1/3 + 100 \eta}$ and using Linnik's identity (Lemma \ref{le:linnik}), we bound the left-hand side of \eqref{eq:toboundshao} by
  \begin{equation} \label{eq:linnik}
  \Big | \sum_{\substack{N \leq n \leq N + H \\ p | n \implies p > z \\ n \equiv a \pmod{r}}} e(g(n)) \Big | + \Big | \sum_{\substack{N \leq n m \leq N + H \\ p | n,m \implies p > z \\ n,m > z \\ n m \equiv a \pmod{r}}} e(g(n m)) \Big | + O(N^{2/3 - 2 \eta}),
\end{equation}
where the $O(N^{2/3 - 2 \eta})$ accounts for the modifications on integers $n$ with $n = p^{\alpha}$ with $\alpha \geq 2$.
  We notice that the second sum falls exactly within the scope of applicability of Proposition \ref{prop:typeII}. Indeed, we can write this sum as a linear combination of expressions of the form
  $$
  \sum_{\substack{m \sim M \\ N \leq n m \leq N + H \\ n,m > z \\ n m \equiv a \pmod{r}}} \alpha_n \alpha_m e(g(n m))
  $$
  with $\sqrt{N} < M < N / z = N^{2/3 - 100 \eta}$ and $\alpha_n$ the indicator function of integers $n$ such that $p | n \implies p > z$. Since $z^2 > M$, this means that $\alpha_n$ is in fact the indicator function of prime numbers.

  The next lemma establishes cancellations in this bilinear sum.
  \begin{lemma}
    Let $\eta \in (0, 10^{-7})$.
    Let $\sqrt{N} < M < N^{2/3 - 100 \eta}$. Then, for $N^{2/3 - \eta} \leq H \leq N^{2/3 + \eta}$,
    $$
    \sum_{\substack{p \sim M \\ N \leq p q \leq N + H \\ p q \equiv a \pmod{r}}} e(g (p q)) \ll \frac{H}{(\log N)^{1000}}
    $$
    provided that $B$ in \eqref{eq:diophantine} is taken to be sufficiently large.
  \end{lemma}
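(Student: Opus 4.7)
The plan is to view the sum as a Type II bilinear form, apply Proposition \ref{prop:typeII} directly when possible, and otherwise reduce via Proposition \ref{prop:preHarman} to a Dirichlet polynomial estimate that will be handled by Baker--Harman--Pintz (Proposition \ref{prop:Harman}). Set $\alpha_m = \mathbf{1}_{m\text{ prime}}$ on $[M,2M]$ and $\beta_n = \mathbf{1}_{p\mid n \Rightarrow p > z}$ on $[N/2M, 2N/M]$, with $z := N^{1/3+100\eta}$, so the sum equals $S := \sum_{mn \equiv a \, (r)} \alpha_m\beta_n e(g(mn))$. Since $N^{1/2} \leq M \leq N^{2/3-100\eta}$, both $M$ and $N/M$ lie below $N^{2/3-100\eta} \leq H(\log N)^{-B}$, so the size hypothesis of Proposition \ref{prop:typeII} holds.

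If, in addition, the diophantine hypothesis of Proposition \ref{prop:typeII} is satisfied, namely that for every $q \leq (\log N)^{C_1}$ some $i \in \{1,\dots,k\}$ satisfies $\|q(i\gamma_i + (i+1)N\gamma_{i+1})\| \geq (\log N)^{C_1}/H^i$, then that proposition (with $A = 1100$) combined with the trivial bounds $\|\alpha\|_4 \ll M^{1/4}$ and $\|\beta\|_2 \ll (N/M)^{1/2}$ gives
$$
S \ll \frac{H}{(\log N)^{1100}} \cdot \frac{M^{1/4}}{N^{1/2}} \cdot \|\alpha\|_4 \|\beta\|_2 \ll \frac{H}{(\log N)^{1000}}.
$$
Otherwise, I would collapse $S$ into a linear sum $\sum_n \gamma_n e(g(n))$ with $\gamma_n = \sum_{n=pq}\alpha_p\beta_q$ supported on $z$-rough integers. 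Since $z \geq k!(\log N)^{C_1}$, Proposition \ref{prop:preHarman} applies; its conclusion (2) would yield some $q' \leq (\log N)^E$ with $\|q'\gamma_i\| \leq (\log N)^E/H^i$ for all $i$, contradicting the standing hypothesis \eqref{eq:diophantine} once $B > E$. So conclusion (1) holds, reducing the estimate to bounding
$$
\Sigma(\chi, N', t) := \sum_{\substack{p \sim M,\ q\text{ rough} \\ N' \leq pq \leq N'+H'}} \chi(p)\chi(q)(pq)^{it}
$$
uniformly over non-principal $\chi \pmod{k!q'r}$, $N' \in [N, N+H]$ and $|t| \in [N(\log N)^D/H', (N/H)^{k+2}]$, with $H' := H/(\log N)^{B_0}$.

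The main work is to bound $|\Sigma(\chi,N',t)| \ll H'/(\log N)^{1200}$ via Proposition \ref{prop:Harman}, the cancellation engine being the third bound of Lemma \ref{le:canceldirpoly}, which gives $|\sum_{v\sim V}\chi(v)v^{-1/2-iu}\mathbf{1}_{v\text{ prime}}| \ll V^{1/2}(\log N)^{-12000}$ throughout the relevant range of $|u|$. It remains to factor $pq = k\ell v$ with $KLV \asymp N$, $V \leq N^{5/9-2\eta}$, and $\max(K/L,L/K) \leq N^{1/3-3\eta}$. When $M \leq N^{5/9-2\eta}$ I would take $v=p$, $V=M$, and factor $q = k\ell$ using a dyadic decomposition along the smallest prime factor of $q$ (which lies in $[z, (N/M)^{1/2}]$); the ranges $K\sim p^-$, $L\sim (N/M)/p^-$ satisfy the ratio constraint because $M\geq N^{1/2}$ and $N/M \leq N^{1/2}$. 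When $M > N^{5/9-2\eta}$ the rough $q$ is $\leq N^{4/9+2\eta}$ so its smallest prime factor $p^- \leq N^{2/9+\eta} \leq N^{5/9-2\eta}$; this time $v=p^-$, $k=p$, $\ell=q/p^-$, and one checks $K/L = M^2 p^-/N \leq N^{1/3-200\eta}$.

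The main obstacle is carrying out this three-variable decomposition uniformly in $(\chi, N', t)$: the edge cases where $q$ is prime, or where the smallest prime factor of $q$ sits at one of the range endpoints, need to be removed separately by a short Buchstab argument and a dyadic loss of at most $(\log N)^{O(1)}$. Once this decomposition is in place, Proposition \ref{prop:Harman} applied with $A = 1200$ delivers the needed bound on $\Sigma$, and feeding it back through Proposition \ref{prop:preHarman} completes the proof.
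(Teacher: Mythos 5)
Your plan works in the non-resonating branch: applying Proposition~\ref{prop:typeII} directly to $\alpha_m = \mathbf{1}_{m \text{ prime}}$ and $\beta_n = \mathbf{1}_{p \mid n \Rightarrow p > z}$ with the $L^4$/$L^2$ norm bookkeeping you describe is a legitimate shortcut over the paper's route through Vaughan. The genuine gap is in the resonating branch, and it is fatal. Note that with $z = N^{1/3+100\eta}$ and $q \sim N/M \in [N^{1/3+100\eta}, N^{1/2}]$, any $z$-rough $q$ in this range is automatically \emph{prime}, because $q < z^2 = N^{2/3+200\eta}$. So there is no factorization $q = k\ell$ with two nontrivial factors, and no ``smallest prime factor of $q$'' other than $q$ itself. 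Concretely, your Case A interval $[z,(N/M)^{1/2}]$ is empty (since $(N/M)^{1/2} \le N^{1/4} < N^{1/3+100\eta} = z$), and your Case B assertion that the smallest prime factor of a $z$-rough $q$ is $\le N^{2/9+\eta}$ contradicts $z$-roughness ($p^- \ge z = N^{1/3+100\eta}$). With $n = pq$ a product of two large primes, the only trilinear partitions available for Proposition~\ref{prop:Harman} set one of $K,L,V$ equal to $1$, and none of these satisfies simultaneously $V \le N^{5/9-2\eta}$, $\max(K/L,L/K) \le N^{1/3-3\eta}$, and the cancellation requirement on the $V$-polynomial.

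The paper gets the needed trilinear flexibility by \emph{first} applying Vaughan's identity (Lemma~\ref{le:vaughan}) to $\log p$ after an integration by parts, which replaces the prime $p \sim M$ by a convolution $u \cdot v \cdot w$ with $\Lambda(v)$, $\mu(w)$ weights and controllable dyadic lengths $U,V,W$. The resulting sum $\sum e(g(uvwq)) \Lambda(v)\mu(w)\log q$ has four factors; after separating the parts of $u,w$ with small prime factors (needed to satisfy the support hypothesis of Proposition~\ref{prop:preHarman}) and disposing of them via Shiu's theorem, one regroups the remaining rough variables (using Lemma~\ref{le:combi} where necessary) into a valid triple for Proposition~\ref{prop:Harman}. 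Your proposal omits this combinatorial decomposition entirely, and without it the key lemma of Baker--Harman--Pintz cannot be invoked. To repair the argument you would have to insert Vaughan (or an equivalent identity) before passing to Proposition~\ref{prop:preHarman}, which brings you essentially back to the paper's proof.
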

  \begin{proof}
    By the integration by parts, we see that it is enough to prove the same result for a sum weighted by $\log p$ and $\log q$.
    Applying Vaughan's identity (Lemma \ref{le:vaughan}) reduces the problem to bounding type-I and type-II sums. The type-I sums are of the form
    $$
    \sum_{\substack{n m \sim M \\ N \leq m n q \leq N + H \\ m \leq z^2 \\ m n q \equiv a \pmod{r}}} e(g(n m q)) \beta_{m} f(n) \log q
    $$
    for some divisor-bounded coefficients $\beta_m$, with $z := N^{1/12 - \eta - \eta^2}$ and with $f(n) = 1$ or $f(n) = \log n$.
    By Proposition \ref{prop:typeI}, this is $\ll_{C} H (\log N)^{-10^6}$ provided that $B$ in \eqref{eq:diophantine} is chosen sufficiently large.

    Therefore, it remains to obtain a similar saving in the type-II sums of the form
    \begin{equation} \label{eq:typeIIpractice}
    \sum_{\substack{u v w \sim M \\ N \leq u v w q \leq N + H \\ u \sim U, v \sim V , w \sim W \\ u v w q \equiv a \pmod{r}}} e(g(u v w q)) \Lambda(v) \mu(w) \log q
    \end{equation}
    for $U V W \asymp M$ powers of two with $V, W \geq N^{1/12 - \eta - \eta^2}$.
    Since $\sqrt{N} \leq M \leq N^{2/3 - 100 \eta}$, Proposition \ref{prop:typeII} establishes that \eqref{eq:typeIIpractice} is $\ll H / (\log N)^{10^7}$ if for $C > 0$ sufficiently large with respect to $k$ and for all $q \leq (\log N)^{C}$ there is an $i \in \{1, \ldots, k\}$ such that
    $$
    \| q (i \gamma_{i} + (i + 1) N \gamma_{i + 1}) \| \geq \frac{(\log N)^{C}}{H^i}.
    $$
    Therefore, we can assume that there exists a $q \leq (\log N)^{C}$ such that for all $i \in \{1,2, \ldots, k\}$,
    \begin{equation} \label{eq:resonates}
    \| q (i \gamma_{i} + (i + 1) N \gamma_{i + 1} ) \| \leq \frac{(\log N)^{C}}{H^i}.
    \end{equation}
    In \eqref{eq:typeIIpractice}, write $u = u_1 u_2$ and $w = w_1 w_2$ with $u_1, w_1$ such that all the prime factors of $u_1, w_1$ are $\leq k! (\log N)^C$ and all the prime factors of $u_2, w_2$ are $> k! (\log N)^{C}$.
    We note that if $w_1 > N^{\eta^6}$ or $u_1 > N^{\eta^6}$ then the integer $u w v$ has more than $\exp(\log N / (\log\log N)^2)$ distinct prime factors. The contribution of such integers to \eqref{eq:typeIIpractice} is
    $$
    \ll \sum_{N / 4 M \leq q \leq 4 N / M} \log q \sum_{\substack{H / q \leq n \leq N / q + H / q \\ \omega(n) > 10^9 \log\log N}} d_3(n) \ll (\log N)^{-10^8}\sum_{\substack{N / 4 M \leq q \leq 4 N / M \\ H / q \leq n \leq N / q + H / q}} d_3(n) e^{\omega(n)}
    $$
    and by Shiu's theorem (Lemma \ref{le:shiu}) applied to the sum over $n$, we see that the above is $\ll H (\log N)^{-10^6}$.

    It remains therefore to obtain an upper bound for
    $$
    \sum_{\substack{u_1, w_1 \leq N^{\eta^6} \\ p | w_1 \implies p \leq k! (\log N)^{C}}} \mu(w_1) \sum_{\substack{u_2 w_2 v \sim M / (u_1 w_1) \\ N / (u_1 w_1) \leq v u_2 w_2 q \leq N / (u_1 w_1) + H / (u_1 w_1) \\ v \sim V , w_2 \sim W / w_1, u_2 \sim U / u_1 \\ p | u_2, w_2 \implies p > k! (\log N)^{C} \\ v u_1 w_1 \equiv a \overline{u_2 w_2}\pmod{r}}} \log q \Lambda(v) \mu(w_2) e(g(v u_1 w_1 u_2 w_2 q))
    $$
    in the case when \eqref{eq:resonates} holds.
    By Proposition \ref{prop:preHarman}, it suffices to show that there exists a $D > 10^{10}$ such that for every $w_1 \leq N^{\eta^6}$,
    $$
    \sum_{\substack{u_2 w_2 v \sim M / (u_1 w_1) \\ N / (u_1 w_1) \leq v u_2 w_2 q \leq N / (u_1 w_1) + H' \\ v \sim V , w_2 \sim W / w_1, u_2 \sim U / u_1 \\ p | u_2, w_2 \implies p > k! (\log N)^{C}}} \Lambda(v) \mu(w_2) \log q \chi(v u_2 w_2 q) (v u_2 w_2 q)^{it} \ll H' \cdot (\log N)^{-10^6}
    $$
    for $H' = (H / (u_1 w_1)) (\log N)^{-B}$, $B$ a sufficiently large constant depending on $k$, $\chi$ of conductor $\leq k! (\log N)^{F}$ with $F$ sufficiently large with respect to $k$ and $N (\log N)^{D} / H' \leq |t| \leq (N / H)^{k + 2}$.

    Suppose that $V > U W / (u_1 w_1)$ (the case of $U W / (u_1 w_1) < V$ is essentially identical as it amounts to swapping the roles of $v$ and $u_2 w_2$).
    Then since $N^{1/2 - \eta^4} \leq U V W / (u_1 w_1) \leq N^{2/3 - 100 \eta}$, we have $V \geq N^{1/4 - \eta^3}$ and also $V \leq N^{2/3 - 100 \eta}$. Moreover, $U W \leq N^{1/3 - 50 \eta + \eta^4}$ and $N^{1/3 + 100 \eta} \leq N / M \leq \sqrt{N}$. Therefore,
    $$
    \max \Big ( \frac{N / M}{V}, \frac{V}{N / M} \Big) \leq \max (N^{1/4 + \eta^3}, N^{1/3 - 200 \eta} ) = N^{1/3 - 200 \eta}
    $$
    and we also have $N^{1/12 - 2 \eta} \leq U W \leq N^{1/3 - 50 \eta + \eta^4}$. Note, moreover, that by Lemma \ref{le:canceldirpoly}, for every $|u| \leq N (\log N)^{D / 2} / H$, we have, for $A = 10^{6}$ and $W, V > N^{\varepsilon}$,
    $$
    \Big | \sum_{\substack{w \sim W \\ p | w \implies p > k! (\log N)^{C}}} \frac{\mu(w) \chi(w)}{w^{1/2 + it}} \Big | \ll \frac{\sqrt{W}}{(\log N)^{10 A}} \ \ , \ \  \Big | \sum_{\substack{v \sim V}} \frac{\Lambda(v) \chi(v)}{v^{1/2 + iu - it}} \Big | \ll \frac{\sqrt{V}}{ (\log N)^{10 A}}
    $$
    since $|t| > N (\log N)^{D} / H$ and $D$ is much larger than $10 A$.
    Therefore, Proposition \ref{prop:Harman} is applicable and gives the required saving.

%    Since $\sqrt{N} \leq M \leq N^{2/3 - 3 \eta}$ we can apply Proposition \ref{prop:typeII} as long as we can verify that, for any $A > 0$ and some $C$ sufficiently large in terms of $A$ and $k$,
%    $$
%   \sum_{\substack{v w \sim M \\ N \leq v w q \leq N + H \\ v \sim V , w \sim W}} (v w q)^{it} \chi(v w q) \beta_{v} \gamma_{w}  \ll \frac{H}{(\log N)^{C / 4}}
%   $$
%   holds for $N (\log N)^{C} / H \leq |t| \leq (N /H)^{k + 2}$ and all characters $\chi$ of conductor $\leq k! (\log N)^{B + A}$.

%   The observation is that if $V$ or $W$ is $\leq N^{1/3 - 4\eta  - \varepsilon}$ then we are good by simply apply the $L^{\infty}$ bound on the smallest of these two variables and $L^2$ on the remaining two (which are in length larger than $N^{1/3 + \eta}$). So we can assume that $N^{1/3 - 4 \eta} \leq V, W \leq N^{1/3 + \eta}$ say. But notice now that this constrains the small prime variable to be in $[N^{1/3 + \eta}, N^{1/3 + 10\eta}]$ and this is a rare event that we can eliminate off the bat. So at the very beginning we bound the contribution from the number of primes with a very localized small prime variable showing that we can assume that both prime variables start off at $N^{1/3 + 100 \eta}$.

  \end{proof}

%  We now use Vaughan's identity on one of the variables. We use Vaughan's identity in such a way so that the non-smooth part is capped in the type-I sum at $N^{\eta / 2}$. In this way we see that we can get rid of the type-I contribution by appealing to the result of Matomaki-Shao. Now we can express one of the variables as a type-II sum. Here by the type-II estimate of Matomaki-Shao it follows that $g$ pretends to be a multiplicative character. Now we are in good shape because at least one of the type-II decomposition of the variable $m$ will be shorter than $N^{1/3}$ so we Cauchy on it and on the rest and we are in good shape.

 % Thus to apply Proposition \ref{prop:typeII} it remains to verify that we have cancellations in
 % $$
 % \sum_{\substack{M \leq m \leq 2M \\ N \leq m n \leq N + H}} \alpha_m \chi(m) \alpha_n \chi(n) (m n)^{it}
 % $$
 % for all characters $\chi$ of conductor $\leq (\log N)^{A}$ and for $N (\log N)^{C} / H \leq |t| \leq (N / H)^{k + 2}$.
 % At this point we can open into Vaughan's identity. We are OK on type-II sums because at least one of them will  be of length $\leq N^{1/3 - \eta}$ and $> N^{\eta}$ so just Cauchying on this sum and the rest does the job. But the type-I sums are more of a pain in the ass. Let's say that we pick a very small $z$ in Vaughan identity. So we have a long smooth variable, of length in $[\sqrt{N}, N^{2/3 - \eta}]$. So it seems that the only real trouble is if everything is smooth that is if instead of primes we just have smooth variables and then a divisor function. Yes indeed this is a bit of a problem.

  In order to handle the contribution of the first sum in \eqref{eq:linnik}, we will use the following lemma.
  We refer the expert reader to subsection \ref{se:alternative} for a quicker alternative treatment relying on
  Harman's book \cite{Harman}.
 % \textbf{Might need to do the same thing for the second sum -- specifically in the case when the polynomial phase pretends to be a multiplicative character}
 % \textbf{Actually I don't think we have to : We are OK with the second sum as soon as there is any non-trivial perturbation coming from Vaughan identity; the only case in which we are in real trouble is when $m$ and $n$ are both smooth variables. But in this case we can go back to the very beginning and see that the type-I estimate does the job -- So perhaps apply Vaughan right away and explain that the case of $d_2$ is eliminated, once that is done we are left with the case of $n^{it}$ but here we always have enough splitting to be happy} \marginpar{Thoughts on the second sum}

  \begin{lemma} \label{le:buchstablemma}
    Let $\eta \in (0, \tfrac{1}{1000})$.
    Let $w = N^{\eta^4}$ and $v = N^{\eta^2}$ and $y = N^{1/3 - 100 \eta}$, $z = N^{1/3 + 100 \eta}$ and $N^{2/3 - \eta} \leq H \leq N^{2/3 + \eta}$. Then,  there exist coefficients $\lambda_d$ with $|\lambda_d| \leq 1$ such that for all $N$ sufficiently large with respect to $1/\eta$, and $(a,r) = 1$, $r \leq \log^{100} N$,
    \begin{align*}
    & \sum_{\substack{N \leq n \leq N + H \\ n \equiv a \pmod{r} \\ p | n \implies p \geq z}} e(g(n)) = \sum_{\substack{N \leq n \leq N + H \\ n \equiv a \pmod{r}}} e(g(n)) \Big ( \sum_{\substack{d | n \\ d \leq v}} \lambda_d \Big ) - \sum_{\substack{N \leq p n \leq N + H \\ w \leq p < z \\ p n \equiv a \pmod{r}}} e(g(p n)) \Big ( \sum_{\substack{d | n \\ d \leq v}} \lambda_d \Big ) \\ & + \sum_{\substack{N \leq n p_1 p_2 \leq N + H \\ w \leq p_1 < p_2 < y \\ n p_1 p_2 \equiv a \pmod{r}}} e(g(n p_1 p_2)) \Big ( \sum_{\substack{d | n \\ d \leq v}} \lambda_d \Big ) + \sum_{3 \leq k \leq \eta^{-4}} (-1)^k \sum_{\substack{N \leq n p_1 \ldots p_k \leq N + H \\ w \leq p_1 < p_2 < \ldots < p_k < y \\ p | n \implies p \geq w \\ n p_1 \ldots p_k \equiv a \pmod{r}}} e(g(n p_1 \ldots p_k)) \\ & + O \Big ( \eta \log \frac{1}{\eta} \frac{H}{\varphi(r) \log N} \Big ).
    \end{align*}
  \end{lemma}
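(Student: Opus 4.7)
The plan is to prove the identity by iterating Buchstab's identity and approximating the resulting sifted sums by a linear sieve. Write $\mathcal{A} := \{N\leq n\leq N+H: n\equiv a\pmod{r}\}$, $\mathcal{A}_d := \{m : dm\in \mathcal{A}\}$, and
\begin{equation*}
S_d(\mathcal{A}, t) := \sum_{\substack{m\in \mathcal{A}_d\\ p\mid m\Rightarrow p\geq t}} e(g(dm)),
\end{equation*}
so the LHS of the lemma is $S_1(\mathcal{A}, z)$. I take $\lambda_d$ to be Rosser--Iwaniec linear-sieve coefficients of level $v=N^{\eta^2}$ sifting by primes below $w=N^{\eta^4}$, satisfying $|\lambda_d|\leq 1$ with $\sum_{d\mid n,\,d\leq v}\lambda_d = \mathbf{1}_{(n,P(w))=1}+R(n)$ and $R(n)$ satisfying strong averaged bounds by the fundamental lemma (gain $\exp(-\log v/\log w)=e^{-1/\eta^2}$).

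I apply Buchstab twice to obtain
\begin{equation*}
S_1(\mathcal{A}, z) = S_1(\mathcal{A}, w) - \sum_{w\leq p_1<z}S_{p_1}(\mathcal{A}, w) + \sum_{w\leq p_2<p_1<z}S_{p_1 p_2}(\mathcal{A}, p_2).
\end{equation*}
Replacing $S_1(\mathcal{A},w)$ and $S_{p_1}(\mathcal{A},w)$ by their sieve-weight approximations produces Terms~1 and~2 of the lemma; the replacement errors are negligible by the fundamental lemma and Lemma~\ref{le:shiu}. I split the remaining double sum by whether $p_1<y$ or $p_1\in[y,z)$. On the good range $p_1<y$, I apply Buchstab once more to $S_{p_1 p_2}(\mathcal{A}, p_2)$; the $S_{p_1 p_2}(\mathcal{A},w)$ part (after relabeling $p_1\leftrightarrow p_2$ so $p_1<p_2<y$ and applying the sieve weight) becomes Term~3, while the next Buchstab correction feeds the next iteration. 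Continuing to iterate only on tails with all extracted primes $<y$ reproduces $(-1)^k\sum_{w\leq p_1<\ldots<p_k<y}S_{p_1\cdots p_k}(\mathcal{A}, w)$ (matching the lemma for $k\geq 3$). I truncate at $K=\lceil \eta^{-4}\rceil$; the truncation tail
\begin{equation*}
\sum_{w\leq p_1<\ldots<p_K<y}|S_{p_1\cdots p_K}(\mathcal{A}, p_K)| \ll \frac{H}{\varphi(r)\log w}\cdot\frac{(\log(\log y/\log w))^K}{K!}\ll \frac{\eta H}{\varphi(r)\log N}
\end{equation*}
is acceptable since $\log y/\log w\asymp \eta^{-4}$ and Stirling gives $K!\gg (K/e)^K\gg (\log\eta^{-4})^K$.

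The main obstacle is the ``bad window'' contribution $\sum_{y\leq p_1<z,\,w\leq p_2<p_1}S_{p_1 p_2}(\mathcal{A}, p_2)$ arising at depth~2: a direct linear-sieve upper bound overshoots the target $O(\eta\log(1/\eta)H/(\varphi(r)\log N))$ by a power of $\eta^{-1}$. The key device will be the reverse Buchstab identity
\begin{equation*}
\sum_{w\leq p_2<p_1}S_{p_1 p_2}(\mathcal{A}, p_2) = S_{p_1}(\mathcal{A}, w)-S_{p_1}(\mathcal{A}, p_1).
\end{equation*}
Summed over $y\leq p_1<z$, the $S_{p_1}(\mathcal{A}, w)$ summand is precisely the ``bad window'' part of Term~2 of the lemma (whose stated prime range $[w,z)$ already contains $[y,z)$), and the two recombine, leaving only the excess $-\sum_{y\leq p_1<z}S_{p_1}(\mathcal{A}, p_1)$. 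By Brun--Titchmarsh and the fact that for $p_1\sim N^{1/3}$ the condition ``all prime factors of $m$ are $\geq p_1$'' with $p_1 m\leq N+H$ forces $m$ to be $1$, a prime, or a product of two primes, one has $S_{p_1}(\mathcal{A}, p_1)\ll H/(p_1\varphi(r)\log N)$. Combined with Mertens' estimate $\sum_{y\leq p_1<z}1/p_1=O(\eta)$, the excess is $O(\eta H/(\varphi(r)\log N))$. The extra $\log(1/\eta)$ factor in the target error bound comes from the cumulative sieve-approximation errors for Terms~1--3 and the Buchstab combinatorial constants, handled via Lemma~\ref{le:shiu} together with Lemma~\ref{le:sieve}, together with the fact that Brun--Titchmarsh is only accurate up to such a constant in our ranges.
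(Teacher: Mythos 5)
Your ``reverse Buchstab'' device is a genuinely different and, on the Brun--Titchmarsh step, arguably more careful route than the paper's. The paper's own decomposition presents the depth-two residual as $\sum_{w\leq p<q<z} S_{pq}(\mathcal A,q)$ with the implicit variable sifted at the \emph{larger} extracted prime $q$, and then estimates the $q\in[y,z)$ slice by Brun--Titchmarsh using that $n$ is sifted up to $y$, which yields the needed $1/\log N$ saving. But an honest two-step Buchstab iteration only leaves the residual sifted at the \emph{smaller} extracted prime (so $\geq w$), under which a direct Brun--Titchmarsh bound saves only $1/\log w = \eta^{-4}/\log N$ and overshoots the target by $\eta^{-4}$, exactly the overshoot you observe. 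Your identity $\sum_{w\leq p_2<p_1} S_{p_1p_2}(\mathcal A,p_2)=S_{p_1}(\mathcal A,w)-S_{p_1}(\mathcal A,p_1)$ circumvents this: $S_{p_1}(\mathcal A,p_1)$ for $p_1\geq y$ \emph{is} sifted at $\geq y$, so Mertens plus Brun--Titchmarsh with the full $\log N$ gives $O(\eta H/(\varphi(r)\log N))$ as desired. That is a real improvement over the argument the paper spells out.

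There is, however, a sign/bookkeeping gap in the ``recombination'' step, and as written the proposal does not close against the lemma's stated form. The reverse Buchstab identity introduces the \emph{positive} quantity $+\sum_{y\leq p_1<z}S_{p_1}(\mathcal A,w)$. Folding this into the decomposition's $-\sum_{w\leq p<z}S_p(\mathcal A,w)$ produces $-\sum_{w\leq p<y}S_p(\mathcal A,w)$, i.e.\ a version of Term~2 with prime range $[w,y)$, not the lemma's stated range $[w,z)$. The missing slice $\sum_{y\leq p<z}S_p(\mathcal A,w)$ is of size $\asymp\eta^{-3}H/(\varphi(r)\log N)$ and cannot be hidden inside $O\big(\eta\log\tfrac1\eta\, H/(\varphi(r)\log N)\big)$. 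So either the identity you actually prove has Term~2 over $[w,y)$ (which is internally consistent and, on a direct combinatorial check of the coefficient of each integer $a$, appears to be what the alternating sum should give), or you need a separate argument to handle the $[y,z)$ slice of the stated Term~2; ``the two recombine'' does not do it, since the two pieces cancel rather than restore the full range. Minor further points: it needs checking that the Rosser--Iwaniec weights you propose meet the lemma's requirement that the \emph{same} $\lambda_d$ appear in Terms~1--3 (the paper builds a bespoke upper-bound sieve for this reason), and the truncation bound should be stated for the honest Buchstab remainder $\sum_{p_1>\dots>p_K}S_{p_1\cdots p_K}(\mathcal A,p_K)$ rather than $S_{p_1\cdots p_K}(\mathcal A,w)$ — though your numerics for it are fine either way.
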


  \begin{proof}

    Iterating Buchstab's identity twice, we see that
    \begin{align*}
      \sum_{\substack{N \leq n \leq N + H \\ n \equiv a \pmod{r} \\ p | n \implies p \geq z}} e(g(n)) & = \sum_{\substack{N \leq n \leq N + H \\ n \equiv a \pmod{r} \\ p | n \implies p \geq w}} e(g(n)) - \sum_{\substack{w \leq p < z \\ N \leq p n \leq N + H \\ p n \equiv a \pmod{r} \\ q | n \implies q \geq w}} e(g(p n)) + \sum_{\substack{w \leq p < q < z \\ N \leq p q n \leq N + H \\ p q n \equiv a \pmod{r} \\ t | n \implies t \geq q \\ t \text{ prime}}} e(g(p q n)).
    \end{align*}
    We will show that at the cost of an error term of size $O(\eta \log(1/\eta) H / (\varphi(r) \log N))$, we can restrict the sum over $w \leq p < q < z$ to $w \leq p < q < y$. Indeed, we notice that the contribution of the integers with $y < q < z$ is bounded by
    \begin{align*}
      \ll \sum_{\substack{y \leq q \leq z \\ w \leq p < z}} \sum_{\substack{N \leq p q n \leq N + H \\ n \equiv a \overline{p q} \pmod{r} \\ t | n \implies t > y \\ t \text{ prime }}} 1 \ll \sum_{\substack{y \leq q \leq z \\ w \leq p \leq z}} \frac{H}{p q \varphi(r) \log N} \ll \eta \log \frac{1}{\eta} \cdot \frac{H}{\varphi(r) \log N}
    \end{align*}
    by the Brun-Titchmarsh theorem \cite[Theorem 6.6]{IwaniecKowalski}.

    On the remaining sum
    $$
    \sum_{\substack{w \leq p < q < y \\ N \leq p q n \leq N + H \\ p q n \equiv a \pmod{r} \\ t | n \implies t \geq q \\ t \text{ prime}}} e(g(n)),
    $$
    we apply Buchstab's identity $\log N / \log w$ times, and this shows that this sum is equal to
    $$
    \sum_{2 \leq k \leq \eta^{-4}} (-1)^k \sum_{\substack{w \leq p_1 < \ldots < p_k < y \\ N \leq p_1 \ldots p_k n \leq N + H \\ p_1 \ldots p_k n \equiv a \pmod{r} \\ t | n \implies t \geq w \\ t \text{ prime}}} e(g(n p_1 \ldots p_k)).
    $$

    It remains to express the condition $p | n \implies t \geq w$ using a sieve on the terms with $k \in \{0,1,2\}$.
    Let $\mathcal{T}$ be the subset of integers $n \in [1, N]$ with the property that all the prime factors of $n$ are less than $w$ and $n$ has at most $100 \lfloor \log_{i + 1} N \rfloor$ distinct prime factors in the interval
    $$
    I_{i} := \Big [ \exp \Big ( \frac{\log N}{(\log_{i} N)^2} \Big ) , \exp \Big ( \frac{\log N}{(\log_{i + 1} N)^2} \Big ) \Big ]  \ , i = 1 , 2 \ldots, J
    $$
    with $J$, the smallest integer such that $\eta^{-4} \leq \log_{J + 1} N \leq \exp(\eta^{-4})$, and where $\log_{1} N := \infty$ so that $\log N / \log_{1} N = 0$. Moreover, letting $w' := \exp(\log N / (\log_{J + 1} N)^2)$, we also require that $n \in \mathcal{T}$ has at most
    $$
    100 \Big \lfloor \sum_{w' \leq p \leq w} \frac{1}{p} \Big \rfloor
    $$
    distinct prime factors in the interval $[w', w]$. Note that $\sum_{w' \leq p \leq w} p^{-1} \geq \frac{1}{2} \log \eta^{-1}$.

    Notice that if $n \in \mathcal{T}$ then in fact $n \leq N^{\eta^2} =: v$. Let also $\mathcal{T}'$ denote the set of integers that can be written as $n = a b $ with $p | a \implies p \leq w $ and $p | b \implies p > w$, and such that $a \in \mathcal{T}$ and $b$ has at most $100 \log(1 / \eta)$ prime factors.

    We notice that on the set $n \in \mathcal{T}'$, we have
    $$
    \mathbf{1}_{p | n \implies p > w} = \sum_{\substack{d | n \\ p|d \implies p \leq w}} \mu(d) = \sum_{\substack{d | n \\ p | d \implies p \leq w \\ d \leq v , d \in \mathcal{T}}} \mu(d) =: \sum_{\substack{d | n \\ d \leq v}} \lambda_d
    $$
    since any divisors $d$ of $n$ with the property that all the prime factors of $d$ are $\leq w$ is a divisor of $a$ and therefore, an element of $\mathcal{T}$, and hence $\leq v$. Here, $\lambda_d$ is defined by setting $\lambda_d = \mu(d)$ whenever $d \in \mathcal{T}$ and $\lambda_d = 0$ otherwise. Moreover,
    \begin{equation} \label{eq:sieveobservation}
    0 \leq \mathbf{1}_{p | n \implies p \geq w} \leq \sum_{\substack{d | n \\ d \leq v}} \lambda_d.
    \end{equation}

    Therefore, we have for $k \in \{0,1,2\}$ the identity
    \begin{align*}
    \sum_{\substack{N \leq n p_1 \ldots p_k \leq N + H \\ w \leq p_1 < \ldots < p_k < y \\ n p_1 \ldots p_k \equiv a \pmod{r}\\ p | n \implies p \geq w}} e(g(n p_1 \ldots p_k)) =  \sum_{\substack{N \leq n p_1 \ldots p_k \leq N + H \\ w \leq p_1 < \ldots < p_k < y \\ n p_1 \ldots p_k \equiv a \pmod{r} \\ n \in \mathcal{T}'}} & e(g(n p_1 \ldots p_k)) \Big ( \sum_{d | n} \lambda_d \Big ) \\ & + O \Big ( \sum_{\substack{N \leq n p_1 \ldots p_k \leq N + H \\ w \leq p_1 < \ldots < p_k < y \\ n p_1 \ldots p_k \equiv a \pmod{r} \\ p | n \implies p \geq w \\ n \not \in \mathcal{T}'}} 1 \Big ).
    \end{align*}
    Furthermore, by \eqref{eq:sieveobservation}, this is equal to
    \begin{align*}
    \sum_{\substack{N \leq n p_1 \ldots p_k \leq N + H \\ w \leq p_1 < \ldots < p_k < y \\ n p_1 \ldots p_k \equiv a \pmod{r} }} & e(g(n p_1 \ldots p_k)) \Big ( \sum_{d | n} \lambda_d \Big ) + O \Big ( \sum_{\substack{N \leq n p_1 \ldots p_k \leq N + H \\ w \leq p_1 < \ldots < p_k < y \\ n p_1 \ldots p_k \equiv a \pmod{r} \\ n \not \in \mathcal{T}'}} \sum_{\substack{d | n \\ d \leq v}} \lambda_d \Big ).
    \end{align*}

    It therefore remains to show that the sum over $n \not \in \mathcal{T}'$ above is negligible for each $k \in \{0,1,2\}$. Since $\lambda_d$ is supported on integers all of whose prime factors are $\leq w$, and $p_1, \ldots, p_k > w$, we have
    $$
    \sum_{\substack{d | n \\ d \leq v}} \lambda_d = \sum_{\substack{d | n p_1 \ldots p_k \\ d \leq v}} \lambda_d.
    $$
    Moreover, the number of representations of a given integer $m$ as $n p_1\ldots p_k$ with $w < p_1 , \ldots, p_k < z$ and all of the prime factors of $n$ less than $w$ is
    $\ll (\log z / \log w)^k$. Therefore,
    \begin{equation} \label{eq:sieveboundelim}
    \sum_{0 \leq k \leq 2} \sum_{\substack{N \leq n p_1 \ldots p_k \leq N + H \\ n p_1 \ldots p_k \equiv a \pmod{r} \\ w \leq p_1 < \ldots < p_k < z \\ n \not \in \mathcal{T}'}} \Big ( \sum_{\substack{d | n \\ d \leq v}} \lambda_d \Big ) \ll \sum_{\substack{N \leq n \leq N + H \\ n \equiv a \pmod{r} \\ n \not \in \mathcal{T}'}} \Big ( \frac{\log z}{\log w} \Big )^2 \cdot \Big ( \sum_{\substack{d | n \\ d \leq v}} \lambda_d \Big )
  \end{equation}
  and it remains to show that this is $\ll \eta H / (\varphi(r) \log N)$.

  Let $f$ be a completely multiplicative function with $f(p) = 1$ for $p \leq 100$ and $f(p) = 2$ for $p > 100$. Then, by the union bound,
  $$
  \mathbf{1}_{n \not \in \mathcal{T}'}  \ll \Big ( \sum_{i = 1}^{J} 2^{-100 \log_{i + 1} N} + 2^{-100 \log(1/\eta)} \Big ) f(n).
  $$
  Therefore, \eqref{eq:sieveboundelim} is
  $$
  \eta^{40} \sum_{\substack{N \leq n \leq N + H \\ n \equiv a \pmod{r}}} f(n) \sum_{\substack{d | n \\ d \leq v}} \lambda_d = \eta^{40} \sum_{\substack{d \leq v \\ (d,r) = 1}} \lambda_d f(d) \sum_{\substack{N \leq n \leq N + H \\ n \equiv \overline{d} a \pmod{r}}} f(n).
  $$
  By \cite[Main Theorem]{Ramachandra}, we have
  \begin{align*}
  \sum_{\substack{N \leq d n \leq N + H \\ n \equiv \overline{d} a \pmod{r}}} f(n) & = \frac{1}{\varphi(r)} \sum_{\chi \pmod{r}} \chi(d) \overline{\chi}(a) \sum_{\substack{N \leq d n \leq N + H}} f(n) \chi(n) \\ & = \frac{H}{d \varphi(r)} \cdot \Big ( \frac{\varphi(r)}{r} \Big )^2 \cdot P (\log N) + O_{A}\Big ( \frac{H}{(\log N)^{A}} \Big )
  \end{align*}
  with $P$ a linear polynomial. Therefore, \eqref{eq:sieveboundelim} is
  $$
  \ll \eta^{40} \cdot \frac{H \log N}{\varphi(r)} \cdot \Big (\frac{\varphi(r)}{r} \Big )^2 \sum_{\substack{d \leq v}} \frac{\lambda_d f(d)}{d}
  $$
  and by definition of $\lambda_d$, we have
  \begin{align*}
    \sum_{\substack{d \leq v \\ (d,r) = 1}} \frac{\lambda_d f(d)}{d} & = \prod_{i = 1}^{J} \Big ( \sum_{\substack{p | n \implies p \in I_i \\ \Omega(n; I_i) \leq 100 \lfloor \log_{i + 1} N \rfloor}} \frac{\mu(d) f(d)}{d} \Big )  \\
    & = \prod_{i = 1}^{J} \Big ( \prod_{\substack{p \in I_i \\ (p, r) = 1}} \Big ( 1 - \frac{2}{p} \Big ) + \Big ( \frac{1}{(\log_{i} N)^{10}} \Big ) \Big )
  \end{align*}
  by Chernoff's bound. Since the Euler product is always larger than the error term, we can bound the above by
  $$
  \ll \Big ( \frac{r}{\varphi(r)} \Big )^2 \cdot \prod_{p \leq v} \Big ( 1 - \frac{2}{p} \Big ) \ll \Big ( \frac{r}{\varphi(r)} \Big )^2 \cdot \frac{\eta^{-8}}{\log^2 N}.
  $$
  It follows that \eqref{eq:sieveboundelim} is
  $$
  \ll  \frac{\eta^{10} H}{\varphi(r) \log N}
  $$
  as needed.

\end{proof}

  We notice that the first two terms,
  $$
  \sum_{\substack{N \leq n \leq N + H \\ n \equiv a \pmod{r}}} e(g(n)) \Big ( \sum_{\substack{d | n \\ d \leq v}} \lambda_d \Big ) \text{ and } \sum_{\substack{N \leq p n \leq N + H \\ w \leq p \leq z \\ p n \equiv a \pmod{r}}} e(g(n)) \Big ( \sum_{\substack{d | n \\ d \leq v}} \lambda_d \Big )
  $$
  fall within the scope of Proposition \ref{prop:typeI}, and in particular, it follows that these terms are $\ll_{A} H (\log N)^{-10^{6}}$ provided that $B$ in \eqref{eq:diophantine} is sufficiently large with respect to $k$.
  We notice that the case $k = 2$ also falls within the scope of Proposition \ref{prop:typeI} since $d p_1 p_2 \leq N^{2/3 - 100 \eta}$. Therefore, we can assume that $k \geq 3$. We localize the variable $n$ in a dy-adic interval $R$. We notice that if $R \geq N^{1/3 + \eta + \eta^2}$ then Proposition \ref{prop:typeI} is once again applicable. We can therefore assume that $R \leq N^{1/3 + \eta + \eta^2}$.

  It therefore remains to show that for each $3 \leq k \leq \eta^{-4}$,
$$
\sum_{\substack{N \leq n p_1 \ldots p_k \leq N + H \\ n \sim R \\ w \leq p_1 < p_2 < \ldots < p_k < y \\ n p_1 \ldots p_k \equiv a \pmod{r} \\ p | n \implies p \geq w}} e(g(n p_1 \ldots p_k)) \ll H \cdot (\log N)^{-{10^6}}.
$$
We then localize each variable $p_i$ in a dyadic interval $[P_i, 2P_i]$ with $w \leq P_i \leq y$ powers of two.  Subsequently, we use contour integral to resolve the condition $p_i < p_{i + 1}$ for $i = 1, 2, \ldots, k- 1$. All these operations introduce logarithmic losses (in total $(\log N)^{O_{\eta}(1)}$) and in particular, it is enough to show that for every $1 \leq R \leq N^{1/3 + \eta + \eta^2}$, $P_i \in [w,y]$ for $i = 1,2, \ldots, k$ with $R P_1 \ldots P_k \asymp N$, we have for some $A > 0$ sufficiently large with respect to $1 / \eta$,
$$
\sum_{\substack{n \sim R, p_i \sim P_i \\ N \leq n p_1 \ldots p_k \leq N + H \\ n p_1 \ldots p_k \equiv a \pmod{r} \\ p | n \implies  p > w}} p_1^{i t_1} \ldots p_{k}^{i t_k} e(g(n p_1 \ldots p_k)) \ll_{A} \frac{H}{(\log N)^{A}}
$$
with $|t_i| \leq y^{1 + 1/100}$ for all $i = 1, 2, \ldots, k$.

Let $\ell$ be the first index such that $P_1 \cdots P_{\ell} > N^{1/3 + 50\eta}$. Then, necessarily $\ell \geq 2$ and $P_1 \cdots P_{\ell} \leq N^{2/3 - 50 \eta}$ since $P_1 \cdots P_{\ell - 1} \leq N^{1/3 + 50 \eta}$ and $P_{\ell} \leq N^{1/3 - 100 \eta}$. Therefore, grouping together the variables $p_1, \ldots, p_{\ell}$ on one side, and the variables $p_{\ell + 1}, \ldots, p_k$, $n$ on the other side, we obtain a bilinear form to which Proposition \ref{prop:typeII} is applicable. Consequently, we can assume that there exists a large constant $C > 0$ depending on $1/\eta$ and $k$ such that\footnote{As usual we set $\gamma_{k + 1} = 0$.}
$$
\| q (i \gamma_i + (i + 1) N \gamma_{i + 1}) \| \leq \frac{(\log N)^{C}}{H^i}
$$
for every $i \in \{1, 2, \ldots, k\}$.

By Proposition \ref{prop:preHarman}, it remains to verify that for $A, B, F$ sufficiently large with respect to $1/\eta$ and $k$, $H' = H (\log N)^{-B}$ and $(N / H') (\log N)^{A^2} \leq |t| \leq (N / H)^{k + 2}$, and $\chi$ of conductor $\leq k! (\log N)^{F}$,
$$
\sum_{\substack{n \sim R,p_i \sim P_i \\ N \leq n p_1 \ldots p_k \leq N + H' \\ p | n \implies p > w}} n^{i t} p_1^{i t_1 + i t} \ldots p_{k}^{i t_k + i t} \chi(n p_1 \ldots p_k) \ll \frac{H'}{(\log N)^{A}}.
$$
Importantly, we notice that $t$ is much larger than the remaining $t_1, \ldots, t_k$. Therefore, by Lemma \ref{le:canceldirpoly},
we have, for $|u| \leq (N / H') (\log N)^{A^2 - 1}$,
$$
\Big | \sum_{\substack{R \leq n \leq 2R \\ p | n \implies p > w}} \frac{\chi(n)}{n^{1/2 + it + iu}} \Big | \ll_{A} \frac{\sqrt{N}}{\log^{A} N}
$$
as long as $R > N^{100 \eta^2}$ and similarly for $|u| \leq (N / H') (\log N)^{A^2 - 1}$,
$$
\Big | \sum_{P_j \leq p \leq 2P_j} \frac{\chi(p)}{p^{1/2 + it + i t_j + iu}} \Big | \ll_{A} \frac{\sqrt{P_j}}{\log^{A} P_j}
$$
for all $j = 1, \ldots, k$ since $P_j \geq w$.

First, let us show that we can assume that $R > N^{\eta}$. In the case $k = 3$ this is clear: indeed, if $R < N^{\eta}$ then $P_1 \ldots P_3 R \leq N^{1 - 100 \eta}$ which is impossible. Let us assume therefore that $k \geq 4$ and that $R \leq N^{\eta}$. In that case, we group together the variable $n$ with the longest variable among the $p_i$'s. This leads to a situation in which we have four variables, all of length $> N^{\eta^2}$, all exhibiting cancellations, and all but the one shorter than $\tfrac 13 - 100 \eta$ (the outlier is still shorter than $\tfrac 13$). It follows then from Lemma \ref{le:combi} that we can group these variables in a way so that Proposition \ref{prop:Harman} is applicable.

In the remaining case, when $R > N^{\eta}$ and $k \geq 3$, we still find ourselves in the situation in which we have at least four variables, and all of them exhibit non-trivial cancellations. Therefore, we conclude again by using Lemma \ref{le:combi} below and Proposition \ref{prop:Harman}.

\begin{lemma}\label{le:combi} Let $\eta \in (0, 10^{-5})$. Let $k \geq 4$.
  Let $a_1 + \ldots + a_k = 1$ be a sequence of real numbers with $0 < a_i < \tfrac 13 - 100 \eta$ for $i=1,\ldots,k-1$, and $0 < a_k < \tfrac 13 + 100 \eta$. Then, there exists a partition of $\{1, \ldots, k\}$ into three disjoint non-empty subsets $I,J,K$ such that
  $$
  \Big | \sum_{i \in I} a_i - \sum_{j \in J} a_j \Big | \leq \frac{1}{3} - \eta \text{ and } \Big | \sum_{k \in K} a_k \Big | \leq \frac{5}{9} - 2 \eta.
  $$
\end{lemma}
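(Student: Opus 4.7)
The plan is to construct the partition greedily. First, relabel so that $a_1 \geq a_2 \geq \ldots \geq a_k$; this is harmless since the hypotheses only say that at most one of the values (originally $a_k$) can exceed $\tfrac{1}{3} - 100\eta$. After relabeling we therefore have $a_1 \leq \tfrac{1}{3} + 100\eta$ and, crucially, $a_i \leq \tfrac{1}{3} - 100\eta$ for every $i \geq 2$ (otherwise two of the original values would exceed $\tfrac{1}{3} - 100\eta$).

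Now I set $K := \{1\}$. Then
$$\sum_{k \in K} a_k \;=\; a_1 \;\leq\; \tfrac{1}{3} + 100\eta \;\leq\; \tfrac{5}{9} - 2\eta,$$
the last inequality being the numerological check $102\eta \leq \tfrac{2}{9}$, which holds since $\eta < 10^{-5}$. Next, I distribute the remaining indices $2, 3, \ldots, k$ between $I$ and $J$ by the standard greedy rule: put $a_2$ into $I$, $a_3$ into $J$, and thereafter process $a_4, \ldots, a_k$ in order, placing each into whichever of $I, J$ currently has the smaller partial sum. Since $k \geq 4$, both $I$ and $J$ receive at least one index, so the partition is into three nonempty pieces.

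To control $|s_I - s_J|$, I will use the elementary invariant that at every stage of the greedy procedure, $|s_I - s_J| \leq a_{\max}$, where $a_{\max}$ is the largest value processed so far. This is immediate by induction: if the discrepancy before adding $a$ is $d \geq 0$ and $a$ is placed on the smaller side, the new discrepancy is $|d - a| \leq \max(d, a)$. Applied here, the final discrepancy is at most $a_2 \leq \tfrac{1}{3} - 100\eta \leq \tfrac{1}{3} - \eta$, which is the first required bound.

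There is no real obstacle here: the argument is a sort-then-greedy balance, and the only substantive use of the hypotheses is (i) the numerical gap $\tfrac{5}{9} - \tfrac{1}{3} = \tfrac{2}{9} \gg 102\eta$ to handle the bound on $K$, and (ii) the observation that only one $a_i$ may exceed $\tfrac{1}{3} - 100\eta$, so the second-largest value is automatically $\leq \tfrac{1}{3} - 100\eta$, controlling the greedy balance. The condition $k \geq 4$ enters only to guarantee that after removing one index for $K$, at least three indices remain so both $I$ and $J$ can be made nonempty.
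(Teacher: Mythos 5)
Your proof is correct, and it takes a genuinely different and cleaner route than the paper. The paper argues by an explicit case analysis on $k = 4, 5, 6$, then for $k \geq 7$ uses an inductive reduction (if some pair $a_\ell + a_j \leq \tfrac13 - 100\eta$, collapse those two indices into one and appeal to the $k-1$ case; otherwise derive a contradiction from the total sum). Your approach replaces this entirely with one uniform argument: sort so the single potentially-large element is isolated as $K$, and then balance the remaining (uniformly small) elements into $I,J$ by the Graham greedy rule, whose standard invariant $|s_I - s_J| \leq \max_i a_i$ gives the discrepancy bound immediately. The two proofs use the hypotheses in the same places (the numerical gap $\tfrac59 - \tfrac13 = \tfrac29 \gg 102\eta$ for the $K$-bound, and the fact that at most one $a_i$ exceeds $\tfrac13 - 100\eta$ for the $I,J$-bound), but yours avoids all case-splitting, avoids the "collapse" step (which in the paper quietly relaxes a strict inequality to non-strict, harmless but slightly awkward), and does not need separate reasoning for large $k$. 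It also sidesteps the minor typo in the paper's $k\geq 7$ argument (the inequality there is written with the sign reversed; the intended point is that the lower bound on $\sum a_i$ exceeds $1$). Overall your version is shorter and would be a genuine simplification.
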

\begin{proof}
  Suppose first that $k = 4$.
  Either $a_1 + a_2 < 5/9 - 2 \eta$ or $a_3 + a_4 < \frac{5}{9} - 2 \eta$. In the first case, we take $K = \{1, 2\}$ and $I = \{3\}, J = \{4\}$. In the second case, we take $K = \{3, 4\}$ and $I = \{1\}, J = \{2\}$.  Suppose now that $k = 5$. If for any two $\ell \neq j$ we have $a_{\ell} + a_{j} \leq \frac{1}{3} - 100 \eta$ then we collapse $a_{\ell} + a_j$ into one element and appeal to the result with $k = 4$. Therefore, we can assume that $a_{\ell} + a_j \geq \frac{1}{3} - 100 \eta$ for any two $\ell \neq j$.  In particular, $\tfrac{1}{3} -100 \eta \leq a_1 + a_2, a_3 + a_4 \leq \tfrac{2}{3} - 200 \eta$, so we take $I = \{1, 2\}, J = \{3, 4\}$ and $K = \{5\}$. Suppose now that $k = 6$. Once again we can assume that for any two $\ell \neq j$ we have $a_{\ell} + a_j > \tfrac{1}{3} - 100 \eta$. Therefore, $\tfrac 13 - 100 \eta \leq a_1 + a_2, a_3 + a_4, a_5 + a_6 \leq \tfrac{2}{3} - 100 \eta$. Moreover, at least one of $a_1 + a_2$ or $a_3 + a_4$ has to be $\leq \tfrac{5}{9} - 100 \eta$, say $a_1 + a_2$. In that case we pick $K = \{1, 2\}$ and $I = \{3, 4\}$ and $J = \{5, 6\}$. Finally, suppose that $k \geq 7$. In that case, as before, we can assume that for any $\ell \neq j$ we have $a_{\ell} + a_{j} \geq \tfrac {1}{3} - 100 \eta$, as otherwise, we are back to the case $k - 1$ which we can assume to be proven. Then either $a_6$ or $a_7$ is greater than $\tfrac{1}{6} - 50 \eta$ because $a_6 + a_7 > \tfrac{1}{3} - 50 \eta$. Without loss of generality, assume that it is $a_7$. This however leads to an impossible situation as
$$
3 \times \Big ( \frac{1}{3} - 100 \eta \Big ) + \frac{1}{6} - 50 \eta < \sum_{i = 1}^{6} a_i + a_7 + \ldots + a_k = 1,
$$
so the case $k \geq 7$ reduces to the earlier case with $k - 1$ variables.
\end{proof}

      \subsection{The non-oscillatory case}
We will prove Proposition \ref{prop:nonoscilatory}.
Suppose therefore that there exist  $B > 0$ and $ q \leq (\log N)^{B}$ such that for all $i \in \{1, \ldots, k\}$ we have $\| q \gamma_i \| \leq (\log N)^{B} / H^{i}$. In this situation, we write
      $$
      \gamma_{i} = \frac{a_{i}}{q} + \theta_{i}
      $$
      with $|\theta_{i}| \leq (\log N)^{B} H^{-i}$.
      We split into progressions $\pmod{q r}$ and we apply Proposition \ref{prop:siegelwalfiszshort} and the integration by parts (using that the derivative of $\sum \theta_{i} (n - N)^{i}$ is $\ll H^{-1} (\log N)^{B}$). This gives us
      \begin{align} \label{eq:nonoscillating}
      \sum_{\substack{N \leq p \leq  N + H \\ p \equiv a \pmod{r}}} e(g(p)) \log p = \frac{1}{\varphi(q r)} \sum_{\substack{(x,q r ) = 1 \\ x \equiv a \pmod{r}}} & e \Big ( \sum_{i = 1}^{k} \frac{(x - N)^i a_i}{q^i} \Big ) \sum_{\substack{N \leq n \leq N + H}} e \Big ( \sum_{i = 1}^{k} (n - N)^{i} \theta_{i} \Big ) \\ \nonumber & + O_{A} \Big ( \frac{H}{(\log N)^{A}} \Big ).
      \end{align}
      We now notice that the assumptions of the theorem imply that $a_{i} = 0$ for all $i \geq 2$. Moreover, if $a_1 \neq 0$ then without loss of generality we can assume that $(a_1, q) = 1$. We also notice that since $|\gamma_1| \leq e^{-\tau r}$, we have $q > e^{\tau r}$ since $a_1 \neq 0$. In particular, for any $A > 1000$,
      $$
      \frac{1}{\varphi(r q)} \cdot e \Big ( - \frac{N a_1}{q} \Big ) \sum_{\substack{(x, qr) = 1 \\ x \equiv a \pmod{r}}} e \Big ( \frac{x a_1}{q} \Big ) \ll_{A} \frac{1}{e^{\tau r / 2}} \cdot \frac{\varphi(q r)}{q r} \cdot q r  + \frac{q r}{(\log q)^{A}}
      $$
      by Lemma \ref{lm:n7}. Therefore, \eqref{eq:nonoscillating} is
      $$
      \ll_{A} \frac{H}{e^{\tau  r / 2}} + \frac{H}{(\log q)^{A}} \ll \frac{\eta H}{\varphi(r)}
      $$
      as needed.

      Finally, in the remaining case when $a_1 = 0$, we obtain precisely the statement of the theorem, as in that case one can apply Proposition \ref{prop:siegelwalfiszshort} with the choice $q = 1$ and $\theta_{i} = \gamma_{i}$ for all $i \geq 1$.

\subsection{An alternative argument} \label{se:alternative}

We describe here an alternative arrangement of our argument that was communicated to us by Kaisa Matom\"aki and which relies on Harman's  book \cite{Harman}. Write
$$
S(A, z) := \sum_{\substack{x \leq n \leq x + H  \\ n \equiv a \pmod{r}  \\ p | n \implies p > z }} e(g(n))
$$
and, as usual, let
$$
S(A_p, z) := \sum_{\substack{x \leq p n \leq x + H \\ p n \equiv a \pmod{r} \\ q | n \implies q > z}} e(g(n p)).
$$
Set $y = x^{1/3 - 100 \eta}$ and $z = x^{1/3 + 500 \eta}$. By Buchstab's identity and a sieve upper bound, we have
\begin{align*}
\sum_{\substack{x \leq p \leq x + H \\ p \equiv a \pmod{r}}} e(g(p)) & = S(A, \sqrt{x}) = S(A, y) - \sum_{y < p < \sqrt{x}} S(A_p, p) \\ & = S(A,y) - \sum_{z < p < \sqrt{x}} S(A_p, p) + O \Big ( \frac{\eta H}{\varphi(r) \log x} \Big ) \\ & = S(A,y) - \sum_{z < p < \sqrt{x}} S(A_p, y) + O \Big ( \frac{\eta H}{\varphi(r) \log x} \Big )
\end{align*}
since the implicit variable in $S(A_p, p)$ with $p > z$ is necessarily a prime.

These sums can be now decomposed into appropriate type I/II sums by using \cite[Theorem 3.1]{Harman} in the ``non-resonating case'' when $e(g(m)) \not \approx m^{it}$ , and using \cite[Lemma 7.5]{Harman} in the ``resonating case'' when $g(m) \approx m^{it}$. The \cite[Lemma 7.5]{Harman} is stated for intervals $H = x^{7/12}$ but a minor variant also works in the case of intervals of length $x^{2/3 - \eta}$ with $z = x^{1/3 - 100 \eta}$.
This saves us from having to prove Lemma \ref{le:buchstablemma} and decomposing into type I and type II sums as this is then done in \cite[Theorem 3.1]{Harman} and \cite[Lemma 7.5]{Harman}. As a result, this arrangement of the proof would save a few pages (beginning with Lemma \ref{le:buchstablemma} and ending at the ``Non-oscillating case'').

%This saves us from having to prove Lemma \ref{le:buchstablemma} shortening the argument by a few pages and the decomposition into type I and type II sums is carried out in \cite[Theorem 3.1]{Harman} and \cite[Lemma 7.5]{Harman}. Essentially this is the same argument as the one that we present. Since our paper is already pretty long we were quite content with keeping additional details.

%However whenever possible we have aimed to be relatively self-contained so we prefered to keep our presentation.

%However the argument requires a very good familiarity with Harman's book and we have prefered to stay relatively self-contained whenever possible. We also believe that Lemma \ref{le:buchstablemma} might be a useful (although not particularly surprising) addition to the literature.

\section{Proof of Theorem \ref{thm:main}}\label{sec:thmmain}
In this section we will prove our main equidistribution result for analytic skew products. Fix $\alpha\in \T$, $g\in C^{\omega}(\T)$ of zero mean and $T(x,y)=(x+\alpha,y+g(x))$. Notice that since the characters form a linearly dense set, it is enough to show Theorem~\ref{thm:main} for $f(x,y)=e_{b,c}(x,y)=e^{2\pi i (bx+cy)}$ for all $b,c\in\Z$. From now on, we also fix $b,c\in \Z$.

We will show that for all $0<\eta<1$ and every sufficiently large $N$, we have
\be\label{eq:smae}
\sum_{p\leq N}e_{b,c}(T^p(x,y))\log p=O(\eta^{1/2} N).
\ee
Then Theorem \ref{thm:main} will immediately follow from \eqref{eq:smae} (since $\eta>0$ is arbitrary). Fix $1>\eta>0$ and set $\epsilon:=\frac{\eta^2}{1000}$ and $\xi:=\epsilon^{10}$ (see Theorems~\ref{thm:nr1} and~\ref{thm:nr2}). Assume that $N\in \N$ and let $n\in \N$ be unique such that $q_n\leq N<q_{n+1}$. The proof of the theorem will split into several cases:\\
\textbf{Case A.} $N\geq e^{q_n^{1/2}}$. Let $H:=\min(N,q_{n+1}^{3/4})$~\footnote{In this case, we can take $1-\delta$ (for fixed $\delta>0$) instead of $3/4$.} and $m\leq N$. We will show that
\be\label{sum:h}
\Big|\sum_{p\in [m,m+H]}e_{b,c}(T^p(x,y))\log p\Big|=O(\eta^{1/2} H),
\ee
then \eqref{eq:smae} follows by summing over disjoint intervals of length $H$. Let $z_n\in \{q_n,p_nq_n\}$ come from Proposition~\ref{prop:ergchar} (recall that $p_n\leq 2\log^2 q_n$). Let $p\in [m,m+H]$. Since $p-m\leq H\leq q_{n+1}^{2/3-\eta}$, by Corollary \ref{cor:jad} ~(applied to $p-m$) with $\delta=1/5$, $w\in \{1,p_n\}$ (in both cases, $|w|\leq 2\log^2q_n\leq \log^3 q_n$) and  $(x_m,y_m)=T^m(x,y)$, we have
\be\label{eq:tep}
T^p(x,y)=T^{p-m}(x_m,y_m)=\\
o(1)+T^{(p-m)\mod z_n}(x_m, y_m+P_n(x_m,p-m)),
\ee
where the degree of $P_n$ is bounded by $5$.
Assume that $a\in\{0,1\,\ldots,z_n-1\}$. First notice that if $p\equiv m+a\mod z_n$ with $(m+a,z_n)>1$, then $p\leq z_n$ (in fact, $p|z_n$) and hence such residue classes can be ignored as their contribution to the LHS of~\eqref{eq:smae} is of order $\sum_{p\leq z_n}\log p=:\theta(z_n)\sim z_n\leq 2q_n\log^2q_n=o(N)$, where we have used the PNT and $N\geq e^{q_n^{1/2}}$. We hence consider only  $a\leq z_n$ such that  $(m+a,z_n)=1$. By \eqref{eq:tep}, we have
\begin{multline}\label{eq:as1}
\sum_{\substack{p\in [m,m+H]\\ p-m\equiv a\mod z_n}}e_{b,c}(T^p(x,y))\log p= o(\sum_{\substack{p\in [m,m+H]\\ p-m\equiv a\mod z_n}}\log p)+\\
e_{b,c}(T^a(x_m,y_m))\sum_{\substack{p\in [m,m+H]\\ p-m\equiv a\mod z_n}}e_{c}(P_n(x_m,p-m))\log p
\end{multline}
(where $o(\cdot)$ does not depend on $a$). Set
$$
h_{m,a}:=\sum_{\substack{p\in [m,m+H]\\ p-m\equiv a\mod z_n}}e_{c}(P_n(x_m,p-m))\log p,
$$
and let
$$v_{m}:=\frac{1}{\varphi(z_n)}\sum_{k\in [m,m+H]}e_{c}(P_n(x_m,k-m)).
$$
By \eqref{eq:as1} and summing over all $a\leq z_n$ for which $(m+a,q_n)=1$, we get
\begin{multline}\label{eq:jgh1}
\sum_{p\in [m,m+H]}e_{b,c}(T^p(x,y))=o(\sum_{p\in [m,m+H]}\log p)+\\
v_{m}\sum_{\substack{(m+a,z_n)=1\\a\leq z_n}}e_{b,c}(T^a(x_m,y_m))+O\Big(\sum_{\substack{(m+a,z_n)
=1\\a\leq z_n}}|h_{m,a}-v_{m}|\Big).
\end{multline}
Again, we can ignore the term $o(\sum_{p\in [m,m+H]}\log p)$  in what follows: after summing over disjoint intervals of length $H$, the joint error term, by a use of the prime number theorem, contributes $o(N)$ in~\eqref{eq:smae}.

Notice that by Proposition~\ref{lem:toadd1}, it follows that $P_n(x_m,n-m)$ is a polynomial (of degree $\leq 3$)  whose coefficients satisfy~\eqref{eq:bocoe}. Since
$N<q_{n+1}$ and $H=\min(N,q_{n+1}^{3/4})\geq N^{3/4}\geq m^{3/4}$, it follows that we can apply Theorem~\ref{thm:nr4}\footnote{In this case, we in fact could appeal to the results of Matom\"aki-Shao and we don't need the full strength of Theorem \ref{thm:nr4}. We will use Theorem \ref{thm:nr4} in its strongest form in case {\bf B.2.1}.} with $g$ replaced with $P_n$, $N$ with $m$ and $r$ with $z_n$ (note that in view of~\eqref{eq:bocoe}, also the assumptions on $\beta_i$ are satisfied).
 Therefore,
$$
\sum_{\substack{(m+a,z_n)=1\\a\leq z_n}}|h_{m,a}-v_{m}|=O\Big(\eta \log\Big(\frac{1}{\eta}\Big) H\Big)=O(\eta^{1/2} H)
$$
as (remembering that $z_n$ is of order at most $q_n\log^2 q_n$) by taking $A>3$ in Theorem~\ref{thm:nr4}, we have $z_n\leq (\log m)^A$ for $m\geq e^{q_n^{1/3}}$, so the theorem applies, and this range of $m$ is sufficient to cope with~\eqref{eq:smae}.

Consider now the set $C:=\{0\leq a<z_n:\:(m+a,z_n)=1\}\subset\{a'+\ell z_n:\: 0\leq a'<z_n, (a',z_n)=1,\ell\in\Z\}$. If $m=uz_n+t$ with $0\leq t<z_n$ then a number $a\in C$ either satisfies $t+a=a'$ or $t+a=a'+z_n$ (with $0\leq a'<z_n$, $(a',z_n)=1$), in any case we obtain a bijection $a\mapsto a'$. But, by its definition, $z_n$ is a time of (uniform) rigidity, so $d(T^a(x_m,y_m),T^{a'}(T^{-t}(x_m,y_m))=o(1)$.
Moreover (trivially),
$|v_{m}|\leq\frac{H}{\varphi(z_n)}$,
so
$$
v_{m}\sum_{\substack{(m+a,z_n)=1\\a\leq z_n}}e_{b,c}(T^a(x_m,y_m))=o(H),
$$
where we use the bound
$$
\frac{1}{\varphi(z_n)}\sum_{\substack{(m+a,z_n)=1\\a\leq z_n}}e_{b,c}(T^a(x_m,y_m))=
\frac{1}{\varphi(z_n)}\sum_{\substack{(a',z_n)=1\\a'\leq z_n}}e_{b,c}(T^{a'}(T^{-t}(x_m,y_m))]+o(1)=o(1),
$$
which follows from Proposition \ref{prop:ergchar}  (with $d=z_n$).

Putting the above bounds together to \eqref{eq:jgh1}, yields
$$
\sum_{p\in [m,m+H]}e_{b,c}(T^p(x,y))\log p=O(\eta^{1/2} H).
$$
This gives \eqref{eq:smae} and finishes the proof of \textbf{Case A.}\\

\textbf{Case B.} $N\leq e^{q_n^{1/2}}$. Let $n^\ast\leq n$ be the largest number such that  (see \eqref{eq:n,ast})
\be\label{eq:nast}
q_{n^\ast}\geq e^{\tau q_{n^\ast-1}}.
\ee

\textbf{B.1.} $N^{5/6-\epsilon}\geq q_n$. Denote $p_{q_n}:=p \mod q_n$. Notice that since $N\leq \min(e^{q_n^{1/2}},q_{n+1})$, by Lemma~\ref{cor:jad2} (note that if $e^{q_n^{1/2}}\geq q_{n+1}$ then still $N\leq q_n\frac{q_{n+1}}{q_{n^\ast}}$ and $e^{\tau q_n}\geq q_{n+1}$) it follows that
$$
d\Big(T^p(x,y),T^{p_{q_n}}(x,y)\Big)=o(1).
$$
Therefore,
$$
\sum_{p\leq N}e_{b,c}(T^p(x,y))\log p=o(N)+\sum_{p\leq N}e_{b,c}(T^{p_{q_n}}(x,y))\log p,
$$
and therefore below we will consider the last sum. We further split this case in two subcases:\\

\textbf{B.1.1.} $q_{n^{\ast}}>q_n^{1-\epsilon}$. In this case we use Theorem \ref{thm:nr2} with $H:=N$, $q:=q_n$, $r:=z_{n^\ast-1}$ (where $z_{n^\ast-1}$ comes from Proposition~\ref{prop:ergchar}), $H':=q^{1/3}$.
Note that by the definition of $n^\ast$, $r=z_{n^{\ast}-1}\leq q_{n^\ast-1}^2\leq \log^3 q_{n^\ast}\leq \log^3 q_n\leq \log^3 N$. Notice moreover that $H/q=N/q\geq \frac{N}{N^{5/6-\epsilon}}= N^{1/6+\epsilon}$.

We call an interval  $I\subset [0,q]$ ``good''
if it satisfies
\begin{equation}\label{goodboys}\sup_{\beta}\Big|\sum_{\substack{p\leq N\\ p_{q}\equiv v \mod r\\ p_{q}\in I}}e(p_{q}\beta)\log p- \frac{N}{\varphi(q)}\sum_{\substack{(m,q)=1\\ m\equiv v \mod r \\m\in I}}e(m\beta)\Big|\ll\frac{N|I|}{q\log^{60}N}.
\end{equation}
Otherwise, we call $I$ ``bad''.  For a good $I\subset [0,q]$, summing over $v\leq r$ and using $r\leq \log^3 N$, we obtain
\be\label{eq:p11}
\sum_{\substack{(v,(r,q))=1\\v\leq r}}\sup_{\beta}\Big|\sum_{\substack{p\leq N\\ p_{q}\equiv v \mod r\\ p_{q}\in I}}e(p_{q}\beta)\log p- \frac{N}{\varphi(q)}\sum_{\substack{(m,q)=1\\ m\equiv v \mod r \\m\in I}}e(m\beta)\Big|=o\Big(\frac{N|I|}{q}\Big).
\ee
We now consider  intervals $[j,j+H']$ (with $j\leq q$) of length $H'$. Recall that all the assumptions of Theorem~\ref{thm:nr2} are satisfied (with $H:=N$, $q:=q_n$, $r:=z_{n^\ast-1}$ and $H'=q^{1/3}$), where we are in the situation $x=H$, so the second part of this theorem applies.
Dividing in \eqref{Hrownex} both sides by $q$, we obtain that the LHS is bounded by $(1/\log^{100}N) O(NH'/q)$. The number $K$ of $j\leq q$ of those intervals $[j,j+H']$ which are bad, i.e.\ for which the LHS in~\eqref{goodboys} is bounded from below by $\frac1{\log^{60} N}O(NH'/q)$ is hence at most  $q\frac{\log^{60} N}{\log^{100}N}=q\frac1{\log^{40}N}$, whence $K=o(q)$. It follows that the number of good intervals $I$  is $q-o(q)$.
By considering these intervals in arithmetic progressions $[s+tH,s+(t+1)H']$ (with $s\leq H'$), we must see the same proportion of good intervals along at least one such arithmetic progression.
It follows that we can decompose $[0,q]=\bigcup_{i=1}^\ell I_i$, where all the intervals $I_i$ are pairwise disjoint, $|I_i|=H'$ for $2\leq i\leq \ell-1$ and $|I_1|,|I_\ell|\leq H'$ and all but $o(\ell)$ of the intervals $\{I_i\}$ satisfy~\eqref{eq:p11}. By \eqref{eq:p11} (summing over good $i\leq \ell$), we have
\be\label{eq:p12}
\sum_{i \text{ is good}}\sum_{\substack{(v,(r,q))=1\\v\leq r}}\sup_{\beta}\Big|\sum_{\substack{p\leq N\\ p_{q}\equiv v \mod r\\ p_{q}\in I_i}}e(p_{q}\beta)\log p- \frac{N}{\varphi(q)}\sum_{\substack{(m,q)=1\\ m\equiv v \mod r \\m\in I_i}}e(m\beta)\Big|=o\Big(N\Big).
\ee
Notice that
\begin{multline}\label{mult:jul}
\sum_{p\leq N}e_{b,c}(T^{p_{q}}(x,y))\log p=\sum_{i\text{ is good }}\sum_{\substack{p\leq N\\ p_{q}\in I_i}}e_{b,c}(T^{p_{q}}(x,y))\log p+\\
\sum_{i\text{ is bad}}\sum_{\substack{p\leq N\\ p_{q}\in I_i}}e_{b,c}(T^{p_{q}}(x,y))\log p.
\end{multline}
Since the cardinality of bad $i\leq \ell$ is $o(\ell)$, by Lemma \ref{lem:us1} (with $I=[0,N]$ and $J=I_i$) for each bad $I_i$, it follows that
$$
\Big|\sum_{i\text{ is bad}}\sum_{\substack{p\leq N\\ p_{q}\in I_i}}e_{b,c}(T^{p_{q}}(x,y))\log p\Big|\leq \sum_{i\text{ is bad}}\sum_{\substack{p\leq N\\ p_{q}\in I_i}}\log p\ll
o(\ell)\frac{H'}{q}N=o(N).$$
Therefore, we will only consider the first sum on the RHS of  \eqref{mult:jul}. Fix a good $i\leq \ell$.
Let $I_i=[u_i,u_i+H']$ and let $p_{q}\in [u_i,u_i+H']$. Then
$$|p_q-u_i|\leq H'= q^{1/3}\leq q_{n^\ast}^{1/(3(1-\epsilon))}\leq q_{n^\ast}^{1/3+10\epsilon}$$ (since we are in case \textbf{B.1.1.}) Therefore, by Corollary~\ref{cor:jad} with $\delta=4/7$ and  $n=n^{\ast}$, it follows that if we denote $(x_i,y_i)=T^{u_i}(x,y)$ and take $r=z_{n^{\ast}-1}$, then
$$
T^{p_q}(x,y)=T^{p_q-u_i}(x_i,y_i)=T^{(p_q-u_i)\mod r}\Big(x_i, y_i+P_{n^{\ast}}(x_i, p_q-u_i)\Big) +o(1).
$$
Moreover,  $P_{n^{\ast}}(x_i,\cdot)$ is a degree $1$ polynomial, and so by the definition of $a_1(\cdot)$ it follows that $P_{n^{\ast}}(x_i,p_q-u_i)=(p_q-u_i)\beta_i$, where $\beta_i:=g_{n^{\ast}}(x_i)$. Then, by \eqref{eq:bocoe}, $|\beta_i|\leq e^{-\tau q_{n^\ast-1}}\leq e^{-(\tau/2) r}$ (since $r=z_{n^\ast-1}\leq q_{n^\ast-1}\log^2 q_{n^\ast-1}$, see Proposition \ref{prop:ergchar}).
Therefore,
\begin{multline}\label{mult4}
\sum_{\substack{p\leq N\\ p_{q}\in I_i}}e_{b,c}(T^{p_{q}}(x,y))\log p=\sum_{a\leq r}\sum_{\substack{p\leq N, p_{q}\in I_i\\ p_q-u_i\equiv a\mod r}}e_{b,c}(T^{p_{q}}(x,y))\log p=\\
\sum_{a\leq r}e_{b,c}(T^a(x_i,y_i))\sum_{\substack{p\leq N,p_{q}\in I_i\\ p_q-u_i\equiv a\mod r}}e_{c}((p_q-u_i)\beta_i)\log p+ o(\sum_{\substack{p\leq N\\ p_{q}\in I_i}} \log p).
\end{multline}
The last term after summing over $i\leq \ell$ is $o(\theta(N))=o(N)$ and hence can be ignored.
Let
$$
h_{i,a}:=\sum_{\substack{p\leq N, p_{q}\in I_i\\ p_q\equiv u_i+a\mod r}}e_{c}((p_q-u_i)\beta_i)\log p,
$$
and let
$$
v_{i,a}:=\frac{N}{\varphi(q)}\sum_{\substack{(m,q)=1\\ m\equiv u_i+a \mod r\\ m\in I_i}}e_c((m-u_i)\beta_i). $$
Notice that if $(u_i+a,(r,q))>1$ (in particular $(r,q)>1$), then $p_q\equiv u_i+ a \mod r$ implies that $(r,q)|p$, which implies that $p=(r,q)\leq q<N^{5/6}$ and hence this can be ignored (after summing over $i\leq\ell$, it gives the contribution to the first summand on the RHS in~\eqref{mult4} at most $N^{5/6}\log N=o(N)$). Therefore, we will only consider those residue classes for which  $(u_i+a,(r,q))=1$. Let $a_0:= (1-u_i) \mod (r,q)$ (we could choose any $a_0$ such that $(a_0+u_i,(r,q))=1$).

By~\eqref{mult4} and the triangle inequality, it follows that
\begin{multline}\label{eq:ma}\sum_{p\leq N, p_q\in I_i}e_{b,c}(T^{p_q}(x,y))\log p=
v_{i,a_0}\sum_{\substack{a\leq r\\ (u_i+a,(r,q))=1}}e_{b,c}(T^a(x_i,y_i))+\\
\sum_{\substack{a\leq r\\ (u_i+a,(r,q))=1}}|v_{i,a}-v_{i,a_0}|+\sum_{\substack{a\leq r\\ (u_i+a,(r,q))=1}}|h_{i,a}-v_{i,a}|.
\end{multline}
Notice that by the definitions of $h_{i,a}$,$v_{i,a}$ and \eqref{eq:p12},
$$
\sum_{i\text{ is good}}\sum_{\substack{a\leq r\\ (z_i+a,(r,q))=1}}|h_{i,a}-v_{i,a}|=o(N),
$$
since when $a$ goes over $0,\ldots,r-1$, $u_i+a$~mod~$r$ runs over the same set.
Moreover, recall that $r\leq \log^3q$, so we can apply~\eqref{eq:j3} (with $\tau$ replaced by $\tau/2$ and $y=0$)  and we have
$\frac{r(r,q)}{\varphi((r,q))e^{\tau r/4}}\leq r(\log r) e^{-\tau r/4}=o(1)$ since $\frac{r(r,q)}{\varphi((r,q))}\leq r^2= o(e^{\tau r/4})$ and $\frac{q\log^3 q}{\varphi(q)\log^{100}q}=o(1)$.
Now, it follows from~\eqref{eq:j3} that
$$
\sum_{\substack{a\leq r\\ (z_i+a,(r,q))=1}}|v_{i,a}-v_{i,a_0}|\ll r\frac{N}{\varphi(q)}\Big[\frac{(r,q)\varphi(q)}{q\varphi((r,q))}\frac{H'}{e^{\tau r/4}}+ \frac{H'}{\log^{100}q}\Big]=o\Big(\frac{NH'}{q}\Big).
$$
Finally, by \eqref{eq:j3}, using $\varphi(q)\log^{100} q\geq rq$ and \eqref{eq:j3'} (with $\frac{rq}{(r,q)}$ in place of $q'$),
\begin{multline*}
|v_{i,a_0}|\ll \frac{N}{\varphi(q)}\Big[\frac{(r,q)\varphi(q)}{q\varphi((r,q))}\frac{H'}{e^{\tau r/4}}+ \frac{H'}{\log^{100}q}\Big]+ \frac{N}{\varphi(q)\varphi(r)}\Big|\sum_{\substack{(m,\frac{rq}{(r,q)})=1\\m\in I_i }}e_c(m\beta_i)\Big|\ll \\
\frac{NH'(r,q)}{rq\varphi((r,q))}+ \frac{N}{\varphi(q)\varphi(r)} 2\frac{|I_i|\varphi\Big(\frac{rq}{(r,q)}\Big)}{\frac{rq}{(r,q)}}\ll \frac{NH'(r,q)}{rq\varphi((r,q))},
\end{multline*}
where we used $\varphi\Big(\frac{rq}{(r,q)}\Big)=\varphi(\frac{r}{(r,q)})\varphi(q)\leq \frac{\varphi(r)\varphi(q)}{\varphi((r,q))}$.
By Proposition~\ref{prop:ergchar} (applied to $n^\ast-1$ instead of $n$) with  $d=(r,q)$, where $r=z_{n^\ast-1}$, it follows that
\begin{multline*}
|v_{i,a_0}\sum_{\substack{a\leq r\\ (u_i+a,(r,q))=1}}e_{b,c}(T^a(x_i,y_i))|= \\O\Big(\frac{NH'}{q}\Big) \Big|\frac{(r,q)}{r\varphi((r,q))}\sum_{\substack{a\leq r\\ (u_i+a,(r,q))=1}}e_{b,c}(T^a(x_i,y_i))\Big|=o\Big(
\frac{NH'}{q}\Big).
\end{multline*}
Using the above estimates and summing over $i\leq \ell$ in \eqref{eq:ma}, it follows that (recall that $\ell\leq \frac{q}{H'}+1$).
$$
\sum_{p\leq N}e_{b,c}(T^{p_q}(x,y)\log p=o(N).
$$
This finishes the proof in this case.\\

\textbf{B.1.2.} $q_{n^\ast}\leq q_n^{1-\epsilon}$. If $q_{n-1}\leq q_n^{1-\epsilon/2}$, let $p_n:=1$. If $q_{n-1}\geq q_n^{1-\epsilon/2}$, let $p_n\in \cP$ be a prime number in the interval $[\frac{q_n^{\epsilon/2}}{2},q_{n}^{\epsilon/2}]$ such that $(p_n,q_{n-1})=1$. Notice that such $p_n$ always exists since by the prime number theorem, for a sufficiently small $\epsilon'>0$,
$$
\prod_{p\in[\frac{q_n^{\epsilon/2}}{2},q_{n}^{\epsilon/2}]\cap \cP}p\geq \Big(\frac12q^{\epsilon/2}_n\Big)^{q_n^{\epsilon/2-\epsilon'}}\geq q_n>q_{n-1}.
$$
 Let $H:=N$, $q=p_nq_n$ and $r=q_{n-1}$. Notice that by the bound on $p_n$ (and remembering that $q_n\leq N$, so $N^{3\epsilon/4}>q_m^{2\epsilon/3}$),
$$
\frac{H}{q}=\frac{N}{p_nq_n}\geq \frac{N}{q_n^{1+2\epsilon/3}}\geq N^{1/6+\epsilon/4},
$$
since we are in \textbf{Case  B1.} Note that  $(q_n,q_{n-1})=1$ and so by the definition of $p_n$, $(q,r)=1$. Moreover, by the definition of $p_n$, in both cases,
$$
q^{1-\epsilon/2}=q_{n}^{1-\epsilon/2}p_n^{1-\epsilon/2}\geq q_{n-1}^{1-\epsilon^2/3}=r^{1-\epsilon^2/3}.
$$
Therefore, $r\leq q^{1-\xi}$ (recall that $\xi=\epsilon^{10}$). Hence, the assumptions of Theorem~\ref{thm:nr1} are satisfied (we use it for $H=x$ and $y=0$). This implies that (since $H=N$)
\be\label{eq:asw1}
\sum_{v=1}^{r} \Big|\sum_{\substack{p\leq N\\p_q~\equiv v \mod r }}\log p - \frac{N}{r}\Big|=o(N).
\ee
Notice that since $n^{\ast}\leq n-1$ (as $q_{n}^\ast\leq q_n^{1-\epsilon}$ since we are in case \textbf{B.1.2.}), the definition of $p_n$ implies
$$p_q\leq q=p_nq_n\leq q_n\max(1,\frac{q_{n-1}}{q_{n}^{1-\epsilon}})\leq q_n \frac{q_{n-1}}{q_{n^{\ast}}}.
$$
Similarly, by the definition of $p_n$ and the definition of $n^{\ast}$ (recalling that $n^{\ast}\leq n-1$),  $p_q\leq q=p_nq_n\leq q_n^{\epsilon/2} q_n\leq e^{2\tau q_{n-1}}$. Therefore,
$$
p_q\leq q_{n-1}\min\Big(\frac{q_{n}}{q_{n^{\ast}}},e^{2\tau q_{n-1}}  \Big).
$$
So, by Lemma~\ref{cor:jad2} with $n-1$ in place of $n$ (since $n^\ast<n$, it follows that $(n-1)^\ast=n^\ast$), $z=1$, $m=p_q$ and remembering that $r=q_{n-1}$, we get
$$
d(T^{p_q}(x,y),T^{p_q \mod r}(x,y))=o(1).
$$
Therefore,
$$
\sum_{p\leq N}e_{b,c}(T^{p_q}(x,y))\log p=\sum_{v\leq r}e_{b,c}(T^v(x,y))[\sum_{\substack{p\leq N\\p_q\equiv v \mod r}}\log p]+o(N).
$$
Moreover, by \eqref{eq:asw1} and unique ergodicity,
\begin{multline*}
\sum_{v\leq q_{n-1}}e_{b,c}(T^v(x,y))[\sum_{\substack{p\leq N\\p_q\equiv v \mod q_{n-1}}}\log p]=\\
\frac{N}{q_{n-1}}\sum_{v\leq q_{n-1}}e_{b,c}(T^v(x,y))+O\Big(\sum_{v\leq q_{n-1}}\Big|\sum_{\substack{p\leq N\\p_q\equiv v \mod q_{n-1}}}\log p-\frac{N}{q_{n-1}}\Big|\Big)=o(N).
\end{multline*}

This finishes the proof in this case and hence also completes the proof of case {\bf B1}.
\bigskip

\textbf{B2.} $N^{5/6-\epsilon}\leq q_n$. We will split the proof into several subcases.\\
\textbf{B2.1.} $q_{n^{\ast}}>N^{2/3-\eta/5}$. Let $r_{n^\ast}:=z_{n^\ast-1}$, where $z_{n^\ast-1}$ comes from Proposition \ref{prop:ergchar}. Let $H:=m r_{n^\ast}$, where $m$ is the largest such that $m r_{n^\ast}\leq q_{n^{\ast}}^{1-\eta}$. Notice that by definition $H\geq\frac12 q_{n^\ast}^{1-\eta}\geq \frac{1}{2}N^{(1-\eta)(2/3-\eta/5)}\geq N^{2/3-\eta}$. We partition the interval $[0,N]$ into consecutive disjoint intervals $I_i$ of length $H$. Let $I_i=I=[z,z+H]$. Notice that by the definition of $H$ it follows that $z=\ell_z r_{n^\ast}$. Denote $(x_z,y_z):=T^z(x,y)$. Let $p\in I$. Notice that  by the definition of $H$, $|p-z|\leq H\leq q_{n^\ast}^{1-\eta}$. So, by Corollary \ref{cor:jad} (with $\delta$ replaced by $\eta$ and using that $z$ is a multiple of $r_{n^\ast}$, so $p-z$ mod~$r_{n^\ast}$ equals $p$ mod~$r_{n^\ast}$), we get
$$
T^p(x,y)=T^{p-z}(x_z,y_z)= T^{p \mod r_{n^\ast}}(x_z,y_z+P_{n^\ast-1}(x_z,p-z))+o(1),
$$
where $P(x_z,\cdot):=P_{n^\ast-1}(x_z,\cdot-z)$ is a polynomial of degree $\leq [\frac{1}{\eta}]$ with coefficients satisfying \eqref{eq:bocoe}.
Let
$$
h_{a,I}:=\sum_{\substack{p\in I\\ p\equiv a\mod r_{n^\ast}}}e_c(P(x_z,p))\log p
$$
(we set $h_{a,I}=0$ whenever no $p\in I$ equals $a$ mod~$r_{n^\ast}$).
Then
$$
\sum_{p\in I}e_{b,c}(T^p(x,y))\log p=\sum_{\substack{(a,r_{n^\ast})=1\\a\leq r_{n^\ast}}}e_{b,c}(T^a(x_z,y_z)) h_{a,I}+o\Big(\sum_{p\in I} \log p\Big).
$$
Denote
$$
v_{I}:=\frac{1}{\varphi(r_{n^\ast})}\sum_{n\in I}e_c(P(x_z,p)).
$$
Then
$$
\sum_{\substack{(a,r_{n^\ast})=1\\a\leq r_{n^\ast}}}e_{b,c}(T^a(x_z,y_z)) h_{a,I}=$$$$
v_{I}\sum_{\substack{(a,r_{n^\ast})=1\\a\leq r_{n^\ast}}}e_{b,c}(T^a(x_z,y_z)) +O\Big(\sum_{\substack{(a,r_{n^\ast})=1\\a\leq r_{n^\ast}}}|h_{a,I}-v_{I}|\Big).
$$
Recall that $H\leq q_{n^\ast}^{1-\eta}$. Therefore, by \eqref{eq:bocoe}, the coefficients of $P(x_z,\cdot)$ satisfy the assumptions of Theorem~\ref{thm:nr4} (in which $N$ is replaced by $z$, cf.\ the definition of $P(x_z,\cdot)$, where obviously $H>z^{\frac23-\eta}$). Hence, applying this theorem to each relevant $a$ and summing over them, yields
$$
\sum_{\substack{(a,r_{n^\ast})=1\\a\leq r_{n^\ast}}}|h_{a,I}-v_{I}|=O(\eta^{1/2} H).
$$
Moreover,
$
|v_{I}|\leq  \frac{H}{\varphi(r_{n^\ast})}$.
Putting the above estimates together, we get
$$
\sum_{p\in I}e_{b,c}(T^p(x,y))\log p=o\Big(\sum_{p\in I} \log p\Big)+O(\eta^{1/2} H)+O\Big(H{\frac{1}{\varphi(r_{n^\ast})}}\sum_{(a,r_{n^\ast})=1}
e_{b,c}(T^a(x_z,y_z))\Big),
$$
and the last summand is $o(H)$ by Proposition~\ref{prop:ergchar} (with $d=r_{n^\ast}$). The proof is finished by summing over $I$.

\textbf{B2.2.} $q_{n^{\ast}}\leq N^{2/3-\eta/5}$. Let $n'<n$ be the largest number such that
\be\label{eq:n'}q_{n'}<N^{5/6-2\epsilon}.
\ee
 We consider two cases:
\\

\textbf{B2.2.1.} $q_{n^\ast}\geq q_{n'}^{1-\epsilon}$. Let  $z_{n^{\ast}-1}\in \{q_{n^\ast-1},p_{n^{\ast}-1}q_{n^\ast-1}\}$ be the number for which the minimum in \eqref{eq:erg} is obtained. Let $H=q_{n'}N^{1/6+\epsilon}$, $q:=q_{n'}$, $r:=z_{n^{\ast}-1}$ and $H':=q_{n'}^{1/3}$. Note that by the definition of $n^\ast$, $r=z_{n^{\ast}-1}\leq q_{n^\ast-1}^2\leq \log^3 q_{n^\ast}\leq \log^3 N$. Notice moreover that, by~\eqref{eq:n'}, it follows that $H\leq N^{1-\epsilon}$. Moreover, $H/q= N^{1/6+\epsilon}$. For $I\subset [0,N]$ and $J\subset[0,q]$, we call the pair $(I,J)$ ``good'' if
\be\label{eq:cas3}
\sum_{\substack{v\leq r\\ (v,(r,q))=1}}\sup_{\beta}\Big|\sum_{\substack{p\in I\\p_q\equiv v \mod r\\ p_q\in J}}e(p_q\beta)\log p- \frac{H}{\varphi(q)}\sum_{\substack{(a,q)=1\\ a\equiv v\mod r\\ a\in J}}e(a\beta)\Big|=o\Big(\frac{|I||J|}{q}\Big).
\ee
Otherwise, the pair $(I,J)$ is called ``bad''. We use Theorem \ref{thm:nr2} with $x=N$ and $H, H',q,r$ (defined above).

This will give us intervals $[y,y+H]$ of length $H$ from which we are interested in those for which the LHS sum $\sum_{z<q}\sup_{\beta\in\R,v<r}|\ldots|\ll_\epsilon \frac{HH'}{(\log N)^{100}}$. Most of them will satisfy this requirement. More than that, we can
decompose $[0,N]=\bigcup_{i=1}^\ell I_i$, where $\{I_i\}_{i=1}^{\ell}$ are pairwise disjoint, $|I_i|=H$ for $2\leq i< \ell$ and $|I_1|,|I_\ell|\leq H$ (we additionally assume that $|I_1|\geq H/2$), where most of the $I_i$ will satisfy the above requirement. Then, we can fix such an $I_i$ and repeat the same procedure by considering intervals $[z,z+H']$, where now we require that on this interval $\sup_{\beta\in\R,v<r}|\ldots|\ll_\epsilon \frac{HH'}{q(\log N)^{100}}$. For most $z$ we will see this requirement satisfied, and in fact
we can decompose $[0,q]:=\bigcup_{j=1}^{\ell'} J^i_j$,  where $\{J^i_j\}_{i=1}^{\ell'}$ are pairwise disjoint, $|J^i_j|=H'$ for $2\leq j< \ell'$ and $|J^i_1|,|J^i_{\ell'}|\leq H'$, and for most of the $j$ we will see the requirement satisfied. Finally, summing over $v\leq r$ (and using $r\leq (\log N)^{100}$), will yield a bound $\ll_\epsilon \frac {HH'}{q}\frac{\log^3 N}{\log^{100}N}$. For the remaining $i$, we can still perform the same procedure, which will give us pairs of the form $(I_i,J^i_j)$, where
(by Theorem~\ref{thm:nr2}),
we get that the cardinality of ``bad'' pairs $(I_i,J^i_j)$ is at most $o(\ell \cdot \ell')$. We will also call the pairs of the form $(I_1,J^1_j)$, $(I_\ell,J^\ell_j)$ and $(I_i, J^i_1)$, $(I_i,J^i_{\ell'})$ bad. Notice that adding the new bad pairs give that the total cardinality of bad pairs is  $2\cdot \ell'+2\ell+o(\ell \cdot \ell')=o(\ell\cdot\ell')$, since $\ell,\ell'\to +\infty$.
We have
\begin{multline}\label{eq:asasa}
\sum_{p\leq N}e_{b,c}(T^p(x,y))\log p=
\sum_{i\leq \ell, j\leq \ell'}\sum_{\substack{p\in I_i\\ p_q\in J^i_j}}e_{b,c}(T^p(x,y))\log p=\\
\sum_{(I_i,J^i_j)\text{ is good}}\sum_{\substack{p\in I_i\\ p_q\in J^i_j}}e_{b,c}(T^p(x,y))\log p+\sum_{(I_i,J^i_j)\text{ is bad}}\sum_{\substack{p\in I_i\\ p_q\in J^i_j}}e_{b,c}(T^p(x,y))\log p.
\end{multline}
Moreover, by the bound on the cardinality of bad pairs, by Lemma \ref{lem:us1} (for each bad pair) and since $\ell \leq 2N/H$ and $\ell'\leq 2q/H'$,
$$
\sum_{(I_i,J^i_J)\text{ is bad}}\sum_{\substack{p\in I_i\\ p_q\in J^i_j}}e_{b,c}(T^p(x,y))\log p\ll o(\ell\cdot \ell')\frac{HH'}{q}=o(N).
$$
Therefore, the second term on the RHS of \eqref{eq:asasa} will be ignored. Fix $2\leq i \leq \ell-1$ and $2\leq j\leq \ell'-1$ such that the pair $(I_i,J^i_j)$ is good.

%\be\label{eq:cas2}
%\sum_{z\leq q}\sum_{\substack{v\leq r\\ (v,(r,q))=1}}\sup_{\beta}\Big|\sum_{\substack{p\in I_i\\p_q\equiv v \mod r\\ p_q\in [z,z+H']}}e(p_q\beta)\log p-\frac{H}{\varphi(q)}\sum_{\substack{(a,q)=1\\ a\equiv v\mod r\\ z\in [z,z+H']}}e(a\beta)\Big|=o(HH').
%\ee
%We also have
%\be\label{eq:afh}
%\sum_{p\in [0,V]}\log p=O(V)=O(H)=o(N),
%\ee
%and hence the interval $[0,V]\subset [0,N]$ can be ignored. By \eqref{eq:cas2} it follows that there exists $V'_i\leq H'$ such that
%, where $J_j:=[(j-1)H'+V'_i,jH'+V'_i]$ and moreover for every $j\leq \ell_i$, we have
%\be\label{eq:cas3}
%\sum_{\substack{v\leq r\\ (v,(r,q))=1}}\sup_{\beta}\Big|\sum_{\substack{p\in I_i\\p_q\equiv v \mod r\\ p_q\in J^i_j}}e(p_q\beta)\log p- \frac{H}{\varphi(q)}\sum_{\substack{(a,q)=1\\ a\equiv v\mod r\\ a\in J^i_j}}e(a\beta)\Big|=o\Big(\frac{HH'}{q}\Big).
%\ee

%Moreover, by Lemma \ref{lem:us1} , with $V=H'\leq q^{1/3}$,
%$$
%O\Big(\sum_{\substack{p\leq N\\ p_q\in [0,H']}}\log p\Big)=o(N).
%$$
%Therefore it is enough to show that
%\be\label{eq:tsh}
%\sum_{i=1}^\ell\sum_{j=1}^{\ell_i}\sum_{\substack{p\in I_i\\ p_q\in J^i_j}}e_{b,c}(T^p(x,y))\log p=o(N).
%\ee
Let $I_i=[u_i,u_i+H]$. Notice that by the definition of $n'$, it follows that $q_{n'+1}\geq N^{5/6-2\epsilon}$, and by assumptions, $q_{n^\ast}\leq N^{2/3-\eta/5}$ (so $q_{n'+1}/q_{n^\ast}\geq N^{1/6+\epsilon}$). Therefore, $n>n'\geq n^{\ast}$ and so, by the definition of $n^{\ast}$, $N^{1/6+\epsilon}\leq N^{5/6-2\epsilon}\leq q_{n'+1}\leq e^{\tau q_{n'}}$. So,
$$
H=qN^{1/6+\epsilon}=q_{n'}N^{1/6+\epsilon}\leq q_{n'}\min(\frac{q_{n'+1}}{q_{n^{\ast}}}, e^{\tau q_{n'}}).
$$
Therefore, and using $n'^{\ast}=n^{\ast}$ (since $n>n'\geq n^{\ast}$), for $p\in I_i$, we have $p-u_i\leq H\leq q_{n'}\min(\frac{q_{n'+1}}{q_{n'^{\ast}}}, e^{\tau q_{n'}})$ and by using Lemma~\ref{cor:jad2} with $n=n'$, $z=1$ and $m=p-u_i$, we get

$$
T^p(x,y)=T^{p-u_i}(T^{u_i}(x,y))=
T^{(p-u_i)\mod q}(T^{u_i}(x,y))+o(1)=$$
$$
T^{(p-u_i)\mod q+ (u_i\mod q)}(T^{u_i-(u_i\mod q)}(x,y))+o(1)=$$$$
T^{p_q}(x_{i},y_i)+o(1)+o(1)=T^{p_q}(x_i,y_i)+o(1),
$$
where $(x_i,y_i)=T^{u_i-(u_i\mod q)}(x,y)$. Therefore,
$$
\sum_{\substack{p\in I_i\\ p_q\in J^i_j}}e_{b,c}(T^p(x,y))\log p=\sum_{\substack{p\in I_i\\ p_q\in J^i_j}}e_{b,c}(T^{p_q}(x_i,y_i))\log p+o\Big(\sum_{\substack{p\in I_i\\ p_q\in J^i_j}}\log p\Big).
$$
Notice that the last term after summing over $j,i$ contributes $o(N)$ to \eqref{eq:asasa} and hence can be ignored. Moreover, splitting into residue classes $\mod r$, we get
\begin{multline}\label{eq:tgh2}
\sum_{\substack{p\in I_i\\ p_q\in J^i_j}}e_{b,c}(T^{p_q}(x_i,y_i))\log p=\sum_{v\leq r}\sum_{\substack{p\in I_i, p_q\in J^i_j\\ p_q\equiv v \mod r}}e_{b,c}(T^{p_q}(x_i,y_i))\log p=\\ \sum_{\substack{v\leq r\\(v,(r,q))=1}}\sum_{\substack{p\in I_i, p_q\in J^i_j\\ p_q\equiv v \mod r}}e_{b,c}(T^{p_q}(x_i,y_i))\log p+\sum_{\substack{v\leq r\\(v,(r,q))>1}}\sum_{\substack{p\in I_i, p_q\in J^i_j\\ p_q\equiv v \mod r}}e_{b,c}(T^{p_q}(x_i,y_i))\log p.
\end{multline}
If $v\leq r$ is such that $(v,(r,q))>1$, then $p_q\equiv v \mod r$ implies that $(v,(r,q))|p$, which is only possible if $(v,(r,q))=p$. In particular, this means that $p<r$. However, by the assumptions, $r\leq \log^3N$ and
hence, $p\in I_i$ and  $(v,(r,q))=p$ is only possible if $i=1$ since the intervals $I_i$ are disjoint and have length at least $H/2\geq N^{1/6+\epsilon}/2$. But, by definition, we consider a good pair $(I_i,J^i_j)$ which implies that $i\geq 2$. This implies that the second sum in \eqref{eq:tgh2} is empty. Let  $p\in I_i$ be such that $p_q\in J^i_j=[z_j^i,z_j^i+H']$. Notice that $p_q-z_{j}^i\leq H'\leq q_{n'}^{1/3}\leq q_{n^\ast}^{\frac{1}{3(1-\epsilon)}}$ (since we are in case {\bf B2.2.1}). Let $(\tilde{x}_{i,j},\tilde{y}_{i,j}):=T^{z^i_j}(x_i,y_i)$. Applying Corollary \ref{cor:jad} with $n+1=n^\ast$, $\delta=1-\frac{1}{3(1-\epsilon)}$ and $w\leq \log^3 q_{n^\ast-1}$ satisfying $wq_{n^\ast-1}= z_{n^\ast-1}$ (see Proposition \ref{prop:ergchar}),
we get that  (using $r=z_{n^\ast-1}$)
\begin{multline}\label{eq:tpg}
T^{p_q}(x_i,y_i)=T^{p_q-z_j^i}(T^{z_j^i}(x_i,y_i))=\\
T^{(p_q-z_j^i) \mod r}(\tilde{x}_{i,j},\tilde{y}_{i,j}+P_{n^\ast-1}(\tilde{x}_{i,j},p_q-z_{j}^i))+o(1).
\end{multline}
Moreover, since $\delta>4/7$ ($\epsilon$ is small), it follows that $\deg P_{n^\ast-1}\leq 1$, and so by Proposition \ref{lem:toadd1}, it follows that $P_{n^\ast-1}(\tilde{x}_{i,j},p_q-z_{j}^i)=(p_q-z_{j}^i)\beta_{ij}$, for some $|\beta_{ij}|\leq e^{-\tau q_{n^\ast-1}}\leq e^{-\tau r/2}$. Therefore, using \eqref{eq:tpg}, if we denote $(x_{i,j},y_{i,j}):=T^{-z^i_j \mod r}(\tilde{x}_{i,j},\tilde{y}_{i,j})$ (so $T^{(p_q-z^i_j)\mod r}(\tilde{x}_{i,j},\tilde{y}_{i,j})=T^{p_q\mod r}(x_{i,j},y_{i,j})+o(1)$ in view of Corollary~\ref{cor:jad}), we obtain
\begin{multline}\label{eq:caa2}
\sum_{\substack{v\leq r\\ (v,(r,q))=1}}\sum_{\substack{p\in I_i,p_q\in J^i_j\\p_q\equiv v \mod r}}e_{b,c}(T^{p_q}(x_i,y_i))\log p=\\
\sum_{\substack{v\leq r\\ (v,(r,q))=1}}e_{b,c}(T^v(x_{i,j},y_{i,j})h_{ij,v} +o(\sum_{\substack{p\in I_i,p_q\in J^i_j}}\log p),
\end{multline}
where
$$
h_{ij,v}:=\sum_{\substack{p\in I_i,p_q\in J^i_j\\p_q\equiv v \mod r}}e_c((p_q-z_j^i)\beta_{ij})\log p.
$$
Notice that after summing over $j$ and $i$,
$$
\sum_{i,j}o\Big(\sum_{\substack{p\in I_i,p_q\in J^i_j}}\log p\Big)=o\Big(\sum_{p\leq N}\log p\Big)=o(N),
$$
and hence this term can be ignored. Let
$$
u_{ij,v}:=\frac{H}{\varphi(q)}\sum_{\substack{(a,q)=1\\a\equiv v \mod r\\ a\in J_j^i}}e_c((a-z_j^i)\beta_{ij}).
$$
Then, by the triangle inequality,
\begin{multline}\label{eq:lasmu}
\Big|\sum_{\substack{v\leq r\\ (v,(r,q))=1}}e_{b,c}(T^v(x_{i,j},y_{i,j})h_{ij,v}\Big|\leq
\Big|u_{ij,1}\sum_{\substack{v\leq r\\ (v,(r,q))=1}}e_{b,c}(T^v(x_{i,j},y_{i,j}))\Big|+\\\sum_{\substack{v\leq r\\ (v,(r,q))=1}}|h_{ij,v}-u_{ij,v}|+
\sum_{\substack{v\leq r\\ (v,(r,q))=1}}|u_{ij,v}-u_{ij,1}|.
\end{multline}
By \eqref{eq:j3} (with $\tau/2$ instead $\tau$), using $\varphi(q)\log^{100} q\geq qr$, $|J_j^i|= H'\geq q^{1/3}$ and $J_j^i\subset [0,q]$, we obtain
$$
|u_{ij,1}|\ll \frac{H}{\varphi(q)}\Big[\frac{(r,q)\varphi(q)}
{q\varphi((r,q))}\frac{H'}{e^{\tau r/2}}+ \frac{H'}{\log^{100}q}\Big]+ \frac{H}{\varphi(q)\varphi(r)}
\Big|\sum_{\substack{(m,\frac{rq}{(r,q)})=1\\m\in J_j^i }}e_c(m\beta_{ij})\Big|\leq$$
$$
\frac{HH'(r,q)}
{q\varphi((r,q))e^{\tau r/2}}+ \frac{HH'}{\varphi(q)\log^{100}q}+ \frac{H}{\varphi(q)\varphi(r)}
\Big|\sum_{\substack{(m,\frac{rq}{(r,q)})=1\\m\in J_j^i }}e_c(m\beta_{ij})\Big|\leq
$$
$$
\frac{HH'(r,q)}
{qr\varphi((r,q))r}+ \frac{HH'}{qr}+ \frac{H}{\varphi(q)\varphi(r)}
\Big|\sum_{\substack{(m,\frac{rq}{(r,q)})=1\\m\in J_j^i }}e_c(m\beta_{ij})\Big|\leq $$$$\frac{HH'(r,q)}
{qr\varphi((r,q))}+  \frac{H}{\varphi(q)\varphi(r)}
\Big|\sum_{\substack{(m,\frac{rq}{(r,q)})=1\\m\in J_j^i }}e_c(m\beta_{ij})\Big|.
$$
Moreover, by \eqref{eq:j3'} (with $\frac{rq}{(r,q)}$ in place of $q'$),
$$
\Big|\sum_{\substack{(m,\frac{rq}{(r,q)})=1\\m\in J_j^i }}e_c(m\beta_{ij})\Big|\leq \frac{|J^i_j|\varphi\Big(\frac{rq}{(r,q)}\Big)}{\frac{rq}{(r,q)}},$$
so finally
$$
|u_{ij,1}|\ll \frac{HH'(r,q)}{rq\varphi((r,q))},$$
where we used $\varphi\Big(\frac{rq}{(r,q)}\Big)=\varphi(\frac{r}{(r,q)})\varphi(q)\leq \frac{\varphi(r)\varphi(q)}{\varphi((r,q))}$.

By the definition of $r=z_{n^{\ast}-1}$, using Proposition \ref{prop:ergchar} with $r=z_{n^\ast-1}$ and $d=(r,q)$, it follows that
\begin{multline*}
|u_{ij,1}\sum_{\substack{v\leq r\\ (v,(r,q))=1}}e_{b,c}(T^v(x_{i,j},y_{i,j}))|=\\O\Big(\frac{HH'}{q}\Big)\Big|\frac{(r,q)}{r\varphi((r,q))}\sum_{\substack{v\leq r\\ (v,(r,q))=1}}e_{b,c}(T^v(x_{i,j},y_{i,j}))\Big|=o(\frac{HH'}{q}).
\end{multline*}

Moreover, by \eqref{eq:cas3} (since $(I_i,J^i_j)$ is good),
$$
\sum_{\substack{v\leq r\\ (v,(r,q))=1}}|h_{ij,v}-u_{ij,v}|=o\Big(\frac{HH'}{q}\Big).
$$
Finally, by Corollary~\ref{cor:j3}, see~\eqref{eq:j3}, summing over $v\leq r$ and using
$\frac{r(r,q)}{\varphi((r,q))e^{\tau r/2}}\leq r\log r e^{-\tau r/2}=o(1)$ and $\frac{q}{\varphi(q)\log^{100}q}=o(1)$,
$$
\sum_{\substack{v\leq r\\ (v,(r,q))=1}}|u_{ij,v}-u_{ij,1}|\ll r\cdot\frac{H}{\varphi(q)}\Big[\frac{(r,q)\varphi(q)}{q\varphi((r,q))}\frac{H'}{e^{\tau r/2}}+ \frac{H'}{\log^{100}q}\Big]=o\Big(\frac{HH'}{q}\Big).
$$

Therefore and by \eqref{eq:lasmu} and \eqref{eq:caa2} (ignoring the last term in \eqref{eq:caa2}),
$$
\sum_{\substack{p\in I_i\\ p_q\in J^i_j}}e_{b,c}(T^p(x,y))\log p=o\Big(\frac{HH'}{q}\Big).
$$
Summing over all $j$, we get
$$
\Big|\sum_{p\in I_i}e_{b,c}(T^p(x,y))\log p\Big|\leq \frac{q}{H'}o\Big(\frac{HH'}{q}\Big)=o(H).
$$
Summing over all $i$, yields
$$
\sum_{p\leq N}e_{b,c}(T^p(x,y))\log p=o(N)
$$
which finishes the proof.

\textbf{B2.2.2.} $q_{n^\ast} \leq q_{n'}^{1-\epsilon}$. In this case, we will constantly use that this implies that $n^{\ast}=n'^{\ast}$. If $q_{n'-1}\leq q_{n'}^{1-\epsilon/2}$, let $p_{n'}:=1$. If $q_{n'-1}\geq q_{n'}^{1-\epsilon/2}$, let $p_{n'}\in \cP$ be a prime number in the interval $[\frac{q_{n'}^{\epsilon/{10}}}{2},q_{n'}^{\epsilon/{10}}]$ such that $(p_{n'},q_{n'-1})=1$. As in {\bf B1.1.2.}, notice that such $p_{n'}$ always exists since by the prime number theorem,
$$
\prod_{p\in[\frac{q_{n'}^{\epsilon/{10}}}{2},q_{n'}^{\epsilon/10}]\cap \cP}p\geq \frac{1}{4}\Big(q_{n'}\Big)^{q_{n'}^{1/2}}>q_{n'-1}.
$$
 Let  $q:=p_{n'}q_{n'}$, $H:=qN^{1/6+\epsilon}$ and $r:=q_{n'-1}\geq q_{n^\ast}$. Notice that $H/q= N^{1/6+\epsilon}$, and by \eqref{eq:n'},
$$
H=p_{n'}q_{n'}N^{1/6+\epsilon}\leq q_{n'}^{1+\epsilon/10}N^{1/6+\epsilon}\leq N^{(5/6-2\epsilon)(1+\epsilon/10)+1/6+\epsilon}<N^{1-\epsilon/2}.
$$
Moreover, since $q_{n^{\ast}}\leq q_{n'}^{1-\epsilon}$ and $\xi=\epsilon^{10}$,
$$
q=p_{n'}q_{n'}\geq q_{n'-1}^{1+\epsilon/20}=r^{1+\epsilon/20}>r^{\frac{1}{1-\xi}},
$$
so $r<q^{1-\xi}$, and
$(q,r)=1$ (by the definition of $p_{n'}$ and $(q_{n'},q_{n'-1})=1$). Thus, the assumptions of Theorem~\ref{thm:nr1} are satisfied with $x=N$ and $H,q,r$. Therefore, for some $V\leq H$, we can decompose $[0,N]=\bigcup_{i=1}^\ell I_i \cup [0,V]$, where $I_i:=[(i-1)H+V,iH+V]$ and moreover for ``most of'' $i\leq \ell$ (that is, for $\ell-o(\ell)$),
\be\label{eq:cas}
\sum_{v=1}^{r}\Big|\sum_{\substack{p\in I_i\\p_q\equiv v \mod r}}\log p- \frac{H}{r}\Big|=o(H).
\ee
By the bound on $H$, we also have
$$
\sum_{p\in [0,V]}1=O(V)=O(H)=o(N),
$$
and hence the interval $[0,V]\subset [0,N]$ can be ignored. Let $p\in I_i=[u_i,u_i+H]$.  Then $p-u_i\leq H\leq q_{n'}^{1+\epsilon/10}N^{1/6+\epsilon}$ (by the definitions of $H$ and $p_{n'}$). Moreover, by the definition of $n'$ (see \eqref{eq:n'}) and since we are in \textbf{B.2.2.} (using also $\epsilon\leq\eta^2/1000$),
\begin{multline*}
H\leq q_{n'}q_{n'}^{\epsilon/10}N^{1/6+\epsilon}\leq q_{n'}N^{\epsilon/10(5/6-2\epsilon)+1/6+\epsilon}\leq\\ q_{n'}N^{1/6+2\epsilon}\leq q_{n'} \frac{N^{5/6-3\epsilon}}{N^{2/3-\eta/5}}\leq q_{n'} \frac{q_{n'+1}}{q_{n^\ast}}.
\end{multline*}
By \eqref{eq:nast} and since $n'>n^{\ast}$, it follows that $q_{n'+1}\leq e^{\tau q_{n'}}$. Moreover, by the definition of $n'$ (see \eqref{eq:n'}), $q_{n'+1}\geq N^{5/6-2\epsilon}$. Therefore,
$$
 H\leq q_{n'}^{1+\epsilon/10}N^{1/6+\epsilon}\leq q_{n'}q_{n'+1}\leq q_{n'}e^{\tau q_{n'}}.
$$
Putting together the two above bounds on $H$, we get $$H\leq q_{n'}\min( \frac{q_{n'+1}}{q_{n^\ast}}, e^{\tau q_{n'}}).$$
Since $p-u_i\leq H$, by Lemma~\ref{cor:jad2} with $n=n'$ (we may use the lemma since $q_{n^\ast}=q_{n'^\ast}$), $m=p-u_i$ and $z=p_{n'}$ (recall that $q=p_{n'}q_n$),
\begin{multline*}
T^p(x,y)=T^{p-u_i}T^{u_i}(x,y)=T^{[(p-u_i)\mod q ]}(T^{u_i}(x,y))+o(1)=\\
T^{p_q}(T^{u_i-[u_i\mod q]}(x,y))+o(1)=T^{p_q}(x_i,y_i)+o(1),
\end{multline*}
where $(x_i,y_i)=T^{u_i-[u_i\mod q] }(x,y)$. Therefore,
\be\label{eq:lse2}
\sum_{p\leq N}e_{b,c}(T^p(x,y))\log p=\sum_{i=1}^{\ell}\sum_{p\in I_i}e_{b,c}(T^{p_q}(x_i,y_i))\log p+o(N).
\ee
Moreover,
$$
\sum_{p\in I_i}e_{b,c}(T^{p_q}(x_i,y_i))\log p=\sum_{v=1}^r\sum_{\substack{p\in I_i\\p_q\equiv v \mod r}}e_{b,c}(T^{p_q}(x_i,y_i))\log p.
$$
Note that since we are in {\bf B2.2.2.} and by the definition of $p_{n'}$ (note that if $q_{n'-1}\geq q_{n'}^{1-\epsilon/2}$ then $\frac{q_{n'-1}}{q_{n'}^{1-\epsilon}}>q_{n'}^{\epsilon/2}>p_{n'}$),
$$
q=p_{n'}q_{n'}\leq q_{n'} \max\Big(1,\frac{q_{n'-1}}{q_{n'}^{1-\epsilon}}\Big)\leq q_{n'}\max\Big(1,\frac{q_{n'-1}}{q_{n^\ast}}\Big).
$$
Since $n>n'>n^{\ast}$  (and $q_{n^{\ast}}\to +\infty$ ), by the definition of $n^\ast$ it follows that  $q_{n'}\leq e^{\tau q_{n'-1}}$. Hence, $q\leq q_{n'}^{1+\epsilon/10}\leq e^{2\tau q_{n'-1}}$. Therefore, by using Lemma~\ref{cor:jad2} with $m=p_q\leq q\leq q_{n'-1}\min(\frac{q_{n'}}{q_{n^\ast}},e^{2\tau q_{n'-1}})$, $n=n'-1$ and $z=1$, we  get that $p_q\equiv v \mod r$ (recall that $r=q_{n'-1}$) implies that
$$
d(T^{p_q}(x_i,y_i),T^v(x_i,y_i))=o(1).
$$
Therefore,
\begin{multline}\label{eq:haj}
\sum_{\substack{p\in I_i\\p_q\equiv v \mod r}}e_{b,c}(T^{p_q}(x_i,y_i))\log p=\\e_{b,c}(T^v(x_i,y_i))\sum_{\substack{p\in I_i\\p_q\equiv v \mod r}}\log p+o\Big(\sum_{\substack{p\in I_i\\p_q\equiv v \mod r}}\log p\Big).
\end{multline}
Let
$$
h_{i,v}:=\sum_{\substack{p\in I_i\\p_q\equiv v \mod r}}\log p.
$$
Then, by \eqref{eq:haj}, summing over $v$,
$$
\sum_{p\in I_i}e_{b,c}(T^{p_q}(x_i,y_i))\log p=\frac{H}{r}\sum_{v=1}^re_{b,c}(T^v(x_i,y_i))+O\Big(\sum_{v=1}^r
\Big|h_{i,v}-\frac{H}{r}\Big|\Big)+o\Big(\sum_{p\in I_i} \log p\Big).
$$
By unique ergodicity, $\frac{H}{r}\sum_{v=1}^re_{b,c}(T^v(x_i,y_i))=o(H)$. Using \eqref{eq:cas} (for all $i\leq\ell$ but $o(\ell)$), we get $$O\Big(\sum_{v=1}^r\Big|h_{i,v}-\frac{H}{r}\Big|\Big)=o(H).$$
 Summing over $i$, and using \eqref{eq:lse2}, we get
$$
\sum_{p\leq N}e_{b,c}(T^p(x,y))\log p=o(N).
$$
This finishes the proof.

\part{Counterexamples}

\section{Counterexamples}\label{sec:count}
In what follows, for  every irrational rotation $\alpha$, we will construct a continuous cocycle $g=g_\alpha:\T\to \R$ such that the Anzai skew product $T(x,y)=T_{\alpha, g}(x,y)=(x+\alpha, y+g(x))$ is uniquely ergodic and there exists  $f\in C(\T^2)$ such that
$$
\lim_{N\to+\infty}\frac{1}{\pi(N)}\sum_{p\leq N}f(T^p(0,0)) \text{ does not exist.}\footnote{We recall that
the limit $\lim_{N\to+\infty}\frac{1}{\pi(N)}\sum_{p\leq N}f(T^p(0,0))$ exists if and only if the limit
$\lim_{N\to+\infty}\frac{1}{N}\sum_{p\leq N}f(T^p(0,0))\log p$ does.}
$$
More generally, our result applies to all $A\subset \N$ which are {\em almost  sparse}.
\begin{definition}\label{def:as}A set $A\subset \N$ is called {\em almost sparse} if  the following three conditions hold:
\begin{enumerate}
\item[i.]  $\lim_{N\to +\infty}\frac{|A\cap[1,N]|}{|A\cap[1,2N]|}$ exists and is positive;
\item[ii.] there exists a sequence of sets $B_N\subset A\cap [1,N]$, $N\geq1$, satisfying $$\lim_{N\to +\infty}\frac{|B_N|}{|A\cap[1,N]|}=0$$ and
$$
\min_{k,l\in (A\cap [0,N])\setminus B_N,\\ k\neq l}|k-l|\to+\infty\text{ as } N\to +\infty;
$$
\item[iii.] $|A\cap [N,2N]\setminus B_{2N}|\neq\emptyset$ eventually.
\end{enumerate}
\end{definition}

\begin{remark} Note that if the limit in i.\ is $<1$ then iii.\ holds automatically.

Note also that very sparse sequences will automatically satisfy i.\ and ii.\ while iii.\ in general is not satisfied, cf.\ $A=\{2^n:\:n\geq1\}$. A reason to add condition iii.\ is that we aim at presenting a universal construction which yields a counterexample for {\bf all} irrational rotations. If $A$ is very sparse (like lacunary sequences) then one can also give a relevant construction in which an irrational rotation is adapted to $A$.\end{remark}

In order to see that the set $\cP$ is almost sparse\footnote{We cannot expect more than that: indeed, the twin prime conjecture implies that, arbitrarily far, there are primes which differ by $2$. Unconditionally, recent results of Zhang and Maynard show that there are infinitely many primes with bounded gaps (with the gap $\leq 249$).}, let us first notice that by the prime number theorem the limit in i.\ exists and equals 1/2. To obtain ii.\ recall:

\begin{theorem} [V.\ Brun, 1919] Given $a$ an even natural number, set $D_a(N):=\{p\leq N:\:p,p+a\in\cP\}$. Then
$$
\Big|D_a(N)|\ll_a\frac{N(\log\log N)^2}{(\log N)^2}.$$\end{theorem}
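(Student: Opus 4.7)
The plan is to recognize this as a classical sieve estimate, namely Brun's upper bound for the number of prime pairs $(p, p+a)$ with $p \leq N$, and to sketch how one would obtain it via a sieve argument. I will use the Brun (pure) sieve approach, since this is historically what gives the bound with the $(\log \log N)^2$ factor; a refined argument via the Selberg sieve would remove the $(\log\log N)^2$ entirely, but the stated bound is all that is needed here.

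The first step is to recast the problem as a sieve. For $n \in D_a(N)$ with $n > z$, both $n$ and $n+a$ must be prime, hence $n(n+a)$ has no prime divisors $\leq z$. Therefore
\[
|D_a(N)| \leq z + S(N,z), \qquad S(N,z) := \Big|\{ n \leq N : (n(n+a), P(z)) = 1 \}\Big|,
\]
where $P(z) := \prod_{p \leq z,\ p \nmid a} p$. For each prime $p \nmid a$ the congruence $p \mid n(n+a)$ cuts out exactly two residue classes modulo $p$ (namely $n \equiv 0$ and $n \equiv -a$). Hence, by inclusion-exclusion,
\[
S(N,z) = \sum_{d \mid P(z)} \mu(d) \Big( N \cdot \frac{2^{\omega(d)}}{d} + O(2^{\omega(d)}) \Big).
\]

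The second step is to apply Bonferroni's inequalities to truncate the inclusion-exclusion at some even depth $2r$, obtaining the upper bound
\[
S(N,z) \leq N \sum_{\substack{d \mid P(z) \\ \omega(d) \leq 2r}} \mu(d) \frac{2^{\omega(d)}}{d} + O\Big(\sum_{\substack{d \mid P(z) \\ \omega(d) = 2r+1}} 2^{\omega(d)}\Big).
\]
Completing the truncated sum and controlling the resulting tail by Rankin's trick gives the main term
\[
N \prod_{\substack{p \leq z \\ p \nmid a}} \Big( 1 - \frac{2}{p}\Big) \ll_a \frac{N}{(\log z)^2}
\]
by Mertens' theorem. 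The error term is crudely $(2ez \log z / (2r))^{2r+1}$, which is controlled as long as $2r \gg z \log z$ is violated in the right direction; choosing $r \asymp \log N / \log\log N$ and $z = N^{\theta/\log\log N}$ for a small constant $\theta > 0$ gives an error $O(N^{1-\epsilon})$.

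The third step is to optimize. With these choices $\log z \asymp \log N / \log\log N$, so $(\log z)^{-2} \asymp (\log\log N)^2 / (\log N)^2$, yielding
\[
|D_a(N)| \ll_a \frac{N (\log\log N)^2}{(\log N)^2},
\]
as claimed. The main obstacle is the balancing of the two parameters $r$ and $z$: the truncation depth must be large enough for the Bonferroni main term to approximate the true sieve function well, yet small enough that the error term $(cz)^{2r}$ does not overwhelm the main term. This is precisely the tradeoff that forces the appearance of the $(\log \log N)^2$ factor in the Brun pure-sieve bound, and removing it requires either Brun's refined combinatorial sieve or the Selberg $\Lambda^2$ construction.
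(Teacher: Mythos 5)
The paper does not prove this statement; it cites it as a classical theorem of Brun (1919) and immediately uses it (to construct the sets $B_N$ witnessing almost-sparseness of the primes). There is therefore no proof in the paper against which to compare your sketch.

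Your outline does capture the structure of Brun's pure sieve: sift $n(n+a)$ over primes $p \le z$ with $p \nmid a$ (two excluded classes per prime), truncate inclusion-exclusion at even depth $2r$ via Bonferroni, approximate the truncated sum by the Euler product and invoke Mertens to get a main term $\ll N/(\log z)^2$. The closing remarks (Selberg's $\Lambda^2$ sieve or Brun's refined sieve remove the $(\log\log N)^2$) are also correct. However, the parameter choice is wrong in a way that makes the argument as written fail. You take $r \asymp \log N/\log\log N$ together with $z = N^{\theta/\log\log N}$, so $r$ and $\log z$ are of the \emph{same} order. But the combinatorial error term is essentially
$$
\sum_{\substack{d \mid P(z) \\ \omega(d) \le 2r}} 2^{\omega(d)} \ \ll\ r\,\bigl(e\pi(z)/r\bigr)^{2r},
$$
whose logarithm is roughly $2r\log z \asymp (\log N/\log\log N)^2 \gg \log N$; it dwarfs $N$. (You also wrote $z\log z$ in place of $\pi(z) \approx z/\log z$, but that is a minor slip compared to the choice of $r$.) The depth $r$ only needs to be large enough to make the tail $\sum_{k>2r}(2\log\log z + O(1))^k/k!$ negligible, which requires merely $r \asymp \log\log N$ (a constant times $\log\log z$). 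With that much smaller $r$, the constraint that the error $r\,(e\pi(z)/r)^{2r}$ stay below $N^{1-\epsilon}$ becomes $2r\log z \lesssim \log N$, which forces $\log z \asymp \log N/\log\log N$ and hence $(\log z)^{-2} \asymp (\log\log N)^2/(\log N)^2$. That is the genuine origin of the $(\log\log N)^2$ loss, and it only emerges once $r$ and $\log z$ live on different scales, $r \asymp \log\log N$ and $\log z \asymp \log N/\log\log N$, rather than on the single scale $\log N/\log\log N$ you chose.
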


In Brun's theorem we have implicit constants $C_a$, we now select a slowly increasing $c(N)\to\infty$, so that depending on the constants $C_a$, the set $B_N:=\bigcup_{a\leq c(N)}D_a(N)$ yields ii.

However, the class of almost sparse sets is far beyond the set of prime numbers, cf.\ the remark below to see another classical class of subsets along which an equidistribution is of interest.

\begin{remark}If $P\in\Z[x]$ is a non-constant polynomial with integer coefficients, $deg\, P\geq 2$ and $A:=\{P(n)\}_{n\in \N}$, then $A$ is almost sparse. Indeed, let $P(x)=c_{r}x^{r}+\ldots+c_0$ with $c_r\neq0$, $r\geq 2$. Assume WLOG that $c_r>0$. Fix $\epsilon>0$, then there exists $M>0$ such that for $x>M$, we have
$$\frac{c_rx^r}{1+\epsilon}\leq P(x)\leq (1+\epsilon)c_rx^r.$$
If we set $\alpha_N:=|\{ n>M:\:P(n)<N\}|$, $\beta_N:=|\{n>M:\: \frac{c_rn^r}{1+\epsilon}<N\}|$ and $\gamma_N:=|\{n>M:\: c_rn^r(1+\epsilon)<N\}|$, then
$$
\frac1{(1+\epsilon)^{2/r}}2^{-1/r}\leq \frac{\gamma_N}{\beta_{2N}}\leq \frac{\alpha_N}{\alpha_{2N}}\leq \frac{\beta_N}{\gamma_{2N}}\leq (1+\epsilon)^{2/r} 2^{-1/r},
$$
so $\lim_{N\to\infty}\alpha_N/\alpha_{2N}=2^{-1/r}$. Moreover, $P(\cdot)$ is eventually increasing with $P(n+1)-P(n)\to\infty$ (since $\deg P\geq 2$), so the existence of $B_N$ follows. Notice finally that the assumption $\deg P\geq 2$ is necessary:\footnote{Degree~1 polynomials yield sets $A$ satisfying i.\ and iii.\ but not ii.}  below, we will show the existence of uniquely ergodic Anzai skew products which are NOT equidistributed along $A$, and such absence of equidistribution  does not hold for instance for $P(x)=x$.
\end{remark}

With the above definition, our main result will be now:
\begin{theorem}\label{thm:main2} Let $A\subset \N$ be an almost sparse set and let $\alpha\in \R\setminus \Q$. There exists $g=g_{A,\alpha}\in C(\T)$ such that $T=T_{\alpha,g}:\T^2\to \T^2$ given by $T(x,y)=(x+\alpha,y+g(x))$ is uniquely ergodic and there exists $f\in C(\T^2)$ such that
$$
\lim_{N\to +\infty}\frac{1}{|A\cap [0,N]|}\sum_{n\in A, n\leq N} f(T^n(0,0)) \text{ does not exist}.
$$
\end{theorem}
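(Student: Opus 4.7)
The strategy is to construct $g$ as a uniformly convergent series $g = \sum_{k \geq 1} h_k$ of small continuous perturbations, each engineered so that the Birkhoff sums $S_n(g)(0)$ along $A$ oscillate between two distinct values as $n$ runs through $A$ in longer and longer ranges, while at the same time $g$ remains non-cohomologous (in the multiplicative sense) to any constant so that unique ergodicity is preserved.

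First, using properties (i)--(iii) of almost sparsity, I will choose an extremely lacunary sequence $N_1 < N_2 < \cdots$ and large well-separated subsets $\widetilde A_k \subset (A \cap (N_{k-1},N_k]) \setminus B_{N_k}$ such that $|\widetilde A_k|/|A \cap [1,N_k]|$ has a positive lower limit (this uses (i) and (iii)) and the minimal gap inside $\widetilde A_k$ tends to $\infty$ with $k$ (this uses (ii)). Next, since $\alpha$ is irrational, for each $k$ the orbit points $\{j\alpha\}_{0 \le j < N_k}$ are pairwise distinct, so around every $(n-1)\alpha$ with $n \in \widetilde A_k$ I can select a small arc $U_n \subset \T$ meeting this finite orbit only at $(n-1)\alpha$. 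I will build $h_k$ as a continuous zero-mean function supported in a union $\bigcup_{n \in \widetilde A_k}(U_n \cup U_n')$, where the $U_n'$ carry compensating negative mass placed slightly after $(n-1)\alpha$ along the orbit; the heights are tuned so that, up to an error $\varepsilon_k \downarrow 0$, one has $S_m(h_k)(0) \equiv c_k \pmod 1$ for every $m \in \widetilde A_k$ and $S_m(h_k)(0) = O(\varepsilon_k)$ for $m \le N_{k-1}$, while $\|h_k\|_\infty \le \delta_k$ with $\sum_k \delta_k < \infty$. Here the constants $c_k \in \{0,\tfrac 12\}$ will alternate with $k$.

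With $f(x,y) = \cos(2\pi y)$, the above gives $f(T^n(0,0)) = \cos(2\pi S_n(g)(0))$, and the telescoping nature of the $h_k$-construction together with the summability $\sum \delta_k < \infty$ ensures $\cos(2\pi S_n(g)(0))$ is uniformly close to $\cos(2\pi (c_1+\cdots+c_{k-1})) \cdot (\pm 1)$, i.e.\ close to $+1$ for $n \in \widetilde A_k$ with $k$ odd and close to $-1$ for $k$ even (the cumulative shift is absorbed by replacing $f$ by a suitable $e(\tilde f(y))$ after each stage if necessary, or by choosing the $c_k$ themselves to cancel in pairs). Combining the density estimate $|\widetilde A_k|/|A \cap [1,N_k]| \gtrsim \tfrac{1}{2}$ from Step~1 with property (i), which bounds the growth rate of $|A \cap [1,N]|$ from above so that the new block dominates the old, the Ces\`aro average $\tfrac{1}{|A\cap[1,N_k]|}\sum_{n \in A \cap [1,N_k]} f(T^n(0,0))$ then admits both a subsequential limit close to $+1$ (along odd $k$) and close to $-1$ (along even $k$). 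Hence the limit along $A$ does not exist.

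It remains to guarantee unique ergodicity of $T_{\alpha,g}$. I will do this by reserving a tiny portion of each $h_k$ for a Katok-type diophantine correction: a further perturbation of sup-norm $\ll \delta_k$ ensures that for every integer $\ell \neq 0$ the Fourier series of $\ell g$ has a subsequence of coefficients $a_{q_{n_j}}(\ell g)$ satisfying $\|q_{n_j}\alpha\|/|a_{q_{n_j}}(\ell g)| \to 0$, which by the criterion recalled in the introduction (Katok's condition) prevents $\ell g$ from being a multiplicative coboundary and thus forces $T_{\alpha,g}$ to be uniquely ergodic. Since this correction modifies only arbitrarily high Fourier modes and can be taken of negligible $C^0$-size, it does not disturb the ergodic-sum construction above. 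The main obstacle is Step~2: arranging the bumps so that $S_m(h_k)(0)$ is simultaneously close to $c_k$ for every $m \in \widetilde A_k$ and close to $0$ for $m \le N_{k-1}$, with $\|h_k\|_\infty$ summable. This is precisely where the separation condition (ii) of almost sparsity is essential, as it is the separation of iterates corresponding to consecutive elements of $\widetilde A_k$ that lets the positive and compensating negative bumps be placed in disjoint arcs of $\T$ of controlled height.
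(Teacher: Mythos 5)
Your high-level strategy matches the paper: write $g$ as a uniformly convergent series of small continuous summands, use the separation condition (ii) of almost sparsity to gain room between consecutive elements of $A$, tune the construction so that $S_n(g)(0)\bmod 1$ alternates between $0$ and $\tfrac12$ along blocks of $A$, and secure unique ergodicity by an auxiliary Katok-type diophantine perturbation (the paper does exactly this via the extra sawtooths $h_n$). However, your Step~2 --- the core construction of the $h_k$ --- has a genuine gap, and it is precisely at the place you flag as the main obstacle.

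You require simultaneously that $\|h_k\|_\infty\le\delta_k$ with $\sum_k\delta_k<\infty$, that $h_k$ be supported in arcs $U_n\cup U_n'$ each meeting the orbit $\{j\alpha\}_{j<N_k}$ in a single point, and that $S_m(h_k)(0)\equiv c_k\pmod 1$ for every $m\in\widetilde A_k$, including $c_k=\tfrac12$. These conditions are incompatible. Since each arc is hit by exactly one orbit point, $S_m(h_k)(0)=\sum_{j<m}h_k(j\alpha)$ receives one contribution of size at most $\delta_k$ per element of $\widetilde A_k\cap[1,m]$ (and the compensating contributions of opposite sign). If the compensating bumps cancel each positive one before the next element of $\widetilde A_k$ is reached --- the natural reading of ``slightly after along the orbit'' --- then $S_{n}(h_k)(0)$ at $n\in\widetilde A_k$ is essentially $h_k((n-1)\alpha)$, which is $\le\delta_k<\tfrac12$, so $\equiv\tfrac12\pmod 1$ is impossible. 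If instead the negative bumps are deferred, then $S_{n_j}(h_k)(0)=\sum_{i\le j}a_i$ where $a_i$ is the $i$-th positive height; demanding this be $\equiv\tfrac12\pmod 1$ for all $j$ forces $a_1\equiv\tfrac12$ and $a_j\equiv 0$ for $j\ge 2$, so $a_1\ge\tfrac12$, again contradicting $\|h_k\|_\infty\le\delta_k$. There is also a second, unaddressed issue: you only control $S_m(h_j)(0)$ for $m\in\widetilde A_j$ and for $m\le N_{j-1}$, so for $m\in\widetilde A_k$ with $k>j$ the contribution $S_m(h_j)(0)$ is completely unconstrained; the parenthetical remark about ``absorbing the cumulative shift'' does not give a mechanism.

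The paper circumvents both issues by working with the transfer functions rather than the coboundaries: it sets $g=\sum_n(f_n\circ R_\alpha-f_n)$ where the coboundary $f_n\circ R_\alpha-f_n$ is small in $C(\T)$ (because $f_n$ is nearly $\tfrac{1}{q_{k_n}}$-periodic, Lipschitz with controlled constant $L_{n,w}\le 12q_{k_n+1}$, so the rotation by $\alpha\approx p_{k_n}/q_{k_n}$ moves it little), but the function $f_n$ itself is \emph{not} small --- it takes values up to roughly $2$. Since $f_n(0)=0$, the Birkhoff sums telescope to $S_m(g)(0)=\sum_n f_n(m\alpha)$, and because $f_n(w\alpha)$ is proportional to the freely adjustable slope $L_{n,w}$ times a known quantity $w\alpha-wp_{k_n}/q_{k_n}\in[\tfrac{1}{4q_{k_n+1}},\tfrac{1}{q_{k_n+1}}]$, one can set $f_n(w\alpha)$ to any prescribed value in, say, $[1,2]$. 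This is exactly what allows the paper to solve the accumulation problem: it defines $r_{w,n}:=\sum_{m<n}f_m(w\alpha)\bmod 1$ and then chooses $f_n(w\alpha)=2-r_{w,n}$ (or $\tfrac32-r_{w,n}$), i.e.\ the $n$-th stage is tuned \emph{at each target point $w\alpha$ separately} so that the total sum, including all earlier stages, lands on $0$ (or $\tfrac12$) mod~$1$. Your proposal has no analogue of this per-point adjustment, and more fundamentally cannot have one under the constraint $\|h_k\|_\infty\to0$. To repair the argument you would need to replace small-$L^\infty$ bumps with a Lipschitz-controlled coboundary construction of the paper's type (or equivalently, drop the $\|h_k\|_\infty\le\delta_k$ requirement and instead control $\|h_k\|_\infty$ via Lipschitz bounds on the transfer function, as in the paper's Lemma~\ref{l:wyst}).
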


Theorem~\ref{thm:main2} should be compared with Bourgain's theorem which asserts that if $A=\cP$ then for every $g, \alpha$ as above, and every $f\in C(\T^2)$ the limit exists for Lebesgue-a.e.\ $(x,y)\in \T^2$. From now on, the set $A$ and $\alpha$ are fixed, so we omit them in the formulations below. Theorem~\ref{thm:main2} is a consequence of the following lemma:

\begin{lemma}\label{lem:phi} There exist $g\in C(\T)$ such that $T_{\alpha, g}$ is uniquely ergodic and  an increasing sequence $\{M_n\}$ of natural numbers such that, for every $\epsilon>0$, we can find $n_0$ for which for every $n\geq n_0$ and every $k\in A\cap[M_n,2M_n]\setminus B_{2M_n}$, we have
$$
S_k(g)(0)\in (-\epsilon,\epsilon) \text{ if } n\text{ is even }
$$
and
$$
S_k(g)(0)\in (1/2-\epsilon,1/2+\epsilon) \text{ if } n\text{ is odd.}
$$
\end{lemma}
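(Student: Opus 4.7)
The plan is to construct $g$ as a uniformly convergent telescoping series $g = \sum_{n \geq 1} g_n$ with each $g_n \in C(\T)$ of very small sup norm. First, using the almost-sparse conditions (ii) and (iii) of Definition \ref{def:as}, I will choose a rapidly growing sequence $\{M_n\}$ so that the minimum gap $K_n := \min\{|k - \ell| : k \neq \ell \in A\cap[0, 2M_n]\setminus B_{2M_n}\}$ tends to $+\infty$ and each block $A_n := A\cap[M_n, 2M_n] \setminus B_{2M_n}$ is non-empty for $n$ large. Set $c_n = 0$ for $n$ even and $c_n = 1/2$ for $n$ odd; the inductive goal at stage $n$ is that $S_k(g_1 + \cdots + g_n)(0) \equiv c_n \pmod 1$ within some error $\epsilon_n \to 0$, uniformly for $k \in A_n$.

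Assuming $g_1, \ldots, g_{n-1}$ have been built, let $\Delta_k := c_n - S_k(\sum_{j<n} g_j)(0) \pmod 1$ for $k \in A_n$; I must engineer $g_n \in C(\T)$ satisfying $S_k(g_n)(0) \equiv \Delta_k \pmod 1$ for each $k \in A_n$. The construction of $g_n$ combines two ingredients. First, I pick a denominator $q_{j_n}$ of $\alpha$ large enough that, by the three-distance theorem, the orbit points $\{j\alpha \bmod 1 : 0 \leq j \leq 2M_n\}$ are pairwise separated by at least $\|q_{j_n-1}\alpha\|$, which is much larger than any preassigned scale. Second, I will place narrow bump functions on non-overlapping intervals around the orbit points $k\alpha$, $k \in A_n$, tuning the bump near $k\alpha$ so that the additional orbit passages through its support up to time $k$ realize the prescribed increment $\Delta_k$. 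Condition (ii) is exactly what allows these bumps to be supported disjointly and tuned independently. A compensating coboundary $h_n - h_n \circ R_\alpha$ may then be added to keep $\|g_n\|_\infty$ of order $1/M_n$ (which is optimal since $|S_k(g_n)(0)| \leq k\|g_n\|_\infty$ and we need the latter of order $1$ when $k \in A_n$).

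The cross-block consistency will follow from sup-norm estimates: for $k \in A_m$ with $m \neq n$, I need $|S_k(g_n)(0)|$ to be summable over $n$ and small. For $n > m$ (future corrections), the bound $|S_k(g_n)(0)| \leq 2M_m/M_n$ is summable provided $\{M_n\}$ grows super-exponentially; for $n < m$ (past corrections), I exploit that $g_n$ has a dominant coboundary structure, so $|S_k(g_n)(0)|$ is bounded by $2\|h_n\|_\infty$ and, by further calibration of the growth of $\|h_n\|_\infty$ against $K_m, M_m$, the total error is negligible on block $A_m$. Taking $\epsilon_n \to 0$, the uniform limit $g = \sum_n g_n \in C(\T)$ inherits the prescribed dichotomy $S_k(g)(0) \to c_n \pmod 1$ for $k \in A_n$.

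Finally, unique ergodicity of $T_{\alpha,g}$, equivalent to the non-existence of a measurable $\xi:\T\to\mathbb{S}^1$ solving $e^{2\pi i m g(x)} = \xi(x)/\xi(x+\alpha)$ for every $m\in\Z\setminus\{0\}$, will be secured by folding into each $g_n$ an additional small ``generic'' perturbation with negligible effect on $S_k(g)(0)$ at the scale $A_n$ but that, in the spirit of Katok's Fourier-coefficient criterion recalled in the introduction, rules out such coboundary relations simultaneously for all $m \neq 0$. I expect the main obstacle to be the second paragraph: producing a single continuous $g_n$ with uniformly small sup norm whose ergodic sums simultaneously realize prescribed values modulo $1$ at every point of $A_n$. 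This is the core technical step and crucially exploits the large-gap condition (ii) together with the three-distance structure of the orbit of $\alpha$.
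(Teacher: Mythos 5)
Your high-level scaffolding — writing $g$ as a telescoping series of small contributions, correcting block by block, using the gap condition of Definition~\ref{def:as} to make independent adjustments, and appealing to super-exponential growth of the scales to kill future corrections — does match the architecture of the paper's proof. But the core technical step (your paragraph two) is where you diverge, and as described I think it would run into real trouble.

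The paper's construction takes $g_n = f_n\circ T_\alpha - f_n$ to be \emph{itself a coboundary}, with $f_n(0)=0$, so that the Birkhoff sum telescopes: $S_k(g_n)(0) = f_n(k\alpha)-f_n(0)=f_n(k\alpha)$. This single observation removes the entire ``orbit passages through the support of the bump'' problem from your plan: one does not need to count how often $\{j\alpha: j< k\}$ hits the support of a narrow bump (which is notoriously unstable — the three-distance theorem tells you the number of such visits can vary by a bounded amount, but that slack is of order~$1$ and your target $\Delta_k$ is also of order~$1$, so there is no room). Instead one only needs to control the \emph{single} value $f_n(k\alpha)$. The function $f_n$ is then built as a trapezoid pattern on the lattice $\mathbb{Z}/q_{k_n}$ with ramps of width $1/q_{k_n+1}$, so that for $w\le q_{k_n}$ the point $w\alpha$ automatically lands on the ramp of the $w$-th cell at a known distance $\approx w/(q_{k_n}q_{k_n+1})$ from the cell endpoint; the slope $L_{n,w}$ is then solved for directly to hit the target. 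Continuity of $g$ then hinges only on the variation of $w\mapsto L_{n,w}$, which is where the sparsity condition~(ii) of Definition~\ref{def:as} enters: between two consecutive prescribed points $w_i, w_{i+1}\in A$, the slopes are linearly interpolated, and the gap $w_{i+1}-w_i\ge\epsilon_{q_{k_n}}^{-1}$ keeps that variation summable. In your plan you place bumps \emph{at the orbit points $k\alpha$} rather than at the lattice points $wp_{k_n}/q_{k_n}$; this is workable in spirit, but once you adopt the coboundary telescoping the two become essentially equivalent — except that the paper's lattice version makes the interpolation (and hence the continuity estimate, Lemma~\ref{l:wyst}) transparent, whereas at the orbit points you would have to quantify separation through the three-distance theorem and verify disjointness of supports at each stage.

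The other gap is unique ergodicity. ``Folding in a generic perturbation'' is not an argument: one has to exhibit a mechanism that forces $S_N(mg)$ to be far from constant along a subsequence, simultaneously for every nonzero $m$ — and do so without destroying the delicate block-by-block calibration of $S_k(g)(0)$ on $A\cap[M_n,2M_n]\setminus B_{2M_n}$. The paper does this by interleaving a second family of coboundaries $h_n\circ T_\alpha - h_n$ built from $1/q_{\ell_n}$-periodic tent maps, with denominators $q_{\ell_n}$ strictly between $q_{k_n}$ and $q_{k_{n+1}}$, and then checking that at the rigidity time $K_nq_{\ell_n}\approx q_{\ell_n+1}/(2q_{\ell_n})\cdot q_{\ell_n}$ the distribution of $S_{K_nq_{\ell_n}}(g)$ is close to that of $h_n(\cdot+1/2)-h_n(\cdot)$, which is visibly non-constant; and separately that these $h_n$ have negligible effect on $S_k(g)(0)$ for $k\le q_{k_n}$. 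You would need to supply an argument of comparable precision; Katok's Fourier-coefficient criterion, which you cite, is formulated for \emph{analytic} $g$, whereas here $g$ is merely continuous, so some care is needed even to invoke a criterion at all.

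In short: your outline has the right shape, but (a) it misses the key telescoping identity $S_k(f_n\circ T_\alpha - f_n)(0)=f_n(k\alpha)$ that makes the tuning of $g_n$ a pointwise rather than an orbit-counting problem, and the orbit-counting route you describe would not deliver $\pmod 1$ precision; and (b) the unique-ergodicity step is left entirely unaddressed.
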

Notice that the assertion is non-trivial provided that, as we have assumed in iii., $A\cap[M_n,2M_n]\setminus B_{2M_n}$ is not empty.
We will prove Lemma \ref{lem:phi} in a separate subsection. Before we do that, let us show how it implies the main theorem.

\begin{proof}[Proof of Theorem \ref{thm:main2}] Notice that if the limit exists then, by ii., also does
$$
\lim_{N\to +\infty}\frac{1}{|A\cap [0,N]|}\sum_{k\in A\setminus B_N, k\leq N} f(T^k(0,0)),
$$
as the sets $B_N$ have density which goes to $0$ (relatively on $A$).
Moreover, because of i., $\lim_{N\to +\infty} \frac{|A\cap[1,N]\setminus B_{2N}|}{|A\cap[1,N]|}=1$. Therefore,
\begin{multline*}
\lim_{n\to +\infty}\frac{1}{|A\cap [M_n,2M_n]|}\sum_{k\in A\cap[M_n,2M_n]\setminus B_{2M_n}} f(T^k(0,0))=\\
\lim_{n\to +\infty}\frac{|A\cap [0,2M_n]|}{|A\cap [M_n,2M_n]|}\frac{1}{|A\cap [0,2M_n]|}\sum_{k\in A\cap[0,2M_n]\setminus B_{2M_n}} f(T^k(0,0))-\\
\lim_{n\to +\infty}\frac{|A\cap [0,M_n]|}{|A\cap [M_n,2M_n]|}\frac{1}{|A\cap [0,M_n]|}\sum_{k\in A\cap[0,M_n]\setminus B_{2M_n}} f(T^k(0,0))
\end{multline*}
also exists. Moreover, by iii., the summation on the LHS summand is non-trivial.

But for any $k\in [M_n,2M_n]$ with $n$ even, we have $T^k(0,0)=(k\alpha,S_k(g)(0))\in \T\times(-\epsilon,\epsilon)$ and for every $k\in [M_n,2M_n]$ with $n$ odd, we have $T^k(0,0)=(k\alpha,S_k(g)(0))\in \T\times(1/2-\epsilon,1/2+\epsilon)$. It is therefore enough to take any $f\in C(\T^2)$ of the form $f(x,y)=\tilde{f}(y)$, with $\tilde{f}\in C(\T)$ satisfying $\tilde{f}(0)=0$ and $\tilde{f}(1/2)=1$. Then, along even $n$, the limit equals $0$ and along odd $n$, it is equal to $1$. Hence, the limit does not exist. This finishes the proof.
\end{proof}
\subsection{Proof of Lemma \ref{lem:phi}}
A general  idea behind the construction of $\varphi$ comes from \cite{Fr-Le}.

Let $\{B_n\}$ be the sequence of sets coming from ii. in the definition of almost sparse set and let
\be\label{eq:eps}
\epsilon_n:=\Big(\min_{k,l\in A\cap [0,n]\setminus B_n,\\ k\neq l}|k-l|\Big)^{-1}.
\ee
By ii., it follows that $\epsilon_n\to 0$. Therefore, there exists a sequence $\{k_n\}$ such that
\be\label{eq:summ}\sum_{n=1}^{+\infty} \epsilon_{q_{k_n}}<+\infty.
\ee
We can also WLOG assume that $\alpha-\frac{p_{k_n}}{q_{k_n}}>0$ and (by taking a further subsequence) that $k_{n+1}>k_n^2$.

Let $f_n=f_{k_n}:\T\ \to \R$ be the following function: for every $w\in\{0,\ldots,q_{k_n}-1\}$,
\be\label{eq:def1}f_n(x):=L_{n,w}\Big(x-\frac{wp_{k_n}}{q_{k_n}}\Big)\;\text{  for  }\;x\in\Big[\frac{wp_{k_n}}{q_{k_n}}, \frac{wp_{k_n}}{q_{k_n}}+\frac{1}{q_{k_n+1}}\Big],
\ee
\be\label{eq:def2}f_n(x):=L_{n,w}\Big(\frac{wp_{k_n}+1}{q_{k_n}}-x\Big)\;\text{  for  }\;x\in \Big[\frac{wp_{k_n}+1}{q_{k_n}}-\frac{1}{q_{k_n+1}},\frac{wp_{k_n}+1}{q_{k_n}}\Big],
\ee
and
\be\label{eq:boundfn}
f_n(x):=\frac{L_{n,w}}{q_{k_n+1}}\;\text{ for }\;x\in \Big[ \frac{wp_{k_n}}{q_{k_n}}+\frac{1}{q_{k_n+1}},
\frac{wp_{k_n}+1}{q_{k_n}}-\frac{1}{q_{k_n+1}}\Big].
\ee
Moreover, we assume that $L_{n,w}>0$.

We define $g:\T\to\R$ by setting
\be\label{eq:defp}
g(x):=\sum_{n=1}^{+\infty}\Big(f_n(x+\alpha)-f_n(x)\Big).
\ee
We have the following:
\begin{lemma} \label{l:wyst} Assume that
\begin{multline}\label{eq:cond}
 \sum_{n=1}^{+\infty }\frac{\max_{w\in \{0,\ldots,q_{k_n}-1\}} L_{n,w}}{q_{k_n}q_{k_n+1}}<+\infty\text{ and }\\\sum_{n=1}^{+\infty} \frac{\max_{w\in \{0,\ldots,q_{k_n}-1\}}|L_{n,w}-L_{n,w+1}|}{q_{k_n+1}}<+\infty,
\end{multline}
where $L_{n,q_{k_n}}=L_{n,0}$.
Then $g$ is continuous.
\end{lemma}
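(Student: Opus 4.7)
The plan is to establish uniform convergence of the series~\eqref{eq:defp} on $\T$; since each partial sum is a finite sum of continuous functions, uniform convergence will immediately yield $g\in C(\T)$.

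First I would record two basic features of each $f_n$ that follow directly from~\eqref{eq:def1}--\eqref{eq:boundfn}. Writing $I_{n,w}:=\bigl[\tfrac{wp_{k_n}}{q_{k_n}}\bmod 1,\tfrac{wp_{k_n}}{q_{k_n}}\bmod 1+\tfrac{1}{q_{k_n}}\bigr]$, the intervals $I_{n,0},\dots,I_{n,q_{k_n}-1}$ partition $\T$, and on each $I_{n,w}$ the function $f_n$ has the common tent profile $\phi_n$ (peak $1/q_{k_n+1}$, vanishing at both endpoints), scaled by $L_{n,w}$. In particular $f_n$ is continuous on $\T$ and Lipschitz with constant $\max_w L_{n,w}$.

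Next I would prove the key estimate
$$
\sup_{x\in\T}\bigl|f_n(x+\alpha)-f_n(x)\bigr|\;\leq\;\frac{\max_w L_{n,w}}{q_{k_n}q_{k_n+1}}+\frac{\max_w|L_{n,w+1}-L_{n,w}|}{q_{k_n+1}},
$$
with the convention $L_{n,q_{k_n}}=L_{n,0}$. I would write $\alpha=\tfrac{p_{k_n}}{q_{k_n}}+\delta_n$ with $0<\delta_n\leq\tfrac{1}{q_{k_n}q_{k_n+1}}$ and split
$$
f_n(x+\alpha)-f_n(x)=\Bigl[f_n(x+\alpha)-f_n\bigl(x+\tfrac{p_{k_n}}{q_{k_n}}\bigr)\Bigr]+\Bigl[f_n\bigl(x+\tfrac{p_{k_n}}{q_{k_n}}\bigr)-f_n(x)\Bigr].
$$
The first bracket is bounded by the global Lipschitz constant of $f_n$ times $\delta_n$, contributing the first term. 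For the second bracket, if $x\in I_{n,w}$ lies at relative position $t=x-\tfrac{wp_{k_n}}{q_{k_n}}\bmod 1$, then $x+\tfrac{p_{k_n}}{q_{k_n}}\in I_{n,w+1}$ at the same relative position $t$, so the difference equals $(L_{n,w+1}-L_{n,w})\phi_n(t)$, giving the second term (with wrap-around absorbed by $L_{n,q_{k_n}}=L_{n,0}$).

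Once the estimate is in hand, the two summability hypotheses in~\eqref{eq:cond} say precisely that $\sum_n\sup_{x\in\T}|f_n(x+\alpha)-f_n(x)|<+\infty$, so the series~\eqref{eq:defp} converges absolutely and uniformly, which delivers $g\in C(\T)$. The main (and only) subtlety I anticipate is the second bracket when $x$ lies exactly at an endpoint of some $I_{n,w}$, where the choice of $w$ is ambiguous; this dissolves instantly because $\phi_n$ vanishes at the endpoints, so both possible evaluations agree and yield zero for that bracket.
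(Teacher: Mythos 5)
Your proposal is correct and takes essentially the same route as the paper: the same two-term split $f_n(x+\alpha)-f_n\bigl(x+\tfrac{p_{k_n}}{q_{k_n}}\bigr)+f_n\bigl(x+\tfrac{p_{k_n}}{q_{k_n}}\bigr)-f_n(x)$, the same Lipschitz bound on the first difference, and the same shift-by-one-interval observation for the second, yielding exactly the two summands appearing in~\eqref{eq:cond}. The only cosmetic difference is that you invoke the global Lipschitz constant $\max_w L_{n,w}$ of $f_n$ for the first bracket, whereas the paper keeps the local constants $\max(L_{n,w},L_{n,w'})$; both are immediately dominated by the same quantity, so the argument is the same in substance.
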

\begin{proof} Notice that
$$
|f_n(x+\alpha)-f_n(x)|\leq \Big|f_n(x+\alpha)-f_n(x+\frac{p_{k_n}}{q_{k_n}})\Big|
+\Big|f_n(x+\frac{p_{k_n}}{q_{k_n}})-f_n(x)\Big|.
$$
Furthermore, either (for some $j$) $x+p_{k_n}/q_{k_n},x+\alpha\in [j/q_{k_n},(j+1)/q_{k_n})$ or $x+p_{k_n}/q_{k_n}\in [j/q_{k_n},(j+1)/q_{k_n})$ and $x+\alpha\in [(j+1)/q_{k_n},(j+2)/q_{k_n})$ and then, by the definition of $f_n$, it follows that
$$
 \Big|f_n(x+\alpha)-f_n(x+\frac{p_{k_n}}{q_{k_n}})\Big|\leq \max(L_{n,w}, L_{n,w'})\frac{1}{q_{k_n}q_{k_n+1}},
$$
where $j=wp_{k_n}$ mod~$q_{k_n}$ and $j+1=w'p_{k_n}$ mod~$q_{k_n}$.
Finally, if $x\in [wp_{k_n}/q_{k_n}, (wp_{k_n}+1)/q_{k_n})$ then
$x+p_{k_n}/q_{k_n}\in [(w+1)p_{k_n}/q_{k_n}, ((w+1)p_{k_n}+1)/q_{k_n})$, so taking into account the bound on $f_n$ given by~\eqref{eq:boundfn}, we obtain
$$
\Big|f_n(x+\frac{p_{k_n}}{q_{k_n}})-f_n(x)\Big|\leq |L_{n,w}-L_{n,w+1}| \frac{1}{q_{k_n+1}},
$$
whence, from \eqref{eq:cond} and \eqref{eq:defp}, it follows that $g$ is continuous on $\T$.
\end{proof}
From now on, we assume that $g$ is defined for parameters for which~\eqref{eq:cond} holds. We will show that there exists a sequence $\{L_{n,w}\}$ satisfying~\eqref{eq:cond} and such that the statement of Lemma~\ref{lem:phi} holds.

Notice that by \eqref{eq:defp} and since $f_n(0)=0$ for every $n$, it follows that for every $k\in \N$, we have
\be\label{eq:formphi}
S_k(g)(0)=\sum_{n=1}^{+\infty}f_n(k\alpha).
\ee
The definition of $\{L_{n,w}\}$ is inductive.  Assume we have defined $L_{\ell,w}$ for $w\in\{0,\ldots, q_{k_\ell}-1\}$ and $\ell<n$ so that $\max_{w\in\{0,\ldots, q_{k_\ell}-1\}}L_{\ell,w}\leq 12q_{k_\ell+1}$ and
$\max_{w\in \{0,\ldots,q_{k_\ell}-1\}}|L_{\ell,w}-
L_{\ell,w+1}|<\max(12\epsilon_{q_{k_\ell}}q_{k_\ell+1},
24q_{k_\ell+1}/q_{k_\ell})$. We will now define $L_{n,w}$ for $w\in\{0,\ldots, q_{k_n}-1\}$ so that
\be\label{eq:max2}
\max_{w\in\{0,\ldots, q_{k_n}-1\}}L_{n,w}\leq 12q_{k_n+1}\ee
and
\be\label{eq:max2a}\max_{w\in \{0,\ldots,q_{k_n}-1\}}|L_{n,w}-L_{n,w+1}|<
\max(12\epsilon_{q_{k_n}}q_{k_n+1},24q_{k_n+1}/q_{k_n}).
\ee
This, by~\eqref{eq:summ} (and the obvious fact that $\sum_r1/q_r<+\infty$), immediately implies that~\eqref{eq:cond} holds and therefore, in view of Lemma~\ref{l:wyst},  $g$ is continuous. Let $w_0<w_1<\ldots <w_{t}$ be all the elements of the set $(A\cap [\frac{q_{k_n}}{2},q_{k_n}])\setminus B_{q_{k_n}}$ (cf.\ iii.\ of Definition~\ref{def:as}). By~\eqref{eq:eps}, it follows that for every $i\in\{0,\ldots,t-1\}$, we have
\be\label{eq:las}
w_{i+1}-w_i\geq \epsilon_{q_{k_n}}^{-1}.
\ee
Let $w\in \{w_0,\ldots, w_t\}$. Then $w\alpha\in [\frac{wp_{k_n}}{q_{k_n}},\frac{wp_{k_n}+1}{q_{k_n}})$ and moreover
\be\label{eq:bounda}
\Big|w\alpha-\frac{wp_{k_n}}{q_{k_n}}\Big|\geq \frac{w}{2q_{k_n}q_{k_n+1}}\geq \frac{1}{4q_{k_n+1}}
\ee
and analogously
\be\label{eq:boundb}
\Big|w\alpha-\frac{wp_{k_n}}{q_{k_n}}\Big|\leq \frac{w}{q_{k_n}q_{k_n+1}}\leq \frac{1}{q_{k_n+1}}.
\ee
Therefore and since $\alpha>\frac{p_{k_n}}{q_{k_n}}$, we know that $f_n(w\alpha)$ is given by \eqref{eq:def1}. Let
\be\label{eq:rwn}
r_{w,n}:=\sum_{m=1}^{n-1}f_m(w\alpha)\text{ mod }1\in [0,1).
\ee
We define $L_{n,w}$ by setting
\be\label{eq:lnw1}
f_n(w\alpha)=L_{n,w}\Big(w\alpha-\frac{wp_{k_n}}{q_{k_n}}\Big)=
2-r_{w,n}
\ee
if $n$ is even and
\be\label{eq:lnw2}
f_n(w\alpha)=L_{n,w}\Big(w\alpha-\frac{wp_{k_n}}{q_{k_n}}\Big)=
3/2-r_{w,n}
\ee
if $n$ is odd. By~\eqref{eq:bounda},~\eqref{eq:boundb} and since $r_{w,n}<1$, it follows that $L_{n,w}\in [\frac{1}{12}q_{k_n+1}, 12q_{k_n+1}]$. In this way we have defined $\{L_{n,w_i}\}_{i=0}^t$. Note also that $L_{n,w_i}>L_{n,w_{i+1}}$ for $i=0,\ldots,t-1$. Now, for any $s\in [w_i,w_{i+1}]$ (with $i=0,\ldots,t-1$), we define inductively
\be\label{d:ind1}
L_{n,s+1}=L_{n,s}+\frac{L_{n,w_{i+1}}-L_{n,w_{i}}}{w_{i+1}-w_i}.
\ee
Note that $L_{n,s}>L_{n,s+1}$ and by iterating~\eqref{d:ind1}, we obtain $$L_{n,w_i+(w_{i+1}-w_i)}=L_{n,w_i}+(w_{i+1}-w_i)
\frac{L_{n,w_{i+1}}-L_{n,w_{i}}}{w_{i+1}-w_i}=L_{n,w_{i+1}},$$
so this is indeed an extension of the definition of $L_{n,w_i}$ to $L_{n,s}$.
By~\eqref{eq:las} and the bound on $(L_{n,w})$, it follows that for every $s$, we have
$$
|L_{n,s+1}-L_{n,s}|<12q_{k_n+1}\epsilon_{q_{k_n}}.
$$
Finally, we complete the definition of $L_{n,w}$ by setting
\be\label{d:in1a}
L_{n,s+1}=L_{n,s}+\frac{L_{n,w_{0}}-
L_{n,w_{t}}}{q_{k_n}-w_{t}+w_0}\ee
for $s=w_t,w_t+1,\ldots, \ldots, q_{n_k},\ldots,q_{n_k}+w_{0}-1$. As before, we verify that this definition yields an extension of the definition of $L_{n,w_i}$ to all of $L_{n,s}$. Moreover, for $s=w_t,\ldots, q_{n_k},\ldots, q_{n_k}+w_0-1$, by~\eqref{d:in1a}, the bound on $L_{n,w_i}$ and $w_0\geq q_{k_n}/2$, we obtain
$$
|L_{n,s+1}-L_{n,s}|<24q_{k_n+1}/q_{k_n}.$$
This finishes the inductive step of the construction.

We will now show that Lemma \ref{lem:phi} holds for $M_n:=\frac{q_{k_n}}{2}$. WLOG we assume that $n$ is even and we will use \eqref{eq:lnw1}, the proof in case $n$ is odd follows the same steps using \eqref{eq:lnw2}.

Recall that by the definition of the sequence $\{w_i\}$, we have $A\cap [\frac{q_{k_n}}{2},q_{k_n}]\setminus B_{q_{k_n}}=\{w_i\}_{i=0}^t$.
Moreover, by \eqref{eq:defp},~\eqref{eq:rwn} and~\eqref{eq:lnw1}, it follows that mod~1, we have
\begin{multline*}
S_{w_i}(g)(0)=\sum_{l=1}^{+\infty}f_l(w_i\alpha)=
\sum_{l=1}^{n-1}f_l(w_i\alpha)+f_n(w_i\alpha)+
\sum_{l=n+1}^{+\infty}f_l(w_i\alpha)=\\
\Big(2+\sum_{l=n+1}^{+\infty}f_l(w_i\alpha)\Big) \text{ mod }1=\sum_{l=n+1}^{+\infty}f_l(w_i\alpha).
\end{multline*}
Therefore, to finish the proof of the lemma,  it is enough to show that
$$
\sum_{l=n+1}^{+\infty}f_l(w_i\alpha)<\epsilon.
$$
Fix $l\geq n+1$. Since $\Big|\alpha-\frac{p_{k_l}}{q_{k_l}}\Big|<
\frac{1}{q_{k_l}q_{k_l+1}}$, we have $$\Big|w_i\alpha-\frac{w_ip_{k_l}}{q_{k_l}}\Big|<
\frac{w_i}{q_{k_l}q_{k_l+1}}\leq \frac{q_{k_n}}{q_{k_l}q_{k_l+1}}<\frac1{q_{k_l}+1}.$$
Hence, the formula for $f_l(w_i\alpha)$ is given by~\eqref{eq:def1}. Therefore  and by \eqref{eq:max2}, we obtain
$$
f_l(w_i\alpha)\leq L_{l,w_i}\frac{q_{k_n}}{q_{k_l}q_{k_l+1}}\leq \frac{12q_{k_n}}{q_{k_l}}
$$
and hence
$$
\sum_{l=n+1}^{+\infty}f_l(w_i\alpha)\leq12q_{k_n} \sum_{l\geq n+1}\frac{1}{q_{k_l}}<\epsilon,
$$
since the sequence $\{k_n\}$ satisfies $k_{n+1}>k_n^2$ and $(q_n)$ grows exponentially fast. This finishes the proof.

\subsection{How to make this construction uniquely ergodic?}
In order to show that the equidistribution along an almost sparse set $A$ does not hold, we only use  our knowledge about~\eqref{eq:rwn} and the fact that the Lipschitz constants  $L_{n,w}$ satisfy certain growths restrictions, cf.~\eqref{eq:max2} and~\eqref{eq:max2a}. Our idea is now to proceed with an interchanged construction in which
$$
q_{k_n}< q_{\ell_n}< q_{k_{n+1}}$$
(remembering that we can sparse $k_n$ and $\ell_n$ as much as we need) and it is  ``time'' $q_{\ell_n}$ which will guarantee that the construction is ergodic (hence uniquely ergodic). In fact, we will show that no non-zero integer multiple of $g$ is multiplicatively cohomologous to a constant, which guarantees that $T_{\alpha,g}$ is uniquely ergodic and the only eigenvalues of it are numbers $e^{2\pi i m\alpha}$, $m\in\Z$.

We define $$h_n(x)=2q_{\ell_n}\Big(x-\frac{j}{q_{\ell_n}}\Big)\text{ if } x\in\Big[\frac{j}{q_{\ell_n}},\frac{j}{q_{\ell_n}}+
\frac1{2q_{\ell_n}}\Big)$$ and
$$h_n(x)=2q_{\ell_n}\Big(\frac{j+1}{q_{\ell_n}}-x\Big)\text{ if } x\in\Big[\frac{j}{q_{\ell_n}}+\frac1{2q_{\ell_n}},
\frac{j+1}{q_{\ell_n}}\Big),$$
for $j=0,\ldots,q_{\ell_n}-1$. Then $h_n$ is Lipschitz continuous, with Lipschitz constant $L'_n=2q_{\ell_n}$ (that is, contrary to the definition of $f_n$, the Lipschitz constant does not depend on the interval $[j/q_{\ell_n},(j+1)/q_{\ell_n})$; $h_n$ is $1/q_{\ell_n}$-periodic). As before, we easily check that the assumptions of Lemma~\ref{l:wyst} are satisfied and $|h_n(x+\alpha)-h_n(x)|\leq L'_n\frac1{q_{\ell_n}q_{\ell_n+1}}$.
We define $K_n:=[q_{\ell_n+1}/2q_{\ell_n}]$ which yields the point $K_nq_{\ell_n}\alpha$  ``close'' to $\frac1{2q_{\ell_n}}$ and guarantees that the distribution of
$$
S_{K_nq_{\ell_n}}(h_n(\cdot+\alpha)-h_n(\cdot))=h_n(\cdot+K_nq_{\ell_n}
\alpha)-h_n(\cdot)$$
is ,,close'' to the distribution of $h_n(\cdot+\frac1{2q_{\ell_n}})-h_n(\cdot)=h_n(\cdot+\frac12)-
h_n(\cdot)$. Note, what will be crucial for our final argument, that {\bf mod~1}
\be\label{eq:nieblisko}
h_n(\cdot+\frac12)-h_n(\cdot)\text{ is not close to any constant}.\ee

We define $$g(x):=\sum_{n=1}^{+\infty}\Big(f_n(x+\alpha)+h_n(x+\alpha)-
f_n(x)-h_n(x)\Big),$$
where $r_{w,n}$ (needed to define  $f_n$) are given by
$$
r_{w,n}:=\sum_{m=1}^{n-1}(f_m(w\alpha)+h_m(w\alpha)).$$
We need to precise how we choose $k_n<\ell_n<k_{n+1}$.
We have
$$
S_{K_nq_{\ell_n}}\Big(\sum_{m=1}^n(f_m(x+\alpha)-f_m(x))+
\sum_{m=1}^{n-1}(h_m(x+\alpha)-h_m(x))\Big)=$$
$$
\sum_{m=1}^{n}(f_m(x+K_nq_{\ell_n}\alpha)-f_m(x))+\sum_{m=1}^{n-1}
(h_m(x+K_nq_{\ell_n}\alpha)-h_m(x)).$$
Now, $K_nq_{\ell_n}\alpha$ is as close to $\frac1{2q_{\ell_n}}$ as we need, so we can make the above sum uniformly as small as we need (by choosing $\ell_n$).  Similarly,
$$
S_{K_nq_{\ell_n}}\Big(\sum_{m=n+1}^\infty(f_m(x+\alpha)-f_m(x))+
\sum_{m=n+1}^{\infty}(h_m(x+\alpha)-h_m(x))\Big)=$$
$$
\sum_{m=n+1}^{\infty}(f_m(x+K_nq_{\ell_n}\alpha)-f_m(x))+
\sum_{m=n+1}^{\infty}
(h_m(x+K_nq_{\ell_n}\alpha)-h_m(x)).$$
Proceeding as in the proof of Lemma~\ref{l:wyst} and using~\eqref{eq:max2a}, we obtain (for some $w\in\{0,\ldots,q_{k_m}-1\}$)
$$
|f_m(x+K_nq_{\ell_n}\alpha)-f_m(x)|\leq $$$$ K_nq_{\ell_n}\frac1{q_{k_m}q_{k_m+1}}+|L_{m,w}-L_{m,w+K_nq_{\ell_n}}|
\frac1{q_{k_m+1}}\leq$$
$$
K_nq_{\ell_n}\Big(\frac1{q_{k_m}q_{k_m+1}}+
\frac{\max(12\epsilon_{q_{k_m}}q_{k_m+1},24q_{k_m+1}/q_{k_m})}
{q_{k_m+1}}\Big).$$
By sparsing the sequence $\{k_n\}$ (e.g.\ we need much stronger assumption than~\eqref{eq:summ}), we can achieve that
$K_nq_{\ell_n}\sum_{m\geq n+1}\Big(\frac1{q_{k_m}q_{k_m+1}}+
\frac{\max(12\epsilon_{q_{k_m}}q_{k_m+1},24q_{k_m+1}/q_{k_m})}
{q_{k_m+1}}\Big)$ is as small as we need. We obtain the same goal for the second series as
$$
|h_m(x+K_nq_{\ell_n}\alpha)-h_m(x)|\leq K_nq_{\ell_n}\frac{L'_m}{q_{\ell_m}q_{\ell_m+1}}=\frac{2K_nq_{\ell_n}}
{q_{\ell_m+1}}.$$

A conclusion of these considerations is that the distribution of $S_{K_nq_{\ell_n}}(g)$ is close to the distribution of $S_{K_nq_{\ell_n}}(h_n(\cdot+\alpha)-h_n(\cdot))$ which by~\eqref{eq:nieblisko} is not close to any constant. This means that $T_{\alpha,g}$ is uniquely ergodic.

Finally, given $n$, we have
$$
S_{w_i}(g)(0)=$$$$\sum_{m=1}^{n-1}(f_m(w_i\alpha)+h_m(w_i\alpha))+
f_n(w_i\alpha)+h_n(w_i\alpha)+\sum_{m\geq n+1}(f_m(w_i\alpha)+h_m(w_i\alpha)).$$
Since $|w_i\alpha-\frac{w_ip_{\ell_n}}{q_{\ell_n}}|<
\frac{w_i}{q_{\ell_n}q_{\ell_n+1}}\leq \frac{q_{k_n}}{q_{\ell_n}q_{\ell_n+1}}$, the third summand $h_m(w_i\alpha)$ is as small as we need, and we finish the proof as at the end of Lemma~\ref{l:wyst}.

\subsection{How to make this construction uniquely ergodic and to satisfy Sarnak's conjecture?}
According to \cite{Ku-Le0}, (see also Theorem 4.1 in \cite{Ku-Le1}) to obtain that $T=T_{\alpha,g}$ is disjoint from M\"obius,  it is sufficient, for all $a,b\in\Z$, $a^2+b^2>0$, and all $(r,s)=1$ (enough to consider pairs of different prime numbers), to have
\be\label{w-eknaaop}
\psi(x)=\psi_{a,b,r,s}(x):=aS_r(g)(r\cdot)+bS_s(g)(s\cdot+c)\ee
is  a (multiplicative) coboundary for no $c\in\T$ (this condition implies the so called AOP property which is sufficient for the M\"obius disjointness; in fact, it yields orthogonality to any multiplicative function).

As we need to consider only countably many cocycles $\psi$, we can repeat the construction from the previous section, where we automatically obtain that  no $A$-equidistribution property holds, while to obtain that no $\psi$ is a (multiplicative) coboundary will be guaranteed by ``reserving''  a subsequence $\{\ell_{n_k}\}$ of $(\ell_n)$ depending on $\psi$ along which $S_{K_nq_{\ell_{n_k}}}(\psi)$ is not close in measure to any constant.

A quick analysis of the construction from the previous section shows that we had $g=g_1+g_2$, where $g_1=\sum_{n=1}^{+\infty}(f_n\circ T-f_n)$ and $g_2=\sum_{n=1}^{+\infty}(h_n\circ T-h_n)$ and we exploited the following:\\
\begin{itemize}
\item $S_{K_nq_{\ell_n}}(g_1)$ was (uniformly) as small as we needed; indeed, for $m\leq n$, to show that $S_{K_nq_{\ell_n}}(f_m\circ T-f_m)$ is small we use the fact that $K_nq_{\ell_n}\alpha$ is as close to~0 as needed, while for $m\geq n+1$, we have
$\|f_m\circ T-f_m\|_{C(\T)}\leq \frac1{q_{k_m}q_{k_m+1}}+\max(12\epsilon_{q_{k_m}},24/q_{k_m})$, so the coboundaries $f_n\circ T-f_n$ are also as small as is needed;
\item $S_{K_nq_{\ell_n}}(g_2)$ was (uniformly) as close to $S_{K_nq_{\ell_n}}(h_n\circ T-h_n)$ as needed (indeed, for $m\leq n-1$ and $m\geq n+1$, we obtain uniform norms of $S_{K_nq_{\ell_n}}(h_n\circ T-h_n)$ as small as we need by the same reason as before).
 \end{itemize}

Now, notice that for each $t,u\geq1$, $c\in\T$ and $j:\T\to\R$, we have
\be\label{eq:zamiana}
S_t(S_u(j)(u\cdot+c))(x)=S_{ut}(j)(ux+c).\ee
It follows that $S_{K_nq_{\ell_n}}(S_r(g_1)(r\cdot))(x)=S_{rK_nq_{\ell_n}}(g_1)(rx)$, so if $\|S_{K_nq_{\ell_n}}(g_1)\|_{C(\T)}\leq\delta$, then
$\|S_{K_nq_{\ell_n}}(g_1(r\cdot))(\cdot)\|_{C(\T)}\leq r\delta$. Using again~\eqref{eq:zamiana} to $S_{K_nq_{\ell_n}}(g_2(s\cdot+c))$, we obtain that $S_{K_nq_{\ell_n}}(\psi)$ is close in measure to
$aS_{K_nq_{\ell_n}}(S_r(h_n\circ T-h_n)(r\cdot))(\cdot)+bS_{K_nq_{\ell_n}}(S_s(h_n\circ T-h_n)(s\cdot+c))(\cdot)$.
In view of \eqref{eq:zamiana}, the result follows whenever
$$
a(h_n(r\cdot+rK_nq_{\ell_n}\alpha)-h_n(r\cdot))+ b(h_n(s\cdot+sK_nq_{\ell_n}\alpha+c)-h_n(s\cdot+c))$$
cannot be close to any constant. This can be achieved by an elementary but a tedious argument.

Since the AOP property of a system implies its orthogonality to any multiplicative function \cite{Ab-Le-Ru}, we obtain the following:

\begin{theorem}\label{p:kontrprzyklady}
Assume that $A\subset \N$ is almost sparse. Then for each irrational $\alpha$ there exists a continuous $g:\T\to\R$ such that the corresponding Anzai skew product $T=T_{\alpha,g}:\T^2\to\T^2$ has the following properties:\\
(i) $T$ is uniquely ergodic.\\
(ii) $T$ is orthogonal to {\bf every} bounded multiplicative function $\boldsymbol{u}:\N\to\C$, that is, $\lim_{N\to\infty}\sum_{n\leq N}f(T^n(x,y))\boldsymbol{u}(n)=0$ for each $f\in C(\T^2)$ of zero mean.\\
(iii) An $A$-equidistribution does not hold for $T$.

In particular, Theorem~\ref{thm:main4} holds.\end{theorem}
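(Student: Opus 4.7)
The plan is to realize $g$ as a sum $g = g_1 + g_2$ of two telescoping cocycles, with $g_1$ engineered so that partial sums $S_k(g)(0)$ visit two different values of $\T$ along the prescribed sparse set $A$, and $g_2$ arranged so that (a) it is negligible on the scale where we want $g_1$ to dictate behavior, yet (b) it rigidly forces unique ergodicity and AOP. Specifically, I would first choose two fast-growing subsequences of denominators $\{q_{k_n}\}, \{q_{\ell_n}\}$ with $q_{k_n} < q_{\ell_n} < q_{k_{n+1}}$, set $g_1 := \sum_n (f_n \circ T_\alpha - f_n)$ with $f_n$ the piecewise-linear ``spike'' based on the partition $\{wp_{k_n}/q_{k_n}\}_{w=0}^{q_{k_n}-1}$ as in \eqref{eq:def1}--\eqref{eq:boundfn}, and inductively select the slopes $L_{n,w}$ at $w \in (A \cap [q_{k_n}/2, q_{k_n}]) \setminus B_{q_{k_n}}$ so that at step $n$ the partial sum $\sum_{m\le n} f_m(w\alpha)$ is forced to equal $2 - r_{w,n}$ (if $n$ even) or $3/2 - r_{w,n}$ (if $n$ odd). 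Because $|w\alpha - wp_{k_n}/q_{k_n}| \in [1/(4q_{k_n+1}), 1/q_{k_n+1}]$, the required $L_{n,w}$ land in $[\tfrac{1}{12} q_{k_n+1}, 12 q_{k_n+1}]$, and interpolating linearly between the chosen values of $L_{n,w_i}$ across the gaps $[w_i, w_{i+1}]$ of length $\ge 1/\epsilon_{q_{k_n}}$ yields the Lipschitz estimate \eqref{eq:max2a}. By \eqref{eq:summ} and Lemma~\ref{l:wyst}, $g_1$ is continuous, and by construction $S_{w}(g_1)(0)$ lies in $(-\varepsilon,\varepsilon)$ (resp.\ $(1/2-\varepsilon,1/2+\varepsilon)$) along $w \in A \cap [q_{k_n}/2, q_{k_n}] \setminus B_{q_{k_n}}$ for even (resp.\ odd) $n$. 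Feeding this into the argument given for Theorem~\ref{thm:main2} with $f(x,y) = \tilde f(y)$, $\tilde f(0)=0$, $\tilde f(1/2)=1$, yields property (iii).

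For unique ergodicity I would then set $g_2 := \sum_n (h_n \circ T_\alpha - h_n)$, where $h_n$ is the $1/q_{\ell_n}$-periodic tent of amplitude $1/2$ with Lipschitz constant $L'_n = 2 q_{\ell_n}$, and pick $K_n := [q_{\ell_n+1}/(2q_{\ell_n})]$, so that $K_n q_{\ell_n} \alpha$ lies arbitrarily close to $1/(2q_{\ell_n})$. The Denjoy--Koksma type estimates give $\|S_{K_n q_{\ell_n}}(g_1)\|_\infty = o(1)$ and $\|S_{K_n q_{\ell_n}}(g_2) - (h_n(\cdot + 1/2) - h_n(\cdot))\|_\infty = o(1)$; but $h_n(\cdot + 1/2) - h_n(\cdot)$ is a square wave equal to $\pm 1/2$ on a set of measure $1/2$ each and is therefore not close (in measure) to any constant. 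Hence $g = g_1+g_2$ is never a multiplicative coboundary, and likewise no integer multiple $kg$ is, so $T_{\alpha,g}$ is uniquely ergodic (and in fact its only eigenvalues are $e^{2\pi i m\alpha}$).

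To obtain Sarnak disjointness via the AOP criterion of \cite{Ku-Le0,Ku-Le1}, I would enumerate the countable family of candidate cocycles
\[
\psi_{a,b,r,s,c}(x) = a S_r(g)(rx) + b S_s(g)(sx+c), \qquad (a,b)\in\Z^2\setminus\{0\},\ (r,s)=1,\ c\in\T,
\]
and, splitting $(\ell_n)$ into disjoint subsequences $(\ell_{n_k^{(j)}})_k$ reserved for each $\psi_j$ in the enumeration, arrange the tent scales $q_{\ell_{n_k^{(j)}}}$ to defeat $\psi_j$. Using the cocycle identity $S_t(S_u(j)(u\cdot + c))(x) = S_{ut}(j)(ux + c)$, the contribution of $g_1$ and of the tail of $g_2$ to $S_{K_{n_k} q_{\ell_{n_k}}}(\psi_j)$ is $o(1)$ (up to the factors $r,s,a,b$, which are absorbed by further sparsification of the $k_n,\ell_n$), so $S_{K_{n_k} q_{\ell_{n_k}}}(\psi_j)$ is uniformly close to
\[
a\bigl(h_{n_k}(r\,\cdot + r K_{n_k}q_{\ell_{n_k}}\alpha) - h_{n_k}(r\,\cdot)\bigr) + b\bigl(h_{n_k}(s\,\cdot + s K_{n_k}q_{\ell_{n_k}}\alpha + c) - h_{n_k}(s\,\cdot + c)\bigr).
\]
Since $r K_{n_k} q_{\ell_{n_k}}\alpha \approx r/(2 q_{\ell_{n_k}})$ and similarly for $s$, the two terms are again square-wave-type functions at incommensurable shifts modulo the period $1/q_{\ell_{n_k}}$ (because $(r,s)=1$), and an elementary direct analysis shows that any nontrivial $(a,b)$-combination has distribution bounded away from every Dirac mass, uniformly in $c$. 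Invoking Gottschalk--Hedlund, no $\psi_{a,b,r,s,c}$ is a multiplicative coboundary, the AOP property holds, and by \cite{Ab-Le-Ru} the skew product is orthogonal to every bounded multiplicative function.

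The main obstacle is the last step: the three parameters $(a,b,r,s)$ must be handled simultaneously for a single continuous $g$, and one must verify that the square-wave combination above indeed stays away from every constant uniformly in $c$. Handling this requires a careful, case-split geometric computation on the tent graphs at relatively prime rescalings together with an interleaved diagonal construction controlling both sparsity of $(k_n)$ (for continuity via \eqref{eq:cond}) and sparsity of $(\ell_n)$ (for all three bullet points of the AOP verification) — everything else is bookkeeping of the inductive construction of the $L_{n,w}$ and the uniform Denjoy--Koksma type tail bounds on $g_1$ and $g_2$.
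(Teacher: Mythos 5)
Your decomposition $g = g_1 + g_2$, the $f_n$-spikes at scales $q_{k_n}$, the $1/q_{\ell_n}$-periodic tents $h_n$, the interleaved sparsification $q_{k_n} < q_{\ell_n} < q_{k_{n+1}}$, the inductive choice of slopes $L_{n,w}$ on $(A \cap [q_{k_n}/2, q_{k_n}]) \setminus B_{q_{k_n}}$, the rigidity-time argument for unique ergodicity, and the diagonal reservation of $(\ell_n)$-subsequences to verify AOP for each $\psi_{a,b,r,s,c}$: all of this matches the paper's construction, and you correctly flag the square-wave case analysis as the remaining tedious-but-elementary step.

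There is one genuine gap in the way you assemble the two cocycles. You establish that $S_w(g_1)(0)$ lands near $0$ or near $1/2$ along the designated $w$'s, and then ``feed this into'' the proof of Theorem~\ref{thm:main2} to conclude (iii) — but (iii) concerns $S_w(g)(0)$, not $S_w(g_1)(0)$. Your stated justification is that $g_2$ is ``negligible on the scale where $g_1$ dictates behavior.'' That is only true for the single term $h_n$: for $w \le q_{k_n}$ you indeed have $|w\alpha - w p_{\ell_n}/q_{\ell_n}| \le q_{k_n}/(q_{\ell_n}q_{\ell_n+1})$ so $h_n(w\alpha)$ is tiny. But for $m < n$ the period $1/q_{\ell_m}$ is much coarser than $1/w$, so $h_m(w\alpha)$ is an essentially arbitrary number in $[0,1/2]$; the cumulative sum $\sum_{m<n} h_m(w\alpha)$ is nowhere near negligible, and $S_w(g)(0) \ne S_w(g_1)(0) + o(1)$. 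The correct fix — and what the paper does — is to change the recursion: when choosing $L_{n,w}$ you must absorb the $h$-contributions by setting $r_{w,n} := \sum_{m=1}^{n-1}(f_m(w\alpha) + h_m(w\alpha)) \bmod 1$ rather than $r_{w,n} = \sum_{m=1}^{n-1} f_m(w\alpha)$. After that, $S_w(g)(0) = 2 + h_n(w\alpha) + \sum_{m\ge n+1}(f_m + h_m)(w\alpha)$, the last two pieces are genuinely small, and (iii) follows. As stated, your ``$g_2$ is negligible'' shortcut would fail.

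A minor point: you cite Gottschalk--Hedlund to conclude that the $\psi_{a,b,r,s,c}$ are not multiplicative coboundaries. Gottschalk--Hedlund runs in the opposite direction (bounded Birkhoff sums $\Rightarrow$ continuous coboundary). What is actually used is the rigidity criterion: if $e(\psi)$ were a measurable multiplicative coboundary for $R_\alpha$, then along rigidity times $S_{K_n q_{\ell_n}}(\psi)$ would have to concentrate near a constant in distribution; you defeat this by showing the distribution of $S_{K_n q_{\ell_n}}(\psi)$ stays bounded away from every point mass.
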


\subsection{Proof of iii.\ of Theorem~\ref{thm:main4}}

We only show how to modify parameters in our general construction. We consider $A=\{m^2:\:m\geq1\}$. We take $f(x,y)=e(x)$ and then
$$\frac1N\sum_{m\leq N}f(T_{\alpha,g}^{m^2}(0,0))\mu(m)=\frac1N\sum_{m\leq N}e^{2\pi iS_{m^2}(g)(0)}\mu(m)$$
and will consider $N$ of the form $[M_n^{1/2}]$.
We define $L_{n,w}$ by setting (cf.\ \eqref{eq:lnw1} and~\eqref{eq:lnw2})
$$
f_n(w\alpha)=L_{n,w}\Big(w\alpha-\frac{wp_{k_n}}{q_{k_n}}\Big)=
\frac{7+\mu(w^{1/2})}{4}-r_{w,n},
$$
where $w\in\{w_0,\ldots,w_t\}=A\cap [1,N^2]\setminus B_{q_{k_n}}$. Then,
$$e^{2\pi iS_w(g)(0)}\mu(w^{1/2})= e^{2\pi i \frac{7+\mu(w^{1/2})}{4}}\mu(w^{1/2})e^{2\pi i\sum_{\ell>n}f_\ell(w\alpha)}=\mu^2(w^{1/2})(1+o_n(1)).$$
It follows that, given $\epsilon>0$ and taking $n$ large enough,
$$
\Big|\frac1N\sum_{n\leq N}e^{2\pi iS_{n^2}(g)(0)}\mu(n)-\frac1N\sum_{w\in\{w_0,\ldots,w_t\}}\mu^2(w^{1/2})\Big|\leq o_n(1)+\frac1N t\epsilon$$
which is arbitrarily small as $t\leq N$.

As before, we can make the construction uniquely ergodic, hence minimal. Since the set of square-free numbers has  positive density, iii.\ of Theorem~\ref{thm:main4} follows. Note also that we can adapt the above proof to other multiplicative function, like the Liouville function, so that we obtain the negative answer to the polynomial variant of Sarnak's conjecture in Problem 7.1 \cite{SarWorkshop}.

\vspace{2ex}
\bibliography{PNT6}
\bibliographystyle{plain}

\end{document}